\newcommand{\gap}{\vspace{0.1in}}
\newcommand{\wt}{\widetilde}
\newcommand{\wh}{\widehat}
\newcommand{\supp}{ \mbox{supp} }
\newcommand{\argmax}{\operatornamewithlimits{\arg\max}}
\newcommand{\argmin}{\operatornamewithlimits{\arg\min}}
\newcommand{\Argmin}{\mbox{Argmin}}
\newcommand{\Ical}{\mathcal I}
\newcommand{\Jcal}{\mathcal J}
\newcommand{\Lcal}{\mathcal L}
\newcommand{\Pcal}{\mathcal P}
\newcommand{\Scal}{\mathcal S}
\newcommand{\sgn}{{\rm sgn}}
\newcommand{\mycut}[1]{{}}
\newcommand{\tblue}[1]{\textcolor{black}{#1}}  
\newtheorem{theorem}{Theorem}[section] 
\newtheorem{lemma}{Lemma}[section] 
\newtheorem{corollary}{Corollary}[section] 
\newtheorem{proposition}{Proposition}[section] 
\newtheorem{definition}{Definition}[section] 
\newtheorem{remark}{Remark}[section]
\newtheorem{example}{Example}[section] 
\begin{document}

\title{Exact Support and Vector Recovery of Constrained Sparse Vectors via Constrained Matching Pursuit}
%


\author{Jinglai Shen\footnote{Department of Mathematics and Statistics, University of Maryland Baltimore County, Baltimore, MD 21250, U.S.A. Emails: shenj@umbc.edu and smousav1@umbc.edu.} \ \ \ and \ \ \ Seyedahmad Mousavi  }

\maketitle

\begin{abstract}
  Matching pursuit, especially its orthogonal version (OMP) and variations, is a greedy algorithm widely used in signal processing, compressed sensing, and sparse modeling. Inspired by constrained sparse signal recovery, this paper proposes a constrained matching pursuit algorithm and develops conditions for exact support and vector recovery on constraint sets via this algorithm. We show that exact recovery via constrained matching pursuit not only depends on a measurement matrix but also critically relies on a constraint set. We thus identify an important class of constraint sets, called coordinate projection admissible set, or simply CP admissible sets;
%
%
  analytic and geometric properties of these sets are established.
  We study exact vector recovery on convex, CP admissible cones for a fixed support. We provide sufficient exact recovery conditions for a general support as well as necessary and sufficient recovery conditions when a support has small size.
  As a byproduct, we construct a nontrivial counterexample to a renowned necessary condition of exact recovery via the OMP
  %
  %
   for a support of size three. Moreover, using the properties of convex CP admissible sets and \tblue{convex} optimization techniques, we establish sufficient conditions for uniform exact recovery on convex CP admissible sets  in terms of the restricted isometry-like constant and the restricted orthogonality-like constant.
  %
\end{abstract}
%

%
\section{Introduction}

Sparse models and representations find  broad applications in numerous fields of contemporary interest \cite{ElderK_book2012}, e.g., signal and image processing, high dimensional statistics, compressed sensing, and machine learning. Effective recovery of sparse signals from a few measurements poses  challenging theoretical and numerical questions. A variety of sparse recovery schemes have been proposed and studied, including the basis pursuit and its extensions, greedy algorithms, and thresholding based algorithms \cite{FoucartRauhut_book2013, ShenMousavi_SIOPT18}.

Originally introduced in signal processing and statistics, matching pursuit \cite{MallatZ_TSP93}, especially the orthogonal matching pursuit (OMP) \cite{PRKrishnaprasad_Asilomar93}, is a greedy algorithm widely used in  sparse signal recovery. At each step, the OMP uses the current target vector to select an additional ``best'' index via coordinate-wise optimization and adds it to the target support, and then updates the target vector over the new support via  optimal fitting of a measurement vector. The deterministic and statistical performance of the OMP  has been extensively studied in the literature \cite{CaiWang_TIT2011, Tropp_ITI04, TroppGilbert_TIT2007, Zhang_JMLR09, Zhang_TIT2011}. In particular, the exact support and vector recovery via the OMP has been characterized in term of the restricted isometry constant with extensions to noisy measurements \cite{MoS_TIT12, WenZWTM_SIT16}. Besides, many variations of the OMP have been developed to improve the recovery accuracy, effectiveness, and robustness under noise and errors; representative examples of these variations include compressive sampling matching pursuit \cite{NeedellTropp_ACHA09, NeedellV_JSTSP2010}, simultaneous OMP \cite{TroppGS_SP06}, stagewise OMP \cite{DonohoTDS_TIT12}, subspace pursuit \cite{DaiMilenk_TIT2009}, generalized OMP \cite{WangKS_TSP2012}, grouped OMP \cite{SwirszczAL_NIPS09}, and multipath matching pursuit \cite{KwonWS_TIT14}, just to name a few; see \cite{FoucartRauhut_book2013} and the references therein for more details.
%
%

Sparse signals arising from diverse applications are subject to constraints, for example, the nonnegative constraint in nonnegative factorization in signal and image processing \cite{RrucksteinEZ_TIT2008}, the polyhedral constraint in index tracking problems in finance \cite{XuLX_OMS16},  and the monotone or shape constraint in order statistics and shape constrained estimation \cite{ShenLebair_Auto15, ShenWang_SICON11}. Hence, constrained sparse recovery has attracted increasing interest from different areas, such as machine learning and sparse optimization \cite{BahmaniRaj_JMLR2013,  BeckE_SIOPT2013, BeckH_MOR15, FoucartKoslicki_ISPL14, IDP_ISP17, Locatello_NIPS2017, MouShen_COCV18, WangXTang_TSP11}. While matching pursuit, particularly the OMP and its variations or extensions, has been extensively studied on $\mathbb R^N$, its constrained version has received much less attention, especially the exact recovery on a general constraint set; exceptions include \cite{RrucksteinEZ_TIT2008} where the uniqueness of the OMP recovery on the nonnegative orthant is considered. Inspired by the constrained sparse recovery, this paper proposes a constrained matching pursuit algorithm for a general constraint set, and develops conditions for exact support and vector recovery on constraint sets via this algorithm.
Similar to the OMP,  the constrained matching pursuit algorithm  selects a new optimal index by solving a constrained coordinate-wise optimization problem at each step, and then updates its target vector over the updated support by solving another constrained optimization problem for the best fitting of a measurement vector.
We show that exact recovery via the constrained matching pursuit not only depends on a measurement matrix but also critically relies on a constraint set. This motivates us to introduce an important class of constraint sets, called coordinate projection admissible sets, or simply CP admissible sets. This class of sets includes
%
%
the Cartesian product of arbitrary copies of $\mathbb R$, $\mathbb R_+$, and $\mathbb R_-$, and many others arising from applications. We establish analytic and geometric properties of these sets to be used for exact recovery analysis.  We then study exact vector recovery on convex, CP admissible cones for a fixed support. When  a fixed support has the size of two and three, we develop necessary and sufficient recovery conditions; when the support size is large, we  provide sufficient exact recovery conditions. As a byproduct of our results, we construct a nontrivial counterexample to the necessary conditions of exact vector recovery via the OMP given by Foucart, Rauhut, and Tropp, when the size of a given support is three
(cf. Section~\ref{subsect:counterexample_S=3}).
Moreover, we  establish sufficient conditions for uniform exact recovery on general convex CP admissible sets  in terms of the restricted isometry-like constant and the restricted orthogonality-like constant, by leveraging the properties of convex CP admissible sets and \tblue{convex} optimization techniques.

%
%

The rest of the paper is organized as follows. Section~\ref{sect:Constrained_MP} presents the constrained matching pursuit algorithm and discusses underlying optimization problems in this algorithm. Section~\ref{sect:exact_supp_recover} studies basic properties of  exact support recovery via constrained matching pursuit. In Section~\ref{sect:CP_admissible_set}, the CP admissible sets are introduced, and their properties are established. Section~\ref{sect:exact_vector_recovery} is concerned with the exact vector recovery of convex, CP admissible cones for a fixed support. \tblue{In Section~\ref{sect:suff_cond_exact_recovery}, sufficient  conditions  for uniform exact recovery on general convex, CP admissible sets are derived with conclusions made in Section~\ref{sect:conclusion}.}

{\it Notation}.
Let $A$ be an $m\times N$ real matrix.
For any index set $\Scal \subseteq \{1, \ldots, N\}$, let $|\Scal|$ denote the cardinality of $\Scal$, $\mathcal S^c$ denote the complement of $\Scal$, and $A_{\bullet\Scal}$ be the matrix formed by the columns of $A$ indexed by elements of $\Scal$. We write the $i$th column of $A$ as $A_{\bullet i}$ instead of $A_{\bullet \{i\} } $.
Further, $\mathbb R^N_+$ and $\mathbb R^N_{++}$ denote the nonnegative and positive orthants of $\mathbb R^N$ respectively, and $\mathbf e_j$ denotes the $j$th column of the $N\times N$ identity matrix.
For  $a \in \mathbb R$, let $a_+:=\max(a, 0) \ge 0$ and $a_-:=\max(-a, 0) \ge 0$. 
 For a given $x \in \mathbb R^N$, $\supp(x)$ denotes the support of $x$, i.e., $\supp(x)=\{ i \, | \, x_i \ne 0 \}$. The standard inner product on $\mathbb R^n$ is denoted by $\langle \cdot, \cdot \rangle$.
When a minimization problem has multiple solutions, $x \in \Argmin$ denotes an arbitrary optimal solution; if there is a unique optimal solution, then we use $x =\argmin$.
Let $\mbox{cone}(S)$ denote the conic hull of a set $S$ in $\mathbb R^N$, i.e., the collection of nonnegative combinations of finitely many vectors in $S$. We always assume that a cone in $\mathbb R^n$ contains the zero vector.
For two sets $A$ and $B$, $A \subseteq B$ means that $A$ is a subset of $B$ and $A$ possibly equals to $B$, while $A \subset B$ means that $A$ is a proper subset of $B$.
For $K \in \mathbb N$, let $\Sigma_K$ be the set of all vectors $x \in \mathbb R^N$  satisfying $|\supp(x)|\le K$. For  $u, v \in \mathbb R^n$, $u \perp v$ stands for the orthogonality of $u$ and $v$, i.e., $u^T v =0$.

%
\section{Constrained Matching Pursuit: Algorithm and Preliminary Results} \label{sect:Constrained_MP}

%
%

Consider the following constrained sparse recovery problem:
\begin{equation} \label{eqn:constrained_L0}
   \min_{x \in \mathbb R^N} \, \| x \|_0 \qquad \mbox{subject to } \quad A x =y, \quad x \in \Pcal,
\end{equation}
where $\| x\|_0:=|\supp(x)|$,  $A \in \mathbb R^{m\times N}$ with $N> m$, $y \in \mathbb R^m$, and $\Pcal$ is a nonempty constraint set  in $\mathbb R^N$. We make the following assumptions on the matrix $A$, the vector $y$, and the set $\Pcal$:
\tblue{
\begin{itemize}
  \item [$\bf A.1$] Each column of $A$ is nonzero, and $y\in A \Pcal:=\{ A x \, | \, x \in \Pcal\}$;
  \item [$\bf A.2$] $\Pcal$ is a (possibly nonconvex) closed set containing the zero vector, i.e., $0 \in \Pcal$.
\end{itemize}
}
\mycut{
Throughout this paper, we also assume that
%
%
%
 $y$ is in the range of $A$,
%
%
and each column of $A$ is nonzero, i.e., $\| A_{\bullet i}\|_2>0$ for each $i=1, \ldots, N$.
}
To solve the problem (\ref{eqn:constrained_L0}), we introduce the constrained matching pursuit scheme given below.

\begin{algorithm}
\caption{Constrained Matching Pursuit}
\begin{algorithmic}[1]
\label{algo:constrained_MP}
%
%
\STATE Input: $A \in \mathbb R^{m\times N}$, $y \in \mathbb R^m$, $\Pcal\subseteq \mathbb R^N$, and a stopping criteria

\STATE Initialize: $k=0$, $x^0=0$, and $\Jcal_0=\emptyset$

\WHILE{the stopping criteria is not met}

  \STATE $g^*_j =  \min_{t \in \mathbb R} \| y - A (x^k + t \, \mathbf e_j  ) \|^2_2 \ \ \mbox{ subject to }  \ x^k + t \, \mathbf e_j  \in \mathcal P$, $\forall \, j \in \{ 1, \ldots, N\}$

  \STATE $j^*_{k+1} \in \Argmin_{j \in \{1, \ldots, N \} } \, g^*_j$ 

  \STATE $\Jcal_{k+1} = \Jcal_k \cup \{ j^*_{k+1}\}$

  \STATE $x^{k+1} \in \Argmin_{w \in \Pcal, \ \supp(w) \subseteq \Jcal_{k+1} } \, \| A w - y\|^2_2$

   \STATE $k\leftarrow k+1$

\ENDWHILE


\STATE Output: $x^* = x^k$

\end{algorithmic}
\end{algorithm}

At each step in the constrained matching pursuit algorithm, two constrained optimization problems are solved. The first problem, given in Line 4 of Algorithm~\ref{algo:constrained_MP}, is a constrained coordinate-wise minimization problem; the second problem, given in Line 7 of  Algorithm~\ref{algo:constrained_MP}, is a minimization problem on the constraint set $\Pcal$ subject to an additional support constraint $\supp(w) \subseteq \Jcal_{k+1}$.
\tblue{
Note that the first problem given in Line 4 is always solvable due to Assumption A.1; see the details below. The following assumption is made for the second problem given in Line 7:
\begin{itemize}
  \item [$\bf A.3$] The optimization problem in Line 7 of  Algorithm~\ref{algo:constrained_MP} attains a solution for any index set $J_{k+1}$.
\end{itemize}
}

In what follows, we discuss these two underlying problems and their solution properties. For a given $x \in \mathcal P$ and an index $j=1, \ldots, N$,  the first minimization problem can be written as
%
%
\[
   (\mbox{P}_{x, j}): \quad \min_{t \in \mathbb R} \| y - A (x + t \, \mathbf e_j  ) \|^2_2 \qquad \mbox{ subject to }  \quad x + t \, \mathbf e_j  \in \mathcal P.
\]
Since $\mathcal P$ is closed, the following constraint set of $(\mbox{P}_{x, j})$ is a closed  set in $\mathbb R$
\begin{equation} \label{eqn:interval_j}
 \mathbb I_j(x) \, := \, \big \{ \, t \in \mathbb R \, | \, x + \mathbf e_j t \in \mathcal P \, \big\}.
\end{equation}
Besides, for any $x \in \mathcal P$ and $j=1, \ldots, N$, we have $0 \in \mathbb I_{j}(x)$, and $(\mbox{P}_{x, j})$ attains an optimal solution because $\| A_{\bullet j} \|_2 >0$.
Since $y=A u$ for some $u\in \Pcal$, we define, for any $u, v \in \mathcal P$ and $j=1, \ldots, N$,
%
%
%
\[
     f^*_j(u, v) \, := \, \min_{t \in \mathbb I_j(v)} \| A u - A (v + t \, \mathbf e_j  ) \|^2_2  = \min_{t \in \mathbb I_j(v)} \| A( u -v) - t A_{\bullet j}  \|^2_2.
\]

A particularly interesting and important case is when $\mathcal P$ is also convex. In this case, for any $v\in \Pcal$ and any index $j$, $\mathbb I_j(v)$ is closed and convex and thus  is a closed interval in $\mathbb R$. Letting
$
  a_j(v):=\inf \, \mathbb I_j(v)$ and  $b_j(v) := \sup \, \mathbb I_j(v)$,
 where $a_j(v)\in \mathbb R_-\cup\{-\infty\}$ and $b_j(v) \in \mathbb R_+\cup \{+\infty\}$, $\mathbb I_j(v)$ can be written as $\mathbb I_j(v)=[a_j(v), b_j(v)]$.
%
%
For any given $u, v \in \Pcal$,  since $A_{\bullet j} \ne 0$, the minimization problem $\min_{t \in [a_j(v), b_j(v)]} \|A(u-v) - t \, A_{\bullet j}\|^2_2$ attains a unique optimal solution
\[
   t^*_j(u, v) \, = \, \left\{\begin{array}{llc} a_j(v), & \mbox{ if  } \  \wt t_j(u, v) \le a_j(v) \\ \wt t_j(u, v), & \mbox{ if  } \ \wt t_j(u, v) \in [a_j(v), b_j(v)] \\  b_j(v), & \mbox{ if  } \ \wt t_j(v)  \ge b_j(v)  \end{array} \right.,
\]
where
\begin{equation} \label{eqn:def_tilda_t}
  \wt t_j(u, v) \, := \, \langle A(u-v), A_{\bullet j} \rangle/\|A_{\bullet j}\|^2_2.
\end{equation}
Consequently,
\begin{equation} \label{eqn:f*_j_convex}
  f^*_j(u, v) \, = \, \left\{\begin{array}{llc} \|A(u-v)\|^2_2 - \| A_{\bullet j}\|^2_2 \cdot [ 2 a_j(v) \wt t_j(u, v) - a^2_j(v)], & \mbox{ if }\  \wt t_j(u, v) \le a_j(v) \\ \|A(u-v)\|^2_2 -\| A_{\bullet j}\|^2_2 \cdot \wt t^2_j(u, v), & \mbox{ if } \ \wt t_j(u, v) \in [a_j(v), b_j(v)] \\   \|A(u-v)\|^2_2 - \| A_{\bullet j}\|^2_2 \cdot [ 2 b_j(v) \wt t_j(u, v) - b^2_j(v)], & \mbox{ if } \ \wt t_j(v)  \ge b_j(v)  \end{array} \right.
\end{equation}
\tblue{
Since $2 a_j(v) \wt t_j(u, v) - a^2_j(v) \le \wt t^2_j(u, v)$ and $2 b_j(v) \wt t_j(u, v) - b^2_j(v) \le \wt t^2_j(u, v)$ for any $(u, v)$ and $j$, it is easy to see that $f^*_j(u, v) \ge \|A(u-v)\|^2_2 -\| A_{\bullet j}\|^2_2 \cdot \wt t^2_j(u, v)$ for any $(u, v)$ and $j$.
}

For illustration, we show the expressions of $f^*_j(u, v)$ for two special cases below.
%
%


(i) $\mathbb I_j(v)=\mathbb R$, i.e., $a_j(v)=-\infty$ and $b_j(v)=+\infty$. In this case,
\begin{equation} \label{eqn:f*_RN}
f^*_j(u, v) \, = \, \|A(u-v)\|^2_2 -\| A_{\bullet j}\|^2_2 \cdot \wt t^2_j(u, v).
\end{equation}


 (ii) $\mathbb I_j(v)=\mathbb R_+$, i.e., $a_j(v)=0$ and $b_j(v)=+\infty$. In this case,
\begin{equation} \label{eqn:f*_RN+}
f^*_j(u, v) \, = \, \|A(u-v)\|^2_2 -\| A_{\bullet j}\|^2_2 \cdot \big([\wt t_j(u, v)]_+\big)^2.
\end{equation}

\mycut{
\noindent  (iii) $\mathbb I_j(v)=[a_j(v), \infty)$ with $a_j(v)<0$ and $b_j(v)=+\infty$. In this case,
\[
   f^*_j(u, v) = \|A(u-v)\|^2_2 - \| A_{\bullet j}\|^2_2 \, \Big\{ \, [2 a_j(v) \wt t_j(u, v) - a^2_j(v)] \cdot \delta_{ \wt t_j(u, v) \le a_j(v)} +  \wt t^2_j(u, v) \cdot \delta_{ \wt t_j(u, v) \ge a_j(v)} \, \Big\},
\]
where $\delta_S$ denotes the indicator function of a set $S$, i.e., $\delta_S(t)=1$ if $t\in S$ and $\delta_S(t)=0$ otherwise.

\noindent  (iv) $\mathbb I_j(v)=(-\infty, b_j(v)]$ with $a_j(v)=-\infty$ and $b_j(v)>0$. In this case,
\[
   f^*_j(u, v) = \|A(u-v)\|^2_2 - \| A_{\bullet j}\|^2_2 \, \Big\{ \,  \wt t^2_j(u, v) \cdot \delta_{ \wt t_j(u, v) \le b_j(v)} + [2 b_j(v) \wt t_j(u, v) - b^2_j(v)] \cdot \delta_{ \wt t_j(u, v) \le b_j(v)} \, \Big\}.
\]
}


We next study the constrained minimization problem in Line 7 of  Algorithm~\ref{algo:constrained_MP} for a given $y \in \mathbb R^m$ and a given index set $\Jcal \subseteq\{1, \ldots, N\}$:
\begin{equation} \label{eqn:P_y_J}
  (\mbox{P}_{y, \Jcal}): \quad \min_{w \in \mathbb R^N} \| A w - y \|^2_2 \qquad \mbox{ subject to }  \quad w \in \mathcal P \quad \mbox{ and } \quad \supp(w) \subseteq \Jcal.
\end{equation}
Since $0\in \Pcal$, $(\mbox{P}_{y, \Jcal})$ is always feasible for any index set $\Jcal$, even if $\Jcal$ is empty.
%
%
Certain solution existence and uniqueness results for $(\mbox{P}_{y, \Jcal})$ can be established under mild assumptions on $A$ and $\Pcal$ as shown below.


\begin{lemma} \label{lemma:sol_existence}
  Let the set $\mathcal P \subseteq \mathbb R^N$ and the matrix $A \in \mathbb R^{m\times N}$. The following hold:
  \begin{itemize}
   \item [(i)] If $A \mathcal P$ is closed, then for any index set $\mathcal J$ and any $y \in \mathbb R^m$,  $(\mbox{P}_{y, \Jcal})$ attains an optimal solution.
     \item [(ii)] If $\Pcal$ is closed and an index set $\Ical$ is such that $A_{\bullet \Ical}$ has linearly independent columns, then $(\mbox{P}_{y, \Ical})$ has an optimal solution. If, in addition, $\mathcal P$ is convex, then such an optimal solution is unique.
    \end{itemize}
\end{lemma}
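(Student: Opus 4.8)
The plan is to treat the two parts separately, exploiting in each case the structure of the feasible region of $(\mbox{P}_{y,\Jcal})$, namely $\Pcal_\Jcal := \{ w \in \Pcal \, | \, \supp(w) \subseteq \Jcal\}$, which is nonempty since $0 \in \Pcal$ by Assumption A.2.

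For part (i), the idea is to recast the problem as a projection of $y$ onto the set $A\Pcal_\Jcal := \{ Aw \, | \, w \in \Pcal,\ \supp(w)\subseteq\Jcal\}$. First I would observe that $A\Pcal_\Jcal$ is the image of $\Pcal$ under the linear map $w \mapsto A(\,\cdot\,)$ restricted to the coordinate subspace indexed by $\Jcal$; equivalently, $A\Pcal_\Jcal = A_{\bullet\Jcal}\,\Pcal^{(\Jcal)}$ where $\Pcal^{(\Jcal)}$ is the ``slice'' of $\Pcal$ on coordinates $\Jcal$. The cleanest route is to note that $A\Pcal_\Jcal$ is a closed subset of the closed set $A\Pcal$: indeed a sequence $Aw^k \to z$ with $w^k \in \Pcal_\Jcal$ has $\supp(w^k)\subseteq\Jcal$, and using that the columns of $A$ are nonzero (Assumption A.1) together with closedness of $A\Pcal$ one extracts a limit point lying in $A\Pcal$ whose support, after the natural identification, still sits in $\Jcal$. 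Once $A\Pcal_\Jcal$ is known to be closed, the continuous function $z \mapsto \|z - y\|_2^2$ attains its minimum over the nonempty closed set $A\Pcal_\Jcal$ by a standard coercivity-plus-closedness argument (restrict to the intersection with a large ball containing at least one feasible point, apply Weierstrass), and pulling this minimizer back to any $w\in\Pcal_\Jcal$ with $Aw$ equal to the minimizing image gives an optimal solution of $(\mbox{P}_{y,\Jcal})$.

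For part (ii), the key simplification is that when $A_{\bullet\Ical}$ has linearly independent columns, the map $w \mapsto Aw$ is injective on the coordinate subspace $\{w : \supp(w)\subseteq\Ical\}$, so $w \mapsto \|Aw - y\|_2^2$ is coercive on that subspace. Restricting to $\Pcal_\Ical$ (closed, as the intersection of the closed set $\Pcal$ with a coordinate subspace) and intersecting with a sublevel set $\{w : \|Aw-y\|_2^2 \le \|A\cdot 0 - y\|_2^2 = \|y\|_2^2\}$, coercivity makes this set bounded and it is closed, hence compact and nonempty, so Weierstrass yields a minimizer. For uniqueness under the additional convexity of $\Pcal$: the objective $w \mapsto \|Aw-y\|_2^2$ is strictly convex on the subspace $\{\supp(w)\subseteq\Ical\}$ because $A_{\bullet\Ical}$ is injective there, and the feasible set $\Pcal_\Ical$ is convex; a strictly convex function on a convex set has at most one minimizer.

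I expect the main obstacle to be the closedness claim buried in part (i): that $A\Pcal_\Jcal$ is closed whenever $A\Pcal$ is. The subtlety is that $\supp(w)\subseteq\Jcal$ is a closed condition in $w$-space but need not obviously pass to a limit in the image $A\Pcal$, since distinct $w$'s may share an image. The resolution is to work directly with the slice $\Pcal^{(\Jcal)} \subseteq \mathbb{R}^{|\Jcal|}$ (the projection of $\Pcal_\Jcal$ onto its nonzero coordinates, which equals $\Pcal \cap \{\supp \subseteq \Jcal\}$ viewed in lower dimension) and to show that $A_{\bullet\Jcal}\,\Pcal^{(\Jcal)}$ inherits closedness; if $\Pcal$ is merely closed this can fail in general, but here the hypothesis gives us closedness of the \emph{full} image $A\Pcal$, and one checks $A_{\bullet\Jcal}\Pcal^{(\Jcal)} = A\Pcal \cap A_{\bullet\Jcal}\mathbb{R}^{|\Jcal|}$ is not automatic either — so the honest argument is the sequential one sketched above, extracting the limit inside $A\Pcal$ and arguing its preimage can be chosen supported in $\Jcal$. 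Everything else (Weierstrass, coercivity from column independence, strict convexity) is routine.
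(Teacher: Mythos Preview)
Your treatment of part (ii) is correct and matches the paper's: closedness of $\Pcal\cap\{w:\supp(w)\subseteq\Ical\}$, coercivity of $w\mapsto\|A_{\bullet\Ical}w_\Ical-y\|_2^2$ from column independence, Weierstrass, and strict (the paper says strong) convexity for uniqueness.

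Part (i), however, has a genuine gap. You correctly isolate the crux---closedness of $A\Pcal_\Jcal$---but your sequential argument does not close it. Given $Aw^k\to z$ with $w^k\in\Pcal_\Jcal$, closedness of $A\Pcal$ tells you only that $z=Aw$ for \emph{some} $w\in\Pcal$; there is no mechanism in your sketch forcing that $w$ (or any other preimage in $\Pcal$) to have $\supp(w)\subseteq\Jcal$. The phrase ``arguing its preimage can be chosen supported in $\Jcal$'' is precisely the whole difficulty, restated rather than resolved.

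For comparison, the paper's proof of (i) does not run a sequential argument at all. It asserts the set identity
\[
A(\Pcal\cap\mathcal V)=(A\Pcal)\cap(A\mathcal V),\qquad \mathcal V:=\{z:z_{\Jcal^c}=0\},
\]
and since $A\Pcal$ is closed by hypothesis and $A\mathcal V$ is a linear subspace, closedness of $A\Pcal_\Jcal$ follows immediately; a cited lemma then gives existence. This is exactly the identity you flagged as ``not automatic,'' and you are right to be suspicious: the inclusion $A(\Pcal\cap\mathcal V)\subseteq(A\Pcal)\cap(A\mathcal V)$ is trivial, but the reverse inclusion can fail for general closed $\Pcal$ (take $N=2$, $A=[1\ \ 1]$, $\mathcal V=\{(t,0)\}$, $\Pcal=\{(0,0),(0,1)\}$). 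So the paper's one-line route is the step you were missing, but it is itself a delicate point; if you want a fully rigorous proof you will need either to justify that identity under the standing assumptions or to find an alternative argument---your current sequential sketch does not do the job.
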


\begin{proof}
 (i) Given any $y \in \mathbb R^m$ and any index set $\mathcal J$,
   $(\mbox{P}_{y, \Jcal})$ is equivalent to $\min_{w \in \Pcal \cap \mathcal V} \|A w - y\|^2_2$, where $\mathcal V:=\{ z=(z_\Jcal, z_{\Jcal^c}) \, | \, z_{\Jcal^c} = 0\}$ is a subspace of $\mathbb R^N$. Note that $A \mathcal V$ is a subspace and thus closed.
 Since $A (\Pcal \cap \mathcal V)=(A\Pcal) \cap (A \mathcal V)$ and  $A\Pcal$ is closed,  $A (\Pcal \cap \mathcal V)$ is also closed. Moreover, the function $\| \cdot \|^2_2$ is continuous, coercive, and bounded below on $\mathbb R^m$. By \cite[Lemma 4.1]{MouShen_COCV18}, $(\mbox{P}_{y, \Jcal})$ has an optimal solution.

 (ii) Suppose $\Pcal$ is closed. Then the set $\Pcal_\Jcal:=\Pcal \cap \mathcal V$ is closed for any index set $\Jcal$, where  $\mathcal V$ is the subspace associated with $\Jcal$ defined in the proof for (i). Since $A_{\bullet \Ical}$ has linearly independent columns, it is easy to see that $\{A_{\bullet \Ical} \, w_\Ical \, | \, (w_\Ical, 0) \in \Pcal_\Ical \}$ is closed. By the similar argument for (i), $(\mbox{P}_{y, \Ical})$ attains an optimal solution.
 If, in addition, $\Pcal$ is convex, then $(\mbox{P}_{y, \Ical})$ is a convex optimization problem with a strongly convex objective function in $w_\Ical$. This yields a unique optimal solution for any $y\in \mathbb R^m$.
\end{proof}

%
%
%

Typical constraint sets $\mathcal P$ satisfying the closedness assumption in statement (i) of Lemma~\ref{lemma:sol_existence} for an arbitrary matrix $A \in \mathbb R^{m\times N}$ include compact sets and polyhedral sets.  Also see Corollary~\ref{coro:sol_existence_CP_adm} in Section~\ref{sect:CP_admissible_set} for a general class of sets on which $(\mbox{P}_{y, \Jcal})$ attains a solution.

When $(\mbox{P}_{y, \Jcal})$ is a convex optimization problem (whose  $\Pcal$ is closed and  convex),  well developed numerical solvers can be exploited to solve  $(\mbox{P}_{y, \Jcal})$, e.g., the gradient projection method and primal-dual schemes.
%
%
In particular, the necessary and sufficient optimality condition for an optimal solution $w^*=(w^*_\Jcal, 0)\in \Pcal$ of $(\mbox{P}_{y, \Jcal})$ is given by the variational inequality (VI): $\langle A^T_{\bullet \Jcal}( A_{\bullet \Jcal} w^*_\Jcal-y),  w_\Jcal- w^*_\Jcal \rangle \ge 0$ for all $(w_\Jcal, 0) \in \Pcal$.
When $\Pcal$ is a closed convex cone, the above VI is equivalent to the cone complementarity problem: $  \mathcal C \ni w^*_\Jcal \perp A^T_{\bullet \Jcal}( A_{\bullet \Jcal} w^*_\Jcal-y) \in \mathcal C^*$, where the closed convex cone $\mathcal C:=\{ w_\Jcal \, | \,  (w_{\Jcal}, 0) \in \Pcal \}$ and $\mathcal C^*$ denotes the dual cone of $\mathcal C$.
Especially, when $\Pcal=\mathbb R^N_+$, it is further equivalent to the linear complementarity problem (LCP): $0 \le w^*_\Jcal \perp  A^T_{\bullet \Jcal}( A_{\bullet \Jcal} w^*_\Jcal-y) \ge 0$. These optimality conditions will be invoked later.
%
%

\tblue{
At the end of this section, we present an example to illustrate Algorithm~\ref{algo:constrained_MP}. This example shows that a desired solution can be recovered from a nonconvex constraint set via Algorithm~\ref{algo:constrained_MP}.}

\begin{example} \rm \label{example:nonconvex_constraint}
\tblue{
    Consider the closed nonconvex set $\Pcal =  \{ x =(x_1, x_2) \in \mathbb R^2 \, | \, x \ge 0,   x_2 \le 1, x^2_2 \ge x_1 \}\cup \{ (x_1, 0) \in \mathbb R^2 \, | \, x_1 \in [0, 1] \}$.
    %
    %
    %
    Let $A=[ \frac{3}{4} \ 1 ]\in \mathbb R^{1\times 2}$, $y=\frac{3}{2}$, and the set $\Scal:=\{ x \, | \, A x = y \} = \{ x =(x_1, x_2) \,  | \, x_2= 2 -\frac{4}{3}x_1 \}$. Thus $\Pcal \cap \Scal$ is the line segment joining the points $p=(\frac{3}{4}, 1)$ and $q =  (\frac{(\sqrt{105}-3)^2}{64}, \frac{\sqrt{105}-3}{8})$. Hence, any solution of the  recovery problem given by (\ref{eqn:constrained_L0}) has support size two.}

    \indent $\bullet$  \tblue{Step 1: Since $x^0=0$, the problem in Line 4 yields: (i) $\min_{t} ( \frac{3}{2} - \frac{3}{4} t)^2_2$ subject to $t \mathbf e_1 \in \Pcal$ or equivalently $0 \le t \le 1$. Hence, $g^*_1=\frac{9}{16}$; (ii)  $\min_{t} ( \frac{3}{2} - t)^2_2$ subject to $t \mathbf e_2 \in \Pcal$ or equivalently $0 \le t \le 1$. Hence, $g^*_2=\frac{1}{4}$. Thus $j^*_1=2$, and $\Jcal_1=\{ 2 \}$. Further, the problem in Line 7 becomes: $\min \|A w - \frac{3}{2} \|^2_2$ subject to $w \in \Pcal$ with $\supp(w)\subseteq \Jcal_1=\{2 \}$. Therefore, its unique optimal solution is $w^*=(0, 1)=x^1$.
    }
    %
    %

    \indent $\bullet$  \tblue{Step 2: Since $x^1=(0, 1)$, the problem in Line 4 yields: (i) $\min_{t} (\frac{1}{2}  - \frac{3}{4} t)^2$ subject to $x^1+ t \mathbf e_1 \in \Pcal$ or equivalently $0\le t \le 1$. Thus $g^*_1=0$; (ii) $\min_{t} (\frac{1}{2}  - t)^2$ subject to $x^1+ t \mathbf e_2 \in \Pcal$ or equivalently $-1 \le t\le 0$. Thus $g^*_2=\frac{1}{4}$.  Hence, $j^*_2= 1$, and $\Jcal_2=\{1, 2 \}$. Therefore, the problem in Line 7 becomes: $\min_w \|A w - \frac{1}{2} \|^2_2$ subject to $w \in \Pcal$ with $\supp(w)\subseteq \Jcal_2=\{1, 2 \}$. Thus any point in $\Pcal \cap \Scal=[p, q]$ is an optimal solution.
%
%
    (Note that there are infinitely many solutions.)  Consequently, a desired solution is recovered in Step 2.}

   %
  \end{example}

%
\section{Exact Support Recovery via Constrained Matching Pursuit} \label{sect:exact_supp_recover}

Fix $K \in \mathbb N$ with $K < N$ throughout the rest of the paper.
\tblue{
Recall that $\Sigma_K$ is the set of all vectors $x \in \mathbb R^N$  satisfying $|\supp(x)|\le K$.
}
For a given $z \in \Sigma_K \cap \mathcal P$, let $\big( (x^k, j^*_k, \Jcal_k) \big)_{k \in \mathbb N}$ be a sequence of triples generated by Algorithm~\ref{algo:constrained_MP} with $y=A z$ starting from $x^0=0$ and $\Jcal_0=\emptyset$, where $\Jcal_{k+1} =\Jcal_k \cup \{ j^*_{k+1} \}$ such that $\Jcal_0 \subseteq \Jcal_1 \subseteq \Jcal_2 \subseteq \cdots$. Note that there are multiple sequences in general for a given $z$, since the optimization problems in Lines 5 and 7 of Algorithm~\ref{algo:constrained_MP} may attain non-unique solutions at each step. For example, if the underlying problem (\ref{eqn:P_y_J}) is a convex minimization problem with non-unique solutions for some $\Jcal=\Jcal_k$ and $y=Az$, then it attains infinitely many $x^k$'s. In this case, there are infinitely many sequences  $\big( (x^k, j^*_k, \Jcal_k) \big)_{k \in \mathbb N}$. \tblue{Another example is given by Step 2 of Example~\ref{example:nonconvex_constraint}.}

\begin{definition} \rm \label{def:exact_suppt_recovery}
\tblue{
 Given a matrix $A \in \mathbb R^{m\times N}$ and a constraint set $\Pcal$, we say that  {\it the exact support recovery} of a given $z \in \Sigma_K \cap \mathcal P$ is achieved from $y=A z$ via constrained matching pursuit (c.f. Algorithm~\ref{algo:constrained_MP}),  if along any sequence $\big( (x^k, j^*_k, \Jcal_k) \big)_{k \in \mathbb N}$, there exists an index $s$
such that $\mathcal J_s = \supp(z)$.
If the exact support recovery of any $z \in \Sigma_K \cap \mathcal P$ is achieved, then we call {\it the exact support recovery on $\Sigma_K \cap \mathcal P$} (or simply the exact support recovery) is achieved.
}
\end{definition}

Necessary and sufficient conditions for the exact support recovery are given as follows.

%

\begin{lemma} \label{lem:exact_suppt_recovery_index}
 Given $0 \ne u \in \sum_K \cap \,  \mathcal P$ and an index set $\mathcal J \subseteq \supp(u)$, let $v$ be an optimal solution to $\min_{w \in \mathcal P, \ \supp(w)\subseteq \mathcal J} \| A (u - w) \|^2_2$, where we assume that such a solution exists. Then $f^*_j(u, v) = \| A (u -v) \|^2_2$ for each $j\in \mathcal J$, and $f^*_j(u, v) \le \| A (u -v) \|^2_2$ for each $j \notin \mathcal J$.
\end{lemma}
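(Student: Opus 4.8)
The plan is to analyze the optimization problem defining $v$ and exploit the fact that $v$ is optimal over a feasible region whose support is contained in $\mathcal J$. First I would observe that, since $\supp(v) \subseteq \mathcal J$, for any index $j \in \mathcal J$ the vector $v + t\,\mathbf e_j$ still has support contained in $\mathcal J$; hence it is feasible for the problem $\min_{w \in \mathcal P,\ \supp(w)\subseteq \mathcal J}\|A(u-w)\|_2^2$ whenever $v + t\,\mathbf e_j \in \mathcal P$, i.e.\ whenever $t \in \mathbb I_j(v)$. By optimality of $v$, we then get $\|A(u-v)\|_2^2 \le \|A(u - v - t\,\mathbf e_j)\|_2^2$ for all $t \in \mathbb I_j(v)$, so the minimum over $t\in\mathbb I_j(v)$ of $\|A(u-v) - t\,A_{\bullet j}\|_2^2$ is attained at $t=0$ and equals $\|A(u-v)\|_2^2$. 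But that minimum is precisely $f^*_j(u,v)$ by its definition. This gives the first claim, $f^*_j(u,v) = \|A(u-v)\|_2^2$ for $j \in \mathcal J$.

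For the second claim, fix $j \notin \mathcal J$. The quantity $f^*_j(u,v) = \min_{t \in \mathbb I_j(v)} \|A(u-v) - t\,A_{\bullet j}\|_2^2$ is a minimum over a set that contains $t = 0$ (since $0 \in \mathbb I_j(v)$ always, as noted in the preliminary discussion in Section~\ref{sect:Constrained_MP}). Evaluating the objective at $t=0$ gives $\|A(u-v)\|_2^2$, so the minimum is at most this value: $f^*_j(u,v) \le \|A(u-v)\|_2^2$. This is immediate and requires no optimality of $v$.

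The only point needing a little care is the feasibility argument in the first part: I must confirm that $v + t\,\mathbf e_j$ with $j \in \mathcal J$ is genuinely admissible in the problem that $v$ solves, which amounts to checking both $v + t\,\mathbf e_j \in \mathcal P$ (true exactly when $t \in \mathbb I_j(v)$, by the definition \eqref{eqn:interval_j}) and $\supp(v + t\,\mathbf e_j) \subseteq \mathcal J$ (true because $\supp(v) \subseteq \mathcal J$ and $j \in \mathcal J$). Then the chain ``$v$ optimal $\Rightarrow$ objective at $v$ no larger than at $v + t\,\mathbf e_j$ $\Rightarrow$ the $t$-minimization is solved at $t=0$'' closes the argument. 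I do not anticipate a serious obstacle here; the main thing is to state the two feasibility conditions cleanly and to recall that $0 \in \mathbb I_j(v)$ so that $f^*_j$ is well-defined and the $t=0$ evaluation is legitimate in both parts.
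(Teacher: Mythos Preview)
Your proposal is correct and follows essentially the same approach as the paper's proof: both use optimality of $v$ together with the feasibility of $v + t\,\mathbf e_j$ (when $j \in \mathcal J$ and $t \in \mathbb I_j(v)$) to get the lower bound $\|A(u-v)\|_2^2 \le f^*_j(u,v)$, and both use $0 \in \mathbb I_j(v)$ for the upper bound in each case. The only cosmetic difference is that the paper treats the case $j \notin \mathcal J$ first.
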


\begin{proof}
   Consider an arbitrary $j \notin \mathcal J$. Noting that $0\in \mathbb I_j(v)$, we have $f^*_j(u, v) \le \| A(u -v) \|^2_2$. We then
    consider an arbitrary $j \in \mathcal J$. For any $t \in \mathbb I_j(v)$, we have $v + \mathbf e_j t \in \mathcal P$ and $\supp(v + \mathbf e_j t) \subseteq \mathcal J$.  Since $v$ is an optimal solution to $\min_{w \in \mathcal P, \ \supp(w)\subseteq\mathcal J} \| A (u - w) \|^2_2$, we have $\|A (u - v)\|^2_2 \le \|A u - A(v + \mathbf e_j t) \|^2_2$ for all $t\in \mathbb I_{j}(v)$. This shows that  $\| A (u -v) \|^2_2 \le f^*_j(u, v)$. Furthermore, $f^*_j(u, v) \le \| A(u -v) \|^2_2 $ since $0\in \mathbb I_j(v)$.
    Therefore, $f^*_j(u, v) = \| A (u -v) \|^2_2$ for each $j \in \Jcal$.
\end{proof}

\begin{theorem} \label{thm:nec_suf_condition_for_exact_supp_recovery}
Given a matrix $A \in \mathbb R^{m\times N}$ and a constraint set $\Pcal$, let $0 \ne z \in \Sigma_K \cap \Pcal$ with $|\supp(z)|=r$.
Then the exact support recovery of $z$ is achieved via constrained matching pursuit if and only if for any sequence $\big( (x^k, j^*_k, \Jcal_k) \big)_{k \in \mathbb N}$ generated by Algorithm~\ref{algo:constrained_MP} with $y=A z$, the following holds
\begin{equation} \label{eqn:exact_sppt_recovery_inequality}
   \min_{j \in \supp(z)\setminus \Jcal_k} f^*_j(z, x^k) \, < \, \min_{j\in [\supp(z)]^c} f^*_j(z, x^k), \qquad \forall \ k=0, 1, \ldots, r-1.
\end{equation}
Moreover, when the exact support recovery of $z$ is achieved, the support of $z$ is firstly attained at the $r$th step along any sequence $\big( (x^k, j^*_k, \Jcal_k) \big)_{k \in \mathbb N}$, i.e., $\Jcal_r=\supp(z)$ and $\Jcal_k \subset \supp(z)$ for each $k < r$.
\end{theorem}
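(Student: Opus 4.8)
The plan is to run Algorithm~\ref{algo:constrained_MP} with $y = A z$ and to track, along an arbitrary sequence $\big((x^k, j^*_k, \Jcal_k)\big)_{k\in\mathbb N}$, the nested index sets $\Jcal_k$ together with the residual $V_k := \|A(z - x^k)\|_2^2$. Three preliminary facts will be used repeatedly. First, since $y = A z$, the coordinate problem in Line~4 has optimal value $g^*_j = f^*_j(z, x^k)$ for each $j$, and the Line~7 problem equals $\min_{w\in\Pcal,\,\supp(w)\subseteq\Jcal_{k+1}}\|A(w-z)\|_2^2$, so $x^{k+1}$ is one of its optimizers. Second, plugging a suitable single-variable update of $x^k$ along $\mathbf e_{j^*_{k+1}}$ into the Line~7 problem, and using that $j^*_{k+1}$ minimizes $j\mapsto g^*_j$, gives $V_{k+1}\le f^*_{j^*_{k+1}}(z,x^k) = \min_{1\le j\le N} f^*_j(z, x^k)\le V_k$. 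Third, whenever $\Jcal_k\subseteq\supp(z)$, Lemma~\ref{lem:exact_suppt_recovery_index} applied with $u = z$, $v = x^k$, and $\mathcal J = \Jcal_k$ gives $f^*_j(z, x^k) = V_k$ for $j\in\Jcal_k$ and, since $0\in\mathbb I_j(x^k)$, $f^*_j(z, x^k)\le V_k$ for every $j$.

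The heart of the proof is a one-step dichotomy. Assume $\Jcal_k\subsetneq\supp(z)$ (this will be maintained inductively, so $x^k$ is optimal over $\{w:\supp(w)\subseteq\Jcal_k\}$ and the third fact applies), and set $\alpha := \min_{j\in\supp(z)\setminus\Jcal_k} f^*_j(z,x^k)$ and $\beta := \min_{j\in[\supp(z)]^c}f^*_j(z,x^k)$; both index sets are nonempty since $\Jcal_k\subsetneq\supp(z)$ and $r\le K<N$, and $\alpha\le V_k$, $\beta\le V_k$. If (\ref{eqn:exact_sppt_recovery_inequality}) holds at step $k$, i.e.\ $\alpha<\beta$, then $\min_{1\le j\le N} f^*_j(z,x^k) = \alpha < \beta\le V_k$, so every minimizing index lies in $\supp(z)\setminus\Jcal_k$ (indices in $\Jcal_k$ attain $V_k > \alpha$ and indices in $[\supp(z)]^c$ attain a value $\ge\beta>\alpha$); hence, \emph{along every continuation}, $j^*_{k+1}\in\supp(z)\setminus\Jcal_k$, so that $\Jcal_{k+1}\subseteq\supp(z)$ with $|\Jcal_{k+1}| = |\Jcal_k|+1$. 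If instead (\ref{eqn:exact_sppt_recovery_inequality}) fails at step $k$, i.e.\ $\alpha\ge\beta$, then $\min_{1\le j\le N} f^*_j(z,x^k) = \beta$ is attained at some $j_0\in[\supp(z)]^c$, so there is a valid continuation with $j^*_{k+1} = j_0$; for that sequence $j_0\in\Jcal_{k'}$ and $j_0\notin\supp(z)$ for all $k'\ge k+1$, while $\Jcal_{k'}\subseteq\Jcal_k\subsetneq\supp(z)$ for $k'\le k$, so $\Jcal_{k'}\ne\supp(z)$ for every $k'$ and the exact support recovery of $z$ fails.

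Both implications then follow by induction on $k$ from the base case $\Jcal_0 = \emptyset\subsetneq\supp(z)$ (valid since $r\ge1$). For the ``if'' direction, if (\ref{eqn:exact_sppt_recovery_inequality}) holds for $k = 0,\dots,r-1$ along every sequence, the first branch of the dichotomy propagates ``$\Jcal_k\subseteq\supp(z)$ and $|\Jcal_k| = k$'' from $k = 0$ up to $k = r$, whence $\Jcal_r = \supp(z)$ and recovery is achieved. For the ``only if'' direction, fix a sequence; at the inductive step $k<r$ we have $\Jcal_k\subsetneq\supp(z)$ with $|\Jcal_k| = k$, and were (\ref{eqn:exact_sppt_recovery_inequality}) to fail at step $k$, the second branch would exhibit a sequence never attaining $\supp(z)$, contradicting the hypothesis; hence (\ref{eqn:exact_sppt_recovery_inequality}) holds at step $k$, and the first branch gives $\Jcal_{k+1}\subseteq\supp(z)$ with $|\Jcal_{k+1}| = k+1$. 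Running to $k = r$ yields (\ref{eqn:exact_sppt_recovery_inequality}) for $k = 0,\dots,r-1$ together with $\Jcal_k\subsetneq\supp(z)$ for $k<r$ and $\Jcal_r = \supp(z)$, which is exactly the ``Moreover'' assertion. The main obstacle is the bookkeeping in the dichotomy: one must exclude the possibility that a minimizing index is repeated from $\Jcal_k$ (which would stall the algorithm) or lies outside $\supp(z)$, and these are precisely the ties resolved by comparing $\alpha$, $\beta$, and $V_k$ via Lemma~\ref{lem:exact_suppt_recovery_index}; the degenerate subcase $V_k = 0$ forces $\alpha = \beta = 0$, so (\ref{eqn:exact_sppt_recovery_inequality}) fails and, consistently, recovery fails, so it is absorbed in the same comparison.
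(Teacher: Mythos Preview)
Your proof is correct and follows essentially the same approach as the paper's: both directions proceed by induction using Lemma~\ref{lem:exact_suppt_recovery_index} to control the values $f^*_j(z,x^k)$ on $\Jcal_k$, and the ``only if'' direction works by branching off to a bad continuation when the inequality fails. Your dichotomy framing (with the explicit quantities $V_k$, $\alpha$, $\beta$) packages the two directions a bit more tightly and makes the inductive maintenance of $\Jcal_k\subsetneq\supp(z)$ in the ``only if'' part more explicit than the paper does, but the underlying argument is the same.
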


\begin{proof}
``If''. For the given $z$,  suppose an arbitrary sequence $\big( (x^k, j^*_k, \Jcal_k) \big)_{k \in \mathbb N}$ generated by Algorithm~\ref{algo:constrained_MP} satisfies (\ref{eqn:exact_sppt_recovery_inequality}).
We prove below by induction on iterative steps of  Algorithm~\ref{algo:constrained_MP} that $\Jcal_k \subseteq \supp(z)$ with $|\Jcal_k|=k$ and $j^*_{k+1} \in \supp(z)\setminus\Jcal_k$ for each $k=1, \ldots, r-1$.
At Step 1, since $x^0=0$ and  $\Jcal_0$ is the empty set, we deduce from (\ref{eqn:exact_sppt_recovery_inequality}) that $\min_{j \in \supp(z)} f^*_{j}(z, 0) \, < \,  \min_{j \in [\supp(z) ]^c} f^*_{j}(z, 0)$. It follows from Algorithm~\ref{algo:constrained_MP} that the optimal index $j^*_1 \in \Argmin_{j=1, \ldots, N} f^*_j(z, 0)$ satisfies $j^*_1 \in \supp(z)$ such that $\Jcal_1=\{ j^*_1 \} \subseteq  \supp(z)$ and $|\Jcal_1|=1$.
Now  suppose $\Jcal_k \subseteq \supp(z)$ with $|\Jcal_k|=k$  and $j^*_k \in \supp(z) \setminus \Jcal_{k-1}$ for  $1\le k \le r-2$. Consider Step $(k+1)$. In view of Lemma~\ref{lem:exact_suppt_recovery_index}, the optimal index $j^*_{k+1} \in \Argmin_{j=1, \ldots, N} f^*_j(z, x^k)$ satisfies $j^*_{k+1} \notin \Jcal_k$. Since $\Jcal_k \subseteq \supp(z)$, $j^*_{k+1} \in [\supp(z) \setminus \Jcal_k]\cup [\supp(z)]^c$.
Further, it follows from (\ref{eqn:exact_sppt_recovery_inequality}) that $j^*_{k+1} \in \supp(z)\setminus \Jcal_k$.
Therefore, $\Jcal_{k+1}:=\Jcal_k \cup \{ j^*_{k+1} \}$ satisfies  $\Jcal_{k+1} \subseteq  \supp(z)$ and $|\Jcal_{k+1}|=k+1$. By the induction principle, we see that $\Jcal_r\subseteq \supp(z)$ and $|\Jcal_r|=r=|\supp(z)|$. This implies that $\Jcal_r=\supp(z)$ and $\Jcal_k \subset \supp(z)$ for each $k < r$.

``Only if''. Suppose the exact support recovery of $z$ is achieved via Algorithm~\ref{algo:constrained_MP}.
By Definition~\ref{def:exact_suppt_recovery}, we claim that for any given sequence $\big( (x^k, j^*_k, \Jcal_k) \big)_{k \in \mathbb N}$ generated by Algorithm~\ref{algo:constrained_MP} with $y=A z$ starting from $x^0=0$ and $\Jcal_0=\emptyset$, the following must hold:
\[
   \min_{j \in \supp(z)} f^*_j(z, x^k) \, < \, \min_{j\in [\supp(z)]^c} f^*_j(z, x^k), \qquad \forall \ k=0, 1, \ldots, r-1,
\]
This is because otherwise,  $\min_{j \in \supp(z)} f^*_j(z, x^\ell) \ge \min_{j\in [\supp(z)]^c} f^*_j(z, x^\ell)$ for some $\ell=0,1, \ldots, r-1$. Hence, there exists an optimal index $j^*_{\ell+1}\notin \supp(z)$ such that $\Jcal_{\ell+1} \neq \supp(z)$ (along a possibly  different sequence), leading to $\Jcal_{s}  \neq \supp(z)$ for all $s \ge \ell$. Note that  $\Jcal_k\ne \supp(z)$ for each $k=1, \ldots, \ell$ since each $|\Jcal_k|<r$. Therefore, there exists a sequence so that $\Jcal_k \ne \supp(z)$ for all $k \in \mathbb N$, yielding a contradiction.
Finally, since each $x^k$ is a minimizer of $\min_{w\in \Pcal, \supp(w) \subseteq \Jcal_k} \|A (z - w) \|^2_2$, we deduce via Lemma~\ref{lem:exact_suppt_recovery_index} that $\min_{j \in \supp(z)} f^*_j(z, x^k)=\min_{j \in \supp(z)\setminus \Jcal_k} f^*_j(z, x^k)$. This leads to (\ref{eqn:exact_sppt_recovery_inequality}).
\end{proof}

In what follows, we show the implications of the exact support recovery.

\begin{proposition} \label{prop:index_set}
Given a matrix $A$ and a constraint set $\Pcal$, let $0\ne z \in \Sigma_K \cap \mathcal P$ with $|\supp(z)|=r$ be such that the exact support recovery of $z$ is achieved. Then for any sequence $\big( (x^k, j^*_k, \Jcal_k) \big)_{k \in \mathbb N}$ generated by Algorithm~\ref{algo:constrained_MP} with $y=A z$, the following hold:
\begin{itemize}
   \item [(i)] $\| A (z - x^{k+1}) \|^2_2 \le f^*_{j^*_{k+1}}(z, x^k) < \| A (z - x^k) \|^2_2$ for each $k=0, 1, \ldots, r-1$;
   \item [(ii)] For each $k=1, \ldots, r$, $(x^k)_{j^*_k} \ne 0$, and $x^k_{\Jcal_{k-1}} \ne 0$ when $k>1$. Hence, $\supp(x^k)=\Jcal_k$ for $k=1, 2$.
\end{itemize}
\end{proposition}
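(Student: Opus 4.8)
The plan is to establish (i) first, then derive (ii) from (i) together with Theorem~\ref{thm:nec_suf_condition_for_exact_supp_recovery}, Lemma~\ref{lem:exact_suppt_recovery_index}, and the structure of the optimization problems in Lines 4 and 7 of Algorithm~\ref{algo:constrained_MP}. Throughout I would fix a sequence $\big( (x^k, j^*_k, \Jcal_k) \big)_{k \in \mathbb N}$ and use the conclusion of Theorem~\ref{thm:nec_suf_condition_for_exact_supp_recovery} that $\Jcal_k \subset \supp(z)$ for $k < r$, $\Jcal_r = \supp(z)$, and $j^*_{k+1} \in \supp(z)\setminus \Jcal_k$ for $k = 0, \ldots, r-1$.

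For (i), the middle term bounds are the heart of it. First, $f^*_{j^*_{k+1}}(z, x^k) < \| A(z - x^k)\|^2_2$: since the exact support recovery of $z$ holds, inequality (\ref{eqn:exact_sppt_recovery_inequality}) gives $\min_{j \in \supp(z)\setminus \Jcal_k} f^*_j(z, x^k) < \min_{j \in [\supp(z)]^c} f^*_j(z, x^k) \le \| A(z-x^k)\|^2_2$, the last step because $0 \in \mathbb I_j(x^k)$ for any $j \notin \Jcal_k$, in particular for $j \in [\supp(z)]^c$ (this is exactly the inequality proved in Lemma~\ref{lem:exact_suppt_recovery_index}); and $j^*_{k+1}$ attains the left-hand minimum over $\supp(z)\setminus\Jcal_k$ since $j^*_{k+1} \in \Argmin_{j=1,\ldots,N} f^*_j(z,x^k)$ lies in $\supp(z)\setminus\Jcal_k$ and by Lemma~\ref{lem:exact_suppt_recovery_index} the global minimum of $f^*_j(z,x^k)$ over all $j$ equals the minimum over $\supp(z)\setminus\Jcal_k$. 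For the left inequality $\| A(z - x^{k+1})\|^2_2 \le f^*_{j^*_{k+1}}(z, x^k)$: by definition $f^*_{j^*_{k+1}}(z, x^k) = \min_{t \in \mathbb I_{j^*_{k+1}}(x^k)} \| A z - A(x^k + t\,\mathbf e_{j^*_{k+1}})\|^2_2$, and for any such $t$ the vector $w = x^k + t\,\mathbf e_{j^*_{k+1}}$ lies in $\Pcal$ with $\supp(w) \subseteq \Jcal_k \cup \{j^*_{k+1}\} = \Jcal_{k+1}$, hence is feasible for the Line-7 problem $(\mbox{P}_{y,\Jcal_{k+1}})$ defining $x^{k+1}$; since $x^{k+1}$ is optimal for that problem, $\|A(z - x^{k+1})\|^2_2 \le \|A z - A w\|^2_2$, and minimizing over $t$ gives the claim.

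For (ii), I would argue by contradiction. Fix $k \in \{1,\ldots,r\}$ and suppose $(x^k)_{j^*_k} = 0$. Then $\supp(x^k) \subseteq \Jcal_k \setminus \{j^*_k\} = \Jcal_{k-1}$, so $x^k$ is feasible for $(\mbox{P}_{y, \Jcal_{k-1}})$; since $x^{k-1}$ is also optimal for that problem and $x^k$ is feasible for it, $\|A(z - x^k)\|^2_2 \ge \|A(z - x^{k-1})\|^2_2$, contradicting part (i). Similarly, if $k > 1$ and $x^k_{\Jcal_{k-1}} = 0$, then $\supp(x^k) \subseteq \{j^*_k\}$; I would then check that $x^k$ is feasible for the Line-4 problem $(\mbox{P}_{x^{k-1}, j^*_k})$ — indeed, writing $x^k = x^{k-1} + (x^k - x^{k-1})$, on coordinates in $\Jcal_{k-1}$ the difference $x^k - x^{k-1}$ equals $-x^{k-1}_{\Jcal_{k-1}}$, which need not be a multiple of $\mathbf e_{j^*_k}$; so a direct feasibility argument does not immediately work. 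Instead I would compare values: $f^*_{j^*_k}(z, x^{k-1}) < \|A(z - x^{k-1})\|^2_2$ by (i), while $x^k$ with $\supp(x^k)\subseteq\{j^*_k\}$ is feasible for $(\mbox{P}_{y,\{j^*_k\}})$, and one shows $\|A(z-x^k)\|^2_2 \ge \min_{w\in\Pcal,\ \supp(w)\subseteq\{j^*_k\}}\|A(z-w)\|^2_2$; the right side equals $f^*_{j^*_k}(z,0)$ (taking $v=0$ in Lemma~\ref{lem:exact_suppt_recovery_index}, noting $0 \in \Pcal$), and I must relate this to the chain of strictly decreasing residuals in (i) to reach a contradiction with $k>1$. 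Finally, for the last sentence: for $k=1$, $\Jcal_1 = \{j^*_1\}$ and $(x^1)_{j^*_1}\ne 0$ forces $\supp(x^1) = \Jcal_1$; for $k=2$, $(x^2)_{j^*_2} \ne 0$ and $x^2_{\Jcal_1}\ne 0$ means $x^2_{j^*_1}\ne 0$, so $\supp(x^2) = \{j^*_1, j^*_2\} = \Jcal_2$ (here $j^*_1 \ne j^*_2$ since $j^*_2 \in \supp(z)\setminus\Jcal_1$).

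The main obstacle I anticipate is the second half of (ii), namely ruling out $x^k_{\Jcal_{k-1}} = 0$ for $k > 1$: unlike the first part, the offending vector $x^k$ is not obviously feasible for an earlier subproblem whose optimal value we control, so the contradiction must be extracted indirectly through the value comparisons and the monotone residual chain from (i). I would expect the paper to handle this by a careful bookkeeping argument, possibly invoking that $x^{k-1}$ already has $x^{k-1}_{\Jcal_{k-2}}\ne 0$ inductively and that the residual strictly decreased at every prior step, so that collapsing back to a support of size one would undercut the value $f^*_{j^*_1}(z,0)$ achieved at the very first step. Everything else is a routine unwinding of the definitions of $f^*_j$, $\mathbb I_j$, the Line-4 and Line-7 subproblems, and Lemma~\ref{lem:exact_suppt_recovery_index}.
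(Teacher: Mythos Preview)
Your approach is correct and matches the paper's proof essentially line for line, for both (i) and (ii). The ``obstacle'' you flag in the second half of (ii) is not an obstacle at all: once you have $\|A(z-x^k)\|_2^2 \ge f^*_{j^*_k}(z,0)$, simply recall that $x^0=0$ and $j^*_1 \in \Argmin_{j} f^*_j(z,0)$, so $f^*_{j^*_1}(z,x^0)=f^*_{j^*_1}(z,0)\le f^*_{j^*_k}(z,0)$; chaining with the left inequality of (i) at $k=0$ gives $\|A(z-x^1)\|_2^2 \le f^*_{j^*_1}(z,x^0)\le f^*_{j^*_k}(z,0)\le \|A(z-x^k)\|_2^2$, contradicting (i) since $k>1$. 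This is exactly the paper's argument, and it is direct rather than inductive --- no appeal to $x^{k-1}_{\Jcal_{k-2}}\ne 0$ is needed.
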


\begin{proof}
  (i) Fix $k \in \{0, 1, \ldots, r-1\}$. Since $x^k$ is an optimal solution to $\min_{w \in \Pcal, \, \supp(w)\subseteq \Jcal_k} \| A(z - w)\|^2_2$, it follows from Lemma~\ref{lem:exact_suppt_recovery_index} that $f^*_j(z, x^k) \le \| A (z - x^k)\|^2_2$ for all $j=1, \ldots, N$. In light of the inequality given by (\ref{eqn:exact_sppt_recovery_inequality}), we have $\min_{j \in \supp(z)\setminus \mathcal J_k} f^*_{j}(z, x^k) \, < \,  \min_{j \in [\supp(z) ]^c} f^*_{j}(z, x^k) \le \| A (z - x^k)\|^2_2$. Since $j^*_{k+1} \in \Argmin_{j \in \supp(z)\setminus \mathcal J_k} f^*_{j}(z, x^k)$, we have $ f^*_{j^*_{k+1}}(z, x^k) < \| A (z - x^k) \|^2_2$. Besides, by virtue of the definition of $f^*_{j}(\cdot, \cdot)$, we deduce that there exists $0 \ne t_* \in \mathbb I_{j^*_{k+1}}$ such that
  \[
   f^*_{j^*_{k+1}}(z, x^k) \, =  \, \big \|A z- A \big(x^k+ t_* \mathbf e_{j^*_{k+1}} \big) \big\|^2_2.
  \]
    Note that $x^k + t_* \mathbf e_{j^*_{k+1}} \in \mathcal P$ and $\supp(  x^k + t_* \mathbf e_{j^*_{k+1}}  ) = \Jcal_k \cup \{ j^*_{k+1}\}= \Jcal_{k+1}$. Since $x^{k+1}$ is an optimal solution to $\min_{w \in \Pcal, \, \supp(w)\subseteq \Jcal_{k+1}} \| A(z - w)\|^2_2$, we have $\|A(z - x^{k+1} ) \|^2_2 \le \|A z- A (x^k+ t_* {\mathbf e}_{j^*_{k+1}} ) \|^2_2 =   f^*_{j^*_{k+1}}(z, x^k)$.

 (ii) Fix $k \in \{1, \ldots, r\}$. We first show the following claim: $x^k_{\Jcal_k \setminus \Jcal_s} \ne 0$ for each $s \in \{0, 1, \ldots, k-1 \}$.
 Suppose, in contrast, that  $(x^k)_{\Jcal_k \setminus \Jcal_s} = 0$ for some $s\in \{0, 1, \ldots, k-1\}$. In light of $\Jcal_s \subset \Jcal_k$, we have $\supp(x^k)\subseteq \Jcal_{s}$.
 Since $x^k \in \Pcal$ and $x^{s}$ is an optimal solution to $\min_{w \in \Pcal, \, \supp(w)\subseteq \Jcal_{s}} \| A(z - w)\|^2_2$, we deduce that $\| A(z - x^{s})\|^2_2 \le \| A(z - x^k)\|^2_2$. Since $s<k$, this yields a contradiction to statement (i). Hence, the claim holds. In view of $\Jcal_k\setminus \Jcal_{k-1} =  \{ j^*_k \}$, we obtain $(x^k)_{j^*_k} \ne 0$.

  We  then show that $x^k_{\Jcal_{k-1}} \ne 0$ when $k>1$. Suppose, in contrast, that $x^k_{\Jcal_{k-1}} = 0$. Then $\supp(x^k)=\{ j^*_{k} \}$ since $(x^k)_{j^*_k} \ne 0$. By the definition of $f^*_{j}(\cdot, \cdot)$, we have that $f^*_{j^*_k}(z, 0) \le  \| A( z - x^k) \|^2_2$. Furthermore, we deduce via $x^0=0$ that $f^*_{j^*_1}(z, x^0)\le f^*_{j^*_k}(z, 0)$. Therefore, $f^*_{j^*_1}(z, x^0) \le  \| A( z - x^k) \|^2_2$.
  On the other hand, it follows from statement (i) that $\| A(z - x^1)\|^2_2 \le f^*_{j^*_1}(z, x^0)$. This leads to $\| A(z - x^1)\|^2_2 \le \| A( z - x^k) \|^2_2$. Since $k>1$, we attain a contradiction to statement (i). Consequently, $x^k_{\Jcal_{k-1}} \ne 0$ when $k>1$.
\end{proof}

We specify particular conditions for the exact support recovery on $\mathbb R^N$ and $\mathbb R^N_+$, respectively.

\begin{corollary} \label{coro:Nec_Suf_suppt_recovery_RN_RN+}
Given a matrix $A \in \mathbb R^{m\times N}$ with unit columns (i.e., $\|A_{\bullet i}\|_2=1$ for all $i$) and a constraint set $\Pcal$,  let $0 \ne z \in \Sigma_K \cap \Pcal$ with $|\supp(z)|=r$. The following hold:
\begin{itemize}
  \item[(i)] When $\Pcal=\mathbb R^N$, the exact support recovery of $z$ is achieved if and only if for any sequence $\big( (x^k, j^*_k, \Jcal_k) \big)_{k \in \mathbb N}$ generated by Algorithm~\ref{algo:constrained_MP} with $y=A z$, $$\max_{j \in \supp(z)\setminus \Jcal_k} |A^T_{\bullet j} A(z - x^k)| \, > \, \max_{j\in [\supp(z)]^c} |A^T_{\bullet j} A(z - x^k)|, \qquad \forall \ k=0, 1, \ldots, r-1;$$
 \item [(ii)] When $\Pcal=\mathbb R^N_+$, the exact support recovery of $z$ is achieved if and only if for any sequence $\big( (x^k, j^*_k, \Jcal_k) \big)_{k \in \mathbb N}$ generated by Algorithm~\ref{algo:constrained_MP} with $y=A z$,
     \[
     \max_{j \in \supp(z)\setminus \Jcal_k} [A^T_{\bullet j} A(z - x^k) ]_+ \, > \, \max_{j\in [\supp(z)]^c} [ A^T_{\bullet j} A(z - x^k) ]_+, \qquad \forall \ k=0, 1, \ldots, r-1.
     \]
 \end{itemize}
\end{corollary}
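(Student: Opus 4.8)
The plan is to derive both parts of Corollary~\ref{coro:Nec_Suf_suppt_recovery_RN_RN+} directly from the characterization in Theorem~\ref{thm:nec_suf_condition_for_exact_supp_recovery} by converting the inequality (\ref{eqn:exact_sppt_recovery_inequality}), stated in terms of $f^*_j(z, x^k)$, into the stated inequalities in terms of $|A^T_{\bullet j} A(z-x^k)|$ and $[A^T_{\bullet j} A(z-x^k)]_+$. The key observation is that the constraint set $\mathbb I_j(x^k)$ has a uniform description in each case: when $\Pcal=\mathbb R^N$ we have $\mathbb I_j(x^k)=\mathbb R$ for all $j$, and when $\Pcal=\mathbb R^N_+$ we have $\mathbb I_j(x^k)=[-(x^k)_j, +\infty)$, but crucially Proposition~\ref{prop:index_set}(i)--(ii) together with Theorem~\ref{thm:nec_suf_condition_for_exact_supp_recovery} tell us that along any sequence for which the recovery succeeds, $\Jcal_k\subseteq\supp(z)$ for $k\le r$, so for $j\in[\supp(z)]^c$ we have $(x^k)_j=0$ and hence $\mathbb I_j(x^k)=\mathbb R_+$; and for $j\in\supp(z)\setminus\Jcal_k$ we again have $(x^k)_j=0$ (since $\supp(x^k)\subseteq\Jcal_k$), so $\mathbb I_j(x^k)=\mathbb R_+$ as well. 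Thus on the two index sets that appear in (\ref{eqn:exact_sppt_recovery_inequality}), the formulas (\ref{eqn:f*_RN}) and (\ref{eqn:f*_RN+}) apply uniformly.

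First I would set up the argument: fix $z$ and an arbitrary sequence $\big((x^k, j^*_k, \Jcal_k)\big)_{k\in\mathbb N}$ generated by Algorithm~\ref{algo:constrained_MP} with $y=Az$. For the ``if'' direction one assumes the stated $\max$-inequality holds; one must first argue that this forces $\Jcal_k\subseteq\supp(z)$ and $\supp(x^k)\subseteq\Jcal_k$ at each step $k=0,\dots,r-1$ so that the reduction to (\ref{eqn:f*_RN}) or (\ref{eqn:f*_RN+}) is legitimate. This is handled by the same induction as in the proof of Theorem~\ref{thm:nec_suf_condition_for_exact_supp_recovery}: at step $k$, under the inductive hypothesis $\Jcal_k\subseteq\supp(z)$ and $\supp(x^k)\subseteq\Jcal_k$, every $j$ appearing in either side of (\ref{eqn:exact_sppt_recovery_inequality}) satisfies $(x^k)_j=0$, so I substitute the explicit expressions. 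With $\|A_{\bullet j}\|_2=1$, (\ref{eqn:f*_RN}) gives $f^*_j(z,x^k)=\|A(z-x^k)\|_2^2 - \big(A^T_{\bullet j}A(z-x^k)\big)^2$ for $\Pcal=\mathbb R^N$, and (\ref{eqn:f*_RN+}) gives $f^*_j(z,x^k)=\|A(z-x^k)\|_2^2 - \big([A^T_{\bullet j}A(z-x^k)]_+\big)^2$ for $\Pcal=\mathbb R^N_+$; in both cases $f^*_j(z,x^k)$ is a strictly decreasing function of the respective quantity $|A^T_{\bullet j}A(z-x^k)|$ (resp.\ $[A^T_{\bullet j}A(z-x^k)]_+$), because $c\mapsto -c^2$ is decreasing on $[0,\infty)$. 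Hence $\min_j f^*_j(z,x^k) < \min_{j'} f^*_{j'}(z,x^k)$ over the two index sets is equivalent to $\max_j |A^T_{\bullet j}A(z-x^k)| > \max_{j'}|A^T_{\bullet j'}A(z-x^k)|$, which closes the induction and hence, by Theorem~\ref{thm:nec_suf_condition_for_exact_supp_recovery}, gives exact support recovery.

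For the ``only if'' direction I invoke Theorem~\ref{thm:nec_suf_condition_for_exact_supp_recovery}: exact recovery of $z$ implies (\ref{eqn:exact_sppt_recovery_inequality}) holds for every generated sequence and, moreover, $\Jcal_k\subset\supp(z)$ for $k<r$; combined with $\supp(x^k)\subseteq\Jcal_k$ (Proposition~\ref{prop:index_set}(ii) for $k=1,2$, and for general $k$ the fact that $x^k$ is supported on $\Jcal_k$ by its defining constraint), the same substitution of (\ref{eqn:f*_RN}) / (\ref{eqn:f*_RN+}) shows (\ref{eqn:exact_sppt_recovery_inequality}) is equivalent to the stated $\max$-inequality at each $k$. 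The main thing to be careful about — the step I expect to require the most attention — is justifying that $(x^k)_j=0$ for all indices $j$ appearing in the inequality, i.e.\ that the iterates never assign a nonzero value to a coordinate outside $\supp(z)$ before $\supp(z)$ is reached; this is exactly what guarantees $\mathbb I_j(x^k)\supseteq\mathbb R_+$ with equality (for $\Pcal=\mathbb R^N_+$) so that the clean formula (\ref{eqn:f*_RN+}) holds rather than the general three-branch formula (\ref{eqn:f*_j_convex}). This follows because $\supp(x^k)\subseteq\Jcal_k$ by construction and $\Jcal_k\subseteq\supp(z)$ in the relevant range — the latter being precisely the content carried along the induction in the ``if'' part and supplied by Theorem~\ref{thm:nec_suf_condition_for_exact_supp_recovery} in the ``only if'' part. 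Everything else is a routine substitution and the elementary monotonicity of $c\mapsto -c^2$ on $[0,\infty)$.
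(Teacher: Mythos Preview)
Your proposal is correct and follows essentially the same approach as the paper: both reduce the general inequality (\ref{eqn:exact_sppt_recovery_inequality}) of Theorem~\ref{thm:nec_suf_condition_for_exact_supp_recovery} to the stated max-inequalities by observing that, for the relevant indices $j$, one has $(x^k)_j=0$ so that the explicit formulas (\ref{eqn:f*_RN}) and (\ref{eqn:f*_RN+}) apply, and then using the monotonicity of $c\mapsto -c^2$ on $[0,\infty)$. You are slightly more careful than the paper in spelling out the inductive justification of $\Jcal_k\subseteq\supp(z)$ for the ``if'' direction in part~(ii); the paper simply cites ``$\supp(x^k)\subset\supp(z)$ as shown in Theorem~\ref{thm:nec_suf_condition_for_exact_supp_recovery}'' without distinguishing the two directions, but the underlying argument is the same.
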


\begin{proof}
 (i) Let $\Pcal=\mathbb R^N$. Then for any $v \in \mathbb R^N$ and any index $j$, $\mathbb I_j(v)=\mathbb R$. It follows from the definition of $\wt t_j(u, v)$ given by (\ref{eqn:def_tilda_t}) and $f^*_j(u, v)$ given by (\ref{eqn:f*_RN}) that (\ref{eqn:exact_sppt_recovery_inequality}) holds if and only if for each $k=0,1, \ldots, r-1$,
 \[
    \max_{j \in \supp(z)\setminus \Jcal_k} \langle A(z- x^k), A_{\bullet j} \rangle^2 > \max_{j \in [\supp(z)]^c} \langle A(z- x^k), A_{\bullet j} \rangle^2.
 \]
 The latter is equivalent to $\max_{j \in \supp(z)\setminus \Jcal_k} |A^T_{\bullet j} A(z - x^k)| \, > \, \max_{j\in [\supp(z)]^c} |A^T_{\bullet j} A(z - x^k)|$.

(ii) Let $\Pcal=\mathbb R^N_+$. Consider the pair $(x^k, \Jcal_k)$ for any fixed $k\in \{0, 1, \ldots, r-1\}$. For each $j \in \supp(z)\setminus \Jcal_k$, we have $(x^k)_j=0$ such that $\mathbb I_j(x^k)=\mathbb R_+$. Further, since $\supp(x^k) \subset \supp(z)$ as shown in Theorem~\ref{thm:nec_suf_condition_for_exact_supp_recovery}, we see that for any $j\in [\supp(z)]^c$, $j \notin \supp(x^k)$ such that $(x^k)_j=0$ and $\mathbb I_j(x^k)=\mathbb R_+$. Hence, in view of $f^*_j(\cdot, \cdot)$ given by  (\ref{eqn:f*_RN+}), we see that $\min_{j \in \supp(z)\setminus \Jcal_k} f^*_j(z, x^k) < \min_{j\in [\supp(z)]^c} f^*_j(z, x^k)$  if and only if $\max_{j \in \supp(z)\setminus \Jcal_k} ([A^T_{\bullet j} A(z - x^k) ]_+)^2  >  \max_{j\in [\supp(z)]^c} ([ A^T_{\bullet j} A(z - x^k) ]_+)^2$, which is equivalent to $\max_{j \in \supp(z)\setminus \Jcal_k} [A^T_{\bullet j} A(z - x^k) ]_+ > \max_{j\in [\supp(z)]^c} [ A^T_{\bullet j} A(z - x^k) ]_+$.
%
%
This yields the desired result.
\end{proof}

 Inspired by Theorem~\ref{thm:nec_suf_condition_for_exact_supp_recovery}, we introduce the following condition for a matrix $A$ and a constraint set $\Pcal$:
%
%
\begin{align}
(\mathbf H): \quad & \mbox{For any $0 \ne u \in \Sigma_K \cap \,  \mathcal P$, any index set $\Jcal \subset \supp(u)$ (where $\Jcal$ is possibly the empty set),} \notag \\
& \mbox{and an arbitrary  optimal solution $v$ of $\min_{w \in \mathcal P, \ \supp(w)\subseteq \mathcal J} \| A (u - w) \|^2_2$, the following holds:} \notag \\
 %
    &  \qquad \qquad  \qquad \qquad \quad \min_{j \in \supp(u)\setminus \mathcal J} f^*_{j}(u, v) \, < \,  \min_{j \in [\supp(u) ]^c} f^*_{j}(u, v). \label{eqn:condition_H'}
\end{align}
The next proposition states that $(\mathbf H)$ is a sufficient condition for the exact support recovery. We omit its proof since it follows directly from the fact that the inequality in (\ref{eqn:condition_H'}) implies  (\ref{eqn:exact_sppt_recovery_inequality}) given in Theorem~\ref{thm:nec_suf_condition_for_exact_supp_recovery}.

\begin{proposition} \label{prop:condition_H_suppt_recovery}
Given a matrix $A\in \mathbb R^{m\times N}$ and a constraint set $\Pcal$, suppose condition $(\mathbf H)$ holds. Then  the exact support recovery is achieved on $\Sigma_K \cap \Pcal$.
\end{proposition}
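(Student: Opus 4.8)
The plan is to derive \eqref{eqn:exact_sppt_recovery_inequality} of Theorem~\ref{thm:nec_suf_condition_for_exact_supp_recovery} as an immediate consequence of condition $(\mathbf H)$ and then quote that theorem. Fix $0 \ne z \in \Sigma_K \cap \mathcal P$ with $|\supp(z)| = r$, and let $\big( (x^k, j^*_k, \Jcal_k) \big)_{k \in \mathbb N}$ be an arbitrary sequence generated by Algorithm~\ref{algo:constrained_MP} with $y = Az$, starting from $x^0 = 0$ and $\Jcal_0 = \emptyset$. I want to show \eqref{eqn:exact_sppt_recovery_inequality} holds for $k = 0, 1, \ldots, r-1$, i.e. that the minimum of $f^*_j(z, x^k)$ over $\supp(z)\setminus\Jcal_k$ is strictly smaller than the minimum over $[\supp(z)]^c$; Theorem~\ref{thm:nec_suf_condition_for_exact_supp_recovery} then yields the exact support recovery of $z$, and since $z$ was arbitrary in $\Sigma_K \cap \mathcal P$, the exact support recovery on $\Sigma_K \cap \mathcal P$ follows.

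The key step is an induction on $k$ establishing the hypothesis needed to invoke $(\mathbf H)$ at each stage, namely that $\Jcal_k \subset \supp(z)$ and that $x^k$ is an optimal solution of $\min_{w \in \mathcal P,\ \supp(w)\subseteq \Jcal_k} \|A(z - w)\|_2^2$. The base case $k = 0$ is clear: $\Jcal_0 = \emptyset \subset \supp(z)$ since $z \ne 0$, and $x^0 = 0$ trivially solves the corresponding problem (the only feasible point). For the inductive step, assuming $\Jcal_k \subset \supp(z)$ and the optimality of $x^k$, apply $(\mathbf H)$ with $u = z$, $\Jcal = \Jcal_k$, and $v = x^k$: this gives exactly \eqref{eqn:exact_sppt_recovery_inequality} at step $k$. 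By Line 5 of Algorithm~\ref{algo:constrained_MP} and Lemma~\ref{lem:exact_suppt_recovery_index} (which gives $f^*_j(z,x^k) = \|A(z-x^k)\|_2^2$ for $j \in \Jcal_k$ and $\le$ for $j \notin \Jcal_k$, so that the minimizing index lies outside $\Jcal_k$), the selected index $j^*_{k+1}$ lies in $\supp(z)\setminus\Jcal_k$; hence $\Jcal_{k+1} = \Jcal_k \cup \{j^*_{k+1}\} \subseteq \supp(z)$ with $|\Jcal_{k+1}| = k+1 \le r$, so $\Jcal_{k+1} \subset \supp(z)$ whenever $k+1 < r$. Finally, $x^{k+1}$ is by construction (Line 7) an optimal solution of $\min_{w \in \mathcal P,\ \supp(w)\subseteq \Jcal_{k+1}} \|A(z-w)\|_2^2$ — note this problem is solvable by Assumption A.3 — which closes the induction.

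I expect no serious obstacle here; the proposition is deliberately stated as a corollary of the theorem. The only point requiring a little care is the bookkeeping that $\Jcal_k$ stays a \emph{proper} subset of $\supp(z)$ for all $k \le r-1$ (so that $(\mathbf H)$, whose hypothesis requires $\Jcal \subset \supp(u)$, genuinely applies at every step), which follows from the cardinality count $|\Jcal_k| = k < r = |\supp(z)|$. In fact, as the excerpt notes, the argument is short enough that one may simply observe that the inequality in \eqref{eqn:condition_H'} is precisely the inequality \eqref{eqn:exact_sppt_recovery_inequality} demanded by Theorem~\ref{thm:nec_suf_condition_for_exact_supp_recovery}, once one verifies inductively that each pair $(\Jcal_k, x^k)$ produced by the algorithm is of the form $(\Jcal, v)$ covered by condition $(\mathbf H)$; hence the proof may be omitted, as the authors choose to do.
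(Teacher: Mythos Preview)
Your proposal is correct and matches the paper's approach exactly: the paper omits the proof, noting only that the inequality in \eqref{eqn:condition_H'} implies \eqref{eqn:exact_sppt_recovery_inequality} of Theorem~\ref{thm:nec_suf_condition_for_exact_supp_recovery}, and your induction is precisely the bookkeeping needed to justify that implication (i.e., that each $(\Jcal_k, x^k)$ along the algorithm is of the form covered by condition $(\mathbf H)$). You even anticipated this, correctly observing that the argument is short enough to be omitted.
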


\begin{remark} \rm
 In general, condition $(\mathbf H)$ is {\em not} necessary for the exact support recovery. This is because the exact support recovery of a vector $z \in \Sigma_K \cap \Pcal$ requires that the inequality (\ref{eqn:exact_sppt_recovery_inequality}) hold for $\Jcal_k$'s only along a sequence $\big( (x^k, j^*_k, \Jcal_k) \big)_{k \in \mathbb N}$ for $z$, while condition $(\mathbf H)$ says that the inequality (\ref{eqn:condition_H'}) hold for {\em all} proper subsets $\Jcal \subset \supp(z)$. Nevertheless, condition $(\mathbf H)$ is  necessary for the exact support recovery when $K$ is small; see Corollary~\ref{coro:condition_H'_necessary_RN_S2} for $\Sigma_2\cap \mathbb R^N$ and Corollary~\ref{coro:condition_H'_necessary_RN+_S2}  for $\Sigma_2\cap \mathbb R^N_+$, respectively.
\end{remark}

Before ending this section, we give an example of a closed convex set $\Pcal$, on which no matrix $A$ can achieve the exact support recovery. It demonstrates that the exact support recovery and condition $(\mathbf H)$ not only depend on the measurement matrix $A$ but also critically rely on the constraint set $\mathcal P$.

\begin{example} \label{example:counter01} \rm
Let $d=(d_1, \ldots, d_N)^T \in \mathbb R^N$ be such that $d_i \ne 0$ for each $i$. Consider the hyperplane $\mathcal P:=\{ x \in \mathbb R^N \, | \, d^T x = 0 \}$. Clearly, $\mathcal P$ is closed and convex, and it contains the zero vector and other sparse vectors.
 %
%
  Since each $d_i\ne 0$, it is easy to verify that for any $v\in \mathcal P$ and any index $j$, the set $\mathbb I_j(v) =\{ 0 \}$. This shows that for any $u, v \in \mathcal P$ and any index $j$, $f^*_j(u,v)=\| A(u-v)\|^2_2$ for any matrix $A$. Hence, for any $z \in \Sigma_K \cap \Pcal$, we deduce that at Step 1 of Algorithm~\ref{algo:constrained_MP}, $\Argmin_{j \in \{1, \ldots, N\}} f^*(z, 0)=\{1, \ldots, N \}$. Thus $j^*_1$ can be chosen as $j^*_1 \notin\supp(z)$. This means that no matrix $A$ achieves the exact support recovery of any $z \in\Sigma_K \cap \Pcal$. It also implies that no matrix $A$ satisfies condition $(\mathbf H)$ on $\Pcal$.
\end{example}

%
\section{Coordinate Projection Admissible Sets} \label{sect:CP_admissible_set}


Since the exact recovery via constrained matching pursuit
%
%
critically relies on a constraint set,  it is essential to find a class of constraint sets to which the constrained matching pursuit can be successfully applied for exact recovery.
 An ideal class of constraint sets is expected to satisfy some crucial conditions,  including but not limited to: (i) each set in this class contains sufficiently many sparse vectors; (ii) this class of sets is broad enough to include important sets arising from applications, such as $\mathbb R^N$ and $\mathbb R^N_+$; and (iii) (relatively) easily verifiable sufficient recovery conditions can be established using general properties of this class of sets. Motivated by these requirements, we identify an important class of constraint sets in this section and study their analytic properties to be used for the exact recovery.

We introduce some notation first. Let $\mathcal U$ be a nonempty set in $\mathbb R$, and $\Ical$ be an index subset of $\{1, \ldots, N\}$. We let $\mathcal U^\Ical := \{ x =(x_1, \ldots, x_N)^T \in \mathbb R^N \, | \, x_i \in \mathcal U, \forall \, i \in \Ical, \mbox{ and } \ x_{\Ical^c}=0 \}$, and $\mathcal U_\Ical:=\{ u \in \mathbb R^{|\Ical|} \, | \, u_i \in \mathcal U, \forall \, i \in \Ical \}$.
For each $x\in \mathbb R^N$ and an index set $\Ical$, define the coordinate projection operator $\pi_{\Ical}:\mathbb R^N \rightarrow \mathbb R^N$ as $\pi_\Ical(x):=z$, where $z_i=x_i, \forall \, i\in \Ical$ and $z_{\Ical^c}=0$. If $\Ical$ is the empty set, then $\pi_\Ical(x)=0, \forall \, x$. We often write $\pi_\Ical(x)=(x_\Ical, 0)$ with $x_{\Ical^c}=0$ for notational simplicity. We also write $\pi_{\{i\}}$ as $\pi_i$ for $i=1, \ldots, N$ when the context is clear.
 Given an index set $\Ical$, $\pi_\Ical$ is obviously a linear operator on $\mathbb R^N$ given by $\pi_\Ical(x) =  (x_\Ical, 0)$ for $x =(x_\Ical, x_{\Ical^c})\in \mathbb R^N$.
%
%
Let $\circ$ denotes the composition of two functions.
For any index sets $\Ical, \Jcal \subseteq\{1, \ldots, N\}$, the following results can be easily established:
\begin{equation} \label{eqn:pi_two_index sets}
   \pi_\Ical \circ \pi_\Jcal \, = \, \pi_{\Ical \, \cap \Jcal} \, = \, \pi_\Jcal \circ \pi_\Ical.
\end{equation}

\begin{definition} \rm
We call a nonempty set $\mathcal P \subseteq \mathbb R^N$ {\em coordinate projection admissible} or simply {\em CP admissible} if for any $x \in \mathcal P$ and any index set $\Jcal \subseteq \supp(x)$, $\pi_\Jcal(x)=(x_\Jcal, 0) \in \mathcal P$, where $\Jcal$ may be the empty set.
\end{definition}

Clearly, $\mathcal P$ must contain the zero vector (by setting $\Jcal = \emptyset$). An equivalent geometric condition for a CP admissible set is shown in the following lemma.

\begin{lemma} \label{lem:CP_adm_geometry}
 $\mathcal P$ is CP admissible if and only if $\pi_\Ical(\mathcal P) \subseteq \mathcal P$ for any index set $\Ical \subseteq \{1, \ldots, N\}$.
\end{lemma}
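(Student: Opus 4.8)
The plan is to prove the two directions of this biconditional separately, both of which reduce to elementary bookkeeping with the definition of CP admissibility and the coordinate projection operators.

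First I would establish the ``if'' direction. Assume $\pi_\Ical(\Pcal) \subseteq \Pcal$ for every index set $\Ical \subseteq \{1, \dots, N\}$. To show $\Pcal$ is CP admissible, take any $x \in \Pcal$ and any index set $\Jcal \subseteq \supp(x)$. Simply apply the hypothesis with $\Ical = \Jcal$: since $x \in \Pcal$, we get $\pi_\Jcal(x) \in \pi_\Jcal(\Pcal) \subseteq \Pcal$. Because $\Jcal \subseteq \supp(x)$, the vector $\pi_\Jcal(x)$ equals $(x_\Jcal, 0)$ in the notation of the definition, so the defining condition of CP admissibility holds. (The empty-set case is subsumed: $\pi_\emptyset(x) = 0 \in \Pcal$ by taking $\Ical = \emptyset$.)

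Next I would handle the ``only if'' direction, which is the slightly more delicate one. Assume $\Pcal$ is CP admissible. Fix an arbitrary index set $\Ical$ and an arbitrary $x \in \Pcal$; I must show $\pi_\Ical(x) \in \Pcal$. The subtlety is that the definition of CP admissibility only directly lets us project onto subsets of $\supp(x)$, whereas $\Ical$ need not be contained in $\supp(x)$. The key observation is that $\pi_\Ical(x) = \pi_{\Ical \cap \supp(x)}(x)$, since the coordinates of $x$ outside its support are already zero; more formally this is an instance of (\ref{eqn:pi_two_index sets}) combined with $\pi_{\supp(x)}(x) = x$. Now set $\Jcal := \Ical \cap \supp(x)$, which satisfies $\Jcal \subseteq \supp(x)$, so the CP admissibility of $\Pcal$ gives $\pi_\Jcal(x) = (x_\Jcal, 0) \in \Pcal$, and therefore $\pi_\Ical(x) \in \Pcal$. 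Since $x \in \Pcal$ was arbitrary, $\pi_\Ical(\Pcal) \subseteq \Pcal$, and since $\Ical$ was arbitrary, this holds for all index sets.

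The main (and only real) obstacle is the mismatch between ``index sets contained in $\supp(x)$'' in the definition and ``arbitrary index sets'' in the geometric characterization; the fix is precisely the identity $\pi_\Ical(x) = \pi_{\Ical \cap \supp(x)}(x)$, i.e. intersecting with the support before applying the definition. Everything else is a one-line substitution. I would write the proof in about half a dozen lines, making explicit only this intersection step and citing (\ref{eqn:pi_two_index sets}) for the composition identity.
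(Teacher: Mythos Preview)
Your proposal is correct and follows essentially the same approach as the paper: the ``if'' direction is immediate, and for the ``only if'' direction both you and the paper reduce $\pi_\Ical(x)$ to $\pi_{\Ical\cap\supp(x)}(x)$ and then invoke CP admissibility on the index set $\Ical\cap\supp(x)\subseteq\supp(x)$.
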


\begin{proof}
  ``If''. Since $\pi_\Ical(\mathcal P) \subseteq \mathcal P$ for any index set $\Ical$, we have $\pi_\Ical(x) \in \mathcal P$  for any $x \in \mathcal P$ and any $\Ical$. Hence, for any $x \in \mathcal P$ and any index set $\Jcal \subseteq \supp(x)$, we have $\pi_\Jcal(x) \in \mathcal P$. This shows that $\mathcal P$ is CP admissible.

  ``Only If''. Suppose $\mathcal P$ is CP admissible, and let $\Ical$ be an arbitrary index set. It suffices to show that $\pi_\Ical(x) \in \mathcal P$ for any given $x \in \mathcal P$. Toward this end, in view of $
  \Ical = (\Ical \cap \supp(x)) \cup (\Ical\setminus \supp(x))$ and $x_{\Ical\setminus \supp(x)}=0$, we have $\pi_\Ical(x) = (x_{\Ical\cap \supp(x)}, x_{\Ical\setminus \supp(x)}, x_{\Ical^c}) = (x_{\Ical \cap \supp(x)}, 0, 0) = \pi_{ \Ical \cap \supp(x)}(x) \in \mathcal P$, where the last membership is due to the facts that $\Ical \cap \supp(x) \subseteq \supp(x)$ and that $\mathcal P$ is CP admissible.
\end{proof}

\begin{example} \rm (\tblue{Examples of CP admissible sets})
Examples of bounded CP admissible sets include $\{ x \in \mathbb R^N \, | \, a^T x \le 1, \mbox{ and } x \ge 0 \}$ for a vector $a\in \mathbb R^N_{++}$, and any $\ell_p$-ball $\{ x \in \mathbb R^N \, | \, \| \| x \|_p \le \varepsilon \}$ with $p > 0$ and $\varepsilon > 0$, and $\mathcal P=[a_1, b_1]\times [a_2, b_2] \times \cdots \times [a_N, b_N]$ where $a_i \le 0 \le b_i$ for each $i$. Examples of unbounded CP admissible sets include $\mathbb R^N$, $\mathbb R^N_+$, and $\Sigma_K=\{ x \in \mathbb R^N \, | \, \| x \|_0 \le K \}$ for some $K \in \mathbb N$.  Note that $\Sigma_K$ and the $\ell_p$-ball with $0< p <1$ are non-convex.
\tblue{Other examples of non-convex CP admissible sets include $\mathcal P=\mathbb R^N_+\cup \mathbb R^N_-$ and the constraint set $\Pcal$ given in Example~\ref{example:nonconvex_constraint}.}
%
%
A CP admissible set may be neither open nor closed, e.g., $\mathcal P=[0, 1)\times (-1, 2]$ in $\mathbb R^2$.
\end{example}


The following proposition provides a list of important properties of CP admissible sets.

\begin{proposition} \label{prop:properties_CP_admissible}
 The following hold:
 \begin{itemize}
  \item[(i)] The set $\mathcal P$ is CP admissible if and only if $\lambda \mathcal P$ is CP admissible for any real number $\lambda \ne 0$, and the intersection and union of CP admissible sets are CP admissible;
  \item [(ii)] The algebraic sum of two CP admissible sets is CP admissible;
  \item [(iii)] If $\mathcal P$ is CP admissible, then for any index set $\Ical$, $\pi_\Ical(\mathcal P)$ is also CP admissible;
  \item [(iv)] If $\mathcal P$ is a convex and CP admissible set, then $\dim(\mathcal P)=\max\{ |\supp(x)| \, : \, x \in \mathcal P \}$.
\end{itemize}
\end{proposition}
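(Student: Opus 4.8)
The plan is to prove the four statements of Proposition~\ref{prop:properties_CP_admissible} in order, relying throughout on the geometric characterization of CP admissibility from Lemma~\ref{lem:CP_adm_geometry}, namely that $\mathcal P$ is CP admissible iff $\pi_\Ical(\mathcal P)\subseteq\mathcal P$ for every index set $\Ical$, together with the composition identity $\pi_\Ical\circ\pi_\Jcal=\pi_{\Ical\cap\Jcal}$ from (\ref{eqn:pi_two_index sets}) and the linearity of each $\pi_\Ical$.

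For (i), I would first observe that $\pi_\Ical(\lambda\mathcal P)=\lambda\pi_\Ical(\mathcal P)$ by linearity of $\pi_\Ical$, so $\pi_\Ical(\lambda\mathcal P)\subseteq\lambda\mathcal P$ iff $\pi_\Ical(\mathcal P)\subseteq\mathcal P$; applying this with $\lambda$ and then $1/\lambda$ gives the equivalence. For intersections and unions, the key point is that $\pi_\Ical$ commutes with unions always ($\pi_\Ical(\bigcup_\alpha \mathcal P_\alpha)=\bigcup_\alpha\pi_\Ical(\mathcal P_\alpha)$) and that $\pi_\Ical(\bigcap_\alpha\mathcal P_\alpha)\subseteq\bigcap_\alpha\pi_\Ical(\mathcal P_\alpha)$; combining these set inclusions with $\pi_\Ical(\mathcal P_\alpha)\subseteq\mathcal P_\alpha$ yields $\pi_\Ical$ of the union (resp. intersection) is contained in the union (resp. intersection), hence both are CP admissible by Lemma~\ref{lem:CP_adm_geometry}. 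For (ii), let $\mathcal P=\mathcal P_1+\mathcal P_2$ with both $\mathcal P_i$ CP admissible; since $\pi_\Ical$ is linear, $\pi_\Ical(\mathcal P_1+\mathcal P_2)=\pi_\Ical(\mathcal P_1)+\pi_\Ical(\mathcal P_2)\subseteq\mathcal P_1+\mathcal P_2$, and Lemma~\ref{lem:CP_adm_geometry} finishes it. For (iii), fix an index set $\Ical$ and let $\mathcal Q:=\pi_\Ical(\mathcal P)$; for any index set $\Jcal$, $\pi_\Jcal(\mathcal Q)=\pi_\Jcal(\pi_\Ical(\mathcal P))=\pi_{\Jcal\cap\Ical}(\mathcal P)=\pi_\Ical(\pi_{\Jcal\cap\Ical}(\mathcal P))\subseteq\pi_\Ical(\mathcal P)=\mathcal Q$, where the middle steps use (\ref{eqn:pi_two_index sets}) and $\pi_{\Jcal\cap\Ical}(\mathcal P)\subseteq\mathcal P$ (CP admissibility of $\mathcal P$) together with monotonicity of $\pi_\Ical$ on sets; Lemma~\ref{lem:CP_adm_geometry} again applies.

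The substantive step is (iv). Write $r:=\max\{|\supp(x)|:x\in\mathcal P\}$ (this max exists since supports are bounded by $N$), attained at some $x^\star\in\mathcal P$ with $\Scal:=\supp(x^\star)$, $|\Scal|=r$. The inclusion $\dim(\mathcal P)\ge r$ is the easy direction: I would show $\mathcal P$ contains $r$ affinely independent points, or more cleanly that the affine hull of $\mathcal P$ contains an $r$-dimensional subspace. The natural approach: since $\mathcal P$ is convex and $0\in\mathcal P$, and since $\pi_\Jcal(x^\star)\in\mathcal P$ for every $\Jcal\subseteq\Scal$ by CP admissibility, the $2^r$ points $\{\pi_\Jcal(x^\star):\Jcal\subseteq\Scal\}$ all lie in $\mathcal P$; their convex hull is a box-like polytope of dimension $r$ spanned by the directions $x^\star_i\ebld_i$, $i\in\Scal$ (these directions are linearly independent because $x^\star_i\ne0$ on $\Scal$), so $\dim(\mathcal P)\ge r$. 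For the reverse inequality $\dim(\mathcal P)\le r$, suppose for contradiction $\dim(\mathcal P)\ge r+1$; then $\mathcal P$, being convex, contains $r+2$ affinely independent points $p^0,\dots,p^{r+1}$, so the differences $p^i-p^0$ span an $(r+1)$-dimensional subspace $V$. Since $0\in\mathcal P$ I may translate and instead assume $\mathcal P$ contains $r+1$ linearly independent vectors $v^1,\dots,v^{r+1}$; each $v^\ell$ has $|\supp(v^\ell)|\le r$. The goal is to produce a single vector in $\mathcal P$ with support larger than $r$, contradicting maximality. The trick: take two of these vectors whose supports are not equal (some pair must have distinct supports, else all lie in a common coordinate subspace $\mathbb R^{\Scal'}$ with $|\Scal'|\le r$, contradicting linear independence of $r+1$ of them only if... — actually I need $r+1$ linearly independent vectors cannot all live in an $r$-dimensional coordinate subspace, which is immediate). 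So at least two vectors $v,w\in\mathcal P$ have $\supp(v)\not\subseteq\supp(w)$ and $\supp(w)\not\subseteq\supp(v)$, or at least the union $\supp(v)\cup\supp(w)$ strictly contains each; by convexity $\tfrac12(v+w)\in\mathcal P$, and one checks $\supp(\tfrac12(v+w))=\supp(v)\cup\supp(w)$ provided no cancellation — but cancellation can occur. To kill cancellation I would instead use a generic convex combination $\lambda v+(1-\lambda)w$ with $\lambda\in(0,1)$ chosen outside the finite bad set where some coordinate vanishes; for all but finitely many $\lambda$, $\supp(\lambda v+(1-\lambda)w)=\supp(v)\cup\supp(w)$, which has size $>r$ whenever the two supports are not nested. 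Iterating, or applying this to a suitable chain of the $v^\ell$, one manufactures an element of $\mathcal P$ of support exceeding $r$ — the contradiction.

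The main obstacle is exactly this cancellation issue in (iv): proving that convexity plus high dimension forces an element of large support requires care, because naive sums of sparse vectors in $\mathcal P$ can have artificially small support due to coordinate cancellation. The clean way around it is the genericity argument above (finitely many values of $\lambda$ are bad, so pick a good one), combined with the observation that if all elements of $\mathcal P$ had support contained in a fixed set of size $\le r$ then $\mathcal P$ would lie in an $r$-dimensional coordinate subspace, forcing $\dim(\mathcal P)\le r$; hence if $\dim(\mathcal P)>r$ there exist elements with non-nested supports, and the generic convex combination of two such produces the required contradiction. An alternative, possibly cleaner, route for the lower bound $\dim(\mathcal P)\ge r$ is to note directly that $\{t\,x^\star_i\ebld_i: t\in[0,1],\ i\in\Scal\}$-generated polytope (the convex hull of the $2^r$ projections) sits inside $\mathcal P$ and has dimension $r$; I would present (iv) in two halves, each half two or three lines once the genericity lemma is isolated.
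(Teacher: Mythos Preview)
Your arguments for (i)--(iii) and the lower bound in (iv) are correct and essentially match the paper's. The difference lies in the upper bound $\dim(\mathcal P)\le r$ in (iv), where you take a more circuitous route. You assume $\dim(\mathcal P)\ge r+1$, extract $r+1$ linearly independent vectors in $\mathcal P$ (your ``translate'' justification is sloppy---translation destroys CP admissibility---but the conclusion is correct since $0\in\mathcal P$ forces $\mbox{aff}(\mathcal P)=\mbox{span}(\mathcal P)$ and a relative-interior ball supplies the needed vectors), then iterate generic pairwise convex combinations to manufacture an element whose support is the union of all $r+1$ supports, hence of size $\ge r+1$. This works, but the detour through a linearly independent system and the genericity/iteration bookkeeping is unnecessary.

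The paper's argument is a one-line version of what you arrive at in your final paragraph: take the maximizer $\wh x$ with $\Scal=\supp(\wh x)$, and for \emph{any} $x'\in\mathcal P$ with $\supp(x')\not\subseteq\Scal$ (say $i\in\supp(x')\setminus\Scal$), consider $z(\lambda)=\lambda x'+(1-\lambda)\wh x$. For all sufficiently small $\lambda>0$ one has $\supp(z(\lambda))\supseteq\Scal\cup\{i\}$---no genericity needed, since the $(1-\lambda)\wh x$ term dominates on $\Scal$ and the $\lambda x'$ term is nonzero at $i$---contradicting maximality of $|\Scal|$. Hence every $x\in\mathcal P$ satisfies $\supp(x)\subseteq\Scal$, so $\mathcal P\subseteq\mathbb R^\Scal$ and $\dim(\mathcal P)\le|\Scal|=r$ immediately. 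What this buys: you avoid extracting a basis, avoid the finitely-many-bad-$\lambda$ argument, and get the stronger structural fact that all supports are nested under $\supp(\wh x)$, which is itself useful downstream.
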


\begin{proof}
 (i) This is a direct consequence of the definition of a CP admissible set.

 (ii) Let $\mathcal P_1$ and $\mathcal P_2$ be two CP admissible sets, and $z$ be an arbitrary vector in $\Pcal_1+\Pcal_2$. Hence, $z=x+y$, where $x\in \Pcal_1$ and $y\in \Pcal_2$. For any index set $\Ical$, it follows from Lemma~\ref{lem:CP_adm_geometry} that $\pi_\Ical(x) \in \Pcal_1$ and $\pi_\Ical(y) \in \Pcal_2$. Therefore, $\pi_\Ical(z)= \pi_\Ical(x) + \pi_\Ical(y) \in \mathcal P_1+\mathcal P_2$. By Lemma~\ref{lem:CP_adm_geometry} again, we deduce that $\mathcal P_1+\Pcal_2$ is CP admissible.
%
%

 (iii) Let $\mathcal P$ be CP admissible, and $\Ical$ be an arbitrary but fixed index set. Then for any index set $\Jcal$, we deduce via equation (\ref{eqn:pi_two_index sets}) that $\pi_\Jcal (\pi_\Ical(\Pcal)) = \pi_{\Ical} (\pi_\Jcal(\Pcal))$. Since $\Pcal$ is CP admissible, $\pi_\Jcal(\Pcal) \subseteq \Pcal$. Hence, by Lemma~\ref{lem:CP_adm_geometry}, we have $\pi_{\Ical} (\pi_\Jcal(\Pcal))
  \subseteq \pi_\Ical(\Pcal)$. This shows that $\pi_\Ical(\Pcal)$ is CP admissible.

  (iv) Suppose $\mathcal P$ is a convex and CP admissible set. Let $\wh x\in \mathcal P$ be such that $|\supp(\wh x)| \ge |\supp(x)$ for all $x \in \mathcal P$. We claim that for any $x \in \mathcal P$, $\supp(x) \subseteq \supp(\wh x)$. Suppose not. Then there exist a point $x' \in \mathcal P$ and an index $i\in \supp(x')$ such that $i\notin \supp(\wh x)$. Since $\mathcal P$ is convex, $z(\lambda):=\lambda x' + (1-\lambda) \wh x \in \mathcal P$ for all $\lambda \in [ 0, 1]$. However, for all $\lambda>0$ sufficiently small, $(\supp(\wh x) \cup \{ i \}) \subseteq \supp(z(\lambda))$. This shows that $|\supp(z(\lambda))|> |\supp(\wh x)|$, leading to a contradiction. Therefore,  $\supp(x) \subseteq \supp(\wh x)$ for all $x \in \mathcal P$. 
  Furthermore, it is known that $\dim(\mathcal P) = \dim(\mbox{aff}(\mathcal P))$, where $\mbox{aff}(\cdot)$ denotes the affine hull of a set. Since $\Pcal$ contains the zero vector, $\mbox{aff}(\mathcal P)=\mbox{span}(\Pcal)$.
  In view of the claim that  $\supp(x) \subseteq \supp(\wh x)$ for any $x \in \mathcal P$,
  we deduce that $\dim(\mathcal P) =\dim(\mbox{span}(\Pcal))\le |\supp(\wh x)|$.
   Letting $p:=|\supp(\wh x)|$, we assume without loss of generality that $\supp(\wh x)=\{1, \ldots, p\}$. For each $s \in \{1, \ldots, p\}$, let $\wh\Jcal_s:=\{1, 2, \ldots, s\}$ and $z^s:=(\wh x_{\wh\Jcal_s}, 0)$. Therefore, $z^p = \wh x$. Since $\Pcal$ is CP admissible, each $z^s \in \mathcal P$. Besides, $\{z^1, z^2, \ldots, z^p \}$ is linearly independent. Since $\mathcal P$ is convex and $\{ 0, z^1, z^2, \ldots, z^p\}$ is affinely independent, the convex hull of $\{ 0, z^1, z^2, \ldots, z^p\}$ is a simplex  of dimension $p$ and is contained in $\Pcal$. Therefore, it follows from \cite[Theorem 2.4]{Rockafellar_book70} that $\dim(\mathcal P) \ge p=|\supp(\wh x)|$. Consequently, $\dim(\mathcal P)= |\supp(\wh x)|$.
\end{proof}

Using  (iv) of Proposition~\ref{prop:properties_CP_admissible}, we see that the hyperplane $\mathcal P =\{ x \in \mathbb R^N \, | \, d^T x = 0 \}$ with each $d_i \ne 0$ given in Example~\ref{example:counter01}  is {\em not} CP admissible, since $\dim(\mathcal P)=N-1$ but $\max\{ |\supp(x)| :  x \in \mathcal P \}=N$.

%
%
%
%
%

\begin{lemma} \label{lem:CP_adm_closedness}
Let $\mathcal P$ be a closed and CP admissible set. Then for any index set $\Jcal$, $\pi_\Jcal(\mathcal P)$ is closed. 
\end{lemma}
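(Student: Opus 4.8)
The plan is to fix an index set $\Jcal$ and show that $\pi_\Jcal(\mathcal P)$ contains all its limit points. Since $\pi_\Jcal$ is a linear (hence continuous) map, the naive approach would be to write $\pi_\Jcal(\mathcal P)$ as a continuous image of the closed set $\mathcal P$; but continuous images of closed sets need not be closed, so this does not work directly, and this is exactly where CP admissibility must be used. The key structural fact I would exploit is that $\pi_\Jcal$ restricted to $\pi_\Jcal(\mathcal P)$ is the identity, together with the identity $\pi_\Jcal \circ \pi_\Jcal = \pi_\Jcal$ from (\ref{eqn:pi_two_index sets}), and most importantly the inclusion $\pi_\Jcal(\mathcal P) \subseteq \mathcal P$ furnished by Lemma~\ref{lem:CP_adm_geometry}.

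Concretely, I would argue as follows. Let $(z^k)$ be a sequence in $\pi_\Jcal(\mathcal P)$ converging to some $z \in \mathbb R^N$. Each $z^k = \pi_\Jcal(x^k)$ for some $x^k \in \mathcal P$; but since $\mathcal P$ is CP admissible, Lemma~\ref{lem:CP_adm_geometry} gives $z^k = \pi_\Jcal(x^k) \in \mathcal P$ as well. Thus $(z^k)$ is in fact a sequence in the closed set $\mathcal P$, so its limit $z$ lies in $\mathcal P$. Moreover each $z^k$ satisfies $z^k_{\Jcal^c} = 0$, a closed condition, so $z_{\Jcal^c} = 0$, i.e., $z = \pi_\Jcal(z)$. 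Combining $z \in \mathcal P$ with $z = \pi_\Jcal(z)$ shows $z = \pi_\Jcal(z) \in \pi_\Jcal(\mathcal P)$. Hence $\pi_\Jcal(\mathcal P)$ is closed.

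The essential point — and the only place the hypotheses genuinely enter — is the observation that CP admissibility collapses $\pi_\Jcal(\mathcal P)$ into a subset of $\mathcal P$ itself (via Lemma~\ref{lem:CP_adm_geometry}), so that "projection" does not actually take us outside the closed set we started with; the projection coordinates being zero is a closed constraint that survives the limit. There is no real obstacle here beyond being careful that the limit of the $z^k$ simultaneously lies in $\mathcal P$ and is fixed by $\pi_\Jcal$; once both are in hand the conclusion is immediate. (An equivalent and slightly slicker phrasing: $\pi_\Jcal(\mathcal P) = \mathcal P \cap \{z : z_{\Jcal^c} = 0\}$ when $\mathcal P$ is CP admissible — the inclusion $\subseteq$ is Lemma~\ref{lem:CP_adm_geometry} plus the definition of $\pi_\Jcal$, and $\supseteq$ holds because any $z \in \mathcal P$ with $z_{\Jcal^c}=0$ equals $\pi_\Jcal(z)$ — and this exhibits $\pi_\Jcal(\mathcal P)$ as an intersection of two closed sets.)
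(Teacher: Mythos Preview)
Your proof is correct and follows essentially the same approach as the paper: take a convergent sequence in $\pi_\Jcal(\mathcal P)$, use CP admissibility (via Lemma~\ref{lem:CP_adm_geometry}) to place the sequence inside the closed set $\mathcal P$, pass to the limit there, and then note that the closed constraint $z_{\Jcal^c}=0$ survives so that the limit equals its own $\pi_\Jcal$-image. Your parenthetical reformulation $\pi_\Jcal(\mathcal P) = \mathcal P \cap \{z : z_{\Jcal^c}=0\}$ is a clean equivalent way to package the same argument.
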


%
%

\begin{proof}
 Fix an index set $\Jcal$.  Let $(z^k)$ be a convergent sequence in $\pi_\Jcal(\mathcal P)$ such that $(z^k) \rightarrow z^*$. Hence, for each $k$, $z^k=(z^k_\Jcal, z^k_{\Jcal^c}) \in \pi_\Jcal(\mathcal P)$ with $z^k_{\Jcal^c}=0$. Since $(z^k)$ converges to $z^*$, we have  $z^*=(z^*_\Jcal, 0)$ and $(z^k_\Jcal) \rightarrow z^*_\Jcal$.
%
%
 Since $\mathcal P$ is CP admissible, $\pi_\Jcal(\mathcal P) \subseteq \mathcal P$ such that $z^k \in \mathcal P$ for each $k$. Further, since $\mathcal P$ is closed, we have $z^* \in \mathcal P$. Clearly, $\pi_\Jcal(z^*) = z^*\in \mathcal P$. Hence, $z^* \in \pi_\Jcal(\mathcal P)$. This shows that $\pi_\Jcal(\mathcal P)$ is closed.
\end{proof}

Note that the above result may fail when $\mathcal P$ is not CP admissible, even if it is closed and convex. For example, consider $\mathcal P=\{x=(x_1, x_2) \, | \, x_2 \ge \frac{1}{x_1}, \ x_1 > 0 \} \subset \mathbb R^2$. Clearly, $\mathcal P$ is closed and convex but not CP admissible. Letting $\Jcal=\{1\}$, we see that  $\pi_\Jcal(\mathcal P) = \{ (x_1, 0) \, | \, x_1 \in (0, \infty) \}$ and thus is not closed.


The following result gives a complete characterization of a closed, convex and CP admissible cone. Particularly, it shows that a closed, convex and CP admissible  cone is a Cartesian product of Euclidean spaces and nonnegative or nonpositive orthants.

\begin{proposition} \label{prop:CP_admissible_cone}
 Let $\mathcal C$ be a closed convex cone in $\mathbb R^N$. Then $\mathcal C$ is CP admissible if and only if there exist four disjoint index subsets $\mathcal I_1$, $\mathcal I_+$, $\mathcal I_-$, and $\mathcal I_0$ (some of which can be empty) whose union is $\{1, \ldots, N\}$ such that $\mathcal C = \mathbb R^{\Ical_1}+ (\mathbb R_+)^{\Ical_+} + (\mathbb R_-)^{\Ical_-} + \{0\}^{\Ical_0}$ or equivalently $\mathcal C = \mathbb R_{\Ical_1} \times (\mathbb R_+)_{\Ical_+} \times (\mathbb R_-)_{\Ical_-} \times \{ 0 \}_{\Ical_0}$.
\end{proposition}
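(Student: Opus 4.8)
The plan is to prove the nontrivial ``only if'' direction; the ``if'' direction follows by inspection, since each of the claimed product sets is easily checked to be a closed convex cone, and the union of orthant-type constraints on disjoint coordinate blocks is coordinate-projection closed. For the ``only if'' direction, suppose $\mathcal C$ is a closed convex CP admissible cone. First I would partition $\{1,\ldots,N\}$ according to the one-dimensional ``shadows'' of $\mathcal C$ along each coordinate axis: for each $i$, apply Lemma~\ref{lem:CP_adm_geometry} (or the definition) to conclude $\pi_i(\mathcal C)\subseteq\mathcal C$, and by Lemma~\ref{lem:CP_adm_closedness} $\pi_i(\mathcal C)$ is closed; it is also a convex cone in the $i$th axis, hence equal to one of $\mathbb R\,\mathbf e_i$, $\mathbb R_+\mathbf e_i$, $\mathbb R_-\mathbf e_i$, or $\{0\}$. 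This defines the four index sets $\mathcal I_1,\mathcal I_+,\mathcal I_-,\mathcal I_0$, which are clearly disjoint with union $\{1,\ldots,N\}$. In particular $\mathbf e_i\in\mathcal C$ for $i\in\mathcal I_1\cup\mathcal I_+$ and $-\mathbf e_i\in\mathcal C$ for $i\in\mathcal I_1\cup\mathcal I_-$, and no point of $\mathcal C$ has a nonzero $i$th coordinate for $i\in\mathcal I_0$.

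Next I would show the inclusion $\mathcal C\subseteq \mathbb R^{\mathcal I_1}+(\mathbb R_+)^{\mathcal I_+}+(\mathbb R_-)^{\mathcal I_-}+\{0\}^{\mathcal I_0}$. Take $x\in\mathcal C$. The $\mathcal I_0$-block of $x$ is zero by the previous paragraph. For $i\in\mathcal I_+$ with $x_i\neq 0$, CP admissibility gives $x_i\mathbf e_i=\pi_i(x)\in\mathcal C$, so $x_i\mathbf e_i$ lies in the $i$th axis shadow $\mathbb R_+\mathbf e_i$, forcing $x_i>0$; similarly $x_i<0$ is impossible for $i\in\mathcal I_-$ when $x_i\neq0$. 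Hence the $\mathcal I_+$-block of $x$ is nonnegative and the $\mathcal I_-$-block is nonpositive, which is exactly membership in the claimed set. For the reverse inclusion, I would take any $x$ in the product set and write it as a nonnegative combination $x=\sum_{i}c_i w_i$ where each $w_i$ is $\mathbf e_i$ or $-\mathbf e_i$ (with the correct sign dictated by which block $i$ lies in, and $c_i\ge 0$); since each such $w_i\in\mathcal C$ by the first paragraph and $\mathcal C$ is a convex cone, $x\in\mathcal C$. Finally, the equivalence of the two displayed descriptions (Minkowski sum of coordinate subspaces/orthants versus Cartesian product over disjoint blocks) is immediate because the blocks are supported on disjoint index sets, so the sum is direct.

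The main obstacle is the step identifying each axis shadow $\pi_i(\mathcal C)$ with one of the four canonical one-dimensional cones: one must be careful that $\pi_i(\mathcal C)$ is genuinely closed (this is where Lemma~\ref{lem:CP_adm_closedness} and the closedness hypothesis on $\mathcal C$ are essential — without CP admissibility a projected cone can fail to be closed, as the paper's counterexample after that lemma shows) and that the containment $\pi_i(\mathcal C)\subseteq\mathcal C$ lets us ``lift'' axis information back into $\mathcal C$. Once the shadows are pinned down, the remaining set-theoretic bookkeeping is routine; I would present it compactly rather than coordinate by coordinate.
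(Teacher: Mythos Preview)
Your proposal is correct and follows essentially the same approach as the paper: both arguments identify the one-dimensional coordinate shadows $[\pi_i(\mathcal C)]_i$ as closed convex cones in $\mathbb R$ (hence one of $\mathbb R,\mathbb R_+,\mathbb R_-,\{0\}$), use this to define the four index sets, and then verify the two inclusions using CP admissibility (to get $\pi_i(\mathcal C)\subseteq\mathcal C$, hence the axis generators lie in $\mathcal C$) and the convex-cone property. Your version spells out a few details more explicitly (e.g., writing $x=\sum_i c_i w_i$ for the reverse inclusion), but the structure and key ingredients are the same as the paper's proof.
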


\begin{proof}
 ``If''. Suppose $\mathcal C = \mathbb R^{\Ical_1}+ \mathbb R^{\Ical_+}_+ + \mathbb R^{\Ical_-}_- + \{0\}^{\Ical_0}$, where  the four index sets $\mathcal I_1$, $\mathcal I_+$, $\mathcal I_-$, and $\mathcal I_0$ form a disjoint union of $\{1, \ldots, N\}$. It is easy to see that $\mathcal C$ is closed and convex and that  $\mathbb R^{\Ical_1}$, $\mathbb R^{\Ical_+}_+$, $\mathbb R^{\Ical_-}_-$ and $\{0\}^{\Ical_0}$ are all CP admissible. By (ii) of Proposition~\ref{prop:properties_CP_admissible}, $\mathcal C$ is also CP admissible.

%
%

%
%
%
%

``Only If''. Let $\mathcal C$ be a closed convex cone which is CP admissible. For an arbitrary index $i\in \{1, \ldots, N\}$, let $\pi_i(\mathcal C):=\{ \pi_i(x) \, | \, x \in \mathcal C\} \subseteq \mathbb R^N$ and $[\pi_i(\mathcal C)]_i:=\{ \big(\pi_i(x))_i \, | \, x \in \mathcal C\} \subseteq \mathbb R$. Since $\mathcal C$ is a closed convex cone, it is easy to show via a similar argument for Lemma~\ref{lem:CP_adm_closedness} that $[\pi_i(\mathcal C)]_i$ is a closed convex cone in $\mathbb R$. This implies that $[\pi_i(\mathcal C)]_i$ equals either one of the following (polyhedral) cones in $\mathbb R$: $\mathbb R$, $\mathbb R_+$, $\mathbb R_-$, or $\{ 0 \}$. Define the index sets $\mathcal I_1:=\{ i \, | \, [\pi_i(\mathcal C)]_i = \mathbb R\}$, $\mathcal I_+:=\{ i \, | \, [\pi_i(\mathcal C)]_i = \mathbb R_+\}$, $\mathcal I_-:=\{ i \, | \, [\pi_i(\mathcal C)]_i = \mathbb R_-\}$, and $\mathcal I_0:=\{ i \, | \, [\pi_i(\mathcal C)]_i = \{ 0 \} \}$. Clearly, these index sets form a disjoint union of $\{1, \ldots, N\}$. Furthermore, since $\mathcal C$ is CP admissible, we have $\mathbb R^{\Ical_1} \subseteq \mathcal C$, $\mathbb R^{\Ical_+}_+ \subseteq \mathcal C$, $\mathbb R^{\Ical_-}_- \subseteq \mathcal C$, and $\{ 0 \}^{\Ical_0} \subseteq \mathcal C$. Since $\mathcal C$ is a convex cone, $ \mathbb R^{\Ical_1}+ (\mathbb R_+)^{\Ical_+} + (\mathbb R_-)^{\Ical_-} + \{0\}^{\Ical_0} \subseteq \mathcal C$. Conversely, for any $x \in \mathcal C$, it follows from the definition of $[\pi_i(\mathcal C)]_i$ and the disjoint property of the index sets $\Ical_1, \Ical_+, \Ical_-$ and $\Ical_0$ that $x \in \mathbb R^{\Ical_1}+ (\mathbb R_+)^{\Ical_+} + (\mathbb R_-)^{\Ical_-} + \{0\}^{\Ical_0}$. This shows that $\mathcal C = \mathbb R^{\Ical_1}+ (\mathbb R_+)^{\Ical_+} + (\mathbb R_-)^{\Ical_-} + \{0\}^{\Ical_0}$.
\end{proof}

\mycut{
\begin{remark} \rm \label{remark:cone_case}
Using the notation $\mathcal U_\Ical$ defined at the beginning of this section, the above proposition shows that a closed, convex and CP admissible  cone $\mathcal C$ can be written a Cartesian product of Euclidean spaces and nonnegative or nonpositive orthants, i.e., $\mathcal C = \mathbb R_{\Ical_1} \times (\mathbb R_+)_{\Ical_+} \times (\mathbb R_-)_{\Ical_-} \times \{ 0 \}_{\Ical_0}$. Hence, for any $x \in \mathcal C$, $A x = A_{\bullet \Ical_1} x_{\Ical_1} + A_{\bullet \Ical_+} x_{\Ical_+} +  A_{\bullet \Ical_-} x_{\Ical_-}$. Consequently, without loss of generality, we may assume that $\mathcal C= \mathbb R_{\Ical_1} \times (\mathbb R_+)_{\mathcal I_+} \times (\mathbb R_-)_{\Ical_-}$ through the rest of the development.
%
%
\end{remark}
}

The next proposition presents a decomposition of a closed, convex and CP admissible set.

%
%

\begin{proposition} \label{prop:CP_adm_decomposition}
Let $\Pcal \subseteq \mathbb R^N$ be closed, convex and CP admissible. Then  $\Pcal = \mathcal W + \mathcal K$, where $\mathcal W\subseteq \Pcal$ is a compact, convex and CP admissible set, and $\mathcal K\subseteq \Pcal$ is a closed, convex and CP admissible cone.
\end{proposition}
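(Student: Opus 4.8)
My plan is to take $\mathcal K$ to be the recession cone $\mbox{rec}(\Pcal)$ and to build $\mathcal W$ as the intersection of $\Pcal$ with a ``sign-reversed'' copy of $\mathcal K$. First I would verify that $\mathcal K:=\mbox{rec}(\Pcal)$ is a closed, convex, CP admissible cone contained in $\Pcal$. That it is a closed convex cone is automatic since $\Pcal$ is closed and convex, and $\mathcal K\subseteq\Pcal$ follows from $0\in\Pcal$ (so $d=0+1\cdot d\in\Pcal$ for $d\in\mathcal K$), while $\Pcal+\mathcal K=\Pcal$. For CP admissibility, fix an index set $\Ical$ and $d\in\mathcal K$: since $0\in\Pcal$, the entire ray $\{\lambda d:\lambda\ge 0\}$ lies in $\Pcal$, so by Lemma~\ref{lem:CP_adm_geometry} and the linearity of $\pi_\Ical$ the ray $\{\lambda\pi_\Ical(d):\lambda\ge 0\}$ also lies in $\Pcal$; the standard characterization of the recession cone of a closed convex set (\cite[Theorem 8.3]{Rockafellar_book70}) then yields $\pi_\Ical(d)\in\mbox{rec}(\Pcal)=\mathcal K$, and Lemma~\ref{lem:CP_adm_geometry} gives that $\mathcal K$ is CP admissible.

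Now Proposition~\ref{prop:CP_admissible_cone} applies to $\mathcal K$ and supplies a partition $\{\Ical_1,\Ical_+,\Ical_-,\Ical_0\}$ of $\{1,\dots,N\}$ with $\mathcal K=\mathbb R^{\Ical_1}+(\mathbb R_+)^{\Ical_+}+(\mathbb R_-)^{\Ical_-}+\{0\}^{\Ical_0}$. I would set $\mathcal D:=\{0\}^{\Ical_1}+(\mathbb R_-)^{\Ical_+}+(\mathbb R_+)^{\Ical_-}+\mathbb R^{\Ical_0}$ (the ``sign flip'' of $\mathcal K$) and $\mathcal W:=\Pcal\cap\mathcal D$. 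Each summand of $\mathcal D$ is a CP admissible cone, so by Proposition~\ref{prop:properties_CP_admissible}(ii) $\mathcal D$ is a closed, convex, CP admissible cone, and hence $\mathcal W$ is closed, convex, and CP admissible with $\mathcal W\subseteq\Pcal$ (Proposition~\ref{prop:properties_CP_admissible}(i)). For compactness it suffices to show $\mbox{rec}(\mathcal W)=\{0\}$, since a closed convex set with trivial recession cone is bounded (\cite[Theorem 8.4]{Rockafellar_book70}): if $v\in\mbox{rec}(\mathcal W)$ then the ray $\{\lambda v:\lambda\ge 0\}$ lies in $\mathcal W\subseteq\Pcal\cap\mathcal D$, which forces $v\in\mbox{rec}(\Pcal)=\mathcal K$ and $v\in\mathcal D$ (as $\mathcal D$ is a cone), so $v$ lies in $\mathcal K\cap\mathcal D$; a coordinatewise inspection ($\mathcal K$ is nonnegative and $\mathcal D$ nonpositive on $\Ical_+$, the reverse on $\Ical_-$, $\mathcal D$ is $\{0\}$ on $\Ical_1$, and $\mathcal K$ is $\{0\}$ on $\Ical_0$) gives $\mathcal K\cap\mathcal D=\{0\}$.

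The remaining task is $\mathcal W+\mathcal K=\Pcal$. The inclusion $\mathcal W+\mathcal K\subseteq\Pcal+\mbox{rec}(\Pcal)=\Pcal$ is immediate. For the reverse inclusion, given $x\in\Pcal$ I define $d$ by $d_i=x_i$ on $\Ical_1$, $d_i=\max(x_i,0)$ on $\Ical_+$, $d_i=\min(x_i,0)$ on $\Ical_-$, and $d_i=0$ on $\Ical_0$; the explicit form of $\mathcal K$ makes $d\in\mathcal K$ clear. Setting $w:=x-d$, a one-line computation gives $w_i=0$ on $\Ical_1$, $w_i=\min(x_i,0)\le 0$ on $\Ical_+$, $w_i=\max(x_i,0)\ge 0$ on $\Ical_-$, and $w_i=x_i$ on $\Ical_0$, so $w\in\mathcal D$. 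The key point is that this same $w$ is a coordinate projection of $x$: $w=\pi_\Jcal(x)$ with $\Jcal:=\Ical_0\cup\{i\in\Ical_+:x_i\le 0\}\cup\{i\in\Ical_-:x_i\ge 0\}$, so $w\in\Pcal$ by Lemma~\ref{lem:CP_adm_geometry}. Hence $w\in\mathcal W$ and $x=w+d\in\mathcal W+\mathcal K$.

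I expect the reverse inclusion $\Pcal\subseteq\mathcal W+\mathcal K$ to be the main obstacle, since the naive idea of ``peeling off recession directions'' fails: $-d$ is in general not a recession direction of $\Pcal$, so $w=x-d$ is not visibly a point of $\Pcal$. The resolution is that, thanks to the concrete description of $\mathcal K$ furnished by Proposition~\ref{prop:CP_admissible_cone}, the residual $x-d$ is exactly a coordinate projection of $x$, which CP admissibility keeps inside $\Pcal$; accordingly, the real content is choosing the right companion cone $\mathcal D$ so that $\mathcal W=\Pcal\cap\mathcal D$ is simultaneously CP admissible, bounded, and big enough.
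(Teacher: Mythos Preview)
Your proof is correct and follows essentially the same construction as the paper: the paper's $\mathcal K$ is precisely the recession cone (as it remarks right after the proposition), its set $\mathcal C$ in (\ref{eqn:def_W_set}) is your $\mathcal D$, and the decomposition $x=w+d$ with $w=\pi_\Jcal(x)$ is identical. The only difference is packaging---you start from $\mathcal K=\mathrm{rec}(\Pcal)$ and invoke Rockafellar's Theorems~8.3 and~8.4 for CP admissibility of $\mathcal K$, boundedness of $\mathcal W$, and $\mathcal W+\mathcal K\subseteq\Pcal$, whereas the paper builds $\mathcal K$ directly from the coordinate projections $[\pi_i(\Pcal)]_i$, bounds $\mathcal W$ by explicit coordinate estimates, and proves $\mathcal W+\mathcal K\subseteq\Pcal$ via a short limit argument.
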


\begin{proof}
  For a given closed, convex and CP admissible set $\mathcal P$, we first construct a compact, convex and CP admissible set $\mathcal W$ contained in $\mathcal P$. It follows from the similar argument for Lemma~\ref{lem:CP_adm_closedness} and Proposition~\ref{prop:CP_admissible_cone} that for each $i \in \{1, \ldots, N\}$, $[\pi_{i} (\Pcal)]_i$ is a closed convex set in $\mathbb R$ which  contains $0$ . Hence, each $[\pi_{i} (\Pcal)]_i$ must be in one of the following forms: $\mathbb R$,   $[a_i, \infty)$ with $a_i \le 0$, $(-\infty, b_i]$ with $b_i \ge 0$, and  $[a_i, b_i]$ with $a_i\le 0 \le b_i$, where in the last case, $a_i=b_i=0$ if $a_i=b_i$. These four forms respectively correspond to an unbounded set without lower and upper bounds, an unbounded set that is bounded from below, an unbounded set that is bounded frow above, and a bounded set. Define the following disjoint index sets whose union is $\{1, \ldots, N\}$:
  \begin{align*}
     \Ical_1 & \, := \, \{ i \, | \, [\pi_{i} (\Pcal)]_i = \mathbb R\}, \qquad \qquad \quad  \Ical_+ \, := \, \{ i \, | \, [\pi_{i} (\Pcal)]_i \mbox{ is unbounded but bounded from below } \}, \\
     \Ical_0 & \, := \, \{ i \, | \, [\pi_{i} (\Pcal)]_i \mbox{ is bounded} \}, \qquad
      \Ical_ -  \, := \, \{ i \, | \, [\pi_{i} (\Pcal)]_i \mbox{ is unbounded but bounded from above } \}.
  \end{align*}
  Define the closed convex cone $\mathcal K:=\mathbb R^{\Ical_1}+ (\mathbb R_+)^{\Ical_+} + (\mathbb R_-)^{\Ical_-} + \{0\}^{\Ical_0}$. Since $\Pcal$ is CP admissible and convex, we have $\mathcal K \subseteq \Pcal$.
  Further, $\mathcal K$ is CP admissible in view of Proposition~\ref{prop:CP_admissible_cone}. Moreover, define the set
  \begin{equation} \label{eqn:def_W_set}
     \mathcal W \, := \, \Pcal \cap \, \underbrace{ \Big \{ \, x =(x_{\Ical_1}, x_{\Ical_+}, x_{\Ical_-}, x_{\Ical_0} ) \, | \,  x_{\Ical_1} =0, \ x_{\Ical_+} \le 0, \ x_{\Ical_-} \ge 0 \, \Big\} }_{:= \, \mathcal C}.
  \end{equation}
  Clearly, $\mathcal W \subseteq \Pcal$. Since the set $\mathcal C$ defined in (\ref{eqn:def_W_set})
  is closed and convex, $\mathcal W$ is also closed and convex. We show next that $\mathcal W$ is bounded and CP admissible. To proved the boundedness of $\mathcal W$,
  %
  %
   recall that (i) for each $i\in \Ical_+$, $[\pi_{i} (\Pcal)]_i=[a_i, \infty)$ for some $a_i \le 0$; (ii) for  each $i\in \Ical_-$, $[\pi_{i} (\Pcal)]_i=(-\infty, b_i]$ for some $b_i \ge 0$; and (iii) for each $i \in \Ical_0$, $[\pi_{i} (\Pcal)]_i=[a_i, b_i]$ for some $a_i \le 0 \le b_i$. Hence, $\pi_i(\mathcal W)=\{0\}$ for each $i\in \Ical_1$,    $\pi_i (\mathcal W) \in [a_i, 0]$ for each $i\in \Ical_+$, $\pi_i (\mathcal W) \in [0, b_i]$ for each $i\in \Ical_-$, and $\pi_i (\mathcal W) \in [a_i, b_i]$ for each $i\in \Ical_0$. Therefore, for each $x \in \mathcal W$, we have  $\| x \|_1 = \| x_{\Ical_+}\|_1 + \| x_{\Ical_-}\|_1 + \| x_{\Ical_0}\|_1\le \sum_{i\in \Ical_+}|a_i| + \sum_{i\in \Ical_-}|b_i| +\sum_{i\in \Ical_0} \max(|a_i|, b_i)$. This shows that $\mathcal W$ is bounded and thus compact. Lastly, it is easy to see that the set $\mathcal C$ defined in (\ref{eqn:def_W_set})
  %
  %
  is CP admissible. Since $\Pcal$ is CP admissible, by statement (i) of Proposition~\ref{prop:properties_CP_admissible}, $\mathcal W$ is also CP admissible.

  We show that $\mathcal P = \mathcal W + \mathcal K$ as follows. We first show that $\mathcal W + \mathcal K \subseteq \mathcal P$. Consider an arbitrary $z \in \mathcal W + \mathcal K$, i.e., $z=x+y$ with $x \in \mathcal W$ and $y \in \mathcal K$. Since $\mathcal W$ and $\mathcal K$ are both contained in the convex set $\Pcal$ and since $\mathcal K$ is a cone, we see that for any $\lambda \in [0, 1)$,
  \[
    \lambda x + y = \lambda x + (1-\lambda) \frac{y}{1-\lambda} \in \mathcal P.
  \]
  Furthermore, since $\Pcal$ is closed, $x+y=\lim_{\lambda \uparrow 1} \big (\lambda x + y \big) \in \mathcal P$. This shows that $z \in \Pcal$ and thus $\mathcal W + \mathcal K \subseteq \Pcal$. We finally show that $\Pcal \subseteq \mathcal W + \mathcal K$.  Toward this end, consider an arbitrary $z =(z_1, \ldots, z_N)^T \in \Pcal$, and define the vectors $x=(x_1, \ldots, x_N)^T$ and $y=(y_1, \ldots, y_N)^T$ as follows:
  \[
      x_i \, := \, \left\{ \begin{array}{lcc} 0 & \mbox{ if } \ \ i \in \Ical_1 \\ -(z_i)_- & \mbox{ if } \ \ i \in \Ical_+ \\  (z_i)_+ & \mbox{ if } \ \ i \in \Ical_- \\ z_i & \mbox{ if } \ \ i \in \Ical_0         \end{array} \right., \qquad \quad
      y_i \, := \, \left\{ \begin{array}{lcc} z_i & \mbox{ if } \ \ i \in \Ical_1 \\ (z_i)_+ & \mbox{ if } \ \ i \in \Ical_+ \\  -(z_i)_- & \mbox{ if } \ \ i \in \Ical_- \\ 0 & \mbox{ if } \ \ i \in \Ical_0         \end{array} \right..
  \]
  Clearly, $z=x+y$, $y \in \mathcal K$, and $x \in \mathcal C$, where $\mathcal C$ is defined in (\ref{eqn:def_W_set}). Moreover, letting the index set $\Jcal := \{ i \in \Ical_+ \, | \, z_i < 0 \} \cup \{ i \in \Ical_- \, | \, z_i > 0 \} \cup \Ical_0$, we have $x=\pi_\Jcal(z)$. Since $\Pcal$ is CP admissible, it follows from Lemma~\ref{lem:CP_adm_geometry} that $x \in \Pcal$, leading to $x\in \mathcal W$. This shows that $z \in \mathcal W + \mathcal K$, and thus $\Pcal \subseteq \mathcal W + \mathcal K$.
\end{proof}

The above proposition shows that $\mathcal K$ is the asymptotic cone (or recession cone) of $\mathcal P$.
%
%
Furthermore,
by using this proposition, we show the existence of an optimal solution of the underlying minimization problem given in Line 7 of Algorithm~\ref{algo:constrained_MP} for an arbitrary index set $\Jcal$ as follows.

\begin{corollary} \label{coro:sol_existence_CP_adm}
  Let $\mathcal P \subseteq \mathbb R^N$ be a closed, convex and CP admissible set. Then for any matrix $A \in \mathbb R^{m \times N}$, any index set $\mathcal J \subseteq \{1, \ldots, N\}$, and any $y \in \mathbb R^m$, $\min_{w \in \Pcal, \supp(w) \subseteq \Jcal} \| A w - y\|^2_2$ attains an optimal solution.
\end{corollary}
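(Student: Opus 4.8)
The plan is to reduce the claim to the single fact that $A\mathcal P$ is a closed subset of $\mathbb R^m$, and then to invoke statement (i) of Lemma~\ref{lemma:sol_existence}: the problem $(\mathrm P_{y,\mathcal J})$ there is exactly $\min_{w\in\mathcal P,\ \supp(w)\subseteq\mathcal J}\|Aw-y\|^2_2$, and it attains an optimal solution whenever $A\mathcal P$ is closed. So the entire content is to show that $A\mathcal P$ is closed for every $A\in\mathbb R^{m\times N}$ when $\mathcal P$ is closed, convex and CP admissible.

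To establish this, I would first apply Proposition~\ref{prop:CP_adm_decomposition} to decompose $\mathcal P=\mathcal W+\mathcal K$, with $\mathcal W\subseteq\mathcal P$ compact, convex and CP admissible, and $\mathcal K\subseteq\mathcal P$ a closed, convex and CP admissible cone; by linearity $A\mathcal P=A\mathcal W+A\mathcal K$. The set $A\mathcal W$ is compact as the continuous image of a compact set. For $A\mathcal K$, I would use Proposition~\ref{prop:CP_admissible_cone}, which gives $\mathcal K=\mathbb R^{\mathcal I_1}+(\mathbb R_+)^{\mathcal I_+}+(\mathbb R_-)^{\mathcal I_-}+\{0\}^{\mathcal I_0}$ for suitable disjoint index sets; thus $\mathcal K$ is generated as a cone by the finitely many vectors $\pm\mathbf e_i$ ($i\in\mathcal I_1$), $\mathbf e_i$ ($i\in\mathcal I_+$) and $-\mathbf e_i$ ($i\in\mathcal I_-$), so $A\mathcal K$ is the cone generated by the finitely many vectors $\pm A_{\bullet i}$ ($i\in\mathcal I_1$), $A_{\bullet i}$ ($i\in\mathcal I_+$) and $-A_{\bullet i}$ ($i\in\mathcal I_-$) in $\mathbb R^m$. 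A finitely generated cone is polyhedral (Minkowski--Weyl) and hence closed, so $A\mathcal K$ is closed. Finally, $A\mathcal P=A\mathcal W+A\mathcal K$ is the Minkowski sum of a compact set and a closed set, which is closed; this proves the needed closedness and completes the argument.

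The bulk of this is routine point-set topology (continuous image of a compact set is compact; the sum of a compact set and a closed set is closed). The one step carrying genuine content is the closedness of $A\mathcal K$: the linear image of an arbitrary closed convex cone need not be closed, so it is crucial that Proposition~\ref{prop:CP_admissible_cone} identifies $\mathcal K$ as a polyhedral/finitely generated cone, for which linear images remain finitely generated and hence closed. Thus the place I expect to need care — and perhaps an explicit reference — is invoking the Minkowski--Weyl equivalence (equivalently, the classical fact that a finitely generated cone is closed). If one prefers to avoid Proposition~\ref{prop:CP_adm_decomposition}, essentially the same proof works after first replacing the feasible set by $\pi_{\mathcal J}(\mathcal P)=\{w\in\mathcal P:\supp(w)\subseteq\mathcal J\}$, which is closed by Lemma~\ref{lem:CP_adm_closedness}, convex, and CP admissible by Proposition~\ref{prop:properties_CP_admissible}(iii), and then extracting a convergent minimizing sequence in the closed set $A\,\pi_{\mathcal J}(\mathcal P)\subseteq\mathbb R^m$.
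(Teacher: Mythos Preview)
Your proposal is correct and follows essentially the same approach as the paper: decompose $\mathcal P=\mathcal W+\mathcal K$ via Proposition~\ref{prop:CP_adm_decomposition}, observe that $A\mathcal W$ is compact and $A\mathcal K$ is a polyhedral cone (hence closed), conclude $A\mathcal P$ is closed, and invoke Lemma~\ref{lemma:sol_existence}(i). Your write-up simply supplies a bit more detail on why $A\mathcal K$ is polyhedral (explicit generators and Minkowski--Weyl), whereas the paper states this in one line.
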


\begin{proof}
 We first show that $A \mathcal P$ is a closed set for any matrix $A \in \mathbb R^{m \times N}$. It follows from Proposition~\ref{prop:CP_adm_decomposition} that $A \Pcal = A \mathcal W + A \mathcal K$, where $\mathcal W$ is compact and $\mathcal K$ is a polyhedral cone.  Note that $A \mathcal W$ is compact, and $A \mathcal K$ is a polyhedral cone and thus is closed. This implies that $A \Pcal$ is closed. The desired result thus follows readily from statement (i) of Lemma~\ref{lemma:sol_existence}.
\end{proof}

%
%

In what follows, we let $\mbox{cone}(\mathcal U)$ denote the conic hull of a nonempty set $\mathcal U$ in $\mathbb R^N$, i.e., $\mbox{cone}(\mathcal U)$ is the collection of all nonnegative combinations of finitely many vectors in $\mathcal U$.
%
%

\begin{proposition} \label{prop:conic_hull}
  Let $\Pcal$ be a closed, convex and CP admissible set in $\mathbb R^N$. Then $\mbox{cone}(\Pcal)=\{ \lambda x \, | \, \lambda \ge 0, x \in \Pcal \}$, and $\mbox{cone}(\Pcal)$ is a closed, convex and CP admissible cone.
\end{proposition}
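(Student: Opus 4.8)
The plan is to prove the three assertions in the order stated: first the explicit formula $\mbox{cone}(\Pcal)=\{\lambda x \mid \lambda\ge 0,\ x\in\Pcal\}$, then convexity and CP admissibility of this set, and finally its closedness, which I expect to be the technical heart of the argument.

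First I would establish the formula for $\mbox{cone}(\Pcal)$. The inclusion $\{\lambda x \mid \lambda\ge 0,\ x\in\Pcal\}\subseteq\mbox{cone}(\Pcal)$ is immediate from the definition of conic hull. For the reverse inclusion, take any nonnegative combination $z=\sum_{i=1}^k \lambda_i x^i$ with $\lambda_i\ge 0$ and $x^i\in\Pcal$. If $\sum_i\lambda_i=0$ then $z=0=0\cdot x$ for any $x\in\Pcal$ (and $0\in\Pcal$ by CP admissibility), so assume $\Lambda:=\sum_i\lambda_i>0$. Then $z=\Lambda\cdot\sum_i(\lambda_i/\Lambda)x^i$, and since $\Pcal$ is convex the point $\sum_i(\lambda_i/\Lambda)x^i$ lies in $\Pcal$; hence $z\in\{\lambda x\mid\lambda\ge 0,\ x\in\Pcal\}$. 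This proves the formula.

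Next, convexity and CP admissibility. Convexity of $\mbox{cone}(\Pcal)$ follows from convexity of $\Pcal$: given $\lambda_1 x^1$ and $\lambda_2 x^2$ in the set with $\lambda_i\ge 0$, a convex combination $\mu\lambda_1 x^1+(1-\mu)\lambda_2 x^2$ can be rewritten, when the coefficient sum $c:=\mu\lambda_1+(1-\mu)\lambda_2$ is positive, as $c$ times a convex combination of $x^1$ and $x^2$ (hence in $\Pcal$), and when $c=0$ it equals $0\in\mbox{cone}(\Pcal)$. It is obviously a cone. For CP admissibility, I would invoke Lemma~\ref{lem:CP_adm_geometry}: it suffices to show $\pi_\Ical(\mbox{cone}(\Pcal))\subseteq\mbox{cone}(\Pcal)$ for every index set $\Ical$. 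Take $\lambda x$ with $\lambda\ge 0$, $x\in\Pcal$; since $\pi_\Ical$ is linear, $\pi_\Ical(\lambda x)=\lambda\,\pi_\Ical(x)$, and $\pi_\Ical(x)\in\Pcal$ because $\Pcal$ is CP admissible (again by Lemma~\ref{lem:CP_adm_geometry}), so $\lambda\,\pi_\Ical(x)\in\mbox{cone}(\Pcal)$.

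The main obstacle is closedness, since conic hulls of closed convex sets need not be closed in general. Here I would exploit the decomposition from Proposition~\ref{prop:CP_adm_decomposition}: write $\Pcal=\mathcal W+\mathcal K$ with $\mathcal W$ compact, convex, CP admissible and $\mathcal K$ a closed convex CP admissible cone, which by Proposition~\ref{prop:CP_admissible_cone} is polyhedral, namely $\mathcal K=\mathbb R^{\Ical_1}+(\mathbb R_+)^{\Ical_+}+(\mathbb R_-)^{\Ical_-}+\{0\}^{\Ical_0}$. Using the formula just established, $\mbox{cone}(\Pcal)=\bigcup_{\lambda\ge 0}\lambda(\mathcal W+\mathcal K)=\bigcup_{\lambda\ge 0}(\lambda\mathcal W+\mathcal K)$ (since $\lambda\mathcal K=\mathcal K$ for $\lambda>0$ and the $\lambda=0$ term $\mathcal K\cup\{0\}=\mathcal K$ is absorbed). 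I would then argue that $\mbox{cone}(\Pcal)=\mbox{cone}(\mathcal W)+\mathcal K$: the inclusion $\supseteq$ is clear, and $\subseteq$ holds since $\lambda(w+k)=\lambda w+\lambda k\in\mbox{cone}(\mathcal W)+\mathcal K$. Now $\mathcal K$ is polyhedral hence closed, and $\mbox{cone}(\mathcal W)$ is the conic hull of a \emph{compact} set not containing a line through the origin issue aside—more carefully, since $0\in\mathcal W$, the conic hull of the compact set $\mathcal W$ is closed (a standard fact: $\mbox{cone}(\mathcal W)=\{\lambda w\mid\lambda\ge 0,\ w\in\mathcal W\}$ is the image of the closed set $[0,\infty)\times\mathcal W$ under the continuous, proper-on-bounded-cones scaling map; closedness follows because any convergent sequence $\lambda_n w_n\to v$ either has $\lambda_n$ bounded, giving a convergent subsequence whose limit lies in $\mbox{cone}(\mathcal W)$, or $\lambda_n\to\infty$, forcing $w_n\to 0$ and $v=0\in\mbox{cone}(\mathcal W)$). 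Finally, the sum of a closed cone and a closed polyhedral cone is closed; indeed more is true here—$\mbox{cone}(\mathcal W)+\mathcal K$ is itself a closed convex cone because one can check it equals the closure of $\mbox{cone}(\Pcal)$ which we have separately shown is convex, but to avoid circularity I would directly use that the Minkowski sum of a compact-generated closed cone with a polyhedral cone is closed (e.g.\ via \cite[Corollary 9.1.1]{Rockafellar_book70}-type results on sums of closed cones one of which is polyhedral). This yields closedness of $\mbox{cone}(\Pcal)$ and completes the proof.
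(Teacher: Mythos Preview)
Your formula for $\mbox{cone}(\Pcal)$ and your arguments for convexity and CP admissibility are fine. The closedness argument, however, has a genuine gap.

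The claim that $\mbox{cone}(\mathcal W)$ is closed whenever $\mathcal W$ is compact convex with $0\in\mathcal W$ is false, and your sequence argument does not prove it. If $\lambda_n w_n\to v$ with $\lambda_n\to\infty$, you correctly get $w_n\to 0$, but this does \emph{not} force $v=0$: the limit $v$ is whatever $\lambda_n w_n$ converges to, and that can be any nonzero vector. The standard counterexample is the closed unit disk $\mathcal W$ in $\mathbb R^2$ centered at $(1,0)$: taking $w_n=(1-\cos(1/n),\sin(1/n))\in\mathcal W$ and $\lambda_n=n$, one has $w_n\to 0$ but $\lambda_n w_n\to(0,1)\notin\mbox{cone}(\mathcal W)$, so the conic hull is not closed. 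That disk is not CP admissible, so this does not refute the \emph{statement} you want, but it does refute your \emph{argument}, which used only compactness and $0\in\mathcal W$. Your subsequent step---that a closed cone plus a polyhedral cone is closed---is also asserted without proof and is not a triviality; the reference to ``Corollary 9.1.1-type results'' does not pin down a usable theorem.

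The paper bypasses all of this by taking a completely different route: rather than decomposing $\Pcal$ via Proposition~\ref{prop:CP_adm_decomposition}, it identifies $\mbox{cone}(\Pcal)$ \emph{explicitly} as the polyhedral cone $\mathcal C=\mathbb R^{\Lcal_1}+(\mathbb R_+)^{\Lcal_+}+(\mathbb R_-)^{\Lcal_-}+\{0\}^{\Lcal_0}$, where the index sets are read off from whether $0$ lies in the interior, on the left end, on the right end, or is all of $[\pi_i(\Pcal)]_i$. Both inclusions $\mbox{cone}(\Pcal)\subseteq\mathcal C$ and $\mathcal C\subseteq\mbox{cone}(\Pcal)$ are checked directly using CP admissibility, and then closedness (and CP admissibility) of $\mbox{cone}(\Pcal)$ is immediate since $\mathcal C$ is polyhedral. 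This is both shorter and avoids the delicate closure-of-sums issues entirely. If you want to salvage your approach, you would essentially need to prove this polyhedral characterization for $\mbox{cone}(\mathcal W)$ anyway, at which point you may as well do it for $\Pcal$ directly.
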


\begin{proof}
 Since $\Pcal$ is a convex set, it follows from a standard argument in convex analysis, e.g., \cite[Corollary 2.6.3]{Rockafellar_book70}, that $\mbox{cone}(\Pcal)=\{ \lambda x \, | \, \lambda \ge 0, x \in \Pcal \}$.
Define the disjoint index sets whose union is $\{1, \ldots, N\}$:
  \begin{align}
     \Lcal_1 & \, := \, \{ i \, | \ \mbox{$0$ is in the interior of } [\pi_{i} (\Pcal)]_i \}, \qquad \qquad
      \Lcal_0  \, := \, \{ i \, | \ [\pi_{i} (\Pcal)]_i =\{ 0 \}  \}, \notag \\
       \Lcal_+ & \, := \, \{ i \, | \ \inf [\pi_{i} (\Pcal)]_i =0, \,  \mbox{ and $[\pi_{i} (\Pcal)]_i$ contains a positive number }   \},  \label{eqn:conic_hll_indices} \\
      \Lcal_ -  & \, := \, \{ i \, | \ \sup [\pi_{i} (\Pcal)]_i =0, \,  \mbox{ and $[\pi_{i} (\Pcal)]_i$ contains a negative number } \}.  \notag
  \end{align}
  Let  $\mathcal C:=\mathbb R^{\Lcal_1}+ (\mathbb R_+)^{\Lcal_+} + (\mathbb R_-)^{\Lcal_-} + \{0\}^{\Lcal_0}$. In view of Proposition~\ref{prop:CP_admissible_cone}, $\mathcal C$ is a closed, convex and CP admissible cone. In what follows, we show that $\mathcal C=\mbox{cone}(\Pcal) $ in two steps.

   (i) We first show that $\mbox{cone}(\Pcal) \subseteq \mathcal C$.
  For a given $x \in \Pcal$, we write it as $x=(x_{\Lcal_1}, x_{\Lcal_0}, x_{\Lcal_+}, x_{\Lcal_-})$. Hence, $x=\pi_{\Lcal_1}(x)  + \pi_{\Lcal_+}(x) + \pi_{\Lcal_-}(x) + \pi_{\Lcal_0}(x)$, where $\pi_{\Lcal_1}(x) \in \mathbb R^{\Lcal_1}$, $\pi_{\Lcal_+}(x) \in (\mathbb R_+)^{\Lcal_+}$, $\pi_{\Lcal_-}(x) \in (\mathbb R_-)^{\Lcal_-}$, and $\pi_{\Lcal_0}(x) = 0 \in  \{0\}^{\Lcal_0}$. By the definition of $\mathcal C$, we have that $x \in \mathcal C$. Therefore, $\Pcal \subseteq \mathcal C$. Since $\mbox{cone}(\Pcal)$ is the smallest convex cone containing $\Pcal$, we have  $\mbox{cone}(\Pcal) \subseteq \mathcal C$.
%
  %

  (ii) We next show that $\mathcal C \subseteq \mbox{cone}(\Pcal)$. Consider a vector $x \in \mathbb R^{\Lcal_1}$, where $x=(x_{\Lcal_1}, x_{\Lcal^c_1})=(x_{\Lcal_1}, 0)$.
  %
  %
  By the definition of the index set $\mathcal L_1$ given in (\ref{eqn:conic_hll_indices}), we see that there exists a sufficiently small positive number $\lambda$ such that $\lambda x_i \in [\pi_i(\Pcal)]_i$ for each $i \in \Lcal_1$.
   Let $v=(v_{\Lcal_1}, v_{\Lcal^c_1})$ with $v_{\Lcal_1} := \lambda x_{\Lcal_1}$ and $v_{\Lcal^c_1}:=0$. Hence, $v\in \pi_{\Lcal_1}(\Pcal)$.
   Since $\Pcal$ is CP admissible,  $\pi_{\Lcal_1}(\Pcal) \subseteq \Pcal$ such that $v \in \Pcal$. In view of  $x = (1/\lambda) v$ and $\mbox{cone}(\Pcal)=\{ \lambda x \, | \, \lambda \ge 0, x \in \Pcal \}$, we deduce that $x \in \mbox{cone}(\Pcal)$. Therefore, $\mathbb R^{\Lcal_1} \subseteq \mbox{cone}(\Pcal)$. It follows from a similar argument that $\mathbb R^{\Lcal_+}_+\subseteq \mbox{cone}(\Pcal)$, $\mathbb R^{\Lcal_-}_-\subseteq \mbox{cone}(\Pcal)$, and $\{0\}^{\Lcal_0} \subseteq \mbox{cone}(\Pcal)$. Since $\mbox{cone}(\Pcal)$ is convex, we see that $\mathbb R^{\Lcal_1}+ \mathbb R^{\Lcal_+}_+ + \mathbb R^{\Lcal_-}_- + \{0\}^{\Lcal_0} \subseteq \mbox{cone}(\Pcal)$. Hence, $\mathcal C \subseteq \mbox{cone}(\Pcal)$.

 Consequently, $\mathcal C = \mbox{cone}(\Pcal)$. Finally, since $\mathcal C$ is closed and CP admissible, so is $\mbox{cone}(\Pcal)$.
%
%
%
%
%
\end{proof}

Note that if $\Pcal$ is not CP admissible (even though closed and convex), its conic hull may {\em not} be closed in general. An example is the closed unit $\ell_2$-ball in $\mathbb R^N$ centered at $\mathbf e_1 \in \mathbb R^N$.

%
%
%
%

\begin{definition} \rm \label{def:irreducible_CP_set}
 A closed, convex and CP admissible set $\Pcal$ is {\em irreducible} if the index set $\{ i \, | \, [\pi_{i} (\Pcal)]_i =\{ 0 \}  \}$ is the empty set.
\end{definition}
In light of Proposition~\ref{prop:conic_hull}, it is easy to see that a closed, convex and CP admissible set $\Pcal$ is irreducible  if and only if $\mbox{cone}(\Pcal)$ is irreducible.

\gap

The above development shows that the class of CP admissible sets enjoy favorable properties indicated at the beginning of this section. For example, each CP admissible set contains sufficiently many sparse vectors due to the CP admissible property. Moreover, $\mathbb R^N$, $\mathbb R^N_+$ and their alikes belong to the class of CP admissible sets. In what follows, we show an additional important implication of CP admissible sets in Proposition~\ref{prop:negative_second_term}, which is crucial to the development of sufficient conditions for uniform exact recovery in Section~\ref{sect:suff_cond_exact_recovery}.
%
%
To this end, we first present a technical result on the support of vectors.

\begin{lemma} \label{lem:support_result}
 Let $u, v \in \mathbb R^N$ and $\Jcal\subseteq \{1, \ldots, N\}$ be such that $\supp(v) \subseteq \Jcal \subseteq \supp(u)$. Then $\supp(u-v)\setminus \Jcal \, = \, \supp(u) \setminus \Jcal$.
\end{lemma}

\begin{proof}
   We show $\supp(u-v) \subseteq \supp(u)$ first. Let $i \in \supp(u-v)$. Hence, $u_i - v_i \ne 0$. We claim that $u_i \ne 0$, because otherwise, $u_i=0$ and $v_i \ne 0$, which implies $i\in \supp(v) \subseteq \supp(u)$, yielding a contradiction.  Hence, $\supp(u-v) \subseteq \supp(u)$. This leads to $\supp(u-v)\setminus \Jcal \subseteq \supp(u)\setminus \Jcal$.
  Conversely, for any $i \in \supp(u)\setminus \Jcal$, we have $v_i =0$ (due to $\supp(v) \subseteq \Jcal$) so that $(u-v)_i = u_i \ne 0$. Hence, $i \in \supp(u-v)$. Since $i\notin\Jcal$, we have $i \in  \supp(u-v)\setminus \Jcal$. Therefore,  $\supp(u)\setminus \Jcal \subseteq \supp(u-v)\setminus \Jcal$. As a result, $\supp(u-v)\setminus \Jcal = \supp(u)\setminus \Jcal$.
\mycut{
  We show the second equality next.
  Since $\supp(u-v) \subseteq \supp(u)$ as shown above, we have $[\supp(u-v)]^c \supseteq [\supp(u)]^c$. Hence, $[\supp(u-v)]^c\setminus \Jcal \supseteq [\supp(u)]^c \setminus \Jcal = [\supp(u)]^c$, where the last equation follows from the assumption that $\Jcal \subset \supp(u)$. Conversely, in view of $\supp(u)\setminus \Jcal \subseteq \supp(u-v)$ shown in the first part, we have $[\supp(u-v)]^c \setminus \Jcal \subseteq  [\supp(u)\setminus \Jcal]^c \setminus \Jcal \subseteq  [\supp(u)]^c \setminus \Jcal = [\supp(u)]^c$, where we use $\Jcal \subset \supp(u)$ again for the last equation. This yields the second equality.
 }
\end{proof}

\begin{proposition} \label{prop:negative_second_term}
  Let $\mathcal P$ be a closed, convex and CP admissible set in $\mathbb R^N$. Given a matrix $A \in \mathbb R^{m\times N}$, a vector $0\ne u \in \Sigma_K \cap \mathcal P$,  and any index set $\Jcal \subset \supp(u)$, let $v$ be an arbitrary solution to $\mathbf Q: \, \min_{ w \in \mathcal P, \, \supp(w) \subseteq \Jcal }  \, \| A (w - u)\|^2_2 $. Then the following hold:
  \[
    \sum_{j \in \supp(u-v)\cap\Jcal } \langle A(u-v), A_{\bullet j}  \rangle \cdot (u-v)_j \, \le \, 0,
  \]
  and
  \[
   \|A (u-v)\|^2_2 \, \le \, \sum_{j \in \supp(u)\setminus\Jcal } \langle A(u-v), A_{\bullet j}  \rangle \cdot (u-v)_j.
  \]
\end{proposition}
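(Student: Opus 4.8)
The plan is to derive both inequalities from the variational characterization of the optimal solution $v$. First I would invoke the optimality condition for the convex problem $\mathbf Q$: since $v$ minimizes $\| A(w-u)\|_2^2$ over $w \in \mathcal P$ with $\supp(w) \subseteq \Jcal$, the standard first-order VI condition (as recorded just after Lemma~\ref{lemma:sol_existence}) gives $\langle A^T_{\bullet \Jcal}(A_{\bullet\Jcal} v_\Jcal - A u),\, w_\Jcal - v_\Jcal\rangle \ge 0$ for every feasible $w$. Equivalently, $\langle A(v-u),\, A(w-v)\rangle \ge 0$, i.e. $\langle A(u-v),\, A(w-v)\rangle \le 0$ for all $w \in \mathcal P$ with $\supp(w) \subseteq \Jcal$. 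The key observation I want to exploit is that, because $\mathcal P$ is CP admissible, the vector $\pi_{\supp(u-v)\cap\Jcal}(v)$ (obtained by zeroing out those coordinates of $v$) lies in $\mathcal P$, is supported in $\Jcal$, and is thus a legitimate competitor $w$.

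For the first inequality, I would take $w := v - \pi_{\supp(v)\cap\Jcal}(v)\cdot(\text{something})$ — more precisely, the right choice is a perturbation of $v$ in the coordinates of $\supp(u-v)\cap\Jcal$. Let me describe the cleanest route: by CP admissibility, for a small scalar $\lambda\in[0,1]$ the vector $w(\lambda)$ obtained from $v$ by scaling the coordinates indexed by $S := \supp(u-v)\cap\Jcal$ by the factor $\lambda$ (using that coordinatewise scaling of a CP admissible convex set stays inside — formally $w(\lambda) = (1-\lambda)\pi_S(v) + $ rest $= v - (1-\lambda)\pi_S(v)$ viewed appropriately, which is a convex combination argument) is feasible. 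Then $w(\lambda) - v = -(1-\lambda)\pi_S(v) + \cdots$; plugging into $\langle A(u-v), A(w(\lambda)-v)\rangle \le 0$ and reading off the coefficient of the perturbation yields $\sum_{j\in S}\langle A(u-v),A_{\bullet j}\rangle\, v_j \ge 0$. Combining this with the analogous statement for $w \equiv 0$ (feasible since $0\in\mathcal P$), which gives $\sum_{j\in\supp(v)}\langle A(u-v),A_{\bullet j}\rangle\, v_j \le 0$, and noting $\supp(v)\subseteq\Jcal$ and that coordinates outside $\supp(u-v)$ can be regrouped, I would isolate $\sum_{j\in\supp(u-v)\cap\Jcal}\langle A(u-v),A_{\bullet j}\rangle\,(u-v)_j$. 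Here I use Lemma~\ref{lem:support_result}: on $\Jcal$, $(u-v)_j$ and $v_j$ are related because $\supp(u-v)\setminus\Jcal=\supp(u)\setminus\Jcal$, and on $\supp(u-v)\cap\Jcal$ one has $(u-v)_j = u_j - v_j$; the cleanest is to test with $w=0$ and with $w=2v$ (if feasible) or with $w=v\pm\varepsilon$-perturbations to get a two-sided bound forcing the sign.

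For the second inequality, I would expand $\|A(u-v)\|_2^2 = \langle A(u-v),A(u-v)\rangle = \sum_{j\in\supp(u-v)}\langle A(u-v),A_{\bullet j}\rangle(u-v)_j$, then split the sum over $\supp(u-v)$ into its part inside $\Jcal$ and its part outside. By Lemma~\ref{lem:support_result}, $\supp(u-v)\setminus\Jcal = \supp(u)\setminus\Jcal$, so the outside-$\Jcal$ part is exactly $\sum_{j\in\supp(u)\setminus\Jcal}\langle A(u-v),A_{\bullet j}\rangle(u-v)_j$. The inside-$\Jcal$ part is $\sum_{j\in\supp(u-v)\cap\Jcal}\langle A(u-v),A_{\bullet j}\rangle(u-v)_j$, which is $\le 0$ by the first inequality. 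Therefore $\|A(u-v)\|_2^2 \le \sum_{j\in\supp(u)\setminus\Jcal}\langle A(u-v),A_{\bullet j}\rangle(u-v)_j$, as claimed. The main obstacle I anticipate is the first inequality: getting the sign right requires a careful choice of feasible competitors built from $v$ using CP admissibility (both the scaling-toward-$0$ direction and, where room allows, a scaling-away direction), and reconciling the coordinatewise relation between $v_j$ and $(u-v)_j$ on the overlap set via Lemma~\ref{lem:support_result}; once that bookkeeping is done, the second inequality is a one-line decomposition.
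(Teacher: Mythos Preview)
Your plan for the second inequality is exactly right and matches the paper: expand $\|A(u-v)\|_2^2$ as a sum over $\supp(u-v)$, split into the $\Jcal$-part and its complement, invoke Lemma~\ref{lem:support_result} to identify $\supp(u-v)\setminus\Jcal=\supp(u)\setminus\Jcal$, and drop the $\Jcal$-part using the first inequality.

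The first inequality, however, has a genuine gap. All of your proposed competitors---$\pi_{\supp(u-v)\cap\Jcal}(v)$, scaled versions $w(\lambda)$ of $v$, $w=0$, $w=2v$---give variational inequalities of the form $\sum_j \langle A(u-v), A_{\bullet j}\rangle\, v_j \gtrless 0$, i.e.\ information about $v_j$, not about $(u-v)_j$. You note this yourself (``reconciling the coordinatewise relation between $v_j$ and $(u-v)_j$''), but there is no clean way to make that reconciliation work: on $\supp(u-v)\cap\Jcal$ one has $(u-v)_j=u_j-v_j$, and the $u_j$-term does not come out of any competitor built from $v$ alone. Also, $w=2v$ need not be feasible.

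The missing idea is much simpler: the right test vector is $w=\pi_\Jcal(u)=(u_\Jcal,0)$. Because $\mathcal P$ is CP admissible and $\Jcal\subset\supp(u)$, this $w$ lies in $\mathcal P$ with $\supp(w)\subseteq\Jcal$, so it is feasible for $\mathbf Q$. Since $v_{\Jcal^c}=0$, one has $w-v=(u_\Jcal-v_\Jcal,0)$ and hence $A(w-v)=A_{\bullet\Jcal}(u-v)_\Jcal$. The VI condition $\langle A(u-v),A(w-v)\rangle\le 0$ then reads
\[
\sum_{j\in\Jcal}\langle A(u-v),A_{\bullet j}\rangle\,(u-v)_j \;\le\; 0,
\]
and restricting to $j\in\supp(u-v)\cap\Jcal$ (the other terms vanish) gives exactly the first inequality. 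This is precisely the paper's argument; once you use $\pi_\Jcal(u)$ as the competitor, no scaling, no two-sided bounds, and no bookkeeping between $v_j$ and $(u-v)_j$ are needed.
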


\begin{proof}
  Note that such an optimal solution $v$ exists due to Corollary~\ref{coro:sol_existence_CP_adm}.
  Define the convex function $g(z):= \| A_{\bullet \Jcal} z - A u \|^2_2$ with $z\in \mathbb R^{|\Jcal|}$, and the constraint set $\mathcal W := \{ z \, | \, (z, 0) \in \pi_\Jcal(\mathcal P) \}$.
  It follows from Lemma~\ref{lem:CP_adm_closedness} that $\pi_\Jcal(\mathcal P)$ is closed. Since $\Pcal$ is convex, so is $\pi_\Jcal(\mathcal P)$. Hence, $\pi_\Jcal(\mathcal P)$ is closed and convex. This shows that $\mathcal W$ is also a closed convex set. Moreover, the underlying optimization problem $\mathbf Q$ can be equivalently formulated as the convex optimization problem: $\displaystyle   \min_{ z \in \mathcal W } g(z)$.
 %
 %
  Therefore, the optimal solution $v=(v_\Jcal, 0)$ satisfies the necessary and sufficient optimality condition given by the following variational inequality: $\langle \nabla g(v_\Jcal), z - v_\Jcal \rangle \ge 0$ for all $z \in \mathcal W$. Since $\mathcal P$ is CP admissible, we have $(u_\Jcal, 0) \in \mathcal P$ so that $u_\Jcal \in \mathcal W$. In view of $\nabla g(v_\Jcal) = A^T_{\bullet \Jcal}(A_{\bullet \Jcal} v_\Jcal - A u)=A^T_{\bullet \Jcal} (Av-Au)$, we have
  \[
     0 \, \le \, \langle A^T_{\bullet \Jcal}(A_{\bullet \Jcal} \, v_\Jcal - A u), u_\Jcal - v_\Jcal \rangle  = \langle A v - A u, A_{\bullet \Jcal} (u - v)_\Jcal \rangle.
  \]
  This implies that $\langle A (u -v), A_{\bullet \Jcal} (u - v)_\Jcal \rangle \le 0$. Consequently, we obtain
  \begin{eqnarray*}
  \lefteqn{ \sum_{j \in \supp(u-v)\cap\Jcal } \langle A(u-v), A_{\bullet j}  \rangle \cdot (u-v)_j} \notag \\ [5pt]
     & = & \sum_{j \in \supp(u-v)\cap\Jcal } \langle A(u-v), A_{\bullet j}  \rangle \cdot (u-v)_j
      + \sum_{j \in [\supp(u-v)]^c\cap\Jcal } \langle A(u-v), A_{\bullet j}  \rangle \cdot (u-v)_j \notag \\ [5pt]
    & = & \sum_{j \in \Jcal } \langle A(u-v), A_{\bullet j}  \rangle \cdot (u-v)_j \, = \, \langle A(u-v), A_{\bullet \Jcal}  (u - v)_\Jcal \rangle \notag \\ [5pt]
    & \, \le \, &  0. \notag 
\end{eqnarray*}
Furthermore, we have
   \begin{eqnarray*}
   \| A(u - v) \|^2_2 & = & \sum^N_{j=1} \langle A(u-v), A_{\bullet j} (u-v)_j \rangle = \sum_{j \in \supp(u-v)} \langle A(u-v), A_{\bullet j}  \rangle \cdot (u-v)_j \\
   & = & \sum_{j \in \supp(u-v)\setminus\Jcal } \langle A(u-v), A_{\bullet j}  \rangle \cdot (u-v)_j +  \sum_{j \in \supp(u-v)\cap\Jcal } \langle A(u-v), A_{\bullet j}  \rangle \cdot (u-v)_j \\
   & \le & \sum_{j \in \supp(u-v)\setminus\Jcal } \langle A(u-v), A_{\bullet j}  \rangle \cdot (u-v)_j \\
   & = & \sum_{j \in \supp(u)\setminus\Jcal } \langle A(u-v), A_{\bullet j}  \rangle \cdot (u-v)_j,
\end{eqnarray*}
 where 
  the last equation follows from Lemma~\ref{lem:support_result}.
\end{proof}

%
%

%
\section{Exact Vector Recovery on Closed, Convex, CP Admissible Cones for a Fixed Support via Constrained Matching Pursuit} \label{sect:exact_vector_recovery}
%

%
%
We first introduce the definition of exact vector recovery.

\begin{definition} \rm
\tblue{
Let a matrix $A\in \mathbb R^{m\times N}$ and a constraint set $\Pcal$ be given. For a fixed $z \in \Sigma_K \cap \mathcal P$, we say that {\it the exact vector recovery} of $z$ is achieved from $y=A z$ via Algorithm~\ref{algo:constrained_MP} if (i) the exact support recovery of $z$ is achieved, and (ii) along  any sequence $\big( (x^k, j^*_k, \Jcal_k) \big)_{k \in \mathbb N}$, once $\Jcal_s=\supp(z)$ is reached, then the minimization problem in Line 7 of Algorithm~\ref{algo:constrained_MP} yields the {\em unique} solution $x^s=z$.
If the exact vector recovery of any $z \in \Sigma_K \cap \, \mathcal P$ is achieved, then we say that  {\it the exact vector recovery on $\Sigma_K \cap \mathcal P$} (or simply the exact vector recovery) is achieved.
}
\end{definition}

We also say that a matrix $A$ achieves exact vector (resp. support) recovery on $\Pcal$ if  the exact vector (resp. support) recovery on $\Sigma_K \cap \mathcal P$ is achieved using $A$.
For a fixed index set $\Scal$, we say that {\it the exact vector recovery on $\Pcal$ for $\Scal$} is achieved if exact vector recovery of any $z \in \Pcal$ with $\supp(z)=\Scal$ is achieved.

This section is focused on the exact vector recovery on closed, convex and CP admissible cones for a fixed support. By Proposition~\ref{prop:CP_admissible_cone}, such a cone is a Cartesian product of copies of $\mathbb R$, $\mathbb R_+$ and $\mathbb R_-$, which includes $\mathbb R^N$ and $\mathbb R^N_{+}$.

%
\subsection{Revisit of Exact Vector Recovery on $\mathbb R^N$ for a Fixed Support via OMP: A Counterexample to a Necessary Exact Recovery Condition in the Literature} \label{subsect:OMP_counterexample}

When the sparse recovery problem (\ref{eqn:constrained_L0}) is constraint free, i.e., $\Pcal=\mathbb R^N$, the constrained matching pursuit scheme given by Algorithm~\ref{algo:constrained_MP} reduces to the OMP \cite{PRKrishnaprasad_Asilomar93}. The OMP has been extensively studied in the signal processing and compressed sensing literature, and many results have been developed for support or vector recovery using the OMP \cite{FoucartRauhut_book2013, MoS_TIT12}. In particular, ``necessary'' and sufficient conditions are established in \cite[Proposition 3.5]{FoucartRauhut_book2013} for exact vector recovery via the OMP for a fixed support; the same ``necessary'' and sufficient conditions are also given by Tropp \cite[Theorems 3.1 and 3.10]{Tropp_ITI04}.
%
%
For the sake of completeness and the ease of the subsequent discussions, we present the real version of \cite[Proposition 3.5]{FoucartRauhut_book2013} as follows, i.e., $A \in \mathbb R^{m\times N}$, $y \in \mathbb R^m$, and $x \in \mathbb R^N$, using slightly modified wording.

\begin{proposition} \cite[Proposition 3.5]{FoucartRauhut_book2013} \label{prop:Foucart_prop3.5}
 Given a matrix $A \in \mathbb R^{m\times N}$ with unit columns, every nonzero vector $x \in \mathbb R^N$ supported on a given index set $\Scal$ of size $s$ (i.e., $\supp(x)=\Scal$ and $|\supp(x)|=s$) is recovered from $y=A x$ after at most $s$ iterations of OMP if and only if the following two conditions hold:
 \begin{itemize}
   \item [(i)] The matrix $A_{\bullet \Scal}$ is injective (i.e., $A_{\bullet \Scal}$ has full column rank), and
    \item [(ii)]  \begin{equation} \label{eqn:condition_Prop_3.5}
    \max_{j \in \Scal} \big| (A^T A z)_j \big | \, > \, \max_{j \in \Scal^c} \big |(A^T A z)_j \big|, \qquad \quad \forall \ 0\ne z \in \mathbb R^{N} \ \mbox{ with } \ \supp(z) \subseteq \Scal.
 \end{equation}
 \end{itemize}
  Further, under condition (i), condition (\ref{eqn:condition_Prop_3.5}) holds  if and only if
  \begin{equation} \label{eqn:exact_recovery_condition}
    \big\| (A^T_{\bullet \Scal} A_{\bullet \Scal} )^{-1} A^T_{\bullet \Scal} A_{\bullet \Scal^c} \big\|_1 < 1,
  \end{equation}
 where $\|\cdot \|_1$ denotes the matrix 1-norm.
\end{proposition}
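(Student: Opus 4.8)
The plan is to prove the two ``if and only if'' assertions in turn: first the equivalence of exact OMP recovery on $\Scal$ with the conjunction of (i) and (ii), then (under (i)) the equivalence of the entrywise inequality \eqref{eqn:condition_Prop_3.5} with the matrix-norm bound \eqref{eqn:exact_recovery_condition}. Two elementary facts are used repeatedly: because the columns of $A$ are unit vectors, the OMP selection step on $y=Ax$ picks an index maximizing $|A_{\bullet j}^T r^k|$, where $r^k:=y-Ax^k=A(x-x^k)$ is the current residual; and when the least-squares update is taken over a support $\Jcal_k$ with $A_{\bullet\Jcal_k}$ injective, the normal equations give $A_{\bullet j}^T r^k=0$ for every $j\in\Jcal_k$.

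\textbf{Sufficiency of (i)+(ii).} Fix $x$ with $\supp(x)=\Scal$, $|\Scal|=s$. I argue by induction on the iteration count that along every OMP run each selected index is new and lies in $\Scal$, so $\Jcal_k\subseteq\Scal$ with $|\Jcal_k|=k$, and $x^k$ is well defined since $A_{\bullet\Scal}$, hence each $A_{\bullet\Jcal_k}$, is injective. At step $k$ put $z:=x-x^k$, which is supported on $\Scal$; if $z=0$ recovery has already occurred. Otherwise $r^k=Az$ and $(A^TAz)_j=A_{\bullet j}^T r^k$, so orthogonality makes $\max_{j\in\Scal}|A_{\bullet j}^T r^k|$ attained in $\Scal\setminus\Jcal_k$ (it is strictly positive, else $A_{\bullet\Scal}^TA_{\bullet\Scal}z_\Scal=0$ would force $z=0$), and \eqref{eqn:condition_Prop_3.5} makes it strictly exceed $\max_{j\in\Scal^c}|A_{\bullet j}^T r^k|$; hence $j_{k+1}^*\in\Scal\setminus\Jcal_k$. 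After at most $s$ steps either the residual vanishes with $x-x^k$ supported on $\Scal$, so $x^k=x$ by injectivity of $A_{\bullet\Scal}$, or $\Jcal_s=\Scal$, and the least-squares problem over support $\Scal$ has minimum value $0$ attained uniquely at $x$, so $x^s=x$.

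\textbf{Necessity.} For (i): if $A_{\bullet\Scal}$ is not injective, choose $0\neq w_\Scal\in\ker A_{\bullet\Scal}$ and $x_\Scal$ avoiding the finitely many coordinate hyperplanes so that $x:=(x_\Scal,0)$ and $x':=(x_\Scal+w_\Scal,0)$ both have support exactly $\Scal$; since $Ax=Ax'$ but $x\neq x'$, the OMP output from this common measurement cannot equal both, so recovery fails. For (ii): suppose \eqref{eqn:condition_Prop_3.5} fails, i.e.\ there is $0\neq z$ with $\supp(z)\subseteq\Scal$ and $\max_{j\in\Scal}|(A^TAz)_j|\le\max_{j\in\Scal^c}|(A^TAz)_j|$. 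The goal is a full-support input on which some OMP run first selects an index outside $\Scal$. If $\supp(z)=\Scal$, take the input $x=z$: at step $1$ the maximum over $\Scal^c$ is at least that over $\Scal$, so an admissible tie-break (permitted by ``along any sequence'') gives $j_1^*\in\Scal^c$, and then $|\Jcal_k|=k$ can never coincide with $\Scal$ within $s$ steps. If $\supp(z)\subsetneq\Scal$, one perturbs $z$ inside $\{w:\supp(w)\subseteq\Scal\}$ to a full-support witness: in the strict case $\max_{j\in\Scal^c}|(A^TAz)_j|>\max_{j\in\Scal}|(A^TAz)_j|$ a small perturbation works by continuity and forces a wrong selection, while the boundary (equality) case must instead be realized as a residual $A(z-x^k)$ along an actual OMP run, exploiting that the witnesses of the failure of \eqref{eqn:condition_Prop_3.5} form a nonzero cone in $\{w:\supp(w)\subseteq\Scal\}$ along which the OMP trajectory can be steered. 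This homogeneity-plus-genericity bookkeeping is the step I expect to demand the most care.

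\textbf{The ``further'' equivalence.} Under (i) the Gram matrix $A_{\bullet\Scal}^TA_{\bullet\Scal}$ is invertible. For $z=(z_\Scal,0)$ one has $(A^TAz)_\Scal=A_{\bullet\Scal}^TA_{\bullet\Scal}z_\Scal$ and $(A^TAz)_{\Scal^c}=A_{\bullet\Scal^c}^TA_{\bullet\Scal}z_\Scal$, so \eqref{eqn:condition_Prop_3.5} says $\|A_{\bullet\Scal}^TA_{\bullet\Scal}z_\Scal\|_\infty>\|A_{\bullet\Scal^c}^TA_{\bullet\Scal}z_\Scal\|_\infty$ for all $z_\Scal\neq 0$. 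Introducing the bijective change of variable $u:=A_{\bullet\Scal}^TA_{\bullet\Scal}z_\Scal$ and $M:=(A_{\bullet\Scal}^TA_{\bullet\Scal})^{-1}A_{\bullet\Scal}^TA_{\bullet\Scal^c}$, this is $\|u\|_\infty>\|M^T u\|_\infty$ for all $u\neq 0$, equivalently $\|M^T\|_{\infty\to\infty}<1$ (the supremum is attained on the $\ell_\infty$ unit sphere, so the strict pointwise inequality is exactly the operator-norm bound). Since $\|M^T\|_{\infty\to\infty}$ is the maximum absolute row sum of $M^T$, which is the maximum absolute column sum of $M$, i.e.\ the matrix $1$-norm $\|M\|_1$, this is precisely \eqref{eqn:exact_recovery_condition}.
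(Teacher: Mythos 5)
Your sufficiency argument, your necessity argument for (i), and the final equivalence of \eqref{eqn:condition_Prop_3.5} with \eqref{eqn:exact_recovery_condition} via the $\ell_\infty$ operator norm are all sound. The genuine gap is exactly the step you flagged: the necessity of (ii) in the case where every witness $z$ to the failure of \eqref{eqn:condition_Prop_3.5} has $\supp(z) \subset \Scal$ (proper inclusion) and achieves equality, $\max_{j\in\Scal}|(A^TAz)_j| = \max_{j\in\Scal^c}|(A^TAz)_j|$, rather than a strict reversal. Perturbing such a $z$ inside $\{w : \supp(w)\subseteq\Scal\}$ only shows, by continuity, that the full-support inequality forces $q(z)\ge 0$ in the limit, never $q(z)>0$; and there is no general mechanism to ``steer'' an OMP trajectory so that this boundary vector actually arises as a residual $x-x^k$ of some run. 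Your proposed homogeneity-plus-genericity bookkeeping cannot be completed, because the boundary witnesses need not be reachable as residuals at all.

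Indeed, this step is not merely hard but impossible: the statement you were asked to prove is the one this paper sets out to refute in the necessity direction. The matrix $A$ in \eqref{eqn:A_matrix_counterexample}, with $\Scal=\{1,2,3\}$, achieves exact recovery via the OMP of every $x$ with $\supp(x)=\Scal$ (Theorem~\ref{thm:counterexample}), yet $\|(A^T_{\bullet\Scal}A_{\bullet\Scal})^{-1}A^T_{\bullet\Scal}A_{\bullet\Scal^c}\|_1=1$, and the direction $z_\Scal=\alpha(1,1,0)^T$ gives equality in \eqref{eqn:condition_Prop_3.5}; the proof of Theorem~\ref{thm:counterexample} verifies that along every OMP run the residuals after steps one and two never point in this bad direction, which is precisely why recovery still succeeds. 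So the paper offers no proof of Proposition~\ref{prop:Foucart_prop3.5}: it quotes it from the literature, isolates the faulty implication \eqref{eqn:necessity_implication} in the published necessity arguments, and shows that necessity of (ii) survives only for $|\Scal|\le 2$ (Theorem~\ref{thm:nonconstrained_S2}) and for almost all matrices, but fails already at $|\Scal|=3$. Your sufficiency half and the norm equivalence are salvageable as stated; the biconditional is not.
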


The ``proof'' of this proposition can be found on page 68 of the well received monograph \cite{FoucartRauhut_book2013} by Foucart and Rauhut, and its equivalent condition (\ref{eqn:exact_recovery_condition}) in term of the matrix 1-norm follows from  \cite[Remark 3.6]{FoucartRauhut_book2013}.
Also see a similar sufficiency proof in \cite[Theorem 3.1]{Tropp_ITI04} and a ``necessity'' proof in \cite[Theorem 3.10]{Tropp_ITI04}, where condition (\ref{eqn:exact_recovery_condition}) is referred to as the {\em exact recovery condition} coined by Tropp in \cite{Tropp_ITI04}.
Clearly, conditions (i) and (ii) are sufficient for the exact vector recovery.
%
%
Further, condition (i) is necessary for the exact vector recovery. However, we find that condition (ii) only {\em partially} holds for the necessity of the exact vector recovery.
Specifically, condition (ii) is necessary  when the index set $\Scal$ satisfies $|\Scal|=1$ or $|\Scal|=2$; when $|\Scal|=3$, we construct a nontrivial counterexample (i.e., a matrix $A$) such that any nonzero vector $x \in \mathbb R^N$ with $\supp(x)=\Scal$ is exactly recovered via the OMP using the matrix $A$ but this $A$ does not satisfy (\ref{eqn:condition_Prop_3.5}) or its equivalence (\ref{eqn:exact_recovery_condition}).

The construction of our counterexample is motivated by an unsuccessful attempt to justify the following implication, which is the last key step given in the necessity proof for \cite[Proposition 3.5]{FoucartRauhut_book2013}:
\begin{align}
\left[ \ \max_{j \in \Scal} \big| (A^T A z)_j \big | \, > \, \max_{j \in \Scal^c} \big |(A^T A z)_j \big|, \quad \forall \ 0 \ne z \in \mathbb R^{N} \ \mbox{ with } \ \supp(z) = \Scal \ \right] \, \Longrightarrow  \notag \\
\left[ \ \max_{j \in \Scal} \big| (A^T A z)_j \big | \, > \, \max_{j \in \Scal^c} \big |(A^T A z)_j \big|, \quad \forall \ 0\ne z \in \mathbb R^{N} \ \mbox{ with } \ \supp(z) \subseteq \Scal \ \right ],  \label{eqn:necessity_implication}
\end{align}
where we assume that the exact vector recovery is achieved and $A_{\bullet \Scal}$ has full column rank. Note that the hypothesis of the implication given by (\ref{eqn:necessity_implication})  holds since it follows from the first step of the OMP using $A$.
%
%
To elaborate an underlying reason for the failure of this implication, we define the function $q(z):=  \max_{j \in \Scal} | (A^T A z)_j | - \max_{j \in \Scal^c} |(A^T A z)_j |$ for $z \in \mathbb R^N$ and the set $\mathcal R:=\{ z \in \mathbb R^N \, | \, z \ne 0, \ \supp(z) = \Scal \}$. Clearly, $q(\cdot)$ is continuous. Further, any nonzero $\wt z \in \mathbb R^N$ with $\supp(\wt z) \subset \Scal$ is on the boundary of $\mathcal R$ such that there exists a sequence $(z_k)$ in $\mathcal R$ converging to $\wt z$. Hence, the sequence $(q(z_k))$ converges to $q(\wt z)$, where each $q(z_k)>0$ in view of the hypothesis of the implication (\ref{eqn:necessity_implication}). However, one can only conclude that $q(\wt z) \ge 0$ instead $q(\wt z)>0$.
%
%
 The counterexample we construct shows that when $|\Scal|=3$, there exists a matrix $A$ achieving the exact vector recovery via the OMP but the corresponding $q(\wt z)=0$ for some $0\ne \wt z \in \mathbb R^N$ with $\supp(\wt z) \subset \Scal$; see Remark~\ref{remark:counter_example} for details. This example invalidates the  implication (\ref{eqn:necessity_implication}).

A similar argument also explains the failure of Tropp's necessity proof in \cite[Theorem 3.10]{Tropp_ITI04}. In fact, the (nonzero) signal ${\mathbf s}_{bad}$ constructed in that proof is shown to satisfy $\rho({\mathbf s}_{bad})\ge 1$, which is equivalent to $q({\mathbf s}_{bad}) \le 0$. However, if $\supp({\mathbf s}_{bad})$ is a proper subset of the index set $\Lambda_{opt}$, which is equivalent to the index set $\Scal$ defined above, then the argument based on the first step of the OMP used in the proof for \cite[Theorem 3.10]{Tropp_ITI04} becomes invalid. In fact,
the counterexample we construct shows that when $|\Scal|=3$, there exists a matrix $A$ achieving the exact vector recovery via the OMP but a nonzero $\wt z$ with $\supp(\wt z) \subset \Scal$ exists such that the corresponding $q(\wt z)=0$ or equivalently $\rho(\wt z) =1$. See Remark~\ref{remark:counter_example} for details.
\footnote{In a private communication, Dr. Joel A. Tropp pointed out to the authors that this issue may be related to the borderline case indicated in Footnote 2 in his paper \cite{Tropp_ITI04}.}

%
%

We introduce more assumptions and notation through the rest of the development in this section. Consider a matrix $A \in \mathbb R^{m\times N}$ with unit columns, i.e., $\|A_{\bullet i}\|_2=1$ for each $i=1, \ldots, N$. Define $\vartheta_{ij}:=\langle A_{\bullet i}, A_{\bullet j} \rangle$ for $i, j \in \{1, \ldots, N\}$, and for each $i$, define the function
\begin{equation} \label{eqn:g_i_def}
  g_i(z) \, :=  \, \big | \langle A_{\bullet i}, A z \rangle  \big | \, = \, \Big | \sum^N_{j=1} \vartheta_{ij} z_j   \Big |, \qquad \quad \forall \, z =(z_1, \ldots, z_N)^T \in \mathbb R^N.
\end{equation}

%
\subsubsection{Positive Necessity Results and Their Implications}

 This subsection presents certain cases where condition (\ref{eqn:exact_recovery_condition}) (or equivalently (\ref{eqn:condition_Prop_3.5})) is indeed necessary for the exact vector recovery for a given support $\Scal$. The first result shows that \cite[Proposition 3.5]{FoucartRauhut_book2013} (or Proposition~\ref{prop:Foucart_prop3.5} of the present paper) holds when the index set $\Scal$ is of size 1 or 2.

\begin{theorem} \label{thm:nonconstrained_S2}
  For a matrix $A \in \mathbb R^{m\times N}$ with unit columns and an index set $\Scal$ with $|\Scal|=1$ or $|\Scal|=2$,  the exact vector recovery of every nonzero vector $x \in \mathbb R^N$ with $\supp(x)=\Scal$ is achieved from $y=A x$ via the OMP if and only if the conditions (i) and (ii) in Proposition~\ref{prop:Foucart_prop3.5} hold.
\end{theorem}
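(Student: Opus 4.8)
The plan is to split the biconditional. For the ``if'' direction nothing new is needed: the \emph{sufficiency} half of Proposition~\ref{prop:Foucart_prop3.5} (equivalently \cite[Theorem 3.1]{Tropp_ITI04}) already shows that (i) and (ii) force exact vector recovery for \emph{any} support $\Scal$, irrespective of its size --- only the necessity of (ii) was ever in question. Thus the whole content of the theorem is the ``only if'' direction when $|\Scal|\le 2$, which I would break into the necessity of (i) and then the necessity of (ii).

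Necessity of (i) is quick and size-independent. Assume exact vector recovery holds on $\Scal$. Then for each $z$ with $\supp(z)=\Scal$ exact support recovery of $z$ holds, so some step reaches $\Jcal_s=\Scal$; at that step Line~7 of Algorithm~\ref{algo:constrained_MP} (with $\Pcal=\mathbb R^N$, $y=Az$) solves $\min_{\supp(w)\subseteq\Scal}\|Aw-Az\|^2_2$, whose optimal set is $\{\,z+v:\supp(v)\subseteq\Scal,\ A_{\bullet\Scal}v_\Scal=0\,\}$. If $A_{\bullet\Scal}$ were not injective this set has more than one point, contradicting the uniqueness clause of exact vector recovery; hence $A_{\bullet\Scal}$ is injective. (Conversely, once (i) holds, injectivity makes $z$ the \emph{unique} Line~7 minimizer once $\Jcal_s=\Scal$, so the ``vector'' clause is automatic --- this is why only condition (ii) carries real content.)

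For the necessity of (ii), assume exact vector recovery on $\Scal$, so (i) holds. Examining the first step of OMP ($x^0=0$, $\Jcal_0=\emptyset$), Corollary~\ref{coro:Nec_Suf_suppt_recovery_RN_RN+}(i) at $k=0$ gives $\max_{j\in\Scal}g_j(z)>\max_{j\in\Scal^c}g_j(z)$ for every $0\ne z$ with $\supp(z)=\Scal$, where $g_j$ is the positively homogeneous function in (\ref{eqn:g_i_def}). The target (\ref{eqn:condition_Prop_3.5}) is the \emph{same} strict inequality for every $0\ne z$ with $\supp(z)\subseteq\Scal$ (its equivalence with (\ref{eqn:exact_recovery_condition}) is already part of Proposition~\ref{prop:Foucart_prop3.5}). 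When $|\Scal|=1$ the two statements coincide, since any $z$ with $\supp(z)\subset\Scal$ is zero. When $|\Scal|=2$, write $\Scal=\{a,b\}$ and argue by contradiction: suppose the inequality fails at some $0\ne z$ with, say, $\supp(z)=\{a\}$. By homogeneity normalize to $z=\mathbf e_a$; since $g_a(\mathbf e_a)=\|A_{\bullet a}\|^2_2=1$ and $g_j(\mathbf e_a)=|\vartheta_{ja}|\le 1$ for all $j$, the failure forces $|\vartheta_{j_0a}|=1$, i.e.\ $A_{\bullet j_0}=\epsilon A_{\bullet a}$ with $\epsilon\in\{1,-1\}$, for some $j_0\notin\Scal$. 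I would then test the step-$0$ inequality on $z_\delta:=\epsilon\mathbf e_a+\delta\mathbf e_b$ with $0<|\delta|<1$ (note $\supp(z_\delta)=\Scal$): a short computation gives $g_a(z_\delta)=|1+\epsilon\delta\vartheta_{ab}|=g_{j_0}(z_\delta)$ and $g_a(z_\delta)^2-g_b(z_\delta)^2=(1-\vartheta_{ab}^2)(1-\delta^2)>0$, using $|\vartheta_{ab}|<1$ from (i). Hence $\max_{j\in\Scal}g_j(z_\delta)=g_a(z_\delta)=g_{j_0}(z_\delta)\le\max_{j\in\Scal^c}g_j(z_\delta)$, contradicting the strict inequality forced at $z_\delta$. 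Therefore (\ref{eqn:condition_Prop_3.5}), hence (\ref{eqn:exact_recovery_condition}), holds, completing the necessity.

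The hard part is exactly this last passage from ``$\supp(z)=\Scal$'' to ``$\supp(z)\subseteq\Scal$'': the naive continuity argument only yields a non-strict inequality on the boundary, which is precisely the gap discussed around (\ref{eqn:necessity_implication}). The perturbation $z_\delta$ is engineered so the offending column $A_{\bullet j_0}$ ``ties'' with $A_{\bullet a}$ while $b$ stays strictly subdominant (the sign of $(1-\vartheta_{ab}^2)(1-\delta^2)$); the absence of any analogous two-parameter device for $|\Scal|=3$ is exactly what leaves room for the counterexample built later in Section~\ref{subsect:OMP_counterexample}.
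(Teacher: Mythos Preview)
Your proof is correct and follows the same overall architecture as the paper's: sufficiency is cited from Proposition~\ref{prop:Foucart_prop3.5}, necessity of (i) is handled by the uniqueness clause of exact vector recovery, and necessity of (ii) reduces to upgrading the step-$0$ inequality from $\supp(z)=\Scal$ to $\supp(z)\subseteq\Scal$. Both proofs observe that a failure on the boundary $\supp(z)=\{a\}$ forces $|\vartheta_{j_0a}|=1$, i.e., $A_{\bullet j_0}=\pm A_{\bullet a}$ for some $j_0\in\Scal^c$.

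The difference lies in how the contradiction is closed. The paper perturbs $z^*$ by an \emph{arbitrary} small $v$ with $\supp(v)=\Scal$, linearizes both sides of (\ref{eqn:R^N_S=1,2}) through four sign cases, and arrives at a linear inequality $(\sgn(z^*_1)\,p-\sgn(\vartheta_{i1}z^*_1)\,q)^Tv_\Scal>0$ that must then also hold for $-v_\Scal$, which is absurd. You instead construct a single explicit witness $z_\delta=\epsilon\,\mathbf e_a+\delta\,\mathbf e_b$ and verify directly that $g_a(z_\delta)=g_{j_0}(z_\delta)$ while $g_a(z_\delta)>g_b(z_\delta)$ via the neat factorization $g_a^2-g_b^2=(1-\vartheta_{ab}^2)(1-\delta^2)$. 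Your route is shorter and avoids the case split; the paper's local-linearization argument is a bit more mechanical but makes the role of the sign symmetry $v\mapsto -v$ (precisely the mechanism that no longer suffices when $|\Scal|=3$) somewhat more transparent.
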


\begin{proof}
 In light of the prior discussions and the argument for \cite[Proposition 3.5]{FoucartRauhut_book2013}, we only need to show that the implication (\ref{eqn:necessity_implication}) holds when $A$ achieves the exact vector recovery via the OMP and $A_{\bullet \Scal}$ has full column rank. The case of $|\Scal|=1$ is trivial, and we focus on the case of $|\Scal|=2$ as follows. Without loss of generality, let $\Scal=\{ 1, 2\}$. In view of $g_i$'s defined in (\ref{eqn:g_i_def}), it suffices to show that if
 $\max( g_1(z), g_2(z)) > \max_{i \ge 3} g_i (z), \forall \, z \mbox{ with } \supp(z)=\{1, 2\}$, then $ \max (g_1(z), g_2(z)) > \max_{i \ge 3} g_i (z), \forall \,  z$ with $\supp(z)=\{1\}$ or $\supp(z)=\{2 \}$.  Since $A_{\bullet \Scal}$ has full column rank, the $2\times 2$ matrix
 $
   A^T_{\bullet \Scal} A_{\bullet \Scal} = \begin{bmatrix} 1 & \vartheta_{12} \\ \vartheta_{12} & 1 \end{bmatrix}
 $
 is positive definite. Hence, $|\vartheta_{12}| <1$. For any $z$ with  $\supp(z)=\{1\}$, we have $\max (g_1(z), g_2(z)) = \max( |z_1|, |\vartheta_{12} z_1|)= g_1(z)> g_2(z)$ because $z_1 \ne 0$ and $|\vartheta_{12}| <1$. Similarly,  $\max (g_1(z), g_2(z)) = g_2(z)> g_1(z)$ when $\supp(z)=\{2\}$.

In what follows, we consider an arbitrary $z^*$ with $\supp(z^*)=\{1\}$ first. Note that $g_j(z^*)=|\vartheta_{j1} z^*_1|$ for each $j$, where $z^*_1\ne 0$.
Since $z^*$ is on the boundary of $\mathcal R:=\{ z \in \mathbb R^N \, | \, \supp(z)=\{1,2\} \}$ on which $\max( g_1(z), g_2(z)) > \max_{i \ge 3} g_i (z)$, we deduce via the continuity of $g_i$'s that $g_1(z^*)=\max (g_1(z^*), g_2(z^*)) \ge g_i(z^*)$ for each $i \ge 3$.
We show next that $g_1(z^*)> g_i (z^*)$ for all $i \ge 3$ by contradiction. Suppose, in contrast, $g_1(z^*) = g_i(z^*)$ for some $i \ge 3$, i.e., $|z^*_1| = | \vartheta_{i 1} z^*_1|=\gamma$. For any $v \in \mathbb R^N$ with $\supp(v)=\{1, 2\}$ and $\| v\|_2>0$ sufficiently small, $\max (g_1(z^*+v), g_2(z^*+v) ) = g_1(z^*+v)$ due to $g_1(z^*)> g_2(z^*)$,  and $z^*+v \in \mathcal R$ so that $g_1(z^*+v) > g_i(z^*+v)$. Therefore, we have
\begin{equation} \label{eqn:R^N_S=1,2}
 |z^*_1 + p^T v_{\Scal} | \, > \, | \vartheta_{i 1} z^*_1 + q^T v_{\Scal}|,
\end{equation}
where $p=(1, \vartheta_{12})^T$, $q=(\vartheta_{i1},  \vartheta_{i2})^T$, and $v_\Scal=(v_1, v_2)^T \in \mathbb R^2$.
 Letting $\gamma:=|z^*_1|>0$, we obtain four possible cases from $|z^*_1| = | \vartheta_{i 1} z^*_1|$: (i) $(z^*_1,  \vartheta_{i 1} z^*_1)=(\gamma, \gamma)$; (ii) $(z^*_1,  \vartheta_{i 1} z^*_1)=(\gamma, -\gamma)$; (iii) $(z^*_1,  \vartheta_{i 1} z^*_1)=(-\gamma, \gamma)$; and (iv) $(z^*_1,  \vartheta_{i 1} z^*_1)=(-\gamma, -\gamma)$. In each of these cases, it follows from (\ref{eqn:R^N_S=1,2}) that  $( \sgn(z^*_1) \cdot p - \sgn(\vartheta_{i 1} z^*_1) \cdot q)^T v_{\Scal}>0$ for all $\| v_\Scal\|>0$ sufficiently small, where $\sgn(\cdot)$ is the signum function. In view of $\supp(v_\Scal)=\supp(-v_\Scal)$, we have $( \sgn(z^*_1) \cdot p - \sgn(\vartheta_{i 1} z^*_1) \cdot q)^T v_{\Scal}>0$ and $( \sgn(z^*_1) \cdot p - \sgn(\vartheta_{i 1} z^*_1) \cdot q)^T (-v_{\Scal})>0$ for all $\| v_\Scal \|_2>0$ sufficiently small. This yields a contradiction. Hence, $\max (g_1(z^*), g_2(z^*))> g_i (z^*)$ for all $i \ge 3$ when $\supp(z^*)=\{ 1\}$. The case of $\supp(z^*)=\{ 2\}$ also follows by interchanging the roles of $g_1$ and $g_2$. Consequently, the implication (\ref{eqn:necessity_implication}) holds, which leads to condition (ii) in  Proposition~\ref{prop:Foucart_prop3.5}.
\end{proof}

By leveraging the necessary and sufficient recovery conditions in Theorem~\ref{thm:nonconstrained_S2} for a fixed support of size 2, we show that condition $(\mathbf H)$ is necessary for the exact vector or support recovery on $\Sigma_2$.

\begin{corollary} \label{coro:condition_H'_necessary_RN_S2}
   Let $A \in \mathbb R^{m\times N}$ have unit columns. Then $A$ achieves the exact vector recovery on $\Sigma_2$ if and only if (i) condition $(\mathbf H)$ holds on $\Sigma_2$, and (ii) any two distinct columns of $A$ are linearly independent.
\end{corollary}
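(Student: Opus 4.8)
The plan is to prove the equivalence by connecting the exact vector recovery on $\Sigma_2$ with the support-size-$2$ analysis of Theorem~\ref{thm:nonconstrained_S2}, and then upgrading exact support recovery to exact vector recovery using the full column rank condition. For the ``if'' direction, assume condition $(\mathbf H)$ holds on $\Sigma_2$ and any two distinct columns of $A$ are linearly independent. By Proposition~\ref{prop:condition_H_suppt_recovery}, condition $(\mathbf H)$ yields the exact support recovery on $\Sigma_2\cap \mathbb R^N=\Sigma_2$. So it remains to check that once $\Jcal_s=\supp(z)$ is reached for some $0\ne z\in\Sigma_2$, the problem in Line 7 of Algorithm~\ref{algo:constrained_MP} has $z$ as its unique solution. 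If $|\supp(z)|=1$, this is immediate since $A_{\bullet\supp(z)}$ is a single nonzero column (Assumption A.1), so $\min_{w\in\mathbb R^N,\ \supp(w)\subseteq\supp(z)}\|Aw-Az\|_2^2$ is strongly convex in the single free coordinate and $z$ is its unique minimizer. If $|\supp(z)|=2$, then $A_{\bullet\supp(z)}$ has two linearly independent columns by hypothesis, hence full column rank; invoking Lemma~\ref{lemma:sol_existence}(ii) with $\Pcal=\mathbb R^N$ convex, the problem $(\mathrm P_{Az,\supp(z)})$ has a unique optimal solution, which must be $z$ itself (since $z$ is feasible with zero objective value). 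This gives exact vector recovery of every $0\ne z\in\Sigma_2$, i.e., on $\Sigma_2$.

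For the ``only if'' direction, assume $A$ achieves exact vector recovery on $\Sigma_2$. First I would extract (ii): if two distinct columns $A_{\bullet i}$, $A_{\bullet j}$ were linearly dependent, say $A_{\bullet j}=c\,A_{\bullet i}$ with $c\neq 0$ (both unit columns, so $c=\pm1$), then consider $z=\mathbf e_i$. We have $\supp(z)=\{i\}$, but any $w$ with $\supp(w)\subseteq\{i\}$ gives $Aw=w_iA_{\bullet i}$, and at Step 1 the coordinate-wise problem for index $j$ yields the same optimal residual as for index $i$ (since $A_{\bullet j}$ spans the same line), so $j\in\Argmin_{\ell}f^*_\ell(z,0)$ and the algorithm may select $j^*_1=j\notin\supp(z)$, contradicting exact support recovery (hence exact vector recovery). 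So (ii) holds. Next, for (i): by the Remark following Proposition~\ref{prop:condition_H_suppt_recovery}, we need that exact vector recovery on $\Sigma_2$ forces condition $(\mathbf H)$ on $\Sigma_2$. Since $\Pcal=\mathbb R^N$, for any $0\ne u\in\Sigma_2$ and any $\Jcal\subset\supp(u)$, the relevant optimal solution $v$ of $\min_{\supp(w)\subseteq\Jcal}\|A(u-w)\|_2^2$ is the (unconstrained over $\Jcal$-coordinates) least-squares fit; using $f^*_j(u,v)$ from \eqref{eqn:f*_RN} and the identity $z:=u-v$ with $\supp(z)\setminus\Jcal=\supp(u)\setminus\Jcal$ (Lemma~\ref{lem:support_result}), the inequality \eqref{eqn:condition_H'} becomes $\max_{j\in\supp(u)\setminus\Jcal}|A_{\bullet j}^TAz|>\max_{j\in[\supp(u)]^c}|A_{\bullet j}^TAz|$ where $Az=A(u-v)$, which is exactly the statement that $g$-dominance over the ``missing'' part of $\supp(u)$ holds. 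The two cases are $|\supp(u)|=2,\ \Jcal=\emptyset$ (reducing to the size-$2$ recovery inequality at Step~1, which holds by the first step of Algorithm~\ref{algo:constrained_MP} since exact support recovery is achieved) and $|\supp(u)|=2,\ |\Jcal|=1$, i.e., $\supp(u)\setminus\Jcal$ is a singleton. For the latter I would invoke the argument already developed in the proof of Theorem~\ref{thm:nonconstrained_S2}: exact vector recovery on $\Sigma_2$ implies (through Step~1 applied to vectors supported on the relevant pair, together with the continuity/boundary argument and the sign-case analysis) that the strict inequality $g_i(z^*)>g_\ell(z^*)$ propagates from support-size-$2$ vectors to their boundary points of support size $1$.

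The main obstacle is the ``only if'' direction's verification of $(\mathbf H)$ in the case $|\Jcal|=1$: here $v\neq 0$ in general, so $Az=A(u-v)$ is not simply a multiple of a column, and one cannot directly quote Step~1 of the algorithm run on $u$ (whose first step uses residual $Au$, not $A(u-v)$). The fix is to reinterpret: the vector $z^\sharp:=u-v$ has $\supp(z^\sharp)=\supp(u)$ (full support of size $2$, since $v$ has support inside $\Jcal$ and the $\Jcal\cap\supp(u)$ coordinate of $u-v$ could vanish — one must be careful here, and if it does vanish then $\supp(z^\sharp)$ has size $1$ and $(\mathbf H)$'s inequality reduces to a size-$1$ comparison which again follows from the Step~1 analysis for that singleton support). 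In the generic case $\supp(u-v)=\supp(u)$, one applies the hypothesis of the implication \eqref{eqn:necessity_implication} — valid since exact vector recovery (hence the Step~1 inequality for all size-$2$ supports, via Theorem~\ref{thm:nonconstrained_S2}) gives $\max_{j\in\Scal}g_j(z^\sharp)>\max_{j\in\Scal^c}g_j(z^\sharp)$ with $\Scal=\supp(u)$ — and then observes that the maximizing index on $\Scal$ must lie in $\supp(u)\setminus\Jcal$: indeed the VI optimality of $v$ (Proposition~\ref{prop:negative_second_term} or the stationarity condition $A_{\bullet\Jcal}^TA(v-u)=0$ for the unconstrained $\mathbb R^N$ case) forces $A_{\bullet j}^TA(u-v)=0$ for $j\in\Jcal$, so $g_j(z^\sharp)=0$ on $\Jcal$, and hence $\max_{j\in\Scal}g_j(z^\sharp)=\max_{j\in\supp(u)\setminus\Jcal}g_j(z^\sharp)$. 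This is precisely \eqref{eqn:condition_H'}, completing the proof.
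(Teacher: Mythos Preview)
Your proof is correct and the ``if'' direction matches the paper's argument. For the ``only if'' direction, however, you take a genuinely different and arguably cleaner route than the paper in verifying condition $(\mathbf H)$ when $|\Jcal|=1$.

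The paper's approach is computational: it invokes Theorem~\ref{thm:nonconstrained_S2} to obtain the explicit equivalent condition \eqref{eqn:condtion_H'_necessary_RN_S2}, namely $1-\vartheta_{12}^2 > \max_{j\in\Scal^c}(|\vartheta_{j1}-\vartheta_{j2}\vartheta_{12}|+|\vartheta_{j2}-\vartheta_{j1}\vartheta_{12}|)$, and then for the ``other'' singleton $\Jcal$ (the one not selected as $\Jcal_1$ by the algorithm) it computes $A^T_{\bullet j}A(z-v^*)=(\vartheta_{j1}-\vartheta_{j2}\vartheta_{12})z_1$ directly and compares against the bound. Your approach is structural: you use the unconstrained least-squares stationarity $A^T_{\bullet\Jcal}A(u-v)=0$ to kill $g_j(z^\sharp)$ on $\Jcal$, so that $\max_{j\in\Scal}g_j(z^\sharp)=\max_{j\in\Scal\setminus\Jcal}g_j(z^\sharp)$, and then apply Theorem~\ref{thm:nonconstrained_S2}'s conclusion (condition~(ii) of Proposition~\ref{prop:Foucart_prop3.5}, valid for all $0\ne z$ with $\supp(z)\subseteq\Scal$) to the residual $z^\sharp=u-v$. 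This avoids the explicit $\vartheta_{ij}$ bookkeeping and makes transparent why the inequality for an arbitrary singleton $\Jcal$ follows, not just the one visited by the algorithm. Both arguments rely on Theorem~\ref{thm:nonconstrained_S2} in the same essential way; yours just uses its abstract form rather than the numerical form~\eqref{eqn:condtion_H'_necessary_RN_S2}.

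Two minor points to tighten. First, you omit the trivial case $|\supp(u)|=1$, $\Jcal=\emptyset$ in verifying $(\mathbf H)$; the paper notes this explicitly (it follows at once from Step~1 of the algorithm applied to $u$). Second, in your case split on $\supp(z^\sharp)$, note that $(z^\sharp)_{\Scal\setminus\Jcal}=u_{\Scal\setminus\Jcal}\ne 0$ always, so $z^\sharp\ne 0$ and the only degenerate possibility is $\supp(z^\sharp)=\Scal\setminus\Jcal$ (a singleton); your handling of that via the Step-1 inequality for the singleton support is correct since $[\supp(u)]^c\subset[\Scal\setminus\Jcal]^c$.
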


\begin{proof}
``If''. In view of  Proposition~\ref{prop:condition_H_suppt_recovery}, condition $(\mathbf H)$ yields the exact support recovery on $\Sigma_2$.  Besides, condition (ii) guarantees that the unique $x^2$ equals $z$ for any $z \in \Sigma_2$ with $|\supp(z)|=2$. It also ensures that the unique $x^1=z$ for any $z \in \Sigma_2$ with $|\supp(z)|=1$. This yields the exact vector recovery on $\Sigma_2$.

``Only if''. Suppose $A$ achieves the exact vector recovery on $\Sigma_2$. Clearly, condition (ii) is necessary as shown before. To show that condition (i) is also necessary, consider a vector $z \in \Sigma_2$ with $|\supp(z)|=2$.
Without loss of generality, we assume that $\supp(z)=\{ 1, 2\}$. Since $A$ achieves the exact vector recovery on $\Sigma_2$, it must achieve the exact support recovery for the fixed support $\Scal=\{1, 2 \}$. Hence it follows from Theorem~\ref{thm:nonconstrained_S2} that  $\| (A^T_{\bullet \Scal} A_{\bullet \Scal} )^{-1} A^T_{\bullet \Scal} A_{\bullet \Scal^c} \|_1 < 1$, which is equivalent to
\begin{equation} \label{eqn:condtion_H'_necessary_RN_S2}
   1-\vartheta^2_{12} \, > \, \max_{j\in \Scal^c} \big( \, |\vartheta_{j1}- \vartheta_{j2} \vartheta_{12}| + |\vartheta_{j2}- \vartheta_{j1} \vartheta_{12}| \, \big).
\end{equation}
Consider the three proper subsets of $\supp(z)=\{ 1, 2\}$, i.e., $\Jcal=\emptyset$, $\Jcal=\{1 \}$, and $\Jcal=\{2 \}$. When $\Jcal=\emptyset $, the inequality (\ref{eqn:condition_H'}) holds for $u=z$ and $v=0$ in light of  $\max_{j \in \Scal} \big| (A^T A z)_j \big | \, > \, \max_{j \in \Scal^c} \big |(A^T A z)_j \big|$ obtained from the first step of the OMP. Moreover, we have either $|z_1 + \vartheta_{12} z_2| \ge |\vartheta_{12} z_1 + z_2|$ or $|z_1 + \vartheta_{12} z_2| \le |\vartheta_{12} z_1 + z_2|$. For the former case, we deduce from the exact support recovery of $z$ via the OMP that $j^*_1=1$ and $\Jcal_1=\{ 1 \}$ such that $x^1=(A^T_{\bullet 1} A z) \mathbf e_1$ is the unique optimal solution to $\min_{\supp(w) \subseteq \Jcal_1} \| A(z - w) \|^2_2$. Hence, by Corollary~\ref{coro:Nec_Suf_suppt_recovery_RN_RN+}, the exact support recovery shows that $f^*_2(z, x^1)< \min_{j \in \Scal^c} f^*_j(z, x^1)$, leading to the inequality (\ref{eqn:condition_H'}) for $u=z$ and $v=x^1$ when $\Jcal=\{ 1 \}$. We then consider $\Jcal=\{2\}$. In this case, the unique optimal solution $v^*$ to $\min_{\supp(w) \subseteq \Jcal} \| A(z - w) \|^2_2$ is given by $v^*=(A^T_{\bullet 2} A z) \mathbf e_2 = (\vartheta_{12} z_1 + z_2) \mathbf e_2$. Therefore, $A^T_{\bullet j} A (z - v^*) = (\vartheta_{j1} -  \vartheta_{j2} \vartheta_{12} ) z_1$ for any $j$. We thus have $|A^T_{\bullet 1} A (z-v^*)|= |1 - \vartheta^2_{12}| \cdot |z_1|$ and $|A^T_{\bullet j} A (z-v^*)|= |\vartheta_{j1} - \vartheta_{12} \vartheta_{j2}| \cdot |z_1|$, where $z_1 \ne 0$.
Noting that $f^*_1(z, v^*) < \min_{j\in \Scal^c} f^*_j(z, v^*)$ if and only if $|A^T_{\bullet 1} A (z-v^*)| > \max_{j\in \Scal^c}  |A^T_{\bullet j} A (z-v^*)|$, we deduce via the above results and (\ref{eqn:condtion_H'_necessary_RN_S2}) that $f^*_1(z, v^*) < \min_{j\in \Scal^c} f^*_j(z, v^*)$,
leading to the inequality (\ref{eqn:condition_H'}) for $u=z$ and $v=v^*$ when $\Jcal=\{ 2 \}$.  The other case where $|z_1 + \vartheta_{12} z_2| \le |\vartheta_{12} z_1 + z_2|$ can be established in a similar way. Further, for any $u\in \Sigma_2$ with $|\supp(u)|=1$ and $\Jcal=\emptyset$,  the inequality (\ref{eqn:condition_H'}) also holds. Thus condition $(\mathbf H)$ holds on $\Sigma_2$.
\end{proof}

The next result shows that even though condition (\ref{eqn:exact_recovery_condition}) (or equivalently  (\ref{eqn:condition_Prop_3.5})) may fail to be necessary, it is necessary for {\em almost all} the matrices achieving the exact vector recovery associated with a fixed support $\Scal$. This result also illustrates the challenge of constructing a counterexample.
Toward this end, let $\mathcal U$ be the set of all matrices in $\mathbb R^{m\times N}$ with unit columns, i.e.,
$
   \mathcal U \, := \, \big\{ A \in \mathbb R^{m\times N} \ | \, \|A_{\bullet i} \|_2=1, \ \forall \ i=1, \ldots, N \big\}.
$
Note that $\mathcal U$ is the Cartesian product of $N$ copies of unit $\ell_2$-spheres in $\mathbb R^m$. Hence, $\mathcal U$ is a compact manifold of dimension $(m-1)N$, and it attains a (finite) positive measure with a Lebesgue measure $\mu$ on $\mathcal U$. For a fixed index set $\Scal$, define the set
\[
 \mathcal D \, := \, \big\{ A \in \mathcal U \, | \, \mbox{ $A$ achieves the exact vector recovery for the given support $\Scal$ } \big \}.
\]
Recall that for any $A \in \mathcal D$, $A_{\bullet \Scal}$ has full column rank.

\begin{proposition}
 Let the set $\mathcal D' :=\{ A \in \mathcal D \, | \, \| (A^T_{\bullet \Scal} A_{\bullet \Scal} )^{-1} A^T_{\bullet \Scal} A_{\bullet \Scal^c} \|_1 = 1\}$, and $\mu$ be a Lebesgue measure  on $\mathcal U$. Then $\mu(\mathcal D)>0$ and $\mu(\mathcal D')=0$.
\end{proposition}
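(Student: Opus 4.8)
\emph{Overview.} I would prove the two assertions separately; they share essentially no argument. Throughout write $s:=|\Scal|$. Since exact recovery forces $A_{\bullet\Scal}$ to have full column rank (as recalled just above), $\mathcal D=\emptyset$ unless $s\le m$; moreover one genuinely needs $s<m$ for $\mu(\mathcal D)>0$ (when $s=m$, unit columns and the triangle inequality give $\|A_{\bullet\Scal}^{-1}A_{\bullet j}\|_1\ge 1$ for every $j\in\Scal^c$, so (\ref{eqn:exact_recovery_condition}) fails, and inspecting the first OMP step then forces two parallel columns, a null condition). So I would work under $s<m$, noting $N-s\ge N-m\ge 1$.

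\emph{Step 1: $\mu(\mathcal D)>0$.} Consider $\mathcal O:=\{A\in\mathcal U:\ \rank(A_{\bullet\Scal})=s,\ \|(A_{\bullet\Scal}^{T}A_{\bullet\Scal})^{-1}A_{\bullet\Scal}^{T}A_{\bullet\Scal^c}\|_1<1\}$. Both defining conditions are open (the first is the nonvanishing of a Gram determinant, the second the sub-level set of a continuous function), so $\mathcal O$ is open in $\mathcal U$. It is nonempty: taking $A_{\bullet\Scal}$ with orthonormal columns and the columns of $A_{\bullet\Scal^c}$ nearly orthogonal to $\operatorname{span}(A_{\bullet\Scal})$ — possible since $m-s\ge 1$ — makes $\|A_{\bullet\Scal}^{T}A_{\bullet\Scal^c}\|_1$ arbitrarily small. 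By the equivalence of conditions (i)--(ii) with (\ref{eqn:exact_recovery_condition}) and the sufficiency half of Proposition~\ref{prop:Foucart_prop3.5}, every $A\in\mathcal O$ achieves exact vector recovery for $\Scal$, i.e.\ $\mathcal O\subseteq\mathcal D$. A nonempty open subset of the compact manifold $\mathcal U$ has positive Lebesgue measure, so $\mu(\mathcal D)\ge\mu(\mathcal O)>0$.

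\emph{Step 2: $\mu(\mathcal D')=0$.} On the open set $\mathcal U_0:=\{A\in\mathcal U:\rank(A_{\bullet\Scal})=s\}$ set $M(A):=(A_{\bullet\Scal}^{T}A_{\bullet\Scal})^{-1}A_{\bullet\Scal}^{T}A_{\bullet\Scal^c}\in\mathbb R^{s\times(N-s)}$, whose entries are rational functions of the entries of $A$. Since recovery forces full column rank, $\mathcal D'\subseteq\mathcal D\subseteq\mathcal U_0$. For $A\in\mathcal D'$ we have $\|M(A)\|_1=\max_{1\le j\le N-s}\sum_{i=1}^{s}|M_{ij}(A)|=1$, so the maximum is attained at some column index $j$, and choosing $\sigma\in\{-1,1\}^{s}$ to match the signs of that column yields $\sum_{i}\sigma_iM_{ij}(A)=1$. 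Hence, writing $h_{j,\sigma}(A):=\langle\sigma,M_{\bullet j}(A)\rangle-1$,
\[
  \mathcal D'\ \subseteq\ \bigcup_{j=1}^{N-s}\ \bigcup_{\sigma\in\{-1,1\}^{s}}\ \{A\in\mathcal U_0:\ h_{j,\sigma}(A)=0\},
\]
a finite union, so it suffices to show each set on the right is $\mu$-null. I would argue by Fubini: $\mu$ is the product of the $N$ round measures on $S^{m-1}$, and $j\in\Scal^c$, so $A_{\bullet j}$ is not among the columns indexed by $\Scal$; integrate the $A_{\bullet j}$-coordinate last. For fixed values of the remaining columns with $A_{\bullet\Scal}$ of full column rank, $h_{j,\sigma}(A)=\langle P^{T}\sigma,A_{\bullet j}\rangle-1$ where $P:=(A_{\bullet\Scal}^{T}A_{\bullet\Scal})^{-1}A_{\bullet\Scal}^{T}$ has full row rank, so $P^{T}$ is injective and $c:=P^{T}\sigma\ne 0$; thus the slice $\{A_{\bullet j}\in S^{m-1}:h_{j,\sigma}(A)=0\}=S^{m-1}\cap\{a:\langle c,a\rangle=1\}$ is the intersection of the sphere with an affine hyperplane missing the origin — an $(m-2)$-sphere, a point, or empty — hence null in $S^{m-1}$; the remaining configurations, where $A_{\bullet\Scal}$ is rank-deficient, also form a null set. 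Therefore each $\{h_{j,\sigma}=0\}$ is $\mu$-null, and $\mu(\mathcal D')=0$.

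\emph{Main obstacle.} The computations are light; the points needing care are (a) the nonemptiness of $\mathcal O$, which is exactly where the hypothesis $s<m$ is used, and (b) organizing Step 2 so that the absolute values built into the matrix $1$-norm are eliminated (via the sign vectors $\sigma$) \emph{before} slicing — this is what reduces $\mathcal D'$ to a finite union of sphere--hyperplane intersections, after which Fubini applies directly and avoids any global connectedness discussion of $\mathcal U_0$.
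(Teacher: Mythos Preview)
Your proof is correct and, in outline, close to the paper's for Step~1 but genuinely different for Step~2.

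For $\mu(\mathcal D)>0$, both you and the paper exhibit the open set of matrices satisfying the strict inequality $\|(A_{\bullet\Scal}^{T}A_{\bullet\Scal})^{-1}A_{\bullet\Scal}^{T}A_{\bullet\Scal^c}\|_1<1$ (the paper calls it $\mathcal D''$), note it is contained in $\mathcal D$ by the sufficiency half of Proposition~\ref{prop:Foucart_prop3.5}, and conclude. You are more careful than the paper in actually verifying that this set is nonempty (via an explicit near-orthogonal construction using $s<m$); the paper simply asserts $\mu(\mathcal D'')>0$ from openness. Your preliminary discussion of the borderline case $s=m$ is extra content the paper does not address.

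For $\mu(\mathcal D')=0$, the two arguments diverge. The paper covers $\mathcal D'$ by the sets $\mathcal W_j=\{A:\|[(A_{\bullet\Scal}^{T}A_{\bullet\Scal})^{-1}A_{\bullet\Scal}^{T}A_{\bullet\Scal^c}]_{\bullet j}\|_1=1\}$, then observes that each $\mathcal W_j$ is contained in a finite union of zero sets of (nonzero) polynomial functions of the entries of $A$, and appeals to the general fact that such zero sets are lower-dimensional submanifolds of $\mathcal U$, hence $\mu$-null. Your route is more concrete: after the same sign-vector decomposition you exploit the product structure $\mu=\bigotimes_i\mu_{S^{m-1}}$ and Fubini, freezing all columns except $A_{\bullet j}$ with $j\in\Scal^c$, so that the remaining condition is the affine equation $\langle P^{T}\sigma,\,A_{\bullet j}\rangle=1$ on $S^{m-1}$; the slice is then a lower-dimensional sphere (or a point, or empty) and hence null. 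Your argument avoids any appeal to real-algebraic geometry or submanifold regularity and uses only elementary measure theory; the paper's approach, by contrast, handles all the column variables symmetrically at once but leans on the ``polynomial zero set has measure zero'' principle. Both are valid; yours is the more self-contained.
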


\begin{proof}
For a given matrix $A\in \mathcal D$, we recall the function $q(z):=  \max_{j \in \Scal} | (A^T A z)_j | - \max_{j \in \Scal^c} |(A^T A z)_j |$ for $z \in \mathbb R^N$ and the set $\mathcal R:=\{ z \in \mathbb R^N \, | \, z \ne 0, \ \supp(z) = \Scal \}$ given below (\ref{eqn:necessity_implication}). Since $A$ achieves  the exact vector recovery for the given support $\Scal$, we have $q(z)>0$ for all $z \in \mathcal R$. Moreover,
it follows from the discussioins below (\ref{eqn:necessity_implication}) that $q(\wt z) \ge 0$ for any nonzero $\wt z \in \mathbb R^N$ with $\supp(\wt z) \subset \Scal$.
%
%
  By a similar argument for \cite[Remark 3.6]{FoucartRauhut_book2013}, we have $\| (A^T_{\bullet \Scal} A_{\bullet \Scal} )^{-1} A^T_{\bullet \Scal} A_{\bullet \Scal^c} \|_1 \le  1$ for any $A \in \mathcal D$.

  Define the set $\mathcal D'' :=\{ A \in \mathcal D \, | \, \| (A^T_{\bullet \Scal} A_{\bullet \Scal} )^{-1} A^T_{\bullet \Scal} A_{\bullet \Scal^c} \|_1 < 1\}$.
  In view of the above argument, we see that $\mathcal D$ is the disjoint union of $\mathcal D'$ and $\mathcal D''$. Since $\mathcal D''$ is a (relatively) open subset in $\mathcal U$, we deduce that $\mu(\mathcal D'')>0$. Therefore, $\mu(\mathcal D) \ge \mu(\mathcal D'')>0$. Moreover, define
  \begin{align*}
   \wt{\mathcal D} & :=  \Big\{ \, A \in \mathcal U \, | \, \mbox{ $A_{\bullet \Scal}$ has full column rank, and $\| (A^T_{\bullet \Scal} A_{\bullet \Scal} )^{-1} A^T_{\bullet \Scal} A_{\bullet \Scal^c} \|_1 = 1$} \Big\}, \\
    \mathcal W_j & :=  \Big\{ \, A \in \mathcal U \, | \, \mbox{ $A_{\bullet \Scal}$ has full column rank, and $\big\| \big[(A^T_{\bullet \Scal} A_{\bullet \Scal} )^{-1} A^T_{\bullet \Scal} A_{\bullet \Scal^c}\big]_{\bullet j} \big\|_1 = 1$} \Big\}, \quad j=1, \ldots, |\Scal^c|.
  \end{align*}
  Hence, $\mathcal D' \subseteq  \wt{\mathcal D} \subset \bigcup^{|\Scal^c|}_{j=1} \mathcal W_j$. Let $\mathbf a\in \mathbb R^{mN}$ be the vectorization of $A \in \mathbb R^{m\times N}$, i.e., $\mathbf a$ is generated by stacking the columns of $A$ on top of one another. For each fixed $j$, $\big\| \big[(A^T_{\bullet \Scal} A_{\bullet \Scal} )^{-1} A^T_{\bullet \Scal} A_{\bullet \Scal^c}\big]_{\bullet j} \big\|_1 = 1$ holds if and only if the piecewise polynomial function $G_j(\mathbf a)=0$, where $G_j(\mathbf a):= \sum^{|\Scal|}_{k=1} | G_{j, k}(\mathbf a) | - G_{j, k+1}(\mathbf a)$, and each $G_{j, k}(\cdot):\mathbb R^{mN} \rightarrow \mathbb R$ is a polynomial function. In view of this result, it is easy to verify that $\mathcal W_j$ is a subset of a finite union of the sets of the following form: $\big\{ \, A \in \mathcal U \, | \, \mbox{ $A_{\bullet \Scal}$ has full column rank, and $H_s(\mathbf a)=0$} \big\}$, where $H_s(\cdot)$ is a (nonzero) polynomial function. Clearly, each set of this form is a lower dimensional sub-manifold of $\mathcal U$ and thus is of zero measure. Thus $\mu(\mathcal W_j)=0$ for each $j$, and we thus have $\mu(\mathcal D')=0$.
\end{proof}


%
\subsubsection{Construction of a Counterexample for a Fixed Support of Size 3} \label{subsect:counterexample_S=3}

In this subsection, we construct a nontrivial counterexample to show that condition (\ref{eqn:exact_recovery_condition}) (or equivalently (\ref{eqn:condition_Prop_3.5})) fails to be necessary.
The main result is given by the following theorem.

\begin{theorem} \label{thm:counterexample}
 For an index set $\Scal$ with $|\Scal|=3$, there exists an $A \in \mathbb R^{4\times 4}$ with unit columns such that $A$ achieves exact vector recovery for the fixed support $\Scal$ via the OMP, $A_{\bullet \Scal}$ has full column rank, and $\big\| (A^T_{\bullet \Scal} A_{\bullet \Scal} )^{-1} A^T_{\bullet \Scal} A_{\bullet \Scal^c} \big\|_1 = 1$.
\end{theorem}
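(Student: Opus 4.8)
The plan is to produce an explicit $A\in\mathbb R^{4\times4}$ through its Gram matrix. After relabelling columns we may take $\mathcal S=\{1,2,3\}$, $\mathcal S^c=\{4\}$. Prescribing a symmetric positive definite $G\in\mathbb R^{4\times4}$ with unit diagonal is equivalent to prescribing $A\in\mathbb R^{4\times4}$ with unit columns and $A^TA=G$ (take $A$ to be a Cholesky factor of $G$). Since $\mathcal S^c$ is a singleton, $\big\|(A^T_{\bullet\mathcal S}A_{\bullet\mathcal S})^{-1}A^T_{\bullet\mathcal S}A_{\bullet\mathcal S^c}\big\|_1=\|w\|_1$ where $w:=G_{\mathcal S\mathcal S}^{-1}G_{\mathcal S,4}\in\mathbb R^3$, and ``$A_{\bullet\mathcal S}$ injective'' means exactly $G_{\mathcal S\mathcal S}\succ0$. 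I would also recall the identity underlying \cite[Remark~3.6]{FoucartRauhut_book2013}: for $z$ with $\supp(z)\subseteq\mathcal S$, writing $u:=G_{\mathcal S\mathcal S}\,z_{\mathcal S}$ gives $(A^TAz)_{\mathcal S}=u$ and $(A^TAz)_4=w^Tu$, so with $q(z):=\max_{j\in\mathcal S}|(A^TAz)_j|-|(A^TAz)_4|=\|u\|_\infty-|w^Tu|$ one has $q\ge0$ on $\{\supp(\cdot)\subseteq\mathcal S\}$ iff $\|w\|_1\le1$, and — provided $w$ has no zero coordinate — $q=0$ on this subspace only along the ray through $\tilde z$, $\tilde z_{\mathcal S}:=G_{\mathcal S\mathcal S}^{-1}\sgn(w)$, because $|w^Tu|=\|u\|_\infty$ then forces $u$ to be a scalar multiple of $\sgn(w)$.

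The design principle is to pick $G$ so that (a) $G\succ0$ and $G_{\mathcal S\mathcal S}\succ0$; (b) $\|w\|_1=1$; (c) all three coordinates of $w$ are nonzero (conveniently, of the same sign), so that $q(z)=0$ (for $z$ supported on $\mathcal S$) holds \emph{only} on the ray through $\tilde z$; and (d) $\supp(\tilde z)\subsetneq\mathcal S$, which then forces $q(z)>0$ — hence a correct first OMP step — for \emph{every} $z$ with $\supp(z)=\mathcal S$. To realize this I would impose the reflection symmetry $1\leftrightarrow3$ on $G$: $\vartheta_{12}=\vartheta_{23}=:\alpha$, $\vartheta_{14}=\vartheta_{34}=:\gamma$, $\vartheta_{13}=:\beta$, $\vartheta_{24}=:\delta$, so that $w_1=w_3$ and $\tilde z_1=\tilde z_3$. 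A direct computation with the $3\times3$ block $G_{\mathcal S\mathcal S}$ shows $\tilde z_2=0$ precisely when $\beta=2\alpha-1$ (so $\supp(\tilde z)=\{1,3\}$), and $\|w\|_1=1$ with $w>0$ when in addition $\gamma=\alpha$ and $0<\alpha<\delta<1$. It remains to choose $\alpha,\delta$ making $G$ positive definite; $\alpha=\gamma=\tfrac12$, $\beta=0$, $\delta=\tfrac34$ works, giving
\[
  G=A^TA=\begin{bmatrix}1&\tfrac12&0&\tfrac12\\ \tfrac12&1&\tfrac12&\tfrac34\\ 0&\tfrac12&1&\tfrac12\\ \tfrac12&\tfrac34&\tfrac12&1\end{bmatrix},\qquad w=\Big(\tfrac14,\tfrac12,\tfrac14\Big),\qquad \tilde z=(1,0,1,0),
\]
whose leading principal minors $1,\tfrac34,\tfrac12,\tfrac3{16}$ are positive (so $G\succ0$, $G_{\mathcal S\mathcal S}\succ0$), with $\|w\|_1=1$ and $A^TA\tilde z=(1,1,1,1)$ so $q(\tilde z)=0$; hence (\ref{eqn:exact_recovery_condition}) (equivalently (\ref{eqn:condition_Prop_3.5})) fails for any realization $A$ of this $G$.

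The main work, and the only delicate step, is to verify that such an $A$ nonetheless achieves exact vector recovery of every $z$ with $\supp(z)=\mathcal S$ via the OMP. I would argue by induction along an arbitrary OMP run. At step $k+1$ the current set $\mathcal J_k\subseteq\mathcal S$, the least-squares residual direction $r^k:=z-x^k$ satisfies $\supp(r^k)\subseteq\mathcal S$ and $(A^TAr^k)_j=0$ for $j\in\mathcal J_k$, so $\max_{j\in\mathcal S\setminus\mathcal J_k}|(A^TAr^k)_j|=\max_{j\in\mathcal S}|(A^TAr^k)_j|$; hence a run can pick the bad index $4$ only if $q(r^k)\le0$, which — since $q\ge0$ once $\|w\|_1=1$ — means $q(r^k)=0$, i.e.\ $r^k$ is a nonzero multiple of $\tilde z=\mathbf e_1+\mathbf e_3$. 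But $r^k$ agrees with $z$ on coordinates outside $\mathcal J_k$, so $(r^k)_2=z_2\neq0$ whenever $2\notin\mathcal J_k$; and when $2\in\mathcal J_k$ (so $k\in\{1,2\}$ and $\mathcal J_k$ is $\{2\}$, $\{1,2\}$, or $\{2,3\}$) an explicit least-squares computation on $\mathcal J_k$, using $\alpha\neq0$ and $\beta\neq1$, gives $(r^k)_2=-\tfrac{\alpha(1-\beta)}{1-\alpha^2}z_3$ (resp.\ $z_1$) in the cases $\mathcal J_k=\{1,2\},\{2,3\}$ — nonzero — while for $\mathcal J_k=\{2\}$ one has $(r^k)_1=z_1$, $(r^k)_3=z_3$, $(r^k)_2=-\alpha(z_1+z_3)$, so $r^k\in\mathbb R\tilde z$ would force $z_1+z_3=0$ and $z_1=z_3$, i.e.\ $z_1=z_3=0$. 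In every case this contradicts $\supp(z)=\mathcal S$, so $q(r^k)>0$ at every step; each run therefore extends $\mathcal J_k$ within $\mathcal S$, reaches $\mathcal J_3=\mathcal S$ after three iterations, and returns $x^3=z$ (the unique fit on the injective $A_{\bullet\mathcal S}$). Together with $\|w\|_1=1$ and $G_{\mathcal S\mathcal S}\succ0$ this yields the theorem. The obstacle overcome is precisely the one flagged after (\ref{eqn:necessity_implication}): the boundary direction $\tilde z$ on which $q$ vanishes has support strictly inside $\mathcal S$, so it never obstructs recovery of genuinely $\mathcal S$-supported vectors, even though it defeats the matrix-$1$-norm condition (\ref{eqn:exact_recovery_condition}).
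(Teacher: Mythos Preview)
Your construction is correct and the verification goes through; the approach is genuinely different from the paper's and in several respects cleaner. The paper takes $\vartheta_{12}=\vartheta_{13}=\vartheta_{23}=-\tfrac13$, $\vartheta_{41}=\vartheta_{42}=\tfrac13$, $\vartheta_{43}=-\tfrac12$, and establishes the Step~1 inequality $\max_{j\in\mathcal S}|(A^TAz)_j|>|(A^TAz)_4|$ for all $z$ with $\supp(z)=\mathcal S$ via a preparatory lemma based on Motzkin's transposition theorem, checking eight sign cases; Steps~2 and~3 are then handled by explicit residual computations that reduce back to this claim. You instead build the Gram matrix around a single organizing principle: with $\|w\|_1=1$ and $w>0$, the function $q$ vanishes on the $\mathcal S$-supported subspace only along the ray through $G_{\mathcal S\mathcal S}^{-1}\sgn(w)$, and you tune the parameters so that this ray has support $\{1,3\}\subsetneq\mathcal S$. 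The Step~1 inequality is then immediate from $z_2\neq0$, with no transposition argument, and the inductive OMP analysis is unified by the single test ``is $r^k$ proportional to $\tilde z$?'', which reduces to checking $(r^k)_2\neq0$ in almost every case. What this buys is a transparent explanation of \emph{why} the counterexample works---the borderline direction on which the exact recovery condition is tight is engineered to lie strictly inside $\mathcal S$, hence never obstructs genuine $\mathcal S$-supported signals or their residuals; the paper's example has the same underlying geometry (its borderline direction is $(1,1,0)$; cf.\ Remark~\ref{remark:counter_example}(a)), but the paper's verification does not exploit this as directly.
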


To construct such a matrix $A$ indicated in the above theorem, we first present some preliminary results. Without loss of generality, let $\Scal=\{1, 2, 3\}$ and $\Scal^c=\{ 4 \}$. In view of the function $g_i$'s defined in (\ref{eqn:g_i_def}), we introduce the following functions for $i=1, \ldots, 4$:
\[
   \wh g_i(v) \, : = \, \big | h^T_i v |, \ \forall \ v \in \mathbb R^3, \quad \mbox{ where } \quad h_i := \big( \vartheta_{i1}, \vartheta_{i2}, \vartheta_{i3} )^T \in \mathbb R^3,
\]
where we recall that $\vartheta_{ij}=\langle A_{\bullet i}, A_{\bullet j} \rangle$ for $i, j\in \{1, \ldots, 4\}$.
Hence,   $\max_{j \in \Scal} \big| (A^T A z)_j \big | \, > \, \max_{j \in \Scal^c} \big |(A^T A z)_j \big|$ for all $0 \ne z \in \mathbb R^{N}$ with $\supp(z) = \Scal$ if and only if the following holds:
\[
  (\mathbf P): \quad \max_{i=1, 2, 3} \wh g_i(v) \, > \, \wh g_4(v), \quad \forall \ v=(v_1, v_2, v_3)^T\in \mathbb R^3 \ \mbox{ with } v_1\cdot v_2\cdot v_3 \ne 0.
\]
Since each $\wh g_i$ and $\max_{i=1, 2, 3} \wh g_i(v)$ are convex piecewise affine functions \cite{MouShen_COCV18}, it is not surprising that the feasibility of ($\mathbf P$) can be characterized by that of certain linear inequalities. The following lemma gives a necessary and sufficient condition for ($\mathbf P$) in term of the feasibility of some linear inequalities.

\begin{lemma} \label{lem:feasbility_sufficiency}
 Let the matrix $H:=\begin{bmatrix} h_4 + h_1 & h_4-h_1 & h_4 + h_2 & h_4-h_2 & h_4 + h_3 & h_4-h_3 \end{bmatrix} \in \mathbb R^{3\times 6}$. Then $\max_{i=1, 2, 3} \wh g_i(v) > \wh g_4(v)$ for all $v=(v_1, v_2, v_3)^T\in \mathbb R^3$ with $v_1\cdot v_2\cdot v_3 \ne 0$ holds if and only if for each $\sigma :=(\sigma_1, \sigma_2, \sigma_3)\in \{ (\pm 1, \pm 1, \pm 1)\}$, there exist vectors $0\ne u \ge 0$ and $w \ge 0$ such that $ u + D_\sigma H w =0$, where the diagonal matrix $D_\sigma:=\mbox{diag}( \sigma_1, \sigma_2, \sigma_3) \in \mathbb R^{3\times 3}$.
\end{lemma}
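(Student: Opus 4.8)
The plan is to reformulate the condition $\max_{i=1,2,3}\widehat g_i(v) > \widehat g_4(v)$ for all $v$ with $v_1 v_2 v_3 \ne 0$ as a statement about the emptiness of certain polyhedra, and then invoke a Farkas-type alternative to pass to the stated feasibility system. First I would observe that, since each $\widehat g_i(v) = |h_i^T v|$, the inequality $\max_{i=1,2,3}\widehat g_i(v) \le \widehat g_4(v)$ at a point $v$ means $|h_i^T v| \le |h_4^T v|$ for $i=1,2,3$ simultaneously. Negating $(\mathbf P)$, the failure of $(\mathbf P)$ is the existence of some $v$ with $v_1 v_2 v_3 \ne 0$ satisfying all three inequalities $|h_i^T v| \le |h_4^T v|$. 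The key trick is that $|h_i^T v| \le |h_4^T v|$ is equivalent to the pair of linear inequalities $(h_4 + h_i)^T v \cdot \varepsilon \ge 0$ and $(h_4 - h_i)^T v \cdot \varepsilon \ge 0$ for the appropriate sign $\varepsilon = \operatorname{sgn}(h_4^T v)$; more symmetrically, $|h_i^T v|\le|h_4^Tv|$ holds if and only if both $(h_4+h_i)^T v$ and $(h_4-h_i)^T v$ have the same sign as $h_4^T v$ (this is the standard identity $|a|\le|b| \iff (b+a)(b-a)\ge 0 \iff b+a$ and $b-a$ share $b$'s sign). Thus, after fixing the sign pattern, the negation of $(\mathbf P)$ on the orthant-interior indexed by $\sigma$ becomes: there exists $v$ in that open orthant with $D_\sigma$-adjusted signs such that $H^T v \le 0$ coordinatewise in the suitably signed sense.

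Next I would carefully split by the sign pattern $\sigma = (\sigma_1,\sigma_2,\sigma_3) \in \{\pm1\}^3$ of $v$. On the closure of the orthant $\{v : \sigma_i v_i \ge 0\}$, writing $v = D_\sigma |v|$ with $|v| \ge 0$, the six linear forms $(h_4 \pm h_i)^T v$ become $(h_4\pm h_i)^T D_\sigma |v|$, i.e. the columns of $D_\sigma H$ paired against $|v|\ge 0$. The condition that $(\mathbf P)$ fails somewhere in the interior of this orthant amounts to: the polyhedral cone $\{x \ge 0 : (D_\sigma H)^T x \le 0, \ x > 0\}$ is nonempty — equivalently (taking closures and using homogeneity, plus a small argument that a strictly interior point can be produced from a boundary one, or just absorbing the strictness) the system $x \ge 0,\ (D_\sigma H)^T x \le 0$ has a nonzero solution. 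By Gordan's theorem / the Farkas lemma applied to $D_\sigma H$, the system "$x\ge 0$, $x\ne 0$, $(D_\sigma H)^T x \le 0$" is infeasible if and only if the alternative system "$u + D_\sigma H w = 0$ has a solution with $0 \ne u \ge 0$, $w \ge 0$" is feasible. Assembling this over all eight sign patterns $\sigma$ gives exactly the claimed equivalence: $(\mathbf P)$ holds iff for every $\sigma \in \{\pm1\}^3$ there exist $0\ne u\ge 0$, $w\ge 0$ with $u + D_\sigma H w = 0$.

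The main obstacle I anticipate is handling the \emph{strictness} and the open-orthant restriction $v_1 v_2 v_3 \ne 0$ cleanly. The condition $(\mathbf P)$ only quantifies over $v$ with all coordinates nonzero, whereas the Farkas alternative naturally speaks to closed cones; one must check that failure of $(\mathbf P)$ on the open orthant interior is genuinely equivalent to feasibility of a closed homogeneous inequality system (so that no spurious boundary solutions, e.g. $v$ on a coordinate hyperplane, interfere) and, conversely, that a nonzero solution of the closed system can be perturbed into the orthant interior. I would address this by using the piecewise-affine / convexity structure: $\max_{i}\widehat g_i - \widehat g_4$ restricted to each closed orthant is piecewise affine and positively homogeneous, so if it is $\le 0$ at some interior point it is $\le 0$ on a full-dimensional subcone, and conversely its being $\le 0$ anywhere nonzero in the closed orthant propagates (by homogeneity and density of the interior in the orthant) to interior points unless it is identically zero on a face — a degenerate case one rules out or absorbs. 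Once this topological bookkeeping is in place, the algebra reduces to a direct application of a standard theorem of the alternative to the matrix $D_\sigma H$, which is routine.
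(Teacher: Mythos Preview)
Your strategy---negate $(\mathbf P)$, linearize the absolute values via the columns of $H$, split by orthant, apply a theorem of the alternative---is exactly the paper's. But two steps in your execution do not go through as written.

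The relaxation from $x>0$ to $x\ge 0$, $x\ne 0$, which you identify as the ``main obstacle,'' is actually false: the system $(D_\sigma H)^Tx\le 0$, $x\ge 0$ can have nonzero solutions confined to a coordinate face of $\mathbb R^3_+$ while admitting no strictly positive solution (take a single constraint row equal to $\mathbf e_1^T$), so your piecewise-affine/density sketch cannot repair it. The paper sidesteps the issue entirely. It first replaces a witness $\widehat v$ to the failure of $(\mathbf P)$ by $v^*:=\sgn(h_4^T\widehat v)\cdot\widehat v$, which forces $h_4^Tv^*\ge 0$ and turns $|h_i^Tv^*|\le h_4^Tv^*$ into the clean linear system $H^Tv^*\ge 0$, with $v^*$ still having all nonzero entries. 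Writing $v^*=D_\sigma\widetilde v$ with $\widetilde v>0$, this becomes $(D_\sigma H)^T\widetilde v\ge 0$, $\widetilde v>0$, and Motzkin's transposition theorem applies directly to this mixed strict/nonstrict system; its alternative is exactly $u+D_\sigma Hw=0$, $0\ne u\ge 0$, $w\ge 0$, with no closure step needed. Note also that you arrive at $(D_\sigma H)^Tx\le 0$ rather than $\ge 0$; the Motzkin alternative of your system would be $u-D_\sigma Hw=0$, not $u+D_\sigma Hw=0$. The sign slip washes out after quantifying over all $\sigma\in\{\pm1\}^3$, but per $\sigma$ your stated alternative does not match the lemma.
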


\begin{proof}
 Note that ($\mathbf P$) fails if and only if there exists $\wh  v \in \mathbb R^3$ with $\wh v_1\cdot \wh v_2\cdot \wh v_3 \ne 0$ such that  $\wh g_i(\wh v) \le \wh g_4(\wh v)$ for each $i=1,2, 3$. We claim that the latter statement holds if and only if there exists $v^*\in \mathbb R^3$ with $v^*_1\cdot v^*_2\cdot v^*_3 \ne 0$ such that $| h^T_i v^* | \le h^T_4 v^*$ for each $i=1, 2, 3$. The ``if'' part is obvious since $h^T_4 v^* \le |h^T_4 v^*|=\wh g_4(v^*)$. To show the ``only if'' part, we let $v^*= \sgn( h^T_4 \wh v) \cdot \wh v$, where $\wh v$ satisfies the specified conditions. In view of $\wh g_i(v^*)=|h^T_i v^*|=|h^T_i \wh v|$ for $i=1, 2, 3$, $h^T_4 v^*= |h^T_4 \wh v|=\wh g_4(\wh v)$, and each $v^*_i \ne 0$, we conclude that the desired result holds. This completes the proof of the claim.
%
%

By using the above claim, we see that  ($\mathbf P$) fails if and only if there exists $v^*\in \mathbb R^3$ with $v^*_1\cdot v^*_2\cdot v^*_3 \ne 0$ such that $| h^T_i v^* | \le h^T_4 v^*$ for each $i=1, 2, 3$, where the latter is further equivalent to $\pm h^T_i v^* \le h^T_4 v^*$ for each $i=1, 2, 3$ or equivalently $H^T v^* \ge 0$. Therefore, ($\mathbf P$) fails if and only if there exist $\sigma  \in \{ (\pm 1, \pm 1, \pm 1)\}$ and $\wt v \in \mathbb R^3_{++}$ (i.e., $\wt v_i>0$ for each $i=1, 2, 3$)  such that $H^T D_\sigma \wt v \ge 0$. By virtue of the Motzkin's Transposition Theorem, we see that for a fixed $\sigma$, the linear inequality system $(D_\sigma H)^T \wt v \ge 0, \wt v >0$ has a solution $\wt v$ if and only if the linear inequality system $u + D_\sigma H w =0, 0\ne u \ge 0$ and $w \ge 0$ has no solution $(u, w)$. In other words, ($\mathbf P$) fails if and only if there exist $\sigma  \in \{ (\pm 1, \pm 1, \pm 1)\}$ such that the linear inequality system $u + D_\sigma H w =0$, $0\ne u \ge 0$, and $w \ge 0$ has no solution. As a result, ($\mathbf P$) holds if and only if for any $\sigma  \in \{ (\pm 1, \pm 1, \pm 1)\}$, there exist vectors $0\ne u \ge 0$ and $w \ge 0$ such that $ u + D_\sigma H w =0$.
\end{proof}

By making use of the above preliminary results, we prove Theorem~\ref{thm:counterexample} as follows.

\begin{proof}[Proof of Theorem~\ref{thm:counterexample}]
 Consider the matrix
 \begin{equation} \label{eqn:A_matrix_counterexample}
   \displaystyle  A \, = \, \begin{bmatrix} 1 & -\frac{1}{3} & -\frac{1}{3} & \frac{1}{3} \\
         0 &\frac{2\sqrt{2}}{3} & -\frac{\sqrt{2}}{3} & \frac{\sqrt{2}}{3} \\
         0 & 0 & \frac{\sqrt{6}}{3} & -\frac{\sqrt{6}}{12} \\
         0 & 0 & 0 & \frac{\sqrt{10}}{4}\end{bmatrix} \in \mathbb R^{4\times 4}.
 \end{equation}
 Recall that $\Scal=\{1, 2, 3\}$ and $\Scal^c=\{4\}$. It is easy to verify that $A$ is invertible with unit columns (i.e., $\|A_{\bullet i}\|_2=1$ for each $i$), $A_{\bullet\Scal}$ has full column rank, and
\begin{equation} \label{eqn:vartheta_value}
 \vartheta_{12}=\vartheta_{21}=\vartheta_{13}=\vartheta_{31}=\vartheta_{23}=\vartheta_{32}=-\frac{1}{3},
 \quad \vartheta_{41}=\vartheta_{42}=\frac{1}{3}, \quad \vartheta_{43} = -\frac{1}{2}.
\end{equation}
Hence, $A^T_{\bullet \Scal} A_{\bullet \Scal}=\begin{bmatrix} h_1 & h_2 & h_3\end{bmatrix} \in \mathbb R^{3\times 3}$ and $A^T_{\bullet \Scal} A_{\bullet \Scal^c} = h_4$, where
\[
   h_1 = \begin{bmatrix} 1 \\ -\frac{1}{3} \\ -\frac{1}{3} \end{bmatrix}, \quad
   h_2 = \begin{bmatrix}  -\frac{1}{3} \\ 1 \\ -\frac{1}{3} \end{bmatrix}, \quad
   h_3 = \begin{bmatrix}  -\frac{1}{3} \\ -\frac{1}{3} \\ 1 \end{bmatrix}, \quad
   h_4 = \begin{bmatrix}  \frac{1}{3} \\ \frac{1}{3} \\ -\frac{1}{2} \end{bmatrix}.
\]
Furthermore,
\[
   \Big( A^T_{\bullet \Scal} A_{\bullet \Scal} \Big)^{-1} = \begin{bmatrix} h_1 & h_2 & h_3\end{bmatrix}^{-1} =  \begin{bmatrix} 1 & -\frac{1}{3} & -\frac{1}{3} \\ -\frac{1}{3} & 1 & -\frac{1}{3} \\ -\frac{1}{3} & -\frac{1}{3} & 1 \end{bmatrix}^{-1} = \frac{3}{4} \begin{bmatrix} 2 & 1 & 1 \\ 1 & 2 & 1 \\ 1 & 1 & 2 \end{bmatrix}
\]
such that $\big\| (A^T_{\bullet \Scal} A_{\bullet \Scal} )^{-1} A^T_{\bullet \Scal} A_{\bullet \Scal^c}\big\|_1 = \frac{3}{8}+\frac{3}{8}+\frac{1}{4}= 1$.
The rest of the proof consists of two parts: the first part shows that $\max_{j \in \Scal} \big| (A^T A z)_j \big| >  \max_{j \in \Scal^c} \big |(A^T A z)_j \big|$ for all $z$ with $\supp(z) = \Scal$, and the second part shows that $A$ achieves the exact vector recovery for the index set $\Scal$.

We first show the following claim:
\begin{equation} \label{eqn:claim_I}
  \mbox{ Claim I: $\quad \max_{j \in \Scal} \big| (A^T A z)_j \big| >  \max_{j \in \Scal^c} \big |(A^T A z)_j \big|$ \ for all $z$ with $\supp(z) = \Scal$}.
\end{equation}
In view of Lemma~\ref{lem:feasbility_sufficiency}, we only need to show that for each $\sigma \in \{ (\pm 1, \pm 1, \pm 1)\}$, there exist vectors $0\ne u \ge 0$ and $w \ge 0$ such that $ u + D_\sigma H w =0$, where the matrix
\[
  H = \begin{bmatrix} h_4 + h_1 & h_4-h_1 & h_4 + h_2 & h_4-h_2 & h_4 + h_3 & h_4-h_3 \end{bmatrix} = \begin{bmatrix} \frac{4}{3} & -\frac{2}{3} & 0 & \frac{2}{3} & 0 & \frac{2}{3} \\
                       0 & \frac{2}{3} & \frac{4}{3} &  -\frac{2}{3} & 0 &  \frac{2}{3} \\
                       -\frac{5}{6} & -\frac{1}{6} & -\frac{5}{6} & -\frac{1}{6} & \frac{1}{2} & -\frac{3}{2} \end{bmatrix}.
\]
Toward this end, we give a specific solution $(u, w)$ to the above linear inequality system for each $\sigma$:
\begin{itemize}
  \item [(1)] $\sigma=(1, 1, 1)$. A solution is given by $u=-(H_{\bullet 2} + H_{\bullet 4})=(0, 0, \frac{1}{3})^T$ and $w=\mathbf e_2 + \mathbf e_4$;
  \item [(2)] $\sigma=(1, 1, -1)$. A solution is given by $u=H_{\bullet 5}=(0, 0, \frac{1}{2})^T$ and $w=\mathbf e_5$;
  \item [(3)] $\sigma=(1, -1, 1)$. A solution is given by $u=(\frac{2}{3}, \frac{2}{3}, \frac{1}{6})^T$ and $w=\mathbf e_2$;
   \item [(4)] $\sigma=(-1, 1, 1)$. A solution is given by $u=(\frac{2}{3}, \frac{2}{3}, \frac{1}{6})^T$ and $w=\mathbf e_4$;
  \item [(5)] $\sigma=(1, -1, -1)$. A solution is given by $u=H_{\bullet 5}=(0, 0, \frac{1}{2})^T$ and $w=\mathbf e_5$;
  \item [(6)] $\sigma=(-1, 1, -1)$. A solution is given by $u=H_{\bullet 5}=(0, 0, \frac{1}{2})^T$ and $w=\mathbf e_5$;
  \item [(7)] $\sigma=(-1, -1, 1)$. A solution is given by $u=(\frac{3}{4}, 0, \frac{5}{6})^T$ and $w=\mathbf e_1$;
  \item [(8)] $\sigma=(-1, 1, -1)$. A solution is given by $u=H_{\bullet 5}=(0, 0, \frac{1}{2})^T$ and $w=\mathbf e_5$.
\end{itemize}
Hence, Claim I holds in light of Lemma~\ref{lem:feasbility_sufficiency}.

We show next that the matrix $A$ achieves the exact vector recovery via the OMP for the given index set $\Scal=\{ 1, 2, 3\}$.
%
%
Let $z$  be an arbitrary vector in $\mathbb R^4$ with $\supp(z)=\Scal$, and $y=A z = A_{\bullet \Scal} z_\Scal$. Consider the following three steps of the OMP:

 \indent $\bullet$  Step 1: Since $x^0=0$ and $y=A z$, it follows from (\ref{eqn:claim_I}) that $\max_{i=1, 2, 3} | A^T_{\bullet i} A (z - x^0)| > | A^T_{\bullet 4} A( z - x^0)|$.
%
%
  Hence, by Corollary~\ref{coro:Nec_Suf_suppt_recovery_RN_RN+}, $j^*_1\in \Scal= \{1, 2, 3\}$ and $\Jcal_1=\{ j^*_1\}$. Also, $x^1:=\argmin_{\supp(w) \subseteq \Jcal_{1} } \, \| y - A w\|^2_2$ is given by $x^1 =  (A^T_{\bullet j^*_1} A_{\bullet \Scal} z_{\Scal}) \cdot \mathbf e_{j^*_1}$.
 %
 %
  Note that $x^1_{j^*_1} \ne 0$ in view of Proposition~\ref{prop:index_set}.

 \indent $\bullet$ Step 2: We first prove the following claim: for any $j_1 \in \Scal= \{1, 2, 3\}$ and $u = (A^T_{\bullet j_1} A z) \cdot \mathbf e_{j_1} \in \mathbb R^4$, $\max_{i=1, 2, 3} | A^T_{\bullet i} A ( z-  u) | > | A^T_{\bullet 4} A ( z -  u) |$.
%
     %
      \begin{proof}[Proof of the above claim]
        For any $j_1\in \{1, 2, 3\}$ and its corresponding $u$, let $v:=z - u$. Note that $v_i=z_i \ne 0$ for each $i\in \Scal \setminus \{ j_1 \}$. Therefore, if $z_{j_1} \ne A^T_{\bullet j_1} A z$, then $\supp(v)=\Scal$ so that the claim holds by virtue of  (\ref{eqn:claim_I}). To handle the case where  $z_{j_1} = A^T_{\bullet j_1} A z$, we consider $j_1=1$ first. Since $A^T_{\bullet 1} A z = A^T_{\bullet 1} A_{\bullet \Scal} z_\Scal = h^T_1 z_\Scal = z_1 - \frac{1}{3} z_2 - \frac{1}{3}z_3$, we must have $z_3=-z_2 \ne 0$. Therefore, $v_\Scal = z_\Scal - u_\Scal = z_2 \cdot (0, 1, -1)^T$. It follows from  $\wh g_i(v_\Scal)=| h^T_i v_\Scal|$ and $h_i$'s given before that $\wh g_1(v_\Scal)= 0$, $\wh g_2(v_\Scal)= \wh g_3(v_\Scal)= \frac{4}{3}|z_2|$, and $\wh g_4(v_\Scal)=\frac{5}{6}|z_2|$. Consequently, $\max_{i=1, 2, 3} | A^T_{\bullet i} A( z- u) | =\max_{i=1,2,3} \wh g_i(v_\Scal) >  \wh g_4(v_\Scal)=| A^T_{\bullet 4} A (z - u) |$. Due to the symmetry of the matrix $A$, it can be shown via a similar argument that the above result also holds for $z_{j_1} = A^T_{\bullet j_1} A z$ with $j_1=2$ or $j_1=3$. This completes the proof of the claim.
      \end{proof}

       By the above claim, we see that $\max_{i=1, 2, 3} | A^T_{\bullet i} A (z- x^1) | > | A^T_{\bullet 4} A (z - x^1) |$ for the vector $x^1$ obtained from Step 1. Therefore, $j^*_2 \in \Scal$ and $j^*_2 \ne j^*_1$ in view of  Lemma~\ref{lem:exact_suppt_recovery_index}.
       %
       %
%
       Hence, $\Jcal_2=\{j^*_1, j^*_2\}$, and $x^2:=\argmin_{\supp(w) \subseteq \Jcal_{2} } \, \| y - A w\|^2_2$ is given by $x^2_{\Jcal_2}= \big( A^T_{\bullet \Jcal_2} A_{\bullet \Jcal_2} \big)^{-1} A^T_{\bullet \Jcal_2} A_{\bullet \Scal} z_\Scal$, and $x^2_i=0$ for $i\notin \Jcal_2$.

   \gap

   \indent $\bullet$ Step 3: Note that for any index set $\Ical\in \big\{ \{1, 2\}, \{1, 3\}, \{2, 3\}\big\}$, it follows from a direct calculation on the matrix $A$ that
        \[
          w = \big( A^T_{\bullet \Ical} A_{\bullet \Ical} \big)^{-1} A^T_{\bullet \Ical} A_{\bullet \Scal} z_\Scal = \begin{bmatrix} z_s - \frac{1}{2} z_p \\ z_t -\frac{1}{2} z_p \end{bmatrix},
        \]
        where $s, t \in \Ical$ with $s<t$, and $p\in \Scal \setminus \Ical$. Hence, (i) if $\Jcal_2=\{1, 2\}$, then $(z-x^2)_\Scal = z_3\cdot (\frac{1}{2}, \frac{1}{2}, 1)^T$; (ii) if $\Jcal_2=\{1, 3\}$, then $(z-x^2)_\Scal = z_2\cdot (\frac{1}{2}, 1, \frac{1}{2})^T$; and (iii) if
        $\Jcal_3=\{2, 3\}$, then $(z-x^2)_\Scal = z_1\cdot (1, \frac{1}{2}, \frac{1}{2})^T$. Therefore, for the vector $x^2$ obtained from Step 2, we have $\supp(z-x^2)=\Scal$. It follows from (\ref{eqn:claim_I}) that $\max_{i=1, 2, 3} | A^T_{\bullet i} A (z- x^2) | > | A^T_{\bullet 4} A (z - x^2) |$. This shows that $j^*_3 \in \Scal$ with $j^*_3 \notin \Jcal_2$. Hence, $\Jcal_3=\Jcal_2\cup\{j^*_3\}=\Scal$. Since $A_{\bullet \Scal}$ has full column rank, we see that $x^3:=\argmin_{\supp(w) \subseteq \Jcal_{3} } \, \| y - A w\|^2_2$ satisfies $x^3=z$. This shows that $z$ is uniquely recovered via the OMP using the matrix $A$.
 %
\end{proof}


\begin{remark} \rm \label{remark:counter_example}
We make a few remarks about the counterexample constructed above.
\begin{itemize}
  \item [(a)] It is easy to verify that for the given matrix $A$ in (\ref{eqn:A_matrix_counterexample}),  when $v=\alpha \cdot (1, 1, 0)^T$ for any $0 \ne \alpha \in \mathbb R$, $\wh g_1(v)=\wh g_2(v)= \wh g_3(v)= \wh g_4(v)= \frac{2}{3}|\alpha|$. Hence, $\max_{i=1,2,3} \wh g_i(v) = \wh g_4(v)$. Letting $z=(z_\Scal, z_{\Scal^c})\in \mathbb R^4$ with $z_\Scal =v$ and $z_{\Scal^c}=0$, we have $\max_{i=1, 2, 3} g_i(z) = g_4(z)$, leading to a counterexample to the implication (\ref{eqn:necessity_implication}) used in the necessity proof for \cite[Proposition 3.5]{FoucartRauhut_book2013}. Besides, letting $\wt m=|\Scal|=3$, since $A$ is invertible, all the $\wt m$-term representations are unique, and the condition $\| (A^T_{\bullet \Scal} A_{\bullet \Scal} )^{-1} A^T_{\bullet \Scal} A_{\bullet \Scal^c} \|_1 = 1$ implies the failure of the ``Exact Recovery Condition'' defined in Tropp's paper \cite{Tropp_ITI04} (i.e., $\| (A^T_{\bullet \Scal} A_{\bullet \Scal} )^{-1} A^T_{\bullet \Scal} A_{\bullet \Scal^c} \|_1 < 1$). However, any $z$ with $\supp(z)=\Scal$ can be exactly recovered via the OMP, yielding a counterexample to  \cite[Theorem 3.10]{Tropp_ITI04}.

  \item [(b)] There are multiple $4\times 4$ real matrices satisfying the conditions specified in Theorem~\ref{thm:counterexample} as long as their columns are unit and the inner products of their distinct columns defined by $\vartheta_{ij}$ equal to the values given in (\ref{eqn:vartheta_value}). In particular, for the matrix $A$ given in (\ref{eqn:A_matrix_counterexample}) and any orthogonal matrix $P \in \mathbb R^{4 \times 4}$, $PA$ also satisfies the  conditions in Theorem~\ref{thm:counterexample}.
\end{itemize}
\end{remark}

The counterexample constructed in the previous theorem can be extended to one with a larger size.

\begin{corollary}
 Suppose an index set $\Scal \subseteq\{ 1, \ldots, N\}$ is of size 3, i.e., $|\Scal|=3$. Then for any $m \ge 4$ and $N \ge 4$, there exists a matrix $\wh A \in \mathbb R^{m\times N}$ with unit columns such that $\wh A$ achieves the exact vector recovery for the fixed support $\Scal$ via the OMP, $\wh A_{\bullet \Scal}$ has full column rank, and $\big\| (\wh A^T_{\bullet \Scal} \wh A_{\bullet \Scal} )^{-1} \wh A^T_{\bullet \Scal} \wh A_{\bullet \Scal^c} \big\|_1 = 1$.
\end{corollary}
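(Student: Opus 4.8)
The plan is to extend the $4 \times 4$ counterexample of Theorem~\ref{thm:counterexample} to an arbitrary $m \times N$ matrix by padding with extra rows of zeros (to go from $4$ rows to $m$ rows) and extra columns (to go from $4$ columns to $N$ columns) whose contributions do not interfere with the recovery dynamics on the fixed support $\Scal$. Without loss of generality, relabel indices so that $\Scal = \{1, 2, 3\}$, and let $A \in \mathbb R^{4 \times 4}$ be the matrix from (\ref{eqn:A_matrix_counterexample}) with the exact-recovery and norm properties already proved. The construction is to first embed $A$ into $\mathbb R^{m \times 4}$ by appending $m - 4$ zero rows; this preserves all inner products $\vartheta_{ij} = \langle A_{\bullet i}, A_{\bullet j}\rangle$, keeps the columns unit, keeps $A_{\bullet \Scal}$ of full column rank, and leaves the Gram matrices $A^T_{\bullet \Scal} A_{\bullet \Scal}$ and $A^T_{\bullet \Scal} A_{\bullet \Scal^c}$ unchanged, so $\big\| (A^T_{\bullet \Scal} A_{\bullet \Scal})^{-1} A^T_{\bullet \Scal} A_{\bullet \Scal^c}\big\|_1 = 1$ still holds.

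Next I would append the extra $N - 4$ columns. The key requirement is that for every $z$ with $\supp(z) = \Scal = \{1,2,3\}$, the OMP selection criterion at each of the (at most three) steps still prefers an index in $\Scal$ over any index outside $\Scal$ — including the new columns indexed $5, \ldots, N$. Since throughout the recovery of such a $z$ the residual $A(z - x^k)$ always lies in $\mathrm{span}\{A_{\bullet 1}, A_{\bullet 2}, A_{\bullet 3}\}$ (this was established inside the proof of Theorem~\ref{thm:counterexample}: at every step $\supp(z - x^k) = \Scal$), it suffices to ensure that each new column $A_{\bullet \ell}$, $\ell \ge 5$, has small enough inner products with $A_{\bullet 1}, A_{\bullet 2}, A_{\bullet 3}$ that $|A^T_{\bullet \ell} A(z - x^k)|$ never reaches the winning value $\max_{i \in \Scal} |A^T_{\bullet i} A(z - x^k)|$. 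The cleanest choice is to take each new column $A_{\bullet \ell}$ to be a unit vector supported entirely in the appended coordinate directions (rows $5, \ldots, m$), i.e. orthogonal to every original column; this requires $m \ge 5$. Then $\vartheta_{\ell i} = 0$ for $\ell \ge 5$ and all $i \le 4$, so $A^T_{\bullet \ell} A(z - x^k) = 0$ at every step, and $j^*_{k+1} \in \Scal$ is forced exactly as in the original proof. The case $m = 4$ with $N > 4$ needs a small separate argument: here we cannot make the new columns orthogonal to everything, but we can choose them as unit vectors $A_{\bullet \ell}$ with $|\vartheta_{\ell i}|$ uniformly bounded by a sufficiently small $\varepsilon > 0$ for $i = 1,2,3$; since Claim~I of Theorem~\ref{thm:counterexample} gives a strict inequality $\max_{i \in \Scal} |A^T_{\bullet i} A(z - x^k)| > |A^T_{\bullet 4} A(z - x^k)|$ that is scale-invariant and hence can be taken uniform over the compact set of relevant normalized residual directions, a small enough $\varepsilon$ keeps $|A^T_{\bullet \ell} A(z - x^k)| < \max_{i \in \Scal}|A^T_{\bullet i} A(z - x^k)|$ for all $\ell \ge 5$.

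With the new columns chosen this way, I would verify the three required conclusions: (1) $\wh A$ has unit columns — immediate by construction; (2) $\wh A_{\bullet \Scal}$ has full column rank and $\big\|(\wh A^T_{\bullet \Scal}\wh A_{\bullet \Scal})^{-1}\wh A^T_{\bullet \Scal}\wh A_{\bullet \Scal^c}\big\|_1 = 1$ — for this note that $\Scal^c$ now contains index $4$ together with the new indices; since the new columns are orthogonal to $A_{\bullet \Scal}$ (when $m \ge 5$), the block of $\wh A^T_{\bullet \Scal}\wh A_{\bullet \Scal^c}$ corresponding to the new columns is zero, so the matrix $1$-norm (maximum absolute column sum) is still achieved by the column indexed $4$ and equals $1$; in the $m = 4$ case one instead picks $\varepsilon$ small enough that the new columns' contributions keep every new column sum strictly below $1$, so again the maximum is the original value $1$; and (3) exact vector recovery of any $z$ with $\supp(z) = \Scal$ via the OMP — this follows by running the three-step argument of Theorem~\ref{thm:counterexample} verbatim, with the added observation that at each step the new indices are never selected (by the inner-product bound above) and the final least-squares solve over $\Jcal_3 = \Scal$ recovers $z$ exactly because $\wh A_{\bullet \Scal}$ has full column rank. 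The main obstacle is the borderline $m = 4$ case, where orthogonality is unavailable and one must argue that the strict inequality in Claim~I is in fact bounded away from equality uniformly over the (projectivized, hence compact) set of residual directions that arise — once that uniform gap is extracted, choosing $\varepsilon$ below it finishes the proof.
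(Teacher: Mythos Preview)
Your overall strategy (pad with zero rows, then append columns that never win the OMP selection) is sound, and the $m \ge 5$ case goes through as you describe. The paper's construction, however, is simpler and avoids your case split: it takes each new column $\wh A_{\bullet \ell} = \pm A_{\bullet 4}$ for $\ell \ge 5$, with the same zero row-padding. Then for every residual $r$ supported on $\Scal$ one has $|\wh A^T_{\bullet \ell}\, \wh A r| = |A^T_{\bullet 4} A r|$, so the strict inequality of Claim~I against index $4$ automatically handles all new indices, and every new column of $(\wh A^T_{\bullet\Scal}\wh A_{\bullet\Scal})^{-1}\wh A^T_{\bullet\Scal}\wh A_{\bullet\Scal^c}$ has $\ell_1$-norm exactly~$1$, so the matrix $1$-norm is preserved. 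No perturbation or compactness argument is needed, and the same construction works for all $m\ge 4$, $N\ge 4$.

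Your $m=4$ argument has a genuine gap. You assert that the Claim~I inequality can be made uniform ``over the (projectivized, hence compact) set of residual directions that arise,'' but at Step~1 the residual is $z$ itself, so the normalized direction ranges over the \emph{open} set $\{v\in\mathbb R^3:\|v\|_2=1,\ v_1v_2v_3\ne 0\}$, which is not compact. On its boundary the Claim~I gap collapses to zero: at $v$ proportional to $(1,1,0)$ one has $\max_{i\in\Scal}\wh g_i(v)=\wh g_4(v)$ (Remark~\ref{remark:counter_example}(a)). This is precisely the borderline phenomenon that makes $A$ a counterexample, so no uniform gap of the kind you invoke exists. The repair is easy, though: either observe that even when $m=4$ the span of $A_{\bullet 1},A_{\bullet 2},A_{\bullet 3}$ is only $3$-dimensional, so a unit vector orthogonal to all three exists and your orthogonality construction already covers $m=4$; or replace the appeal to Claim~I by the uniform lower bound $\max_{i\in\Scal}|h_i^Tv|=\|A^T_{\bullet\Scal}A_{\bullet\Scal}\,v\|_\infty\ge c\|v\|_2$ coming from the invertibility of $A^T_{\bullet\Scal}A_{\bullet\Scal}$, and choose $\varepsilon$ below $c/\sqrt{3}$; or simply adopt the paper's replication of column~$4$.
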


\begin{proof}
Without loss of generality, let $\Scal=\{1, 2, 3\}$. For any $N \ge 4$, define the matrix $B \in \mathbb R^{4\times N}$ as
$
   B := \begin{bmatrix} A & B_{\bullet 5} & \cdots & \cdots & B_{\bullet N} \end{bmatrix},
$
where the matrix $A$ is given in (\ref{eqn:A_matrix_counterexample}), and $B_{\bullet k}=\pm A_{\bullet 4}$ for each $k \ge 5$. Then let
$
    \wh A := \begin{bmatrix} B \\ 0_{(m-4)\times N} \end{bmatrix} \in \mathbb R^{m\times N}.
$
Straightforward calculations show that $\wh A$ satisfies the desired properties by observing that almost all the required properties of $\wh A$ rely on $\langle \wh A_{\bullet i}, \wh A_{\bullet j}\rangle$'s, which are defined by $\vartheta_{ij}$'s or $h_i$'s of the matrix $A$.
\end{proof}

%
\subsection{Exact Vector Recovery on the Nonnegative Orthant $\mathbb R^N_+$ for a Fixed Support}

We consider the exact vector recovery on the nonnegative orthant $\mathbb R^N_+$ for a fixed support $\Scal$ using constrained matching pursuit. Without loss of generality, we assume that the matrix $A \in \mathbb R^{m\times N}$ has unit columns, i.e., $\|A_{\bullet i}\|_2=1$ for each $i=1, \ldots, N$. A necessary condition is given as follows.

\begin{lemma} \label{lem:nonnegative orthant_Nec01}
 Given a matrix $A \in \mathbb R^{m\times N}$ with unit columns and an index set $\Scal$ of size $s$, the exact vector recovery of every nonzero vector $x \in \mathbb R^N_+$ with $\supp(x)=\Scal$ is achieved via constrained matching pursuit  only if $A_{\bullet \Scal}$ has full column rank.
\end{lemma}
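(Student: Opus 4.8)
The plan is to argue by contraposition: assuming that $A_{\bullet \Scal}$ fails to have full column rank, I will exhibit a single nonzero $z \in \mathbb R^N_+$ with $\supp(z) = \Scal$ whose exact vector recovery fails, which contradicts the hypothesis.

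First, since $A_{\bullet \Scal}$ is rank-deficient, its columns are linearly dependent, so there is a nonzero vector $h \in \mathbb R^N$ with $\supp(h) \subseteq \Scal$ and $A h = A_{\bullet \Scal} h_\Scal = 0$. Next I would pick any $z \in \mathbb R^N$ with $z_i > 0$ for every $i \in \Scal$ and $z_i = 0$ for $i \notin \Scal$; then $z \in \mathbb R^N_+$, $z \ne 0$, and $\supp(z) = \Scal$. Because every coordinate of $z$ on $\Scal$ is strictly positive and $h$ is supported on $\Scal$, for all sufficiently small $t > 0$ the vector $z + t h$ again lies in $\mathbb R^N_+$, satisfies $\supp(z + t h) \subseteq \Scal$, is distinct from $z$, and obeys $A(z + t h) = A z =: y$.

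Now suppose, toward a contradiction, that the exact vector recovery of this $z$ is achieved from $y$. By definition this entails, in particular, the exact support recovery of $z$, so by Theorem~\ref{thm:nec_suf_condition_for_exact_supp_recovery} every sequence $\big((x^k, j^*_k, \Jcal_k)\big)_{k\in\mathbb N}$ generated by Algorithm~\ref{algo:constrained_MP} with $y = Az$ reaches $\Jcal_s = \supp(z) = \Scal$ at step $s = |\Scal|$. At that step $x^s$ must, by the definition of exact vector recovery, be the unique optimal solution of the Line~7 problem $\min_{w \in \mathbb R^N_+,\ \supp(w) \subseteq \Scal} \|A w - y\|^2_2$, and it must equal $z$. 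However, both $z$ and $z + t h$ are feasible for this problem and both attain the value $\|A w - y\|^2_2 = 0$, which is clearly the minimum; hence the problem has at least two distinct optimal solutions. This contradicts the uniqueness requirement in the definition of exact vector recovery, and therefore $A_{\bullet \Scal}$ must have full column rank.

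I do not anticipate a genuine obstacle here: the only points needing care are (a) choosing $z$ with full support on $\Scal$, so that the kernel direction $h$ produces a nearby feasible competitor, and (b) recognizing that the appearance of multiple minimizers of the Line~7 problem is precisely the situation excluded by the definition of exact vector recovery; the appeal to Theorem~\ref{thm:nec_suf_condition_for_exact_supp_recovery} to pin down the step index is routine. A minor remark is that the unit-column normalization of $A$ plays no role in this necessity direction.
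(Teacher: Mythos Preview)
Your proof is correct and follows essentially the same route as the paper: assume $A_{\bullet\Scal}$ is rank-deficient, take a null direction $h$ supported on $\Scal$, and observe that for any $z$ with $z_\Scal>0$ both $z$ and $z+th$ (for small $t>0$) are optimal for the Line~7 problem at the step where $\Jcal_s=\Scal$, violating the uniqueness required by exact vector recovery. The only cosmetic difference is your explicit invocation of Theorem~\ref{thm:nec_suf_condition_for_exact_supp_recovery} to fix the step index, which the paper leaves implicit.
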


\begin{proof}
Assume, in contrast, that $A_{\bullet \Scal}$ does not have full column rank. Let $r:=|\Scal|$. Then there exist a nonzero vector $v \in \mathbb R^{r}$ such that $A_{\bullet \Scal} v =0$.
For a given nonzero $x \ge 0$ with $\supp(x)=\Scal$, suppose at the $r$th step, the exact support of $x$ is recovered from $y=A x$ via  constrained matching pursuit. It follows from Algorithm~\ref{algo:constrained_MP}
  that one need to solve the constrained minimization problem
$
 {\bf Q}: \ \min_{w \in \mathbb R^{r}_+} \| A_{\bullet \Scal} w -  y \|^2_2,
$
where $y=A_{\bullet \Scal} x_\Scal$, to recover $x_\Scal$. Since $x_\Scal>0$ and $v \ne 0$, there exists a small positive constant $\varepsilon$ such that $x_\Scal + \varepsilon v >0$. Noting that $A_\Scal  (x_\Scal + \varepsilon v)= A_\Scal x_\Scal = y$, we see that $x_\Scal + \varepsilon v$ is a solution to the minimization problem ${\bf Q}$. Hence, ${\bf Q}$ has multiple optimal solutions which can be different from the desired solution $x_\Scal$. This leads to a contradiction. Consequently, $A_{\bullet \Scal}$ has full column rank.
\end{proof}

In light of statement (ii) of Corollary~\ref{coro:Nec_Suf_suppt_recovery_RN_RN+} for $x^0=0$, we easily obtain another necessary condition for the exact support recovery (and thus exact vector recovery) of any $z \in \mathbb R^N_+$ with $\supp(z)=\Scal$:
\[
     \max_{j \in \supp(z)} (A^T_{\bullet j} A z )_+ \, > \, \max_{j\in [\supp(z)]^c} ( A^T_{\bullet j} A z )_+, \qquad \forall \ z \in \mathbb R^N_+ \ \mbox{ with } \supp(z)=\Scal,
\]
which is equivalent to $\| (A^T_{\bullet \Scal} A_{\bullet \Scal} v )_+ \|_\infty > \| (A^T_{\bullet \Scal^c} A_{\bullet \Scal} v )_+ \|_\infty$  for all $v \in \mathbb R^{|\Scal|}_{++}$.

\mycut{
In what follows, we show that under mild assumptions, the negation of this condition leads to the failure of the exact support recovery in the strong sense defined in Remark~\ref{remark:failure_recovery_def} for those  $z \in \mathbb R^N_+$ with $\supp(z)=\Scal$ and $z_\Scal$ violating (\ref{eqn:R+_x0=0}) subject to arbitrarily small perturbations. For this purpose, we present a lemma first.

\begin{lemma} \label{lem:positive_sign}
 Let $M \in \mathbb R^{m\times m}$ be a positive definite matrix. Then for any $z \in \mathbb R^m$ with $z > 0$, there exists $i\in \{1, \ldots, m\}$ such that $(M z)_i>0$.
\end{lemma}

\begin{proof}
  Suppose there exists $z>0$ such that $M z \le 0$. Since $z>0$, we have $z^T Mz \le 0$. On the other hand, since $M$ is positive definite, we deduce that $z^T M z =0$ so that $z =0$. This yields a contradiction.
\end{proof}

\begin{proposition}
 Given a matrix $A \in \mathbb R^{m\times N}$ with unit columns and an index set $\Scal$, suppose $A_{\bullet \Scal}$ has full column rank and $A_{\bullet i} \ne A_{\bullet j}$ for any $i \ne j$. Then
 for almost all $z \in \mathbb R^N_+$ with $\supp(z)=\Scal$ satisfying $\max_{j \in \Scal} (A^T_{\bullet j} A z )_+  \le \max_{j\in \Scal^c} ( A^T_{\bullet j} A z)_+$, the exact support recovery of $z$ fails in the strong sense, i.e., $\max_{j \in \Scal} (A^T_{\bullet j} A z )_+  < \max_{j\in \Scal^c} ( A^T_{\bullet j} A z)_+$.
%
%
\end{proposition}

\begin{proof}
 %

 %
  Define the two sets
 \begin{align*}
    \mathcal U' & \, := \, \left\{ \, u \in \mathbb R^{|\Scal|}_{++} \, | \, \max_{j\in \Scal} ( A^T_{\bullet j} A_{\bullet\Scal} u )_+ \le \max_{j \in \Scal^c}( A^T_{\bullet j} A_{\bullet\Scal} u )_+ \, \right\}, \\
   \mathcal U & \, := \, \left\{ \, u \in \mathbb R^{|\Scal|}_{++} \, | \, \max_{j\in \Scal} ( A^T_{\bullet j} A_{\bullet\Scal} u )_+ = \max_{j \in \Scal^c}( A^T_{\bullet j} A_{\bullet\Scal} u )_+ \, \right\}.
  \end{align*}
 Since $A_{\bullet \Scal}$ has full column rank, $A^T_{\bullet \Scal} A_{\bullet \Scal}$ is positive definite. Hence, by Lemma~\ref{lem:positive_sign}, we see that $\max_{j \in \Scal} (A^T_{\bullet j} A_{\bullet\Scal} u )_+>0$ for all $u \in \mathbb R^{|\Scal|}$.
 In view of this result, we easily show that $\mathcal U$ is contained in the intersection of sets of the following form:
 \[
 \mathcal V \, := \, \Big \{ u \in \mathbb R^{|\Scal|}_{++} \, | \, A^T_{\bullet j_1} A_{\bullet\Scal} u = A^T_{\bullet j_2} A_{\bullet\Scal} u, \mbox{ for some $j_1 \in \Scal$ and  $j_2 \in \Scal^c$} \Big\}.
 \]
 Recall that $\vartheta_{ij}:=\langle A_{\bullet i}, A_{\bullet j} \rangle$ for $i, j \in \{1, \ldots, N\}$. Hence, the $j_1$-th element of $A^T_{\bullet\Scal} (A_{\bullet j_1} - A_{\bullet j_2})$ is given by $1-\vartheta_{j_1, j_2}$. Since $A_{\bullet j_1}, A_{\bullet j_2}$ are distinct unit vectors, we have $\vartheta_{j_1, j_2}<1$ so that $A^T_{\bullet j_1} A_{\bullet\Scal} \ne A^T_{\bullet j_2} A_{\bullet\Scal}$. This implies that $\mathcal V$ is a lower-dimensional closet set of zero measure, and so is $\mathcal U$. Therefore, $\mathcal U' \setminus \mathcal U$ is open and $v \in \mathcal U' \setminus \mathcal U$ for almost all $v \in \mathcal U'$. This shows that for almost all $v \in \mathcal U'$,  $\max_{j \in \Scal} (A^T_{\bullet j} A_{\bullet \Scal} v )_+  < \max_{j\in \Scal^c} ( A^T_{\bullet j}  A_{\bullet \Scal} v )_+$.
 %
 %
 %
\end{proof}
}

%
\subsubsection{Necessary and Sufficient Conditions for Exact Vector Recovery for a Fixed Support of Size 2}

We derive necessary and sufficient conditions for exact vector recovery on $\mathbb R^N_+$ for a given support $\Scal$ with $|\Scal|=2$. Recall that $\vartheta_{ij}:=\langle A_{\bullet i}, A_{\bullet j} \rangle$ for $i, j \in \{1, \ldots, N\}$. Besides, the following lemma is needed.

%
%

\begin{lemma} \label{lem:positive_sign}
 Let $M \in \mathbb R^{m\times m}$ be a positive definite matrix. Then for any $z \in \mathbb R^m$ with $z > 0$, there exists $i\in \{1, \ldots, m\}$ such that $(M z)_i>0$.
\end{lemma}

\begin{proof}
  Suppose, in contrast, that there exists $z>0$ such that $M z \le 0$. Since $z>0$, we have $z^T Mz \le 0$. As $M$ is positive definite, we deduce that $z^T M z =0$ so that $z =0$. This yields a contradiction.
\end{proof}

\begin{theorem} \label{thm:nonnegative constraint_S2}
 Given a matrix $A \in \mathbb R^{m\times N}$ with unit columns and the index set $\Scal=\{1, 2\}$, every nonzero vector $x \in \mathbb R^N_+$ with $\supp(x)=\Scal$ is recovered from $y=A x$ via constrained matching pursuit if and only if the following conditions hold:
 \begin{itemize}
   \item [(i)] $A_{\bullet \Scal}$ has full column rank or equivalently $|\vartheta_{12}|<1$;
    \item [(ii)]
   $ \max \big( (z_1 + \vartheta_{12} z_2)_+, \, (\vartheta_{12} z_1 + z_2)_+  \big) > \max_{j \in \Scal^c} \big(\vartheta_{j1} z_1 + \vartheta_{j2} z_2 \big)_+, \ \forall \, (z_1, z_2)^T \in \mathbb R^2_{++}$;
    \item [(iii)] $1-\vartheta^2_{12} \, > \, \max_{j\in \Scal^c} \big( \, (\vartheta_{j2}- \vartheta_{12}\vartheta_{j1})_+, \ (\vartheta_{j1}- \vartheta_{12}\vartheta_{j2})_+ \, \big)$.
 \end{itemize}
\end{theorem}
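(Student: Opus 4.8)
The plan is to trace Algorithm~\ref{algo:constrained_MP} with $\Pcal=\mathbb R^N_+$ through its (at most) $r:=|\Scal|=2$ steps, reading off the selected indices and target vectors from the closed form~(\ref{eqn:f*_RN+}) of $f^*_j$ on the nonnegative orthant and from Corollary~\ref{coro:Nec_Suf_suppt_recovery_RN_RN+}(ii). I will use repeatedly that any $z$ with $\supp(z)=\Scal=\{1,2\}$ has $z_1,z_2>0$, $z_j=0$ for $j\ge 3$, and $\langle A_{\bullet j},Az\rangle=\vartheta_{j1}z_1+\vartheta_{j2}z_2$ with $\vartheta_{11}=\vartheta_{22}=1$; also that $\mathbb R^N_+$ is a closed, convex, CP admissible cone, so every Line~7 problem is solvable (Corollary~\ref{coro:sol_existence_CP_adm}) and, when the support constraint is a single index or $\Scal$ itself, uniquely solvable (Lemma~\ref{lemma:sol_existence}(ii)).

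For sufficiency, fix such a $z$. At Step~1, since $\mathbb I_j(0)=\mathbb R_+$, formula~(\ref{eqn:f*_RN+}) gives $f^*_j(z,0)=\|Az\|^2_2-\big([\langle A_{\bullet j},Az\rangle]_+\big)^2$, so after taking minima, $\min_{j\in\Scal}f^*_j(z,0)<\min_{j\in\Scal^c}f^*_j(z,0)$ is exactly condition~(ii); hence $j^*_1\in\Scal$. As (ii) and (iii) are invariant under swapping the indices $1$ and $2$, I may assume $j^*_1=1$. Next I would show $z_1+\vartheta_{12}z_2>0$: by (i), $A^T_{\bullet\Scal}A_{\bullet\Scal}$ is positive definite, so Lemma~\ref{lem:positive_sign} forces $\max(z_1+\vartheta_{12}z_2,\vartheta_{12}z_1+z_2)>0$, while $j^*_1=1\in\Argmin$ forces $[z_1+\vartheta_{12}z_2]_+\ge[\vartheta_{12}z_1+z_2]_+$, so the positive one must be $z_1+\vartheta_{12}z_2$. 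Thus the unique Line~7 output is $x^1=(z_1+\vartheta_{12}z_2)\mathbf e_1$, and a short computation gives $A(z-x^1)=z_2\,(A_{\bullet 2}-\vartheta_{12}A_{\bullet 1})$, hence $\langle A_{\bullet 2},A(z-x^1)\rangle=z_2(1-\vartheta^2_{12})>0$ and $\langle A_{\bullet j},A(z-x^1)\rangle=z_2(\vartheta_{j2}-\vartheta_{12}\vartheta_{j1})$ for $j\ge 3$. Since $(x^1)_2=(x^1)_j=0$ for $j\ge 3$, the relevant $\mathbb I_j(x^1)$ all equal $\mathbb R_+$, so~(\ref{eqn:f*_RN+}) and Lemma~\ref{lem:exact_suppt_recovery_index} reduce Step~2 to comparing $[\langle A_{\bullet 2},A(z-x^1)\rangle]_+$ with $\max_{j\ge 3}[\langle A_{\bullet j},A(z-x^1)\rangle]_+$; condition~(iii) together with $z_2>0$ makes the former strictly larger, forcing $j^*_2=2$ and $\Jcal_2=\Scal$. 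Finally, at Line~7 with $\Jcal_2=\Scal$ the problem is uniquely solvable by (i) and Lemma~\ref{lemma:sol_existence}(ii), and its minimizer must be $z$; so $x^2=z$.

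For necessity, (i) is Lemma~\ref{lem:nonnegative orthant_Nec01}. For (ii): the $x^0=0$ instance of Corollary~\ref{coro:Nec_Suf_suppt_recovery_RN_RN+}(ii) shows exact support recovery of every $z$ with $\supp(z)=\Scal$ forces $\max_{j\in\Scal}[\langle A_{\bullet j},Az\rangle]_+>\max_{j\in\Scal^c}[\langle A_{\bullet j},Az\rangle]_+$, and since every $(z_1,z_2)\in\mathbb R^2_{++}$ occurs as $z_\Scal$, writing this out is~(ii). For (iii): for each $j_1\in\{1,2\}$ (say $j_1=1$; $j_1=2$ is symmetric), pick $z$ with $z_1>z_2>0$ and $z_j=0$ for $j\ge 3$; using $|\vartheta_{12}|<1$ one checks $z_1+\vartheta_{12}z_2>0$ and $z_1+\vartheta_{12}z_2-(\vartheta_{12}z_1+z_2)=(1-\vartheta_{12})(z_1-z_2)>0$, so with~(ii) the index $j^*_1=1$ is the \emph{unique} first choice and $x^1=(z_1+\vartheta_{12}z_2)\mathbf e_1$. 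Exact support recovery then forces $j^*_2=2$, which by Corollary~\ref{coro:Nec_Suf_suppt_recovery_RN_RN+}(ii) at $(x^1,\Jcal_1)$ (note $\supp(x^1)\subset\Scal$) requires $z_2(1-\vartheta^2_{12})>\max_{j\ge 3}[z_2(\vartheta_{j2}-\vartheta_{12}\vartheta_{j1})]_+$; dividing by $z_2>0$ gives $1-\vartheta^2_{12}>\max_{j\ge 3}(\vartheta_{j2}-\vartheta_{12}\vartheta_{j1})_+$, and the mirror choice $z_2>z_1>0$ supplies $1-\vartheta^2_{12}>\max_{j\ge 3}(\vartheta_{j1}-\vartheta_{12}\vartheta_{j2})_+$; together these are~(iii).

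I expect the main obstacle to be two bookkeeping points rather than any deep estimate: first, correctly tracking the $(\cdot)_+$ truncations in~(\ref{eqn:f*_RN+}) when $\vartheta_{12}$ and the cross inner products $\vartheta_{j1},\vartheta_{j2}$ may be negative — in particular the verification that $x^1\ne 0$, which is exactly where Lemma~\ref{lem:positive_sign} (or, alternatively, condition~(ii)) is needed; and second, the ``along any sequence'' quantifier in the definition of exact recovery — here the only genuine branch point is the choice $j^*_1\in\{1,2\}$, handled by the $1\leftrightarrow 2$ symmetry of (ii) and (iii), since $x^1$, $j^*_2$, and $x^2$ are each forced uniquely thereafter.
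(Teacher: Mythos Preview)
Your proposal is correct and follows essentially the same approach as the paper's proof: trace the two steps of Algorithm~\ref{algo:constrained_MP} using the closed form~(\ref{eqn:f*_RN+}) and Corollary~\ref{coro:Nec_Suf_suppt_recovery_RN_RN+}(ii), identifying condition~(ii) with Step~1 and condition~(iii) with Step~2, with the $1\leftrightarrow 2$ symmetry handling the branch on $j^*_1$. Minor cosmetic differences only: the paper phrases the final uniqueness of $x^2$ via the LCP optimality condition rather than Lemma~\ref{lemma:sol_existence}(ii), and in the necessity argument it allows $z_1\ge z_2$ rather than your strict $z_1>z_2$, but your choice is equally valid and slightly cleaner since it forces $j^*_1=1$ uniquely.
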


\begin{proof}
 ``Only if''. Clearly, the condition that $A_{\bullet \Scal}$ has full column rank is necessary for the exact vector recovery in view of Lemma~\ref{lem:nonnegative orthant_Nec01}. Since $A_{\bullet \Scal}$ has full column rank if and only if $A^T_{\bullet \Scal} A_{\bullet \Scal} = \begin{bmatrix} 1 & \vartheta_{12} \\ \vartheta_{12} & 1 \end{bmatrix}$ is positive definite, we see that $A_{\bullet \Scal}$ has full column rank if and only if $|\vartheta_{12}|<1$. For an arbitrary $z \in \mathbb R^N$ with $z_\Scal=(z_1, z_2)>0$, let $y= A z = A_{\bullet \Scal} z_\Scal$. At Step 1, since $x^0=0$, it follows from statement (ii) of Corollary~\ref{coro:Nec_Suf_suppt_recovery_RN_RN+} that any $j^*_1 \in \Scal$ if and only if
 $
   \max_{j\in \Scal} \langle A_{\bullet j}, A z \rangle_+ > \max_{j\in \Scal^c} \langle A_{\bullet j}, A z \rangle_+.
 $
 This leads to condition (ii), in light of $\langle A_{\bullet 1}, A z \rangle_+ = (z_1 + \vartheta_{12} z_2)_+$,  $\langle A_{\bullet 2}, A z \rangle_+ = (\vartheta_{12}z_1 +  z_2)_+$, and $\langle A_{\bullet j}, A z \rangle_+ = (\vartheta_{j1} z_1 + \vartheta_{j2} z_2 )_+$.
 Since $\begin{bmatrix} 1 & \vartheta_{12} \\ \vartheta_{12} & 1 \end{bmatrix}$ is positive definite, it follows from Lemma~\ref{lem:positive_sign} that for any $(z_1, z_2)>0$, at least one of $\vartheta_{12}z_1 +  z_2$ and $\vartheta_{j1} z_1 + \vartheta_{j2} z_2$ is positive.
%
%
 %
 %
 Further, in view of $|\vartheta_{12}|<1$ and  the fact that for $a, b \in \mathbb R$, $b_+>a_+$ if and only if $b>0$ and $b>a$, it is easy to verify
 that for any $(z_1, z_2)>0$, (a) $(z_1 + \vartheta_{12} z_2)_+ > (\vartheta_{12} z_1 + z_2)_+$ if and only if $z_1>z_2$; (b) $(z_1 + \vartheta_{12} z_2)_+ < (\vartheta_{12} z_1 + z_2)_+$ if and only if $z_1 < z_2$; and (c) $(z_1 + \vartheta_{12} z_2)_+ = (\vartheta_{12} z_1 + z_2)_+>0$ if and only if $z_1 =z_2$.
  Hence, we have that $j^*_1=1$ if $z_1>z_2>0$, $j^*_1=2$ if $z_2>z_1>0$, and $j^*_1\in\{1, 2\}$ if $z_1=z_2>0$. Moreover,  $\Jcal_1=\{ j^*_1\}$, and $x^1:=\argmin_{w \ge 0, \supp(w) \subseteq \Jcal_{1} } \, \| A_{\bullet \Scal} z_\Scal - A w\|^2_2$ is given by $x^1= \langle A_{\bullet \Scal} z_{\Scal}, A_{\bullet j^*_1} \rangle_+ \cdot \mathbf e_{j^*_1}$, where $\langle A_{\bullet \Scal} z_{\Scal}, A_{\bullet j^*_1} \rangle_+ >0$ by Proposition~\ref{prop:index_set}.
%
%
  In what follows, we consider $j^*_1=1$ corresponding to $z_1\ge z_2>0$ first. In this case, $x^1= (z_1 + \vartheta_{12} z_2) \cdot \mathbf e_1$. Hence, $(z - x^1)_\Scal = (-\vartheta_{12}, 1)^T \cdot z_2$. It follows from  statement (ii) of Corollary~\ref{coro:Nec_Suf_suppt_recovery_RN_RN+}
   that a necessary and sufficient condition to select $j^*_2=2$ at Step 2 is
 \begin{equation}  \label{eqn:Step2_Nec}
    \langle A(z - x^1), A_{\bullet 2} \rangle_+  \, >  \, \max_{j \in \Scal^c} \langle A(z - x^1), A_{\bullet j} \rangle_+,
 \end{equation}
  where $\langle A(z - x^1), A_{\bullet 2} \rangle_+= (1-\vartheta^2_{12}) \cdot z_2$ and $\langle A(z - x^1), A_{\bullet j} \rangle_+ = (\vartheta_{j2}- \vartheta_{12}\vartheta_{j1})_+ \cdot z_2$ for each $j\in \Scal^c$. Hence, when $z_1\ge z_2>0$, an equivalent condition for (\ref{eqn:Step2_Nec}) is $1-\vartheta^2_{12} > \max_{j \in \Scal^c} (\vartheta_{j2}- \vartheta_{12}\vartheta_{j1})_+$. When $j^*_1=2$ corresponding to $z_2 \ge z_1>0$, we deduce via a similar argument that a necessary and sufficient condition for $j^*_2=1$ at Step 2 is $1-\vartheta^2_{12} > \max_{j \in \Scal^c}  (\vartheta_{j1}- \vartheta_{12}\vartheta_{j2})_+$. This gives rise to condition (iii).

``If''. As indicated in the ``only if'' part, condition (ii) is sufficient for $j^*_1 \in \Scal$ at Step 1, and condition (iii) is sufficient for $j^*_2 \in \Scal\setminus \{ j^*_1\}$ at Step 2. Hence, under conditions (ii) and (iii), the exact support $\Scal$ is recovered from $y=A z$ in two steps for any $z\in \mathbb R^N$ with $z_\Scal>0$, i.e., $\Jcal_2=\Scal$. Note that the optimality condition for $x^2:=\argmin_{w \ge 0, \supp(w) \subseteq \Jcal_{2} } \, \| A_{\bullet \Scal} z_\Scal - A w\|^2_2$ is given by the linear complementarity problem (LCP): $0 \le x^2_\Scal \perp A^T_{\bullet \Scal} A_{\bullet \Scal} (x^2_{\Scal} - z_\Scal) \ge 0$. Since $A_{\bullet \Scal}$ has full column rank, $A^T_{\bullet \Scal} A_{\bullet \Scal}$ is positive definite such that the LCP has a unique solution $x^2_\Scal = z_\Scal$ or equivalently $x^2=z$. This shows that the exact vector recovery is achieved for any $z\in \mathbb R^N$ with $z_\Scal>0$ under conditions (i)-(iii).
\end{proof}

Applying the necessary and sufficient  conditions given in Theorem~\ref{thm:nonnegative constraint_S2}, it is shown in the next corollary that condition $(\mathbf H)$ is necessary for the exact vector or support recovery on $\Sigma_2 \cap \mathbb R^N_+$.
%
%

\begin{corollary} \label{coro:condition_H'_necessary_RN+_S2}
 Let $A \in \mathbb R^{m\times N}$ be a matrix with unit columns. Then  the exact vector recovery on $\Sigma_2 \cap \, \mathbb R^N_+$ is achieved if and only if (i) condition $(\mathbf H)$ holds on $\Sigma_2 \cap \, \mathbb R^N_+$, and (ii) any two distinct columns of $A$ are linearly independent.
\end{corollary}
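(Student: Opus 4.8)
The plan is to follow the template of the proof of Corollary~\ref{coro:condition_H'_necessary_RN_S2}, simply replacing Theorem~\ref{thm:nonconstrained_S2} by Theorem~\ref{thm:nonnegative constraint_S2}. Write $\vartheta_{ij}=\langle A_{\bullet i},A_{\bullet j}\rangle$ throughout, and recall that for $\Pcal=\mathbb R^N_+$ one has $\mathbb I_j(v)=\mathbb R_+$ whenever $v\ge 0$ and $v_j=0$, so by (\ref{eqn:f*_RN+}) and unit columns $f^*_j(u,v)=\|A(u-v)\|^2_2-\big([\wt t_j(u,v)]_+\big)^2$ with $\wt t_j(u,v)=\langle A(u-v),A_{\bullet j}\rangle$. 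These two facts reduce every instance of (\ref{eqn:condition_H'}) we must check to an inequality between squared inner products, hence to an inequality that can be matched against Theorem~\ref{thm:nonnegative constraint_S2}.

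\emph{``If''.} Proposition~\ref{prop:condition_H_suppt_recovery} turns $(\mathbf H)$ into exact support recovery on $\Sigma_2\cap\mathbb R^N_+$. Fixing $0\ne z\in\Sigma_2\cap\mathbb R^N_+$ and an arbitrary sequence, Theorem~\ref{thm:nec_suf_condition_for_exact_supp_recovery} gives $\Jcal_r=\supp(z)$ at step $r=|\supp(z)|\le 2$, where the Line~7 problem is $\min_{w\ge 0,\ \supp(w)\subseteq\Jcal_r}\|Aw-Az\|^2_2$. Condition (ii) makes $A_{\bullet\Jcal_r}$ of full column rank (trivial when $r=1$), so this objective is strongly convex in $w_{\Jcal_r}$ over a closed convex set and has a unique minimizer, which must be $z$ since $z$ is feasible and attains value $0$; hence $x^r=z$ and exact vector recovery holds. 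Necessity of condition (ii) follows from Lemma~\ref{lem:nonnegative orthant_Nec01} applied to each two-element support $\{i,j\}$.

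\emph{``Only if''.} It remains to verify $(\mathbf H)$, i.e. (\ref{eqn:condition_H'}) for every $0\ne u\in\Sigma_2\cap\mathbb R^N_+$ and every proper $\Jcal\subset\supp(u)$, using that exact vector recovery entails exact support recovery. If $|\supp(u)|=1$ then $\Jcal=\emptyset$, $v=0$, and Theorem~\ref{thm:nec_suf_condition_for_exact_supp_recovery} forces $j^*_1\in\supp(u)$ along every sequence, i.e. $\Argmin_j f^*_j(u,0)=\supp(u)$, which is exactly (\ref{eqn:condition_H'}). Now let $\supp(u)=\{a,b\}$ with $u_a,u_b>0$; after relabeling, Theorem~\ref{thm:nonnegative constraint_S2} applies to the support $\{a,b\}$ and supplies its conditions (i)--(iii). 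For $\Jcal=\emptyset$ the same step-1 reasoning gives $\Argmin_j f^*_j(u,0)\subseteq\{a,b\}$, hence $\min\big(f^*_a(u,0),f^*_b(u,0)\big)<\min_{j\notin\{a,b\}}f^*_j(u,0)$. For $\Jcal=\{a\}$ (the case $\Jcal=\{b\}$ being symmetric) I would compute the unique optimum $v=(u_a+\vartheta_{ab}u_b)_+\mathbf e_a$ of the Line~7 problem over support $\{a\}$ and split on the sign of $u_a+\vartheta_{ab}u_b$. If $u_a+\vartheta_{ab}u_b>0$, then $A(u-v)=u_b\big(A_{\bullet b}-\vartheta_{ab}A_{\bullet a}\big)$, so $\wt t_b(u,v)=u_b(1-\vartheta^2_{ab})>0$ and $\wt t_j(u,v)=u_b(\vartheta_{jb}-\vartheta_{ab}\vartheta_{ja})$ for $j\notin\{a,b\}$, and (\ref{eqn:condition_H'}) becomes $1-\vartheta^2_{ab}>\max_{j\notin\{a,b\}}(\vartheta_{jb}-\vartheta_{ab}\vartheta_{ja})_+$, i.e. a part of Theorem~\ref{thm:nonnegative constraint_S2}(iii). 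If $u_a+\vartheta_{ab}u_b\le 0$, then $v=0$ and necessarily $u_b>u_a$ (as $|\vartheta_{ab}|<1$), so $\vartheta_{ab}u_a+u_b>0$ and (\ref{eqn:condition_H'}) becomes $\vartheta_{ab}u_a+u_b>\max_{j\notin\{a,b\}}(\vartheta_{ja}u_a+\vartheta_{jb}u_b)_+$; since $(u_a+\vartheta_{ab}u_b)_+=0$ in this range, this is precisely Theorem~\ref{thm:nonnegative constraint_S2}(ii) evaluated at $(u_a,u_b)$. All proper subsets $\Jcal$ are thereby covered, so $(\mathbf H)$ holds on $\Sigma_2\cap\mathbb R^N_+$.

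\emph{Main obstacle.} The delicate point is that $(\mathbf H)$ demands (\ref{eqn:condition_H'}) for \emph{every} proper subset $\Jcal\subset\supp(u)$, including subsets such as $\Jcal=\{a\}$ with $u_a<u_b$ that no actual run of Algorithm~\ref{algo:constrained_MP} ever produces, so one cannot merely read the inequality off a single trajectory as in the pure support-recovery statements. The remedy is the explicit solution of the Line~7 subproblem above; but, in contrast to the unconstrained setting of Corollary~\ref{coro:condition_H'_necessary_RN_S2}, the positive-part nonlinearities in $f^*_j$ and in Theorem~\ref{thm:nonnegative constraint_S2}(ii)--(iii) make the two branches $u_a+\vartheta_{ab}u_b>0$ and $u_a+\vartheta_{ab}u_b\le 0$ genuinely different, and the computation depends on verifying that $\wt t_b(u,v)$ and $\vartheta_{ab}u_a+u_b$ are strictly positive in the relevant ranges so that the positive parts can be removed cleanly.
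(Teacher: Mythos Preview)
Your proposal is correct and follows essentially the same approach as the paper: both invoke Theorem~\ref{thm:nonnegative constraint_S2} to obtain its conditions (ii)--(iii) and then verify (\ref{eqn:condition_H'}) for each proper $\Jcal\subset\supp(u)$ by explicitly computing the Line~7 minimizer $v=(u_a+\vartheta_{ab}u_b)_+\mathbf e_a$ and splitting on the sign of $u_a+\vartheta_{ab}u_b$. The only organizational difference is that the paper first splits on which of $(z_1+\vartheta_{12}z_2)_+$ and $(\vartheta_{12}z_1+z_2)_+$ is larger, handles the on-trajectory singleton $\Jcal$ via the algorithm's step-2 inequality, and reserves the explicit two-sub-case computation for the off-trajectory singleton; you instead treat both singletons uniformly by direct computation and appeal to symmetry, which is slightly cleaner but uses the same ingredients.
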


\begin{proof}
The ``if'' part is similar to that given in the proof of Corollary~\ref{coro:condition_H'_necessary_RN_S2}. For the ``only if'' part, let $A$ achieve the exact vector recovery on $\Sigma_2 \cap \, \mathbb R^N_+$. Clearly, condition (ii) is necessary in light of Lemma~\ref{lem:nonnegative orthant_Nec01}. To show that condition (i) is necessary, we consider an arbitrary $z \in \Sigma_2 \cap \, \mathbb R^N_+$ with $\supp(z)=\{1, 2\}:=\Scal$. Hence, $A$ achieves the exact support recovery for the fixed support $\Scal$. Therefore, conditions (ii) and (iii) of Theorem~\ref{thm:nonnegative constraint_S2} hold.
 Consider the three proper subsets of $\Scal$, i.e., $\Jcal=\emptyset$, $\Jcal=\{1\}$, and $\Jcal=\{2 \}$. When $\Jcal=\emptyset$, we see that the inequality (\ref{eqn:condition_H'}) holds for $u=z$ and $v=0$ in light of statement (ii) of Corollary~\ref{coro:Nec_Suf_suppt_recovery_RN_RN+} and  conditions (ii) of Theorem~\ref{thm:nonnegative constraint_S2}.
 Furthermore, we have either $(z_1 + \vartheta_{12} z_2)_+ \ge (\vartheta_{12} z_1 + z_2)_+$ or $(z_1 + \vartheta_{12} z_2)_+ \le (\vartheta_{12} z_1 + z_2)_+$. For the former case, we deduce from Algorithm~\ref{algo:constrained_MP} that $j^*_1=1$ and $\Jcal_1=\{ 1 \}$ such that $x^1=(A^T_{\bullet 1} A z)_+ \mathbf e_1$ is the unique optimal solution to $\min_{w\ge 0, \supp(w) \subseteq \Jcal_1} \| A(z - w) \|^2_2$. Hence, the exact support recovery of $z$ shows that $f^*_2(z, x^1)< \min_{j \in \Scal^c} f^*_j(z, x^1)$, yielding (\ref{eqn:condition_H'}) for $u=z$ and $v=x^1$ when $\Jcal=\{ 1 \}$.
 We then consider $\Jcal=\{2\}$. Similarly, the unique optimal solution $v^*$ to $\min_{w \ge 0, \supp(w) \subseteq \Jcal} \| A(z - w) \|^2_2$ is given by $v^*=(A^T_{\bullet 2} A z)_+ \mathbf e_2 = (\vartheta_{12} z_1 + z_2)_+ \mathbf e_2$. Consider two sub-cases:
 \begin{itemize}
   \item [(a)] $(\vartheta_{12} z_1 + z_2)_+ \le 0$. In this case, $v^*=0$ such that $z-v^*=z$. Hence, $(A^T_{\bullet 1} A (z - v^*))_+=(z_1+\vartheta_{12} z_2)_+$ and $(A^T_{\bullet j} A (z-v^*))_+= (\vartheta_{j1}z_1 + \vartheta_{j2} z_2)_+$ for $j \in \Scal^c$. Since $ \max \big( (z_1 + \vartheta_{12} z_2)_+, \, (\vartheta_{12} z_1 + z_2)_+  \big) = (z_1 + \vartheta_{12} z_2)_+$, we deduce via condition (ii) of Theorem~\ref{thm:nonnegative constraint_S2} that $f^*_1(z, v^*)  < \min_{j\in \Scal^c} f^*_j(z, v^*)$, yielding the inequality (\ref{eqn:condition_H'}) for $u=z$ and $v=v^*$ when $\Jcal=\{ 2 \}$.
   \item [(b)]  $(\vartheta_{12} z_1 + z_2)_+ \ge 0$.   In this case, $z-v^*= (1, -\vartheta_{12}) z_1$ such that $(A^T_{\bullet 1} A (z - v^*))_+=(1 - \vartheta^2_{12}) \cdot z_1$ and $(A^T_{\bullet j} A (z-v^*))_+= (\vartheta_{j1} - \vartheta_{12} \vartheta_{j2})_+ \cdot z_1$ for $j \in \Scal^c$, where $z_1>0$. By condition (iii) of Theorem~\ref{thm:nonnegative constraint_S2} that $f^*_1(z, v^*)  < \min_{j\in \Scal^c} f^*_j(z, v^*)$, yielding (\ref{eqn:condition_H'}) for $u=z$ and $v=v^*$ when $\Jcal=\{ 2 \}$.
 \end{itemize}
The other case where $(z_1 + \vartheta_{12} z_2)_+ \le (\vartheta_{12} z_1 + z_2)_+$ can be established similarly. In addition, for any $u\in \Sigma_2\cap \mathbb R^N_+$ with $|\supp(u)|=1$ and $\Jcal=\emptyset$,  (\ref{eqn:condition_H'}) also holds. Hence, condition $(\mathbf H)$ holds on $\Sigma_2 \cap \, \mathbb R^N_+$.
\end{proof}

%
\subsubsection{Necessary and Sufficient Conditions for Exact Vector Recovery for a Fixed Support of Size 3}

We first present some preliminary results. Given a (possibly non-square) matrix
\[
 M = \begin{bmatrix} M_{11} & M_{12} \\ M_{21} & M_{22} \end{bmatrix},
\]
where $M_{ij}$'s are submatrices of $M$ with $M_{11}$ being invertible, the Schur complement of $M_{11}$ in $M$, denoted by $M/M_{11}$, is given by $M/M_{11}:= M_{22} - M_{21} M^{-1}_{11} M_{12}$. When $M$ is square, the Schur determinant formula says that $\det (M/M_{11}) = \det M / \det M_{11}$ \cite[Proposition 2.3.5]{CPStone_book92}. Particularly, when $M$ is positive definite, any of its Schur complement is also positive definite.

\begin{lemma} \label{lem:R+_optimal_solution}
Given a matrix $A\in \mathbb R^{m\times N}$ and an index set $\Scal$ such that $A_{\bullet \Scal}$ has full column rank, let the matrix $M:=A^T_{\bullet \Scal} A_{\bullet \Scal}$. For a nonempty index set $\Jcal \subset \Scal$ and $z \in \mathbb R^N_+$ with $\supp(z)=\Scal$, let $x^*$ be the unique solution to $\min_{w \ge 0, \supp(w) \subseteq \Jcal} \| A (z-w) \|^2_2$ whose support is given by $\Jcal^*$, i.e., $\supp(x^*)=\Jcal^*$. Define the index set $\Ical :=\Scal \setminus \Jcal^*$. Then $A^T_{\bullet \Jcal^*}A (z- x^*)=0$,  and
\[
   A^T_{\bullet \Ical}A (z- x^*) = \big( M/M_{\Jcal^* \Jcal^*} \big) \cdot z_{\Ical}, \quad
   A^T_{\bullet \Scal^c}A (z- x^*) =  A^T_{\bullet \Scal^c} \big[ I - A_{\bullet \Jcal^*} \big( A^T_{\bullet \Jcal^*} A_{\bullet \Jcal^*} \big)^{-1} A^T_{\bullet \Jcal^*} \big] A_{\bullet \Ical} \cdot z_{\Ical}.
\]
Moreover, $\max_{j\in\Ical} [A^T_{\bullet j}A(z - x^*)]_+=\max_{j\in\Scal\setminus \Jcal} [A^T_{\bullet j}A(z - x^*)]_+>0$.
\end{lemma}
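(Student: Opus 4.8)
The plan is to exploit the standard optimality (KKp)-type conditions for the nonnegatively constrained least-squares problem defining $x^*$, together with the full-column-rank hypothesis and basic Schur-complement algebra.

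First I would write down the optimality conditions for $x^* = \argmin_{w \ge 0,\ \supp(w)\subseteq\Jcal}\|A(z-w)\|^2_2$. Since this is a convex problem with a strongly convex objective in $w_\Jcal$ (as $A_{\bullet\Jcal}$ has full column rank, being a submatrix of $A_{\bullet\Scal}$), the unique solution $x^*$ with $\supp(x^*)=\Jcal^*\subseteq\Jcal$ satisfies the linear complementarity system $0 \le x^*_\Jcal \perp A^T_{\bullet\Jcal}A(x^*-z) \ge 0$. On the active support $\Jcal^*$ the complementarity forces $A^T_{\bullet\Jcal^*}A(z-x^*)=0$, which is the first assertion. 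I would then solve this equation: $A^T_{\bullet\Jcal^*}A_{\bullet\Jcal^*}x^*_{\Jcal^*} = A^T_{\bullet\Jcal^*}A_{\bullet\Scal}z_\Scal = A^T_{\bullet\Jcal^*}A_{\bullet\Jcal^*}z_{\Jcal^*} + A^T_{\bullet\Jcal^*}A_{\bullet\Ical}z_\Ical$, where $\Ical = \Scal\setminus\Jcal^*$ (using $z_{\Scal^c}=0$ and $\Scal=\Jcal^*\cup\Ical$ disjointly), so $x^*_{\Jcal^*} - z_{\Jcal^*} = (A^T_{\bullet\Jcal^*}A_{\bullet\Jcal^*})^{-1}A^T_{\bullet\Jcal^*}A_{\bullet\Ical}z_\Ical$. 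Substituting into $A^T_{\bullet\Ical}A(z-x^*)$ and into $A^T_{\bullet\Scal^c}A(z-x^*)$, and recalling $M = A^T_{\bullet\Scal}A_{\bullet\Scal}$ with block $M_{\Jcal^*\Jcal^*} = A^T_{\bullet\Jcal^*}A_{\bullet\Jcal^*}$, gives exactly the two stated formulas; the $\Ical$-block collapses to the Schur complement $M/M_{\Jcal^*\Jcal^*}$ applied to $z_\Ical$, and the $\Scal^c$-block to the residual-projection expression.

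For the final $\max$ identity I would argue as follows. Since $z > 0$ on $\Scal$ and $M$ is positive definite (so $M/M_{\Jcal^*\Jcal^*}$ is positive definite by the Schur-complement property cited just above), Lemma~\ref{lem:positive_sign} applied to $M/M_{\Jcal^*\Jcal^*}$ and $z_\Ical > 0$ yields some coordinate $j\in\Ical$ with $[A^T_{\bullet j}A(z-x^*)]_j > 0$, hence $\max_{j\in\Ical}[A^T_{\bullet j}A(z-x^*)]_+ > 0$. To upgrade $\Ical$ to $\Scal\setminus\Jcal$: on $\Jcal\setminus\Jcal^*$ the solution has $x^*_j = 0$ but complementarity gives $A^T_{\bullet j}A(x^*-z)\ge 0$, i.e.\ $A^T_{\bullet j}A(z-x^*)\le 0$, so $[A^T_{\bullet j}A(z-x^*)]_+ = 0$ for such $j$. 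Since $\Scal\setminus\Jcal = \Ical\setminus(\Jcal\setminus\Jcal^*)$ — more precisely $\Ical = (\Scal\setminus\Jcal)\cup(\Jcal\setminus\Jcal^*)$ disjointly — the nonnegative-part maximum over $\Scal\setminus\Jcal$ equals that over $\Ical$, and is strictly positive.

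The main obstacle is purely bookkeeping: keeping the three index sets $\Jcal^*$ (active), $\Jcal\setminus\Jcal^*$ (in the ambient support constraint but inactive), and $\Scal\setminus\Jcal$ straight, and checking that each reduction ($z_{\Scal^c}=0$, $\Scal=\Jcal^*\sqcup\Ical$, the inactive-index sign condition) is correctly invoked; no genuinely hard step is involved once the complementarity conditions and the Schur-complement positive-definiteness are in hand.
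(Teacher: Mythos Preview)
Your proposal is correct and follows essentially the same approach as the paper: both derive the LCP optimality conditions $0 \le x^*_\Jcal \perp A^T_{\bullet\Jcal}A(x^*-z) \ge 0$, use complementarity on $\Jcal^*$ to solve for $x^*_{\Jcal^*}$, substitute to obtain the Schur-complement and projection formulas, and then invoke Lemma~\ref{lem:positive_sign} together with the sign condition on $\Jcal\setminus\Jcal^*$ and the decomposition $\Ical = (\Scal\setminus\Jcal)\cup(\Jcal\setminus\Jcal^*)$ for the final $\max$ identity.
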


\begin{proof}
Since $x^*$ is the unique optimal solution to $\min_{w \ge 0, \supp(w) \subseteq \Jcal} \| A (z- w) \|^2_2$, we have $x^*=(x^*_\Jcal, 0)$, where $x^*_\Jcal$ is the solution to $\min_{u \ge 0} \| A_{\bullet \Jcal} u - A_{\bullet \Scal} z_\Scal\|^2$, and $A_{\bullet \Jcal}$ has full column rank.
 Therefore, $x^*_\Jcal$ is a solution to the linear complementarity problem: $0 \le u \perp A^T_{\bullet \Jcal} A_{\bullet \Jcal} u - A^T_{\bullet \Jcal} A_{\bullet \Scal} z_\Scal \ge 0$. In view  of $\supp(x^*)=\Jcal^* \subseteq \Jcal$, we deduce that $x^*_{\Jcal^*} = \big( A^T_{\bullet \Jcal^*} A_{\bullet \Jcal^*} )^{-1}  A^T_{\bullet \Jcal^*} A_{\bullet \Scal} z_\Scal > 0$. Using $\Ical = \Scal \setminus \Jcal^*$ and $A_{\bullet \Scal} z_\Scal= A_{\bullet \Jcal^*} z_{\Jcal^*} + A_{\bullet \Ical} z_\Ical$, we further have $x^*_{\Jcal^*} = z_{\Jcal^*} + \big( A^T_{\bullet \Jcal^*} A_{\bullet \Jcal^*} )^{-1}  A^T_{\bullet \Jcal^*} A_{\bullet \Ical} z_\Ical$. Hence,
\[
  A (z - x^*) = A_{\bullet \Scal} (z_\Scal - x^*_\Scal ) = A_{\bullet \Jcal^*} \big( z_{\Jcal^*} - x^*_{\Jcal^*} \big) + A_{\bullet \Ical}  z_\Ical  = \big[ -A_{\bullet \Jcal^*} \big( A^T_{\bullet \Jcal^*} A_{\bullet \Jcal^*} )^{-1}  A^T_{\bullet \Jcal^*} A_{\bullet \Ical} + A_{\bullet\Ical} \big] z_\Ical.
%
%
\]
Direct calculations yield $A^T_{\bullet \Ical}A (z- x^*)  =  \big[ A^T_{\bullet \Ical} A_{\bullet\Ical} - A^T_{\bullet\Ical}A_{\bullet \Jcal^*} \big( A^T_{\bullet \Jcal^*} A_{\bullet \Jcal^*} )^{-1}  A^T_{\bullet \Jcal^*} A_{\bullet \Ical} \big] z_\Ical = \big( M/M_{\Jcal^* \Jcal^*} \big) \cdot z_{\Ical}$; the other equation also follow readily.

Since $M/M_{\Jcal^* \Jcal^*}$ is positive definite and $z_\Ical>0$, it follows from Lemma~\ref{lem:positive_sign} and the expression for $A^T_{\bullet \Ical}A (z- x^*)$ derived above that there exists an index $j\in \Ical$ such that $A^T_{\bullet j}A(z - x^*)>0$. Hence,  $\max_{j\in\Ical} [A^T_{\bullet j}A(z - x^*)]_+>0$. Furthermore, since $\Ical=\Scal\setminus \Jcal^*$ and $\Jcal^* \subseteq \Jcal \subset \Scal$, we have $\Ical = (\Scal \setminus \Jcal) \cup (\Jcal \setminus \Jcal^*)$. However, it follows from the linear complementarity condition for $x^*_\Jcal$ that $A^T_{\bullet \Jcal}A( x^* - z) = A^T_{\bullet \Jcal} A_{\bullet \Jcal} x^*_\Jcal - A^T_{\bullet \Jcal} A_{\bullet \Scal} z_\Scal \ge 0$, which implies that $A^T_{\bullet \Jcal}A(z- x^* ) \le 0$ or equivalently $[A^T_{\bullet j}A(z- x^* )]_+ =0$ for all $j \in \Jcal$. Therefore, $\max_{j\in\Ical} [A^T_{\bullet j}A(z - x^*)]_+=\max_{j\in\Scal\setminus \Jcal} [A^T_{\bullet j}A(z - x^*)]_+$.
%
%
%
%
%
\end{proof}

\begin{lemma} \label{lem:R+_S3_Step2}
 Let $U:= \begin{bmatrix} \alpha & \gamma \\ \gamma & \beta \end{bmatrix} \in \mathbb R^{2\times 2}$ be a positive definite matrix for real numbers $\alpha, \beta$ and $\gamma$. Define the set $\mathcal W :=\big\{ (u_1, u_2) \in \mathbb R^2_{++} \, | \, \big( \alpha u_1 + \gamma u_2 \big)_+ \ge \big( \gamma u_1 + \beta u_2 \big)_+ \big\}$. Then $\mathcal W$ is nonempty if and only if $\alpha > \gamma$. Furthermore, if $\mathcal W$ is nonempty, then $\{ u_2 \, | \, (u_1, u_2) \in \mathcal W \}=\mathbb R_{++}$.
\end{lemma}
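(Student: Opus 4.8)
The plan is to reduce everything to elementary sign analysis of the two linear forms $\ell_1(u):=\alpha u_1+\gamma u_2$ and $\ell_2(u):=\gamma u_1+\beta u_2$ on the open quadrant $\mathbb R^2_{++}$, using only the three inequalities that positive definiteness of $U$ provides: $\alpha>0$, $\beta>0$, and $\alpha\beta-\gamma^2>0$. (In particular $\beta>\gamma^2/\alpha\ge 0$, so $\beta>0$ indeed follows.) The set $\mathcal W$ is exactly $\{u\in\mathbb R^2_{++}:(\ell_1(u))_+\ge(\ell_2(u))_+\}$, so I want to know for which sign relations among $\alpha,\beta,\gamma$ this condition can hold.

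For the ``if'' direction (and the ``furthermore'' part simultaneously), I would fix an arbitrary $u_2>0$ and let $u_1\to+\infty$. Since $\alpha>0$, $\ell_1(u_1,u_2)\to+\infty$; since $\alpha-\gamma>0$, the difference $\ell_1(u_1,u_2)-\ell_2(u_1,u_2)=(\alpha-\gamma)u_1-(\beta-\gamma)u_2\to+\infty$ as well (the coefficient of $u_2$ is a fixed constant, so its sign is irrelevant). Hence for all sufficiently large $u_1>0$ one has both $\ell_1(u_1,u_2)\ge 0$ and $\ell_1(u_1,u_2)\ge\ell_2(u_1,u_2)$, which gives $(\ell_1(u_1,u_2))_+=\ell_1(u_1,u_2)\ge\max(\ell_2(u_1,u_2),0)=(\ell_2(u_1,u_2))_+$, i.e. $(u_1,u_2)\in\mathcal W$. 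This shows $\mathcal W\neq\emptyset$ when $\alpha>\gamma$, and that every $u_2>0$ is the second coordinate of some point of $\mathcal W$; the inclusion $\{u_2:(u_1,u_2)\in\mathcal W\}\subseteq\mathbb R_{++}$ is trivial from $\mathcal W\subseteq\mathbb R^2_{++}$, so $\{u_2:(u_1,u_2)\in\mathcal W\}=\mathbb R_{++}$.

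For the ``only if'' direction I would prove the contrapositive: if $\alpha\le\gamma$, then $\mathcal W=\emptyset$. First extract the sign information: $\alpha>0$ forces $\gamma\ge\alpha>0$, and then $\alpha\beta-\gamma^2>0$ together with $\gamma\ge\alpha$ (dividing by $\gamma>0$) gives $\beta>\gamma^2/\alpha\ge\gamma$, so $0<\alpha\le\gamma<\beta$. Now for any $(u_1,u_2)\in\mathbb R^2_{++}$ we have $\ell_2(u_1,u_2)=\gamma u_1+\beta u_2>0$, hence $(\ell_2(u_1,u_2))_+=\ell_2(u_1,u_2)$, while $\ell_1(u_1,u_2)=\alpha u_1+\gamma u_2<\gamma u_1+\beta u_2=\ell_2(u_1,u_2)$ since $\alpha u_1\le\gamma u_1$ and $\gamma u_2<\beta u_2$. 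Splitting into the cases $\ell_1\le 0$ and $\ell_1>0$, in both one gets $(\ell_1(u_1,u_2))_+<\ell_2(u_1,u_2)=(\ell_2(u_1,u_2))_+$, so the defining inequality of $\mathcal W$ fails at every point of $\mathbb R^2_{++}$ and $\mathcal W=\emptyset$.

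I do not anticipate a genuine obstacle here — the whole argument is a short case analysis on signs. The only spots that need a little care are the passage $\alpha\le\gamma\Rightarrow\beta>\gamma$ (one must first observe $\gamma>0$ before dividing by it in $\alpha\beta>\gamma^2$), and the harmless but necessary splitting of the comparison $(\ell_1)_+$ versus $(\ell_2)_+$ according to the sign of $\ell_1$, since $\ell_1$ need not be positive when $\gamma<0$.
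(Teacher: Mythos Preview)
Your proof is correct and follows essentially the same approach as the paper's: both arguments use the positive-definiteness inequalities $\alpha>0$, $\beta>0$, $\alpha\beta>\gamma^2$ to deduce $0<\alpha\le\gamma<\beta$ in the contrapositive direction, and both produce points of $\mathcal W$ by pushing one coordinate to an extreme. The only minor difference is organizational: the paper first shows nonemptiness by fixing $u_1>0$ and taking $u_2>0$ small, and then separately handles the ``furthermore'' clause by fixing $u_2>0$ and taking $u_1$ large; you streamline this by doing the latter argument once, which simultaneously yields both nonemptiness and the full projection onto $\mathbb R_{++}$.
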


\begin{proof}
  Since $U$ is positive definite, we have $\alpha>0$, $\beta>0$, and $\alpha\beta > \gamma^2$. To show the ``if'' part, suppose $\alpha> \gamma$. Then for a fixed $u_1>0$, we have $\alpha u_1 > \gamma u_1$ and $\alpha u_1>0$. Therefore, for a sufficiently small $u_2>0$, it is easy to see that $( \alpha u_1 + \gamma u_2 )_+ \ge ( \gamma u_1 + \beta u_2 )_+$. This shows that $\mathcal W$ is nonempty. To prove the ``only if'' part, suppose $\mathcal W$ is nonempty but $\alpha \le \gamma$. Note that this implies that $\gamma>0$. Since $\alpha \cdot \beta > \gamma^2$ (due to the positive definiteness of $U$), we have
  $
  \displaystyle   \beta > \frac{\gamma}{\alpha} \cdot \gamma \ge \gamma.
  $
  Therefore, $\beta>\gamma\ge \alpha>0$. Hence, for any $(u_1, u_2)>0$, we have $\alpha u_1 \le \gamma u_1$ and $\gamma u_2 < \beta u_2$ such that
  $
     0 < \alpha u_1 + \gamma u_2 < \gamma u_1 + \beta u_2.
  $
  This implies that $\mathcal W$ is empty, yielding a contradiction. Finally, when $\mathcal W$ is nonempty, we see, in view of $\alpha > \gamma$ proven above, that for any $u_2>0$, there exists a sufficiently large $u_1>0$ such that $\alpha u_1 + \gamma u_2>0$ and  $\alpha u_1 + \gamma u_2> \gamma u_1 + \beta u_2$. This shows that $( \alpha u_1 + \gamma u_2 )_+ > ( \gamma u_1 + \beta u_2 )_+$. Hence, $\{ u_2 \, | \, (u_1, u_2) \in \mathcal W \}=\mathbb R_{++}$.
\end{proof}

\begin{theorem} \label{thm:nonnegative constraint_S3}
 Given a matrix $A \in \mathbb R^{m\times N}$ with unit columns and the index set $\Scal=\{1, 2, 3\}$, let $M:=A^T_{\bullet \Scal} A_{\bullet \Scal}$. Then every nonzero vector $x \in \mathbb R^N_+$ with $\supp(x)=\Scal$ is recovered from $y=A x$ via constrained matching pursuit if and only if each of the following conditions holds:
 \begin{itemize}
   \item [(i)] $A_{\bullet \Scal}$ has full column rank;
    \item [(ii)] $\big\| (A^T_{\bullet \Scal} A_{\bullet \Scal} u )_+ \big\|_\infty > \big\| (A^T_{\bullet \Scal^c} A_{\bullet \Scal} u )_+ \big\|_\infty$  for all $u \in \mathbb R^3_{++}$;
    \item [(iii)] For any $\Jcal\in \{ \{1\}, \{2\}, \{3\}\}$, $\big\| (M/M_{\Jcal\Jcal} \, v )_+ \big\|_\infty > \big\| (A^T_{\bullet \Scal^c}[I- A^T_{\bullet \Jcal} A_{\bullet \Jcal}] A_{\Scal\setminus\Jcal}v  )_+ \big\|_\infty$  for all $v \in \mathbb R^2_{++}$;
       %
    \item [(iv)]  All the following implications hold:
      \begin{align*}
        \Big[ 1-\vartheta^2_{12} > \min(\Delta_{13}, \Delta_{23}) \Big] & \ \Longrightarrow \ \Big[ \det M > \max_{i \in \Scal^c} \big( \vartheta_{i3} (1-\vartheta^2_{12}) - \vartheta_{i1} \Delta_{13} - \vartheta_{i2}\Delta_{23} \big)_+ \Big], \\
        \Big[ 1-\vartheta^2_{13} > \min(\Delta_{12}, \Delta_{23}) \Big] & \ \Longrightarrow \ \Big[ \det M > \max_{i \in \Scal^c} \big( \vartheta_{i2} (1-\vartheta^2_{13}) - \vartheta_{i1} \Delta_{12} - \vartheta_{i3}\Delta_{23} \big)_+ \Big], \\
        \Big[ 1-\vartheta^2_{23} > \min(\Delta_{12}, \Delta_{13}) \Big] & \ \Longrightarrow \ \Big[ \det M > \max_{i \in \Scal^c} \big( \vartheta_{i1} (1-\vartheta^2_{23}) - \vartheta_{i2} \Delta_{12} - \vartheta_{i3}\Delta_{13} \big)_+ \Big],
      \end{align*}
       where $\Delta_{12}:=\vartheta_{12}-\vartheta_{13}\vartheta_{23}$, $\Delta_{13}:=\vartheta_{13}-\vartheta_{12}\vartheta_{23}$, and $\Delta_{23}:=\vartheta_{23} - \vartheta_{12}\vartheta_{13}$.
 \end{itemize}
\end{theorem}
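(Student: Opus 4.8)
The plan is to run Algorithm~\ref{algo:constrained_MP} from $x^0=0$ with $y=Az$ for an arbitrary $z\in\mathbb R^N_+$ with $\supp(z)=\Scal=\{1,2,3\}$ and to read off, iteration by iteration, how the index--selection step (Line~5) and the update step (Line~7) translate into the four conditions. The main tools are Corollary~\ref{coro:Nec_Suf_suppt_recovery_RN_RN+}(ii) for the $[\cdot]_+$ form of the selection rule, Lemma~\ref{lem:R+_optimal_solution} for the Schur--complement description of $A^TA(z-x^k)$ after the update, and Lemma~\ref{lem:R+_S3_Step2} together with Lemma~\ref{lem:positive_sign} to control which orderings of $\Scal$ can occur and to guarantee that the left--hand maxima in (ii)--(iv) are positive. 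Since the columns of $A$ are unit, $f^*_j(z,x^k)=\|A(z-x^k)\|_2^2-([A^T_{\bullet j}A(z-x^k)]_+)^2$ whenever $(x^k)_j=0$ (see (\ref{eqn:f*_RN+})), so choosing $j^*_{k+1}$ amounts to maximizing $[A^T_{\bullet j}A(z-x^k)]_+$ over the admissible indices; moreover, if $A_{\bullet\Scal}$ has full column rank then $M=A^T_{\bullet\Scal}A_{\bullet\Scal}$, $\det M$, each $1-\vartheta_{ij}^2$ with $i,j\in\Scal$, and every Schur complement of $M$ are positive, which I will use repeatedly.

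For necessity, condition (i) is Lemma~\ref{lem:nonnegative orthant_Nec01}. With $x^0=0$, Corollary~\ref{coro:Nec_Suf_suppt_recovery_RN_RN+}(ii) forces $j^*_1\in\Scal$ for every admissible $z$, which upon writing $z_\Scal=u\in\mathbb R^3_{++}$ is exactly (ii). For (iii) with $\Jcal=\{\ell\}\subset\Scal$, I would take $z$ with $z_{\Scal\setminus\{\ell\}}$ an arbitrary $v\in\mathbb R^2_{++}$ and $z_\ell$ so large that Step~1 must pick $j^*_1=\ell$ (using $|\vartheta_{\ell j}|<1$ for $j\in\Scal$ from (i) and (ii) to exclude $\Scal^c$); then $x^1=[A^T_{\bullet\ell}Az]_+\mathbf e_\ell$ has support $\{\ell\}$, Lemma~\ref{lem:R+_optimal_solution} gives $A^T_{\bullet\Scal\setminus\{\ell\}}A(z-x^1)=(M/M_{\ell\ell})v$ and identifies the $\Scal^c$--residual, and Corollary~\ref{coro:Nec_Suf_suppt_recovery_RN_RN+}(ii) at Step~2 is precisely the inequality in (iii). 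For (iv), suppose $1-\vartheta_{12}^2>\min(\Delta_{13},\Delta_{23})$, say $1-\vartheta_{12}^2>\Delta_{23}$; then the set $\mathcal W$ of Lemma~\ref{lem:R+_S3_Step2} for $U=M/M_{\{1\}\{1\}}$ is nonempty with full projection onto its last coordinate, so there is $z$ with $z_3>0$ arbitrary, $z_2$ large enough that $(z_2,z_3)\in\mathcal W$, and $z_1$ large enough that Step~1 picks $j^*_1=1$; for such $z$, Step~2 picks $j^*_2=2$ (because $(z_2,z_3)\in\mathcal W$) and, because $z_1,z_2$ are large, the Line~7 minimizer $x^2$ has full support $\{1,2\}$. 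Lemma~\ref{lem:R+_optimal_solution} with $\Jcal^*=\{1,2\}$ and the Schur determinant formula $M/M_{\{1,2\}\{1,2\}}=\det M/(1-\vartheta_{12}^2)$ then turn the Step~3 inequality $[A^T_{\bullet3}A(z-x^2)]_+>\max_{j\in\Scal^c}[A^T_{\bullet j}A(z-x^2)]_+$ into $\det M>\max_{j\in\Scal^c}(\vartheta_{j3}(1-\vartheta_{12}^2)-\vartheta_{j1}\Delta_{13}-\vartheta_{j2}\Delta_{23})_+$; since $x^2$ depends only on $\Jcal_2=\{1,2\}$, the case $1-\vartheta_{12}^2>\Delta_{13}$ (via $j^*_1=2$, $j^*_2=1$) yields the same inequality, and the remaining two implications of (iv) follow by permuting $\{1,2,3\}$.

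For sufficiency, assume (i)--(iv). By (ii), Step~1 selects $j^*_1\in\Scal$ with $[A^T_{\bullet j^*_1}Az]_+>0$, so $\supp(x^1)=\{j^*_1\}$. Applying Lemma~\ref{lem:R+_optimal_solution} with $\Jcal=\Jcal^*=\{j^*_1\}$, condition (iii) with $v=z_{\Scal\setminus\{j^*_1\}}$ gives $\max_{j\in\Scal\setminus\{j^*_1\}}[A^T_{\bullet j}A(z-x^1)]_+>\max_{j\in\Scal^c}[A^T_{\bullet j}A(z-x^1)]_+\ge0$, so by Lemma~\ref{lem:exact_suppt_recovery_index} the selected index satisfies $j^*_2\in\Scal\setminus\{j^*_1\}$; set $\Jcal_2=\{j^*_1,j^*_2\}\subset\Scal$. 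Let $x^2$ be the Line~7 minimizer over $\supp(w)\subseteq\Jcal_2$; since $\|A(z-x^2)\|_2^2\le\|A(z-x^1)\|_2^2<\|Az\|_2^2$ we have $x^2\ne0$, hence $\supp(x^2)=\Jcal^*$ with $\Jcal^*\subseteq\Jcal_2$ and $1\le|\Jcal^*|\le2$; Lemma~\ref{lem:R+_optimal_solution} gives $\max_{j\in\Scal\setminus\Jcal_2}[A^T_{\bullet j}A(z-x^2)]_+=\max_{j\in\Scal\setminus\Jcal^*}[A^T_{\bullet j}A(z-x^2)]_+>0$, and I claim this exceeds $\max_{j\in\Scal^c}[A^T_{\bullet j}A(z-x^2)]_+$: if $|\Jcal^*|=1$ this is condition (iii) with $\Jcal=\Jcal^*$; if $|\Jcal^*|=2$ this is the implication of (iv) for $\Jcal_2$, whose hypothesis $1-\vartheta_{j^*_1j^*_2}^2>\min(\cdot,\cdot)$ holds automatically because the Step~2 choice of $j^*_2$ over $\Scal\setminus\{j^*_1\}$ puts $z_{\Scal\setminus\{j^*_1\}}$ in the set $\mathcal W$ of Lemma~\ref{lem:R+_S3_Step2} for $M/M_{\{j^*_1\}\{j^*_1\}}$, forcing the relevant entry of that matrix to dominate its off--diagonal. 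Thus Step~3 selects $j^*_3=\Scal\setminus\Jcal_2$, so $\Jcal_3=\Scal$, and since $A_{\bullet\Scal}$ has full column rank the Line~7 problem over $\supp(w)\subseteq\Scal$ is strongly convex in $w_\Scal$ with the unique minimizer $x^3=z$ (as $z$ is feasible with zero objective). Hence exact support recovery of $z$ holds at Step~3 and the final update is the unique vector $z$, i.e., the exact vector recovery holds.

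The hard part will be the bookkeeping around the intermediate update $x^2$: its support can be a proper subset of $\Jcal_2$, and the argument must show that in either case Step~3's selection inequality is delivered by (iii) or by (iv), while in the necessity direction one must exhibit, for each ordered route $(j^*_1,j^*_2)$ compatible with (ii)--(iii), an admissible $z$ that both realizes that route and forces $x^2$ to have full support --- this is exactly what the nonemptiness and projection statements of Lemma~\ref{lem:R+_S3_Step2} are tailored to supply. The remainder is routine linear algebra: checking that the Schur--complement residuals $A^T_{\bullet\Scal^c}A(z-x^2)$ simplify to the explicit coefficients $\vartheta_{j3}(1-\vartheta_{12}^2)-\vartheta_{j1}\Delta_{13}-\vartheta_{j2}\Delta_{23}$ and their cyclic analogues.
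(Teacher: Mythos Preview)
Your proposal is correct and follows essentially the same route as the paper: run the algorithm step by step, translate the selection inequality at each step via Corollary~\ref{coro:Nec_Suf_suppt_recovery_RN_RN+}(ii) and Lemma~\ref{lem:R+_optimal_solution} into Schur--complement form, and use Lemma~\ref{lem:R+_S3_Step2} to control which orderings $(j^*_1,j^*_2)$ actually occur.

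The one noticeable stylistic difference is how you handle $\supp(x^2)$. The paper, in both directions, invokes Proposition~\ref{prop:index_set} to conclude $\supp(x^2)=\Jcal_2$ outright: in the ``only if'' direction this is legitimate because exact support recovery is assumed, and in the ``if'' direction the proof of Proposition~\ref{prop:index_set} only needs the strict inequalities at $k=0,1$, which have just been established. You instead bypass Proposition~\ref{prop:index_set}: in sufficiency you split into the cases $|\Jcal^*|=1$ (handled by~(iii)) and $|\Jcal^*|=2$ (handled by~(iv)); in necessity you force $\supp(x^2)=\{1,2\}$ by taking $z_1,z_2$ large so the unconstrained least--squares solution over $\Jcal_2$ is already positive. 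Both arguments are valid. Your case split is arguably cleaner in the sufficiency direction (it avoids the slightly forward--referencing use of Proposition~\ref{prop:index_set}), at the cost of an extra case that the paper shows is in fact vacuous.
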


\begin{remark} \rm \label{remark:R+3_conditions}
We comment on the above conditions before presenting a proof:
\begin{itemize}
 \item [(a)] Since the matrix $M = \begin{bmatrix} 1 & \vartheta_{12} & \vartheta_{13} \\ \vartheta_{12} & 1 & \vartheta_{23} \\ \vartheta_{13} & \vartheta_{23} & 1 \end{bmatrix}$, its determinant $\det M = 1 + 2\vartheta_{12} \vartheta_{13} \vartheta_{23} - \vartheta^2_{12} - \vartheta^2_{13}-\vartheta^2_{23}$.

 \item [(b)] If the hypothesis of an implication in condition (iv) fails, then that implication holds even when the conclusion statement is false. Hence, that implication is vacuously true and can be neglected.

 \item [(c)] Since each Schur complement of $M:=A^T_{\bullet \Scal} A_{\bullet \Scal}$ is positive definite, we have $(1-\vartheta^2_{13})(1-\vartheta^2_{23}) \ge \Delta^2_{12}$, $(1-\vartheta^2_{12})(1-\vartheta^2_{23}) \ge \Delta^2_{13}$, and $(1-\vartheta^2_{12})(1-\vartheta^2_{13}) \ge \Delta^2_{23}$. By virtue of these inequalities, it is easy to verify that at least two hypotheses of the three implications in condition (iv) must hold.
\end{itemize}

\end{remark}

\begin{proof}[Proof of Theorem~\ref{thm:nonnegative constraint_S3}]
``If''. Suppose conditions (i)-(iv) hold. Fix an arbitrary $z=(z_\Scal, 0) \in \mathbb R^N_+$ with $z_\Scal=(z_1, z_2, z_3)\in \mathbb R^3_{++}$, and let $y = A z = A_{\bullet S} z_\Scal$. Consider the following three steps of Algorithm~\ref{algo:constrained_MP}:

 \indent $\bullet$  Step 1: Let $x^0=0$. Since $y=A_{\bullet \Scal} z_\Scal$, it follows from condition (ii) that
  $\max_{i=1, 2, 3} ( A^T_{\bullet i} A_{\bullet \Scal} z_\Scal)_+ > \max_{j \in \Scal^c} (A^T_{\bullet j} A_{\bullet \Scal} z_\Scal)_+$. Hence, it follows from Algorithm~\ref{algo:constrained_MP} that $j^*_1\in \Scal= \{1, 2, 3\}$, and the index set $\Jcal_1=\{ j^*_1\}$. Further, $x^1:=\argmin_{x  \ge 0, \supp(x) \subseteq \Jcal_{1} } \, \| y - A x\|^2_2$ is given by $x^1 = \langle A_{\bullet j^*_1}, A_{\bullet \Scal} z_{\Scal} \rangle_+ \mathbf e_{j^*_1}$,
 %
 %
   where $\langle A_{\bullet j^*_1}, A_{\bullet \Scal} z_{\Scal} \rangle_+ > 0$ in view of Proposition~\ref{prop:index_set}.

\indent $\bullet$  Step 2: By observing that $x^1$ is the optimal solution obtained from Step 1 with $\supp(x^1)=\Jcal_1=\{ j^*_1\}$, it follows from Lemma~\ref{lem:R+_optimal_solution} and $A^T_{\bullet \Jcal_1} A_{\bullet \Jcal_1}=1$ that by letting  the index set $\Ical := \Scal \setminus \Jcal_1$,
\begin{align*}
  \max_{i\in \Scal} \big( A^T_{\bullet i} A(z - x^1) \big)_+ & \, = \ \big \| \big( A^T_{\bullet  \Scal} A(z - x^1) \big)_+ \big\|_\infty \, = \, \big\| \big( (M/M_{\Jcal_1 \Jcal_1}) \cdot z_\Ical \big)_+ \big\|_\infty, \\
  \max_{j\in \Scal^c} \big( A^T_{\bullet j} A(z - x^1) \big)_+ & \, = \ \big \| \big( A^T_{\bullet  \Scal^c} A(z - x^1) \big)_+ \big\|_\infty \, = \, \big\|  (A^T_{\bullet \Scal^c}[I- A_{\bullet \Jcal_1} A^T_{\bullet \Jcal_1}] A_{\Ical} \cdot z_\Ical  )_+  \big\|_\infty.
\end{align*}
Noting that the Schur complement $M/M_{\Jcal_1 \Jcal_1}$ is positive definite and $z_\Ical>0$, we deduce via Lemma~\ref{lem:positive_sign} that $\big\| \big( (M/M_{\Jcal_1 \Jcal_1}) \cdot z_\Ical \big)_+ \big\|_\infty>0$.  By $z_\Ical>0$ and condition (iii), we have  $ \max_{i\in \Scal} \big( A^T_{\bullet i} A(z - x^1) \big)_+ >  \max_{j\in \Scal^c} \big( A^T_{\bullet j} A(z - x^1) \big)_+$. In light of Algorithm~\ref{algo:constrained_MP}, we see that $j^*_2:= \argmax_{i\in \Scal} \big( A^T_{\bullet i} A(z - x^1) \big)_+$ satisfies $j^*_2 \in \Ical$, and $\Jcal_2=\{ j^*_1, j^*_2\} \subset \Scal$ with $j^*_1 \ne j^*_2$. Moreover, let $x^2$ be the unique optimal solution to $\min_{w \ge 0, \supp(w)\subseteq \Jcal_2} \| y - A w \|^2$. Then it follows from Proposition~\ref{prop:index_set} that $\supp(x^2)=\Jcal_2$.

\indent $\bullet$  Step 3: Let the index $j_3$ be such that $\{ j_3 \}=\Scal \setminus \Jcal_2$.
 Note that $\{ j^*_2, j_3\}=\Ical$. Hence, the Schur complement $U:=M/M_{\Jcal_1 \Jcal_1}$ is one of the following $2\times 2$ positive definite  matrices:
\begin{equation} \label{eqn:U_matrices}
  U^1: = \begin{bmatrix} 1 - \vartheta^2_{12} & \Delta_{23} \\ \Delta_{23} & 1 - \vartheta^2_{13} \end{bmatrix}, \quad
  U^2: = \begin{bmatrix} 1 - \vartheta^2_{12} & \Delta_{13} \\ \Delta_{13} & 1 - \vartheta^2_{23} \end{bmatrix}, \quad
  U^3: = \begin{bmatrix} 1 - \vartheta^2_{13} & \Delta_{12} \\ \Delta_{12} & 1 - \vartheta^2_{23} \end{bmatrix},
\end{equation}
where $\Delta_{ij}$'s are defined in condition (iv), $U^1= M/M_{11}$, $U^2= M/M_{22}$, and $U^3= M/M_{33}$. Hence, $U=\begin{bmatrix} \alpha & \gamma \\ \gamma & \beta \end{bmatrix}$ is positive definite, where $\alpha, \beta \in \{ 1 - \vartheta^2_{12}, 1-\vartheta^2_{13}, 1-\vartheta^2_{23} \}$ with $\alpha \ne \beta$, and $\gamma \in \{ \Delta_{12}, \Delta_{13}, \Delta_{23} \}$. Furthermore, either $(U_{1\bullet} z_\Ical )_+\ge (U_{2\bullet} z_\Ical )_+$ or $(U_{2\bullet} z_\Ical )_+\ge (U_{1\bullet} z_\Ical )_+$, where $U_{i \bullet}$ denotes the $i$th row of $U$.
Since $z_\Ical >0$, it follows from Lemma~\ref{lem:R+_S3_Step2} that either $\alpha > \gamma$ or $\beta > \gamma$. We first consider the case where $\alpha > \gamma$. In this case, $\alpha= 1- \vartheta^2_{j^*_1, j^*_2}$, $\beta = 1 - \vartheta^2_{j^*_1, j_3}$, and $\gamma=\Delta_{j^*_2, j_3}$. In light of the implications given by condition (iv), we have that
\begin{equation} \label{eqn:R+_step_3}
\det M \, > \, \max_{i \in \Scal^c} \big( \vartheta_{i, j_3} (1-\vartheta^2_{j^*_1, j^*_2}) - \vartheta_{i, j^*_1} \Delta_{j^*_1, j_3} - \vartheta_{i, j^*_2}\Delta_{j^*_2, j_3} \big)_+.
\end{equation}
Additionally, since $x^2$ is the unique solution to $\min_{w \ge 0, \supp(w) \subseteq \Jcal_2} \| y - A w \|^2$ with $\supp(x^2)=\Jcal_2$, it follows from Lemma~\ref{lem:R+_optimal_solution} that by letting $\wt\Ical := \Scal\setminus \Jcal_2=\{ j_3\}$,
\begin{align*}
 \max_{i \in \Scal} \big( A^T_{\bullet i} A(z- x ^2) \big)_+ & \,  = \, \big \| \big( (M/M_{\Jcal_2\Jcal_2}) \cdot z_{\wt\Ical} \big)_+ \big\|_\infty, \\
  \max_{i\in \Scal^c} \big( A^T_{\bullet i} A(z - x^2) \big)_+ & \, = \, \big\|  (A^T_{\bullet \Scal^c}[I- A^T_{\bullet \Jcal_2} ( A_{\bullet \Jcal_2} A_{\bullet \Jcal_2} )^{-1}  A^T_{\bullet \Jcal_2}] A_{\wt\Ical} \cdot z_{\wt \Ical}  )_+  \big\|_\infty.
\end{align*}
Note that $z_{\wt\Ical}$ and $M/M_{\Jcal_2\Jcal_2}$ are positive scalars. It follows from the Schur determinant formula that $M/M_{\Jcal_2\Jcal_2} = \det (M/M_{\Jcal_2\Jcal_2} )=\det M/\det(M_{\Jcal_2\Jcal_2})$.  Thus $\max_{i \in \Scal} \big( A^T_{\bullet i} A(z- x ^2) \big)_+ = \det M/\det(M_{\Jcal_2\Jcal_2}) \cdot z_{\wt \Ical}$. Further, direct calculations show that $( A_{\bullet \Jcal_2} A_{\bullet \Jcal_2} )^{-1}  A^T_{\bullet \Jcal_2} A_{\wt\Ical} = ( \Delta_{j^*_1, j_3}, \Delta_{j^*_2, j_3})^T/\det(M_{\Jcal_2 \Jcal_2})$.
In view of this result and $\det(M_{\Jcal_2\Jcal_2})= 1-\vartheta^2_{j^*_1, j^*_2}$, we have, for each $i \in \Scal^c$,
\[
\big( (A^T_{\bullet i}[I- A^T_{\bullet \Jcal_2} ( A_{\bullet \Jcal_2} A_{\bullet \Jcal_2} )^{-1}  A^T_{\bullet \Jcal_2}] A_{\wt\Ical} \cdot z_{\wt \Ical} \big)_+ =  \frac{z_{\wt \Ical}}{\det( M_{\Jcal_2\Jcal_2})} \Big( \vartheta_{i, j_3} (1-\vartheta^2_{j^*_1, j^*_2}) - \vartheta_{i, j^*_1} \Delta_{j^*_1, j_3} - \vartheta_{i, j^*_2}\Delta_{j^*_2, j_3} \Big)_+.
\]
These results and the inequality (\ref{eqn:R+_step_3}) imply that $\max_{i \in \Scal} \big( A^T_{\bullet i} A(z- x ^2) \big)_+ > \max_{i\in \Scal^c} \big( A^T_{\bullet i} A(z - x^2) \big)_+$. The other case where $\beta>\gamma$ can be established by the similar argument.
Therefore, following Algorithm~\ref{algo:constrained_MP},  $j^*_3:= \argmax_{i\in \Scal} \big( A^T_{\bullet i} A(z - x^2) \big)_+$ satisfies $j^*_3 = j_3$. This yields $\Jcal_3= \Scal$. Since $A_{\bullet \Scal}$ has full column rank, the exact vector recovery is achieved.


 ``Only if''. Suppose every nonzero vector $x \in \mathbb R^N_+$ with $\supp(x)=\Scal$ is recovered from $y=A x$ via constrained matching pursuit for a given matrix $A \in \mathbb R^{m\times N}$ and the index set $\Scal=\{1, 2, 3\}$. It follows from Lemma~\ref{lem:nonnegative orthant_Nec01} that condition (i) must hold. Besides, by setting $x^0=0$, we see via Corollary~\ref{coro:Nec_Suf_suppt_recovery_RN_RN+}  that $\max_{i \in \Scal} (A^T_{\bullet i} A_{\bullet S} z_\Scal)_+ >  \max_{j \in \Scal^c} (A^T_{\bullet j} A_{\bullet S} z_\Scal)_+$ holds for all $z_\Scal \in \mathbb R^3_{++}$. This yields condition (ii).

 For each $p\in \Scal$, define the set $\mathcal W_p :=\{ z_\Scal \in \mathbb R^3_{++} \, | \,   (A^T_{\bullet p} A_{\bullet S} z_\Scal)_+ = \max_{i \in \Scal} (A^T_{\bullet i} A_{\bullet S} z_\Scal)_+ \}$. Clearly, $\mathbb R^3_{++} = \mathcal W_1 \cup \mathcal W_2 \cup \mathcal W_3$. Since the matrix $M:=A^T_{\bullet \Scal} A_{\bullet \Scal}$ given by (c) of Remark~\ref{remark:R+3_conditions}  is positive definite, we observe $|\vartheta_{ij}| <1$ for any $i \ne j$. Based on this observation, it is easy to show that for any given $(z_2, z_3)>0$, there exists a sufficiently large $z_1>0$ such that $(z_1, z_2, z_3) \in \mathcal W_1$.
  Hence, $\mathcal W_1$ is nonempty and $\{ (z_2, z_3) \, | \, z_\Scal=(z_1, z_2, z_3) \in \mathcal W_1 \}=\mathbb R^2_{++}$. By a similar argument, we deduce that $\mathcal W_2$ and $\mathcal W_3$ are nonempty and  $\{ (z_1, z_3) \, | \, z_\Scal=(z_1, z_2, z_3) \in \mathcal W_2 \}=\mathbb R^2_{++}$ and  $\{ (z_1, z_2) \, | \, z_\Scal=(z_1, z_2, z_3) \in \mathcal W_3 \}=\mathbb R^2_{++}$. Since $\mathbb R^3_{++} = \mathcal W_1 \cup \mathcal W_2 \cup \mathcal W_3$, $z_\Scal$ belongs to one of $\mathcal W_i$'s for any $z_\Scal \in \mathbb R^3_{++}$. For each $p \in \Scal$, it follows from Algorithm~\ref{algo:constrained_MP} that
 for any $z \in \mathcal W_p$,   the corresponding unique $x^1=  (A^T_{\bullet p} A_{\bullet S} z_\Scal)_+ \mathbf e_p$, where $(A^T_{\bullet p} A_{\bullet S} z_\Scal)_+>0$. Moreover,  we must have $\max_{i \in \Scal} \big( A^T_{\bullet i} A (z - x^1) \big)_+> \max_{j \in \Scal^c} \big( A^T_{\bullet j} A (z - x^1) \big)_+$. This condition, as shown at Step 2 of the ``if'' part, is equivalent to
 $
  \big\| (M/M_{\Jcal_1\Jcal_1} \, z_\Ical )_+ \big\|_\infty > \big\| (A^T_{\bullet \Scal^c}[I- A^T_{\bullet \Jcal_1} A_{\bullet \Jcal_1}] A_{\Ical} z_\Ical  )_+ \big\|_\infty,
  $
  where $\Jcal_1=\{ p \}$ and $\Ical = \Scal \setminus \Jcal_1$. Since $\{ z_\Ical \, | \, z_\Scal \in \mathcal W_p\}= \mathbb R^2_{++}$ as shown before, we obtain condition (iii).

To establish condition (iv), we first show the following claim: if $1-\vartheta^2_{12}>\min(\Delta_{13}, \Delta_{23})$ holds true, then there exists $z \in \mathbb R^N_+$ with $\supp(z)=\Scal$ such that when $y=A z$, Algorithm~\ref{algo:constrained_MP} give rises to $\Jcal_2=\{ 1, 2\}$.
 To prove this claim, it is noted that $1-\vartheta^2_{12}>\min(\Delta_{13}, \Delta_{23})$ is equivalent to $1-\vartheta^2_{12} > \Delta_{23}$ or $1-\vartheta^2_{12} > \Delta_{13}$. For the former case, i.e., $1-\vartheta^2_{12} > \Delta_{23}$, it follows from Lemma~\ref{lem:R+_S3_Step2} and $U^1=M/M_{11}$ given in (\ref{eqn:U_matrices}) that there exists $v:=(v_1, v_2)^T \in \mathbb R^2_{++}$ such that $(U^1_{1\bullet} v )_+ \ge (U^1_{2\bullet} v)_+$. Further, as shown previously, there exists a sufficiently large $v_0>0$ such that $\wt z=(\wt z_\Scal, 0)$ with $\wt z_\Scal:=(\wt z_1, \wt z_2, \wt z_3)=(v_0, v_1, v_2)$ satisfies $\wt z \in \mathcal W_1$. This implies via Lemma~\ref{lem:R+_optimal_solution} and the argument for Step 1 of the ``if'' part that when $y= A \wt z$, Algorithm~\ref{algo:constrained_MP} give rises to
 $(j^*_1, j^*_2)=(1, 2)$ and $\Jcal_2=\{ 1, 2\}$. The similar argument can be used to show that if  $1-\vartheta^2_{12} > \Delta_{13}$ holds, then there exists $z \in \mathbb R^N_+$ with $\supp(z)=\Scal$ such that when $y=A z$, Algorithm~\ref{algo:constrained_MP} give rises to $(j^*_1, j^*_2)=(2, 1)$ and $\Jcal_2= \{1, 2\}$. The above proof can be extended to show that  if $1-\vartheta^2_{13}>\min(\Delta_{12}, \Delta_{23})$ (respectively $1-\vartheta^2_{13}>\min(\Delta_{12}, \Delta_{23})$) holds, then there exists $z \in \mathbb R^N_+$ with $\supp(z)=\Scal$ such that when $y=A z$, Algorithm~\ref{algo:constrained_MP}  yields $\Jcal_2=\{ 1, 3 \}$ (respectively $\Jcal_2=\{ 2, 3 \}$).

As indicated in Remark~\ref{remark:R+3_conditions}, if the hypothesis of an implication in condition (iv) is false, then that implication holds true vacuously. Now consider an implication in condition (iv) whose hypothesis holds true. Then there exists $z \in \mathbb R^N_+$ with $\supp(z)=\Scal$ such that  Algorithm~\ref{algo:constrained_MP}  yields $\Jcal_2:=\{ j^*_1, j^*_2\}$ from $y=A z$.
%
%
Hence, the corresponding $x^2$ obtained from $y=Az$ via Algorithm~\ref{algo:constrained_MP}  satisfies $\supp(x^2)=\Jcal_2$. Since the exact support recovery implies that $\max_{i \in \Scal} (A^T_{\bullet i} A(z- x^2))_+>\max_{j \in \Scal^c} (A^T_{\bullet i} A(z- x^2))_+$, we deduce, in view of $\supp(x^2)=\Jcal_2$, Lemma~\ref{lem:R+_optimal_solution} and the argument for Step 3 of the ``if'' part, that
\[
 \frac{\det M}{\det(M_{\Jcal_2\Jcal_2}) } \cdot z_{\wt \Ical}  \, > \,  \frac{z_{\wt \Ical}}{\det( M_{\Jcal_2\Jcal_2})} \Big( \vartheta_{i, j_3} (1-\vartheta^2_{j^*_1, j^*_2}) - \vartheta_{i, j^*_1} \Delta_{j^*_1, j_3} - \vartheta_{i, j^*_2}\Delta_{j^*_2, j_3} \Big)_+,
\]
where $\wt \Ical=\{j_3\}=\Scal \setminus \Jcal_2$, $z_{\wt \Ical}\in \mathbb R_{++}$, and $\det(M_{\Jcal_2\Jcal_2}) =1-\vartheta^2_{j^*_1, j^*_2}$. This yields condition (iv).
\end{proof}

%
\subsubsection{Sufficient Conditions for Exact Vector Recovery on $\mathbb R^N_+$ for a Fixed Support}

When a given support $\Scal$ is of size greater than or equal to 4, necessary {\em and} sufficient conditions are difficult to obtain due to increasing complexities.
%
%
%
%
Hence, we seek neat sufficient conditions in this subsection.

\begin{theorem} \label{thm:R+_sufficient_cond}
  Given a matrix $A \in \mathbb R^{m\times N}$ with unit columns and the index set $\Scal \subset \{1, \ldots, N\}$, let $M:=A^T_{\bullet \Scal} A_{\bullet \Scal}$. Then every nonzero vector $z \in \mathbb R^N_+$ with $\supp(z)=\Scal$ is recovered from $y=A z$ via constrained matching pursuit if the following conditions hold:
 \begin{itemize}
   \item [(i)] $A_{\bullet \Scal}$ has full column rank or equivalently $M$ is positive definite; and
    \item [(ii)] For any (possibly empty) index set $\Jcal \subset \Scal$,
     \begin{equation} \label{eqn:R+_suff_cond}
       \big\| (M/M_{\Jcal\Jcal} \, x )_+ \big\|_\infty \, > \, \big\| \big(A^T_{\bullet \Scal^c}[I- A^T_{\bullet \Jcal} ( A^T_{\bullet \Jcal} A_{\bullet \Jcal} )^{-1} A_{\bullet \Jcal}] A_{\bullet\Scal\setminus\Jcal} \, x  \big)_+ \big\|_\infty, \quad \ \forall \ x \in \mathbb R^{|\Scal\setminus\Jcal|}_{++},
     \end{equation}
    where $M/M_{\Jcal\Jcal}$ is the Schur complement of $M_{\Jcal\Jcal}$ in $M$.
 \end{itemize}
\end{theorem}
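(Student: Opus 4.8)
The plan is to run Algorithm~\ref{algo:constrained_MP} on an arbitrary nonzero $z=(z_\Scal,0)\in\mathbb R^N_+$ with $z_\Scal\in\mathbb R^r_{++}$, $r:=|\Scal|$, and $y=Az=A_{\bullet\Scal}z_\Scal$, and to prove by induction on $k$ that along \emph{every} sequence $\big((x^k,j^*_k,\Jcal_k)\big)_{k\in\mathbb N}$ one has $\Jcal_k\subseteq\Scal$ and $j^*_{k+1}\in\Scal\setminus\Jcal_k$ for each $k=0,1,\dots,r-1$. Since the index sets are strictly increasing this forces $|\Jcal_k|=k$ and, in particular, $\Jcal_r=\Scal$. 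Once $\Jcal_r=\Scal$ is reached, the Line~7 problem $\min_{w\ge 0,\ \supp(w)\subseteq\Scal}\|Aw-y\|^2_2$ has a \emph{unique} minimizer by Lemma~\ref{lemma:sol_existence}(ii) (using that $A_{\bullet\Scal}$ has full column rank and $\mathbb R^N_+$ is closed and convex); as $z$ is feasible with zero objective, that minimizer is $z$, so $x^r=z$ and exact vector recovery of $z$ holds. Throughout we use that for $y=Az$ the quantity $g^*_j$ of Line~4 equals $f^*_j(z,x^k)$.

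For the base case $k=0$ we have $x^0=0$, $\Jcal_0=\emptyset$, and $\mathbb I_j(0)=\mathbb R_+$ for all $j$, so $f^*_j(z,0)=\|Az\|^2_2-\big([A^T_{\bullet j}Az]_+\big)^2$ by (\ref{eqn:f*_RN+}). Applying condition (ii) with $\Jcal=\emptyset$ (under the conventions $M/M_{\emptyset\emptyset}=M$ and that the bracketed operator is $I$) and $x=z_\Scal$ yields $\max_{i\in\Scal}[A^T_{\bullet i}Az]_+>\max_{j\in\Scal^c}[A^T_{\bullet j}Az]_+$, and Lemma~\ref{lem:positive_sign} with positive definiteness of $M$ shows the left-hand side is strictly positive. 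Hence $\Argmin_{j}f^*_j(z,0)\subseteq\Scal$, so $j^*_1\in\Scal$ and $\Jcal_1\subseteq\Scal$ (this also re-derives the first-step conclusion of Corollary~\ref{coro:Nec_Suf_suppt_recovery_RN_RN+}(ii)).

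For the inductive step, assume $\Jcal_k\subseteq\Scal$ with $|\Jcal_k|=k<r$. Since $A_{\bullet\Jcal_k}$ has full column rank, $x^k$ is the unique minimizer of $\min_{w\ge 0,\ \supp(w)\subseteq\Jcal_k}\|A(z-w)\|^2_2$; set $\Jcal^*_k:=\supp(x^k)\subseteq\Jcal_k\subsetneq\Scal$ and $\Ical:=\Scal\setminus\Jcal^*_k$. Lemma~\ref{lem:R+_optimal_solution} (with index set $\Jcal_k$) gives $A^T_{\bullet\Jcal^*_k}A(z-x^k)=0$, $A^T_{\bullet j}A(z-x^k)\le 0$ for every $j\in\Jcal_k$, the explicit formulas for $A^T_{\bullet\Ical}A(z-x^k)$ and $A^T_{\bullet\Scal^c}A(z-x^k)$ in terms of $M/M_{\Jcal^*_k\Jcal^*_k}$ and the orthogonal projection onto $(\mathrm{span}\,A_{\bullet\Jcal^*_k})^\perp$, and the identity $\max_{j\in\Ical}[A^T_{\bullet j}A(z-x^k)]_+=\max_{j\in\Scal\setminus\Jcal_k}[A^T_{\bullet j}A(z-x^k)]_+>0$. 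Because $\supp(x^k)\subseteq\Jcal_k$, for every $j\notin\Jcal_k$ we have $(x^k)_j=0$ and $\mathbb I_j(x^k)=\mathbb R_+$, so $f^*_j(z,x^k)=\|A(z-x^k)\|^2_2-\big([A^T_{\bullet j}A(z-x^k)]_+\big)^2$; while for $j\in\Jcal_k$ the one-sided minimization over $\mathbb I_j(x^k)$ with $A^T_{\bullet j}A(z-x^k)\le 0$ gives $f^*_j(z,x^k)=\|A(z-x^k)\|^2_2$. Now invoke condition (ii) with the index set $\Jcal^*_k$ and $x=z_\Ical\in\mathbb R^{|\Ical|}_{++}$: its two sides are exactly $\max_{i\in\Ical}[A^T_{\bullet i}A(z-x^k)]_+$ and $\max_{j\in\Scal^c}[A^T_{\bullet j}A(z-x^k)]_+$ by Lemma~\ref{lem:R+_optimal_solution} (the set $\Scal\setminus\Jcal^*_k$ in condition (ii) being precisely $\Ical$), so the above identity yields $\max_{i\in\Scal\setminus\Jcal_k}[A^T_{\bullet i}A(z-x^k)]_+>\max_{j\in\Scal^c}[A^T_{\bullet j}A(z-x^k)]_+\ge 0$. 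Squaring and comparing the three cases, the global minimum of $f^*_j(z,x^k)$ over $j=1,\dots,N$ is attained only at indices in $\Scal\setminus\Jcal_k$, hence $j^*_{k+1}\in\Scal\setminus\Jcal_k$ and $\Jcal_{k+1}\subseteq\Scal$, completing the induction. (Equivalently, the inequality just proved is precisely (\ref{eqn:exact_sppt_recovery_inequality}), so $\Jcal_r=\Scal$ also follows from Theorem~\ref{thm:nec_suf_condition_for_exact_supp_recovery}, with Proposition~\ref{prop:index_set} supplying the support structure.)

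The main obstacle is the bookkeeping caused by $\Jcal^*_k=\supp(x^k)$ being possibly a \emph{proper} subset of $\Jcal_k$ (the dropped coordinates correspond to active inequality constraints): condition (ii) and Lemma~\ref{lem:R+_optimal_solution} must be applied with $\Jcal^*_k$ rather than $\Jcal_k$, and one must (i) use the lemma's identity $\max_{j\in\Scal\setminus\Jcal^*_k}[\cdot]_+=\max_{j\in\Scal\setminus\Jcal_k}[\cdot]_+$ and (ii) check that $f^*_j(z,x^k)=\|A(z-x^k)\|^2_2$ on \emph{all} of $\Jcal_k$, so that the single strict inequality from condition (ii) simultaneously prevents an optimal index from landing in $\Scal^c$ or in $\Jcal_k$. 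A subsidiary point is making condition (ii) meaningful at $\Jcal=\emptyset$ through the conventions $M/M_{\emptyset\emptyset}=M$ and ``projection $=I$''.
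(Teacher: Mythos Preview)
Your proof is correct and follows essentially the same route as the paper. Both arguments hinge on the same key maneuver: at step $k$, letting $\Jcal^*_k=\supp(x^k)$ (possibly a proper subset of $\Jcal_k$), invoking Lemma~\ref{lem:R+_optimal_solution} to express $\max_{i\in\Scal}[A^T_{\bullet i}A(z-x^k)]_+$ and $\max_{j\in\Scal^c}[A^T_{\bullet j}A(z-x^k)]_+$ via the Schur complement $M/M_{\Jcal^*_k\Jcal^*_k}$, and then applying condition (ii) with the index set $\Jcal^*_k$. The only organizational difference is that the paper phrases this as verifying (a support-$\Scal$ version of) condition $(\mathbf H)$ and then citing Proposition~\ref{prop:condition_H_suppt_recovery}, whereas you unroll the induction over algorithm steps directly; you yourself note this equivalence in your parenthetical remark about Theorem~\ref{thm:nec_suf_condition_for_exact_supp_recovery}.
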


\begin{proof}
 Due to condition (i), it suffices to show the exact support recovery of each $z\in \mathbb R^N_+$ with $\supp(z)=\Scal$ via Algorithm~\ref{algo:constrained_MP} from $y=A z$. Toward this end,
%
%
we see via a similar argument for Corollary~\ref{coro:Nec_Suf_suppt_recovery_RN_RN+} that condition $(\mathbf H)$ given by (\ref{eqn:condition_H'}) holds if for any $0\ne u \in \mathbb R^N_+$ with $\supp(u)=\Scal$, any  index set $\Jcal \subset \Scal$, and the (unique) optimal solution $v=\argmin_{w \ge 0, \supp(w)\subseteq \Jcal} \| A (u - w )\|^2_2$, the following holds:
\[
   \max_{i\in \Scal} \big( A^T_{\bullet i} A(u - v) \big)_+ = \max_{i\in \Scal\setminus \Jcal} \big( A^T_{\bullet i} A(u - v) \big)_+ \, > \, \max_{j \in \Scal^c} \big( A^T_{\bullet j} A(u - v) \big)_+,
\]
where the first equation follows from Lemma~\ref{lem:exact_suppt_recovery_index}.
Let $\Jcal^*:=\supp(v)$. Hence, $\Jcal^* \subseteq \Jcal \subset \Scal$. Since $v$ is the optimal solution to $\min_{w \ge 0, \supp(w)\subseteq \Jcal} \| A (u - w )\|^2_2$, we deduce via Lemma~\ref{lem:R+_optimal_solution} that
\begin{align*}
  \max_{i\in \Scal} \big( A^T_{\bullet i} A(u - v) \big)_+ & \, = \ \big \| \big( A^T_{\bullet  \Scal} A(u - v) \big)_+ \big\|_\infty \, = \, \big\| \big( (M/M_{\Jcal^* \Jcal^*}) \cdot u_\Ical \big)_+ \big\|_\infty, \\
  \max_{j\in \Scal^c} \big( A^T_{\bullet j} A(u - v) \big)_+ & \, = \ \big \| \big( A^T_{\bullet  \Scal^c} A(u - v) \big)_+ \big\|_\infty \, = \, \big \| \big(A^T_{\bullet \Scal^c}[I- A^T_{\bullet \Jcal^*} ( A^T_{\bullet \Jcal^*} A_{\bullet \Jcal^*} )^{-1} A_{\bullet \Jcal^*}] A_{\bullet\Ical} \cdot u_\Ical  \big)_+ \big\|_\infty,
\end{align*}
where $\Ical := \Scal \setminus \Jcal^*$ is nonempty. Since $u_\Ical >0$, we see that $\max_{i\in \Scal} \big( A^T_{\bullet i} A(u - v) \big)_+ \, > \, \max_{j \in \Scal^c} \big( A^T_{\bullet j} A(u - v) \big)_+$ holds under condition (ii). This leads to the desired result. 
\end{proof}

In what follows, we develop conditions to verify the inequality given in (\ref{eqn:R+_suff_cond}), which leads to a numerical scheme to check (\ref{eqn:R+_suff_cond}). Fix an index set $\Jcal \subset \Scal$, and let $r:=|\Scal\setminus \Jcal|$. Further, let $M/M_{\Jcal\Jcal}=[ p_1, \cdots, p_r]$, and $E:=\big(A^T_{\bullet \Scal^c}[I- A^T_{\bullet \Jcal} ( A^T_{\bullet \Jcal} A_{\bullet \Jcal} )^{-1} A_{\bullet \Jcal}] A_{\bullet\Scal\setminus\Jcal} \big)^T=[ q_1, \cdots, q_{|\Scal^c|} ]$, namely, $p_i \in \mathbb R^r$ is the $i$th column of $M/M_{\Jcal\Jcal}$ and $q_j \in \mathbb R^r$ is the $j$th column of $E$.

\begin{lemma} \label{lem:verification_cond}
 The inequality (\ref{eqn:R+_suff_cond}) for a fixed index set $\Jcal \subset \Scal$ holds if and only if for each $q_j\in \mathbb R^r$, there exist $w\in \mathbb R^r_+$ and $0 \ne (w', \beta) \in \mathbb R^r_+ \times \mathbb R_+$ such that $[ p_1 - q_j, p_2-q_j, \cdots, p_r - q_j] w = w'+ \beta \cdot q_j$.
\end{lemma}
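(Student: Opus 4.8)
The argument will parallel the proof of Lemma~\ref{lem:feasbility_sufficiency}: reduce the inequality (\ref{eqn:R+_suff_cond}) to a finite family of homogeneous linear feasibility problems, one for each column $q_j$, and then apply Motzkin's Transposition Theorem to each. First I would put (\ref{eqn:R+_suff_cond}) into an algebraic normal form. By condition (i) of Theorem~\ref{thm:R+_sufficient_cond} the matrix $M$ is positive definite, so its Schur complement $M/M_{\Jcal\Jcal}$ is a symmetric positive definite $r\times r$ matrix; consequently the $i$th column $p_i$ of $M/M_{\Jcal\Jcal}$ is also its $i$th row, whence $(M/M_{\Jcal\Jcal}\,x)_i = p_i^T x$. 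Since, by the definition of $E$ just before the lemma, the matrix appearing on the right of (\ref{eqn:R+_suff_cond}) equals $E^T$ and the $j$th column of $E$ is $q_j$, we have $(E^T x)_j = q_j^T x$. Because $\|v_+\|_\infty = \max_k (v_k)_+$, (\ref{eqn:R+_suff_cond}) is therefore equivalent to
\[
  \max_{1\le i\le r}\,(p_i^T x)_+ \; > \; \max_{1\le j\le |\Scal^c|}\,(q_j^T x)_+, \qquad \forall\, x\in\mathbb R^r_{++}.
\]

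Next I would record that the left-hand side is strictly positive on $\mathbb R^r_{++}$: for any $x>0$ we have $\sum_{i=1}^r x_i\,(p_i^T x) = x^T (M/M_{\Jcal\Jcal})\,x >0$, so $p_i^T x>0$ for at least one $i$ by (the proof of) Lemma~\ref{lem:positive_sign}, hence $\max_i (p_i^T x)_+>0$. Using this, the displayed inequality holds for all $x\in\mathbb R^r_{++}$ if and only if, for each fixed $j$, $\max_i (p_i^T x)_+ > (q_j^T x)_+$ for all $x\in\mathbb R^r_{++}$. A short case analysis then shows that, for a fixed $j$, this last condition fails for some $x$ precisely when the linear system
\[
  x\in\mathbb R^r,\qquad x>0,\qquad q_j^T x>0,\qquad (p_i-q_j)^T x\le 0\ \ (i=1,\dots,r)
\]
is feasible: if $q_j^T x\le 0$ the inequality is automatic since the left side is positive, and if $q_j^T x>0$ then $(q_j^T x)_+ = q_j^T x$ and $\max_i (p_i^T x)_+\le q_j^T x$ is equivalent to $(p_i-q_j)^T x\le 0$ for every $i$. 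Thus (\ref{eqn:R+_suff_cond}) holds if and only if, for every $j$, the above system is infeasible.

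Finally I would invoke Motzkin's Transposition Theorem for each fixed $j$, grouping the strict inequalities $x>0$ and $q_j^T x>0$ into the strict block and the inequalities $(p_i-q_j)^T x\le 0$ into the weak block. The theorem asserts that this system is infeasible if and only if there is a nonzero multiplier $(w',\beta)\in\mathbb R^r_+\times\mathbb R_+$ for the strict block (the first $r$ coordinates dual to $x>0$, the last dual to $q_j^T x>0$) and a multiplier $w\in\mathbb R^r_+$ for the weak block such that the transposed combination vanishes, i.e. $w' + \beta\,q_j - [\,p_1-q_j,\ \dots,\ p_r-q_j\,]\,w = 0$, which is exactly $[\,p_1-q_j,\ \dots,\ p_r-q_j\,]\,w = w' + \beta\,q_j$. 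Combining the three steps gives the claimed equivalence. The routine parts are the positive definiteness of $M/M_{\Jcal\Jcal}$ and the case analysis; the one point requiring care is the bookkeeping in the Motzkin step — ensuring that the strict block (which is the one whose dual multiplier must be nonzero) is precisely $(x,\,q_j^T x)$, so that the nonzero vector in the conclusion is $(w',\beta)$ and not $w$ — and I expect this to be the only genuine obstacle.
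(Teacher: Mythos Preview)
Your proposal is correct and follows essentially the same route as the paper: rewrite (\ref{eqn:R+_suff_cond}) via the symmetry of $M/M_{\Jcal\Jcal}$ and the definition of $E$, use Lemma~\ref{lem:positive_sign} to reduce the failure of the inequality for a given $j$ to feasibility of the linear system $v>0,\ q_j^T v>0,\ q_j^T v\ge p_i^T v$, and then apply Motzkin's Transposition Theorem to obtain the dual certificate $[p_1-q_j,\dots,p_r-q_j]w=w'+\beta q_j$. Your bookkeeping in the Motzkin step is correct, and your exposition is in fact slightly more explicit than the paper's on the case analysis and the identification of the strict block.
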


\begin{proof}
Since the Schur complement $M/M_{\Jcal\Jcal}$ is symmetric, it is easy to see that
 the inequality (\ref{eqn:R+_suff_cond}) fails if and only if there exists $v>0$ such that $\max_{i=1, \ldots, r} ( p^T_i v)_+ \le (q^T_j v)_+$ for some $j$. In view of Lemma~\ref{lem:positive_sign}, we deduce that $\max_{i=1, \ldots, r}  ( p^T_i v)_+>0$ such that $q^T_j v >0$ for this $j$.  Hence, the inequality system $\max_{i=1, \ldots, r}  ( p^T_i v)_+ \le (q^T_j v)_+, v >0$ is equivalent to the following linear inequality system:
 \[
   \mbox{(I)}: \quad   v>0, \ \ q^T_j v >0, \ \ q^T_j v \ge p^T_i v, \ \ \forall \ i=1, \ldots, r.
 \]
 By  Motzkin's Transposition Theorem, (I) has no solution if and only if there exist $w\in \mathbb R^r_+$ and $0 \ne (w', \beta) \in \mathbb R^r_+ \times \mathbb R_+$ such that $[ p_1 - q_j, p_2-q_j, \cdots, p_r - q_j] w = w'+ \beta \cdot q_j$, yielding the desired result.
%
\end{proof}

The  condition derived in the above lemma can be effectively verified via a linear program for the given matrices $M/M_{\Jcal \Jcal}$ and $E$.

\mycut{
Moreover, it is equivalent to  the existence of  $q_j \in \mathbb R^r$, $w\in \mathbb R^r_+$ and $0 \ne (w', \beta) \in \mathbb R^r_+ \times \mathbb R_+$ such that $[ p_1, p_2, \cdots, p_r ] w = w'+ (\beta + \mathbf 1^T w) \cdot q_j$. We consider three possible cases as follows for a fixed $q_j$: (i) $q_j \le 0$. In this case, we can choose $w=0$, $\beta=1$, and $w'=(q_j)_- \ge 0$. Hence, this condition holds. (ii) $q_j \nleq 0$. We first show that if desired $w\in \mathbb R^r_+$ and $0 \ne (w', \beta) \in \mathbb R^r_+ \times \mathbb R_+$ exist, then $w$ must be nonzero. In fact, suppose $w=0$. Then we have $w'+ \beta q_j =0$. Since $p_j$ has a positive element and $0 \ne (w', \beta) \in \mathbb R^r_+ \times \mathbb R_+$, $\beta$ cannot be positive and thus must be zero. This shows that $w' =0$. But this contradicts $(w', \beta) \ne 0$. Therefore, $w \ne 0$. This thus implies that if desired $w$ and $0 \ne (w', \beta) \in \mathbb R^r_+ \times \mathbb R_+$ exist, then $\mathbf 1^T w>0$ such that $\beta+\mathbf 1^T w>0$ and $[ p_1, p_2, \cdots, p_r ] \frac{w}{\beta+\mathbf 1^T w} = \wt w + q_j$, where $0 \ne (\wt w, \beta) \in \mathbb R^r_+ \times \mathbb R_+$, and $\wt w=w'/(\beta+\mathbf 1^T w)$. Two subcases: (a) $\beta=0$ and $\wt w \ngeq 0$; and (b) $\beta>0$ and $\wt w \ge 0$. The former is equivalent to the existence of $0 \ne u \in \mathbb R^r_+$ such that $(q_j + u) \in \mbox{conv}(p_1, \ldots, p_r)$, and the latter is equivalent to the existence of $ u \in \mathbb R^r_+$ such that $(q_j + u) \in \mbox{conv}(0, p_1, \ldots, p_r)$ and the latter is equivalent to ... (noting that $0\notin \mbox{conv}(p_1, \ldots, p_r)$ since
$\{ p_1, p_2, \cdots, p_r \}$ are linearly independent).
}

%
\subsection{Exact Vector Recovery on $\mathbb R^{N_1} \times \mathbb R^{N_2}_+$ for a Fixed Support}

In this subsection, we briefly discuss an extension of the preceding exact vector recovery results
to a Cartesian product of copies of $\mathbb R$ and $\mathbb R_+$. Let $\Ical_1$ and $\Ical_+$ be two nonempty index subsets that form a disjoint union of $\{1, \ldots, N\}$. Consider the constraint set $\Pcal = \mathbb R_{\Ical_1} \times (\mathbb R_+)_{\Ical_+}$.
%
%
The following preliminary result can be easily extended from Corollary~\ref{coro:Nec_Suf_suppt_recovery_RN_RN+} and Lemma~\ref{lem:nonnegative orthant_Nec01}; its proof is thus omitted.

\begin{lemma} \label{lem:preliminary_results_RR+}
 Let $A\in \mathbb R^{m\times N}$ be a matrix with unit columns, and $\Pcal=\mathbb R_{\Ical_1} \times (\mathbb R_+)_{\Ical_+}$. The following hold:
 \begin{itemize}
   \item [(i)] Let $0 \ne z \in \Sigma_K \cap \Pcal$ with $|\supp(z)|=r$. Then  the exact support recovery of $z$ is achieved if and only if for any sequence $\big( (x^k, j^*_k, \Jcal_k) \big)_{k \in \mathbb N}$ generated by Algorithm~\ref{algo:constrained_MP} with $y=A z$,
     \begin{align*}
    & \max\Big( \, \max_{j \in (\supp(z)\setminus \Jcal_k) \cap \Ical_1} |A^T_{\bullet j} A(z - x^k)|, \  \max_{j \in (\supp(z)\setminus \Jcal_k) \cap \Ical_+} [A^T_{\bullet j} A(z - x^k) ]_+  \, \Big) \\
    & \, > \ \max\Big( \, \max_{j\in [\supp(z)]^c\cap \Ical_1} | A^T_{\bullet j} A(z - x^k) |, \ \max_{j\in [\supp(z)]^c\cap \Ical_+} [ A^T_{\bullet j} A(z - x^k) ]_+   \Big), \quad \forall \ k=0, 1, \ldots, r-1.
     \end{align*}
   \item [(ii)]  Let $\Scal$ be a nonempty index subset of $\{1, \ldots, N\}$. The exact vector recovery of every vector $x \in \Pcal$ with $\supp(x)=\Scal$ is achieved via constrained matching pursuit  only if $A_{\bullet \Scal}$ has full column rank.
 \end{itemize}
\end{lemma}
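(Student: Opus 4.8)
The plan is to obtain both parts of Lemma~\ref{lem:preliminary_results_RR+} by adapting, coordinate by coordinate, the arguments already carried out for the pure cases $\Pcal=\mathbb R^N$ and $\Pcal=\mathbb R^N_+$ in Corollary~\ref{coro:Nec_Suf_suppt_recovery_RN_RN+} and Lemma~\ref{lem:nonnegative orthant_Nec01}, using that $\Pcal=\mathbb R_{\Ical_1}\times(\mathbb R_+)_{\Ical_+}$ is a product whose $j$th slice is $\mathbb R$ when $j\in\Ical_1$ and $\mathbb R_+$ when $j\in\Ical_+$.

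For part (i) I would invoke Theorem~\ref{thm:nec_suf_condition_for_exact_supp_recovery}: exact support recovery of $z$ (with $|\supp(z)|=r$) holds if and only if \eqref{eqn:exact_sppt_recovery_inequality} holds along every generated sequence for $k=0,\dots,r-1$; it then suffices to express $f^*_j(z,x^k)$ in closed form. The key facts are: (a) if $j\notin\Jcal_k$ then $(x^k)_j=0$ because $\supp(x^k)\subseteq\Jcal_k$ by Line~7 of Algorithm~\ref{algo:constrained_MP}, so $\mathbb I_j(x^k)=\mathbb R$ for $j\in\Ical_1$ and $\mathbb I_j(x^k)=\mathbb R_+$ for $j\in\Ical_+$; and (b) if $j\in\Jcal_k$ then the variational-inequality optimality condition for $x^k$ (an optimal solution of the Line~7 problem) forces $A^T_{\bullet j}A(z-x^k)=0$ when $j\in\Ical_1$ and $[A^T_{\bullet j}A(z-x^k)]_+=0$ when $j\in\Ical_+$. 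Substituting into \eqref{eqn:f*_RN}--\eqref{eqn:f*_RN+} and using $\|A_{\bullet j}\|_2=1$ gives the uniform formula
\[
  f^*_j(z,x^k)=\|A(z-x^k)\|_2^2-\bigl(\psi_j\bigr)^2,\qquad
  \psi_j:=\begin{cases} \bigl|A^T_{\bullet j}A(z-x^k)\bigr|, & j\in\Ical_1,\\[3pt] \bigl[A^T_{\bullet j}A(z-x^k)\bigr]_+, & j\in\Ical_+,\end{cases}
\]
valid for all $j$ (for $j\in\Jcal_k$ both sides reduce to $\|A(z-x^k)\|_2^2$). Since every $\psi_j\ge0$, we get $\min_{j\in S}f^*_j(z,x^k)=\|A(z-x^k)\|_2^2-\bigl(\max_{j\in S}\psi_j\bigr)^2$ for any index set $S$, so \eqref{eqn:exact_sppt_recovery_inequality} is equivalent to $\max_{j\in\supp(z)\setminus\Jcal_k}\psi_j>\max_{j\in[\supp(z)]^c}\psi_j$; splitting each maximum according to $\Ical_1$ versus $\Ical_+$ yields precisely the inequality stated in (i).

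For part (ii) I would argue by contradiction, in exact analogy with Lemma~\ref{lem:nonnegative orthant_Nec01}. Suppose $A_{\bullet\Scal}$ is not of full column rank and pick $0\ne v\in\mathbb R^{|\Scal|}$ with $A_{\bullet\Scal}v=0$. Let $z\in\Pcal$ with $\supp(z)=\Scal$; then $z_i\ne0$ for every $i\in\Scal$, and since $z_i\ge0$ for $i\in\Ical_+$ this forces $z_i>0$ for $i\in\Scal\cap\Ical_+$. Hence $z_\Scal$ lies in the relative interior of the feasible set of the Line~7 subproblem at the step where $\Jcal_s=\Scal$ is reached, namely $\{w:\supp(w)\subseteq\Scal,\ w_i\ge0\ \text{for all}\ i\in\Scal\cap\Ical_+\}$, so for all sufficiently small $\varepsilon>0$ the point $z_\Scal+\varepsilon v$ is still feasible and satisfies $A_{\bullet\Scal}(z_\Scal+\varepsilon v)=A_{\bullet\Scal}z_\Scal=y$. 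This produces a minimizer of $\min_{w\in\Pcal,\ \supp(w)\subseteq\Scal}\|Aw-y\|_2^2$ distinct from $z$, contradicting the uniqueness requirement in the definition of exact vector recovery; therefore $A_{\bullet\Scal}$ has full column rank.

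The only step needing genuine care is the bookkeeping in part (i) — verifying that indices inside $\Jcal_k$ contribute $\psi_j=0$ so that the two-sided maxima in the statement are the correct ones, and that replacing the $f^*_j$'s by the cleaner quantities $\psi_j$ leaves the strict inequality intact; everything else is a routine transcription of the $\mathbb R^N$ and $\mathbb R^N_+$ proofs.
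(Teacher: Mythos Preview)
Your proposal is correct and carries out exactly the extension of Corollary~\ref{coro:Nec_Suf_suppt_recovery_RN_RN+} and Lemma~\ref{lem:nonnegative orthant_Nec01} that the paper alludes to (the paper omits the proof entirely). One minor simplification: your item (b) handling $j\in\Jcal_k$ is unnecessary, since under both directions of Theorem~\ref{thm:nec_suf_condition_for_exact_supp_recovery} one has $\Jcal_k\subset\supp(z)$ for $k<r$, so the index sets $\supp(z)\setminus\Jcal_k$ and $[\supp(z)]^c$ appearing in \eqref{eqn:exact_sppt_recovery_inequality} already lie in $\Jcal_k^c$ and only item (a) is needed.
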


The next result characterizes the exact vector recovery on $\Pcal$ for a given support $\Scal$ of size 2.

\begin{theorem}
  Given a matrix $A \in \mathbb R^{m\times N}$ with unit columns and the index set $\Scal=\{1, 2\}$ with $1 \in \Ical_1$ and $2\in \Ical_+$, every vector $x \in \Pcal=\mathbb R_{\Ical_1} \times (\mathbb R_+)_{\Ical_+}$ with $\supp(x)=\Scal$ is recovered from $y=A x$ via constrained matching pursuit if and only if the following conditions hold:
 \begin{itemize}
   \item [(i)] $A_{\bullet \Scal}$ has full column rank or equivalently $|\vartheta_{12}|<1$;
    \item [(ii)]
   $\displaystyle \max \big( |z_1 + \vartheta_{12} z_2|, \, (\vartheta_{12} z_1 + z_2)_+  \big) > \max\Big( \max_{j \in \Scal^c\cap \Ical_1} |\vartheta_{j1} z_1 + \vartheta_{j2} z_2 \big|, \, \max_{j \in \Scal^c\cap \Ical_+} \big(\vartheta_{j1} z_1 + \vartheta_{j2} z_2 \big)_+ \Big)$, \\
   $\forall \, (z_1, z_2)^T \in \big(\mathbb R \setminus \{ 0 \} \big) \times \mathbb R_{++}$;
    \item [(iii)] $1-\vartheta^2_{12} \, > \,
     \max\Big( \max_{j\in \Scal^c\cap \Ical_1} \, |\vartheta_{j2}- \vartheta_{12}\vartheta_{j1}|, \ \max_{j \in \Scal^c \cap \Ical_+}(\vartheta_{j2}- \vartheta_{12}\vartheta_{j1})_+, \ \max_{j\in \Scal^c} | \vartheta_{j1}- \vartheta_{12}\vartheta_{j2} | \,  \Big)$.
 \end{itemize}
\end{theorem}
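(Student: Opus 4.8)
The plan is to trace the two steps of Algorithm~\ref{algo:constrained_MP}, exactly as in the proofs of Theorem~\ref{thm:nonconstrained_S2} and Theorem~\ref{thm:nonnegative constraint_S2}, but now for the mixed cone $\Pcal=\mathbb R_{\Ical_1}\times(\mathbb R_+)_{\Ical_+}$. Fix $z$ with $\supp(z)=\Scal=\{1,2\}$, so $z_\Scal=(z_1,z_2)$ with $z_1\ne 0$ and $z_2>0$ (since $2\in\Ical_+$), and put $y=A_{\bullet\Scal}z_\Scal$. Note that $\mathbb I_1(0)=\mathbb R$ and $\mathbb I_2(0)=\mathbb R_+$ because $1\in\Ical_1$ and $2\in\Ical_+$, and that condition (i) is equivalent to $|\vartheta_{12}|<1$ via positive definiteness of $A^T_{\bullet\Scal}A_{\bullet\Scal}=\begin{bmatrix}1&\vartheta_{12}\\\vartheta_{12}&1\end{bmatrix}$.

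\emph{The ``if'' direction.} At Step~1 with $x^0=0$, Lemma~\ref{lem:preliminary_results_RR+}(i) shows that $j^*_1\in\Scal$ for every admissible $z$ is exactly condition~(ii); hence $\Jcal_1=\{j^*_1\}$. If $j^*_1=1$, then (index $1$ being unconstrained) $x^1=(z_1+\vartheta_{12}z_2)\mathbf e_1$, so $(z-x^1)_\Scal=z_2\,(-\vartheta_{12},1)^T$; if $j^*_1=2$, then $x^1=(\vartheta_{12}z_1+z_2)_+\mathbf e_2$, which by Proposition~\ref{prop:index_set} is nonzero and hence equals $(\vartheta_{12}z_1+z_2)\mathbf e_2$, so $(z-x^1)_\Scal=z_1\,(1,-\vartheta_{12})^T$. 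In either case $A^T_{\bullet j}A(z-x^1)$ equals the scalar $z_2$ (resp.\ $z_1$) times a quantity that, for the remaining index of $\Scal$, carries the positive coefficient $1-\vartheta_{12}^2$. Substituting into Lemma~\ref{lem:preliminary_results_RR+}(i) at Step~2 and cancelling $z_2>0$ in the first branch and $|z_1|>0$ in the second, conditions~(ii)/(iii) force $j^*_2$ to be the remaining index of $\Scal$, so $\Jcal_2=\Scal$. Finally $x^2=\argmin_{w\in\Pcal,\ \supp(w)\subseteq\Scal}\|Aw-y\|_2^2$; since $z_\Scal$ is feasible (free first coordinate, positive second) and $A_{\bullet\Scal}$ is injective with $\Pcal$ convex, this minimizer is the unique point $z$, so exact vector recovery holds.

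\emph{The ``only if'' direction.} Condition~(i) is necessary by Lemma~\ref{lem:preliminary_results_RR+}(ii); condition~(ii) is necessary because Step~1 must yield $j^*_1\in\Scal$ for every admissible $z$, which by Lemma~\ref{lem:preliminary_results_RR+}(i) is precisely~(ii). For~(iii) I will exhibit both outcomes of Step~1: taking $|z_1|$ large with $z_2$ fixed gives $|z_1+\vartheta_{12}z_2|>(\vartheta_{12}z_1+z_2)_+$ since $|\vartheta_{12}|<1$, so $j^*_1=1$; taking $|z_1|$ small and $z_2$ large gives $\vartheta_{12}z_1+z_2>|z_1+\vartheta_{12}z_2|\ge 0$, so $j^*_1=2$, and here the sign of $z_1$ is free. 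Running Step~2 in the branch $j^*_1=1$ and using that exact recovery forces $j^*_2=2$ yields $1-\vartheta_{12}^2>\max_{j\in\Scal^c\cap\Ical_1}|\vartheta_{j2}-\vartheta_{12}\vartheta_{j1}|$ and $1-\vartheta_{12}^2>\max_{j\in\Scal^c\cap\Ical_+}(\vartheta_{j2}-\vartheta_{12}\vartheta_{j1})_+$; running it in the branch $j^*_1=2$, and letting the sign of $z_1$ vary, forces $1-\vartheta_{12}^2>\max_{j\in\Scal^c}|\vartheta_{j1}-\vartheta_{12}\vartheta_{j2}|$. Together these are the three inequalities of condition~(iii).

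The main obstacle is the bookkeeping of which $\Scal^c$-coordinates enter with a plus-part and which with an absolute value: a coordinate in $\Ical_+$ that is hit by a residual proportional to the \emph{free} variable $z_1$ effectively sees both signs, so it contributes its absolute value---this is the source of the last $\max$ in~(iii), and it must be justified by choosing $z$ with either sign of $z_1$ in the $j^*_1=2$ branch. A minor point to dispose of cleanly is the boundary tie $|z_1+\vartheta_{12}z_2|=(\vartheta_{12}z_1+z_2)_+$, where $j^*_1$ may be either index; since the non-tie cases already force conditions~(i)--(iii) and, conversely, (i)--(iii) handle both choices at a tie, the argument closes exactly as in Theorems~\ref{thm:nonconstrained_S2} and~\ref{thm:nonnegative constraint_S2}.
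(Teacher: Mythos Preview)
Your approach is essentially the same as the paper's: both trace Steps~1 and~2 of Algorithm~\ref{algo:constrained_MP} via Lemma~\ref{lem:preliminary_results_RR+}(i), split on which of $j^*_1=1$ or $j^*_1=2$ occurs, and extract condition~(iii) from the Step~2 residual in each branch (with the sign of $z_1$ varying in the $j^*_1=2$ branch to upgrade the $\Ical_+$-part to an absolute value). The paper pins down the dichotomy precisely as $|z_1|\gtrless z_2$, whereas you force each branch by taking $|z_1|$ large or small; both work for the ``only if'' direction.

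There is one small but genuine slip in your ``if'' direction: you invoke Proposition~\ref{prop:index_set} to conclude $x^1\ne 0$ when $j^*_1=2$, but that proposition \emph{assumes} exact support recovery, which is exactly what you are in the process of proving. The non-circular justification is immediate from condition~(ii): if $j^*_1=2$ is selected then $(\vartheta_{12}z_1+z_2)_+\ge |z_1+\vartheta_{12}z_2|$, so both sides vanishing would make the left side of~(ii) equal to~$0$, contradicting~(ii). Hence $(\vartheta_{12}z_1+z_2)_+>0$ and $x^1=(\vartheta_{12}z_1+z_2)\mathbf e_2$ as you need. With this fix, your argument is complete and matches the paper's.
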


\begin{proof}
 ``Only if''. Suppose the exact vector recovery is achieved for any $x \in \Pcal$ with $\supp(x)=\Scal$. Condition (i) follows from statement (ii) of Lemma~\ref{lem:preliminary_results_RR+}, and condition (ii) follows from Step 1 of Algorithm~\ref{algo:constrained_MP} and statement (i) of Lemma~\ref{lem:preliminary_results_RR+} with $x^0=0$ and $\Jcal_0=\emptyset$. To establish condition (iii), we first notice via $|\vartheta_{12}|<1$ that for any $z\in \Pcal$ with $\supp(z)=\Scal$, i.e., $z_1 \ne 0$ and $z_2>0$, $|z_1 + \vartheta_{12} z_2 | \ge (\vartheta_{12} z_1 + z_2)_+$ if and only if $|z_1| \ge z_2 >0$, and $|z_1 + \vartheta_{12} z_2 | \le (\vartheta_{12} z_1 + z_2)_+$ if and only if $z_2 \ge |z_1|>0$. When the former holds, i.e., $|z_1|\ge z_2>0$, we have $j^*_1=1$ and $x^1=(z_1+ \vartheta_{12} z_2) \cdot \mathbf e_1$. Hence, $A^T_{\bullet j} A(z - x^1)= (\vartheta_{j2} - \vartheta_{j1} \vartheta_{12}) z_2$. Using Step 2 of Algorithm~\ref{algo:constrained_MP} and statement (i) of Lemma~\ref{lem:preliminary_results_RR+} with $\Jcal_1=\{1 \}$, it is easy to obtain $1-\vartheta^2_{12} > \max\big( \max_{j\in \Scal^c\cap \Ical_1} \, |\vartheta_{j2}- \vartheta_{12}\vartheta_{j1}|, \ \max_{j \in \Scal^c \cap \Ical_+}(\vartheta_{j2}- \vartheta_{12}\vartheta_{j1})_+\big)$. We next consider the case where $z_2 \ge |z_1|>0$. In this case, $j^*_1=2$ such that $x^1=(\vartheta_{12} z_1 + z_2)_+ \cdot \mathbf e_2$, where $\vartheta_{12} z_1 + z_2 > 0$. Hence, $A^T_{\bullet j} A(z - x^1)= (\vartheta_{j1} - \vartheta_{j2} \vartheta_{12}) z_1$. Applying Step 2 of Algorithm~\ref{algo:constrained_MP} and statement (i) of Lemma~\ref{lem:preliminary_results_RR+} with $\Jcal_1=\{2 \}$, we have that
 \[
  (1-\vartheta^2_{12}) |z_1| > \max\big( \max_{j\in \Scal^c\cap \Ical_1} \, |(\vartheta_{j1}- \vartheta_{12}\vartheta_{j2}) z_1|, \ \max_{j \in \Scal^c \cap \Ical_+}[(\vartheta_{j1}- \vartheta_{12}\vartheta_{j2}) z_1 ]_+\big).
  \]
 It is easy to show that  $(1-\vartheta^2_{12}) |z_1| > \max_{j \in \Scal^c \cap \Ical_+}[(\vartheta_{j1}- \vartheta_{12}\vartheta_{j2}) z_1 ]_+$ for any $z_1 \ne 0$ if and only if $1-\vartheta^2_{12} > \max_{j \in \Scal^c \cap \Ical_+}|\vartheta_{j1}- \vartheta_{12}\vartheta_{j2}|$. This yields $1-\vartheta^2_{12}> \max_{j\in \Scal^c} | \vartheta_{j1}- \vartheta_{12}\vartheta_{j2} |$, and  condition (iii).

 ``If''. This part can be shown in a similar way by reversing the previous argument.
\end{proof}

Necessary and sufficient conditions for  the exact vector recovery on $\Pcal$ for a given support $\Scal$ of size 3 can be established via a similar argument for Theorem~\ref{thm:nonnegative constraint_S3}. Instead doing this, we provide a sufficient condition for a given support of arbitrary size. To simplify notation, we define the following function $F_{\Ical, \Jcal}: \mathbb R_\Ical \times (\mathbb R_+)_\Jcal \rightarrow \mathbb R$ for given index sets $\Ical$ and $\Jcal$: $F_{\Ical, \Jcal}( v ):=\max\big( \max_{i\in \Ical} |v_i|, \max_{i\in \Jcal} (v_i)_+\big)$.

\begin{theorem} \label{thm:RR+_sufficient_cond}
  Given a matrix $A \in \mathbb R^{m\times N}$ with unit columns and the index set $\Scal \subset \{1, \ldots, N\}$, let $M:=A^T_{\bullet \Scal} A_{\bullet \Scal}$, $\Scal_1:=\Scal \cap \Ical_1$, and $\Scal_+:=\Scal \cap \Ical_+$. Then every vector $z \in \Pcal$ with $\supp(z)=\Scal$ is recovered from $y=A z$ via constrained matching pursuit if the following conditions hold:
 \begin{itemize}
   \item [(i)] $A_{\bullet \Scal}$ has full column rank or equivalently $M$ is positive definite; and
    \item [(ii)] For any (possibly empty) index sets $\Lcal_1 \subset \Scal_1$ and $\Lcal_+ \subset \Scal_+$, letting $\wt \Lcal:=\Lcal_1 \cup \Lcal_+$,
     \begin{align*} 
      & F_{\Scal_1\setminus \Lcal_1, \, \Scal_+\setminus \Lcal_+}\left( M/M_{\wt \Lcal \wt\Lcal} \begin{pmatrix} v_{\Scal_1 \setminus \Lcal_1} \\  v_{\Scal_+ \setminus \Lcal_+} \end{pmatrix} \right) \notag \\
      & > \
       F_{\Scal^c\cap \Ical_1, \, \Scal^c\cap \Ical_+}\left( \big(A^T_{\bullet \Scal^c}[I- A^T_{\bullet \wt\Lcal} ( A^T_{\bullet \wt\Lcal} A_{\bullet \wt\Lcal} )^{-1} A_{\bullet \wt\Lcal}] A_{\bullet\Scal\setminus\wt\Lcal}
        \begin{pmatrix} v_{\Scal_1 \setminus \Lcal_1} \\  v_{\Scal_+ \setminus \Lcal_+} \end{pmatrix} \right)
     \end{align*}
     for all $v_{\Scal_+ \setminus \Lcal_+} >0$ and  all $v_{\Scal_1 \setminus \Lcal_1}$ whose each element is nonzero.
   %
 \end{itemize}
\end{theorem}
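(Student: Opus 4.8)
The plan is to adapt the proof of Theorem~\ref{thm:R+_sufficient_cond}, replacing the linear complementarity analysis of $\mathbb R^N_+$ by the variational inequality analysis of the mixed cone $\Pcal=\mathbb R_{\Ical_1}\times(\mathbb R_+)_{\Ical_+}$. First I would settle the vector-recovery part: since $\Pcal$ is closed, convex and CP admissible, Corollary~\ref{coro:sol_existence_CP_adm} together with Lemma~\ref{lemma:sol_existence}(ii) shows that under condition~(i) the problem in Line~7 of Algorithm~\ref{algo:constrained_MP}, namely $\min_{w\in\Pcal,\ \supp(w)\subseteq\Scal}\|Aw-y\|_2^2$, has a \emph{unique} optimal solution, which equals $z$ whenever $y=Az$; hence once a sequence reaches $\Jcal_s=\Scal$ it produces $x^s=z$. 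It therefore suffices to prove exact support recovery of every $z\in\Pcal$ with $\supp(z)=\Scal$. As in the proof of Theorem~\ref{thm:R+_sufficient_cond}, and using Theorem~\ref{thm:nec_suf_condition_for_exact_supp_recovery} together with Proposition~\ref{prop:condition_H_suppt_recovery} (the iterates $x^k$ are optimal solutions of the Line~7 problems over sets $\Jcal_k\subset\Scal$), this reduces to the claim: for every $u\in\Pcal$ with $\supp(u)=\Scal$, every $\Jcal\subset\Scal$, and the unique optimal $v$ of $\min_{w\in\Pcal,\ \supp(w)\subseteq\Jcal}\|A(u-w)\|_2^2$, one has $\min_{j\in\Scal\setminus\Jcal}f^*_j(u,v)<\min_{j\in\Scal^c}f^*_j(u,v)$.

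To verify this claim I would write $\Lcal_1:=\Jcal\cap\Ical_1$, $\Lcal_+:=\Jcal\cap\Ical_+$ and use the necessary and sufficient optimality condition for $v=(v_\Jcal,0)$, namely the variational inequality $\langle A^T_{\bullet\Jcal}(A_{\bullet\Jcal}v_\Jcal-Au),\,w_\Jcal-v_\Jcal\rangle\ge 0$ for all $(w_\Jcal,0)\in\Pcal$. Because $\Pcal$ is a product, this decouples into $A^T_{\bullet j}A(v-u)=0$ for $j\in\Lcal_1$ and the complementarity relations $v_j\ge 0\perp A^T_{\bullet j}A(v-u)\ge 0$ for $j\in\Lcal_+$. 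Setting $\Jcal^*:=\supp(v)\subseteq\Jcal$ and $\Ical:=\Scal\setminus\Jcal^*$, I would then argue — by the computation of Lemma~\ref{lem:R+_optimal_solution}, now using that $A^T_{\bullet j}A(u-v)=0$ for $j\in\Lcal_1$ and that $v_j>0$ for $j\in\Jcal^*\cap\Ical_+$ forces the corresponding complementarity inequality to be an equality — that $A^T_{\bullet\Jcal^*}A(u-v)=0$, that $v_{\Jcal^*}=(A^T_{\bullet\Jcal^*}A_{\bullet\Jcal^*})^{-1}A^T_{\bullet\Jcal^*}A_{\bullet\Scal}u_\Scal$, and hence that $A^T_{\bullet\Ical}A(u-v)=(M/M_{\Jcal^*\Jcal^*})\,u_\Ical$ while $A^T_{\bullet\Scal^c}A(u-v)$ equals the expression on the right-hand side of the inequality in condition~(ii) (with $\wt\Lcal$ there taken to be $\Jcal^*$ and the input vector taken to be $u_\Ical$). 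I would also record that on the inactive indices $j\in\Jcal\setminus\Jcal^*$ the quantity governing $f^*_j$ vanishes: $|A^T_{\bullet j}A(u-v)|=0$ when $j\in\Lcal_1$, and $v_j=0$ with $j\in\Lcal_+$ gives $A^T_{\bullet j}A(u-v)\le 0$, so $[A^T_{\bullet j}A(u-v)]_+=0$.

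With this in hand the claim becomes a calculation with the $f^*_j$ formulas, parallel to Corollary~\ref{coro:Nec_Suf_suppt_recovery_RN_RN+}. For every index $j\notin\Jcal$ we have $v_j=0$, so $\mathbb I_j(v)=\mathbb R$ when $j\in\Ical_1$ and $\mathbb I_j(v)=\mathbb R_+$ when $j\in\Ical_+$; since $A$ has unit columns, formulas~(\ref{eqn:f*_RN}) and~(\ref{eqn:f*_RN+}) give $f^*_j(u,v)=\|A(u-v)\|_2^2-\varphi_j^2$ with $\varphi_j=|A^T_{\bullet j}A(u-v)|$ for $j\in\Ical_1$ and $\varphi_j=[A^T_{\bullet j}A(u-v)]_+$ for $j\in\Ical_+$. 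Since squaring is increasing on $[0,\infty)$, the claim is equivalent to $\max_{j\in\Scal\setminus\Jcal}\varphi_j>\max_{j\in\Scal^c}\varphi_j$, that is, to $F_{\Scal_1\setminus\Lcal_1,\,\Scal_+\setminus\Lcal_+}(A^T_{\bullet\Scal\setminus\Jcal}A(u-v))>F_{\Scal^c\cap\Ical_1,\,\Scal^c\cap\Ical_+}(A^T_{\bullet\Scal^c}A(u-v))$. Using $\varphi_j=0$ on $\Jcal\setminus\Jcal^*$ (and $F\ge 0$) to enlarge the left-hand index set from $\Scal\setminus\Jcal$ to $\Ical=\Scal\setminus\Jcal^*$, and substituting the two expressions from the previous step, this is exactly the inequality in condition~(ii) evaluated at $\Lcal_1=\Jcal^*\cap\Ical_1$, $\Lcal_+=\Jcal^*\cap\Ical_+$ and input $u_\Ical$, whose entries indexed by $\Scal_1\setminus\Lcal_1$ are nonzero and whose entries indexed by $\Scal_+\setminus\Lcal_+$ are strictly positive because $\supp(u)=\Scal$. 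This closes the argument.

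The step I expect to be the main obstacle is the mixed-cone analogue of Lemma~\ref{lem:R+_optimal_solution}: one must check carefully that the variational inequality genuinely decouples coordinatewise over $\Ical_1$ and $\Ical_+$, that it is strict positivity $v_j>0$ (not merely $v_j\ge 0$) on $\Jcal^*\cap\Ical_+$ that turns the complementarity relation into the clean normal equations $A^T_{\bullet\Jcal^*}A(u-v)=0$, and that the quantifier bookkeeping is correct — in particular that every set $\wt\Lcal=\Jcal^*$ arising as the support of an optimal $v$ is among the index sets covered by condition~(ii). The rest — the $f^*_j$ formulas, the monotonicity of squaring, and the vanishing on inactive indices — is routine once this lemma is established.
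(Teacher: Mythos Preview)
Your proposal is correct and follows essentially the same route as the paper: reduce to condition~$(\mathbf H)$, analyze the optimality condition of the Line~7 problem on the product cone $\mathbb R_{\Ical_1}\times(\mathbb R_+)_{\Ical_+}$ to get normal equations on a suitable index set, derive the Schur complement/projection formulas for $A^T_{\bullet\,\cdot}A(u-v)$, and match against condition~(ii) via the $F$-function. The one cosmetic difference is your choice of $\wt\Lcal=\Jcal^*=\supp(v)$, whereas the paper takes $\Lcal_1=\Jcal\cap\Ical_1$ in full (not just $\supp(v)\cap\Ical_1$) together with $\Lcal_+=\{j\in\Jcal\cap\Ical_+:v_j>0\}$; since $A^T_{\bullet j}A(u-v)=0$ for every $j\in\Jcal\cap\Ical_1$, both choices satisfy $\supp(v)\subseteq\wt\Lcal$ and $A^T_{\bullet\wt\Lcal}A(u-v)=0$, so the same Schur complement computation goes through either way.
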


\begin{proof}
 Let $\Jcal \subset \Scal$ be a nonempty index set. Since $\Pcal=\mathbb R_{\Ical_1} \times (\mathbb R_+)_{\Ical_+}$ is a closed convex cone, it follows from the discussions at the end of Section~\ref{sect:Constrained_MP} that
 the necessary and sufficient optimality condition for an optimal solution $x^*=(x^*_\Jcal, 0)$ of the underlying minimization problem $\min_{w \in \Pcal, \supp(w) \subseteq \Jcal} \| A w - A z \|^2_2$ is given by:
 $\mathcal C \in x^*_\Jcal \perp   A^T_{\bullet \Jcal}( A_{\bullet \Jcal} x^*_\Jcal- A z ) \in \mathcal C^*$, where $z \in \Pcal$ is such that $\supp(z)=\Scal$, the convex cone $\mathcal C:=\{ w_\Jcal \, | \,  (w_{\Jcal}, 0) \in \Pcal \}= \mathbb R_{\Ical_1 \cap \Jcal} \times (\mathbb R_+)_{\Ical_+\cap \Jcal}$ and the dual cone $\mathcal C^*$ is given by $\mathcal C^*=\{ 0 \}\times (\mathbb R_+)_{\Ical_+\cap \Jcal}$. Hence, we have that
 \[
   A^T_{\bullet \Ical_1 \cap \Jcal} ( A_{\bullet \Jcal} x^*_\Jcal- A z ) = A^T_{\bullet \Ical_1 \cap \Jcal} A(x^*-z)=0,
 \]
 where $(\Ical_1 \cap \Jcal) \subset \Scal_1$, and
 \[
    0 \le x^*_{\Ical_+ \cap \Jcal} \perp A^T_{\bullet \Ical_+ \cap \Jcal} ( A_{\bullet \Jcal} x^*_\Jcal- A z ) \ge 0,
 \]
 where $x^*_\Jcal=(x^*_{\Ical_1 \cap \Jcal}, x^*_{\Ical_+ \cap \Jcal})$ with $x^*_{\Ical_+ \cap \Jcal} \ge 0$. %
 Let the index set $\mathcal L_+:=\{ i \in \Ical_+ \cap \Jcal \, | \, x^*_i>0\}$. Thus $\mathcal L_+ \subset \Scal_+$ and $A^T_{\bullet \mathcal L_+} A(x^*-z)=0$. Set $\Lcal_1:=\Ical_1 \cap \Jcal$, and $\wt \Lcal:=\Lcal_1 \cup \Lcal_+$. Hence, $\Lcal_1$ and $\Lcal_+$ are disjoint subsets of $\Scal$ with $A^T_{\bullet \wt \Lcal} A(z - x^*)=0$. Further, $x^*_{\Scal\setminus \wt \Lcal}=0$.
 %
 %
 %
%
 Hence, $ A^T_{\bullet \Scal \setminus \wt \Lcal} A (z - x^*) = M/M_{\wt\Lcal \wt \Lcal} (z - x^*)_{\Scal\setminus \wt \Lcal} = M/M_{\wt\Lcal \wt \Lcal} \, z_{\Scal \setminus \wt \Lcal}$, and $A^T_{\bullet \Scal^c} A (z - x^*) = A^T_{\bullet \Scal^c}[I- A^T_{\bullet \wt\Lcal} ( A^T_{\bullet \wt\Lcal} A_{\bullet \wt\Lcal} )^{-1} A_{\bullet \wt\Lcal}] A_{\bullet\Scal\setminus\wt\Lcal} \, z_{\Scal \setminus \wt \Lcal}$. Since $\Scal$ is a disjoint union of $\Scal_1$ and $\Scal_+$, $z_{\Scal \setminus \wt \Lcal} = (z_{\Scal_1\setminus \Lcal_1}, z_{\Scal_+\setminus \Lcal_+})$, where $z_{\Scal_+\setminus \Lcal_+}>0$ and each element of $z_{\Scal_1\setminus \Lcal_1}$ is nonzero. Further,
 \[
   \max\Big( \max_{j \in \Scal_1\setminus \Jcal} |A^T_{\bullet j} A (z -x^*)|, \max_{j\in \Scal_+ \setminus \Jcal} [ A^T_{\bullet j} A (z -x^*) ]_+\Big) \, = \, F_{\Scal_1\setminus \Lcal_1, \, \Scal_+\setminus \Lcal_+}\left( M/M_{\wt \Lcal \wt\Lcal} \, z_{\Scal \setminus \wt \Lcal} \right),
 \]
 and
 \begin{align*}
   &\max\Big( \max_{j \in \Scal^c\cap \Ical_1} |A^T_{\bullet j} A (z -x^*)|, \max_{j\in \Scal^c \cap \Ical_+} [ A^T_{\bullet j} A (z -x^*) ]_+\Big) \\
   & \, = \ F_{\Scal^c\cap \Ical_1, \, \Scal^c\cap \Ical_+}\left( \big(A^T_{\bullet \Scal^c}[I- A^T_{\bullet \wt\Lcal} ( A^T_{\bullet \wt\Lcal} A_{\bullet \wt\Lcal} )^{-1} A_{\bullet \wt\Lcal}] A_{\bullet\Scal\setminus\wt\Lcal}\, z_{\Scal \setminus \wt \Lcal} \right).
 \end{align*}
 Consequently, under the condition (ii), condition $(\mathbf H)$ holds, leading to the exact vector recovery.
\end{proof}

%

%
\section{\tblue{Sufficient Conditions for Uniform Exact Recovery on Convex, CP Admissible Sets via Constrained Matching Pursuit}} \label{sect:suff_cond_exact_recovery}

%
%
In this section, we derive sufficient conditions for uniform exact support and vector recovery via constrained matching pursuit using the restricted isometry-like and restricted orthogonality-like constants. For this purpose, we introduce the following constants.

\begin{definition} \label{def:RIP_MC_constants}
For a given (possibly non-CP admissible) set $\Pcal$, a matrix $A \in \mathbb R^{m\times N}$, and disjoin index sets $\Scal_1, \Scal_+, \Scal_-$ whose union is $\{1, \ldots, N\}$,  we say that
\begin{itemize}
 \item [(i)] A real number $\delta$ is {\em of Property RI on $\Pcal$} if $0<\delta <1$ and $(1-\delta) \cdot \| u - v \|^2_2 \le \| A(u-v) \|^2_2 $ for all $u, v\in \Sigma_K \cap \Pcal$ with $\supp(v) \subset \supp(u)$, where \tblue{$\Sigma_K:=\{ x \in \mathbb R^N \, | \, |\supp(x)|\le K\}$};
 \item [(ii)] A real number $\theta$  is {\em of Property RO  on $\Pcal$ corresponding to $\Scal_1, \Scal_+, \Scal_-$} if $\theta>0$ and for all $u, v\in \Sigma_K \cap \Pcal$ with $\supp(v) \subset \supp(u)$, the following holds:
     \begin{eqnarray*}
    \lefteqn{
     \max\Big( \max_{j \in [\supp(u)]^c \cap \Scal_1} |\langle A(u-v), A_{\bullet j} \rangle|, \
      \max_{j \in  [\supp(u)]^c \cap \Scal_+} \langle A(u-v), A_{\bullet j} \rangle_+,
      } \qquad \qquad \qquad \qquad \\
      &    \max_{j \in [\supp(u)]^c \cap \Scal_-} \langle A(u-v), A_{\bullet j} \rangle_-    \Big ) \ \le \ \theta \cdot \|u-v\|_2.
     \end{eqnarray*}
 %
 \end{itemize}
We also denote these two constants by $\delta_{K, \Pcal}$ and $\theta_{K, \Pcal}$ respectively to emphasize their dependence on $\Pcal$.
\end{definition}
When $\Pcal=\mathbb R^N$, the constant $\delta_{K, \Pcal}$ resembles the restricted isometry constant, and the constant $\theta_{K, \Pcal}$ is closely related to the $(K, 1)$-restricted orthogonality constant \cite[Definition 6.4]{FoucartRauhut_book2013}.

We consider an irreducible, closed convex and CP admissible set $\Pcal$ (cf. Definition~\ref{def:irreducible_CP_set}) as follows.
By Propositions~\ref{prop:CP_admissible_cone} and \ref{prop:conic_hull},
  its conic hull $\mbox{cone}(\Pcal) = \mathbb R_{\Lcal_1} \times (\mathbb R_+)_{\Lcal_+} \times (\mathbb R_-)_{\Lcal_-}$, where $\Lcal_1, \Lcal_+$ and $\Lcal_-$ form a disjoint union of $\{1, \ldots, N\}$.
For a given $v\in\Pcal$, recall that the interval $\mathbb I_j(v)=[a_j(v), b_j(v)]$, where $a_j(v)\in \mathbb R_-\cup\{-\infty\}$ and $b_j(v) \in \mathbb R_+\cup \{+\infty\}$ take the extended real values; see Section~\ref{sect:Constrained_MP}.
We introduce more notation. For a given index set $\Jcal$ and $u, v \in \Pcal$ with $\supp(v) \subseteq \Jcal \subset \supp(u)$, we define the following  (possibly empty) index sets, where $\wt t_j(u, v)$ is defined in (\ref{eqn:def_tilda_t}):
\begin{eqnarray}
 \Lcal^-_a(u, v) &  := & \{ j \in \supp(u)\setminus \Jcal \ | \ \, \wt t_j(u, v)< a_j(v)<0 \, \},  \notag \\
 \Lcal^0_a(u, v) &  := & \{ j \in \supp(u)\setminus \Jcal \ | \ \, \wt t_j(u, v)\le b_j(v), \mbox{ and } a_j(v) = 0 < b_j(v) \, \},  \notag \\
  \Lcal^+_b(u, v) & := & \{ j \in \supp(u)\setminus \Jcal \ | \ \, \wt t_j(u, v) > b_j(v)>0  \, \},  \label{eqn:LL_index_sets} \\
  \Lcal^0_b(u, v) &  := & \{ j \in \supp(u)\setminus \Jcal \ | \ \, \wt t_j(u, v)\ge a_j(v), \mbox{ and } a_j(v) < 0 = b_j(v)  \, \},  \notag \\
  \Lcal_{uc}(u, v) & := & \{ j \in \supp(u)\setminus \Jcal \ | \ \, \wt t_j(u, v) \in [ a_j(v), b_j(v)], \mbox{ and } a_j(v) < 0 < b_j(v) \, \},  \notag \\
  \Lcal_{0}(u, v) & := & \{ j \in \supp(u)\setminus \Jcal \ | \ \, a_j(v) = 0 = b_j(v) \, \}. \notag
\end{eqnarray}
It is easy to see that the above index sets form a disjoint union of $\supp(u) \setminus \Jcal$.
Further, when $A$ has unit columns,
 it follows from (\ref{eqn:def_tilda_t}) that $\wt t_j(u, v)=\langle A(u-v), A_{\bullet j} \rangle $ for any index $j$ and $u, v \in \Pcal$.
%
Using the index sets defined in (\ref{eqn:LL_index_sets}), we present two technical results as follows.

%
%

\begin{lemma} \label{lem:W_a_b_bound}
For a given index set $\Jcal$ and $u, v \in \Pcal$ with $\supp(v) \subseteq \Jcal \subset \supp(u)$, the following hold:
\begin{itemize}
 \item [(i)] For any $j\in \Lcal^+_b(u, v)$, $ 2 b_j(v) \wt t_j(u, v) - b^2_j(v) \ge [ b_j(v)/\wt t_j(u,v)] \cdot \big[ \wt t_j(u, v) \big]^2$;
 \item [(ii)] For any $j\in \Lcal^-_a(u, v)$, $ 2 a_j(v) \wt t_j(u, v) - a^2_j(v) \ge  [ a_j(v)/\wt t_j(u,v)] \cdot \big[ \wt t_j(u, v) \big]^2$.
 \end{itemize}
\end{lemma}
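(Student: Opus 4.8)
The plan is to reduce both inequalities to elementary one‑variable algebra, using only the sign information that membership in $\Lcal^+_b(u,v)$ and $\Lcal^-_a(u,v)$ supplies. First I would record that for $j\in\Lcal^+_b(u,v)$ one has $\wt t_j(u,v) > b_j(v) > 0$, so in particular $\wt t_j(u,v)\neq 0$; hence the right‑hand side of (i) simplifies to
\[
   [\,b_j(v)/\wt t_j(u,v)\,]\cdot[\wt t_j(u,v)]^2 \, = \, b_j(v)\,\wt t_j(u,v).
\]
Thus (i) is equivalent to $2b_j(v)\wt t_j(u,v) - b_j(v)^2 \ge b_j(v)\wt t_j(u,v)$, i.e.\ to $b_j(v)\bigl(\wt t_j(u,v) - b_j(v)\bigr)\ge 0$. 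Since $b_j(v)>0$ and $\wt t_j(u,v) - b_j(v) > 0$ by definition of $\Lcal^+_b(u,v)$, this holds (with strict inequality). Equivalently, dividing $b_j(v)\wt t_j(u,v)\ge b_j(v)^2$ by $b_j(v)>0$ gives exactly $\wt t_j(u,v)\ge b_j(v)$, the defining condition.

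Then I would treat (ii) by the symmetric computation. For $j\in\Lcal^-_a(u,v)$ one has $\wt t_j(u,v) < a_j(v) < 0$, so again $\wt t_j(u,v)\neq 0$ and the right‑hand side of (ii) equals $a_j(v)\,\wt t_j(u,v)$. Hence (ii) is equivalent to $a_j(v)\bigl(\wt t_j(u,v) - a_j(v)\bigr)\ge 0$, and since $a_j(v) < 0$ and $\wt t_j(u,v) - a_j(v) < 0$, the product of two negative numbers is positive, so the inequality holds. I would present each case as a short displayed chain of equivalences (two or three lines), being explicit about the direction of the inequality whenever dividing by the negative quantities $a_j(v)$ or $\wt t_j(u,v)$.

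Since the argument is purely computational, there is no genuine obstacle; the only points requiring care are (a) noting that $\wt t_j(u,v)\neq 0$ on both index sets \emph{before} cancelling it against $[\wt t_j(u,v)]^2$, and (b) keeping track of inequality directions when manipulating the (possibly negative) factors. I expect the whole proof to occupy only a few lines.
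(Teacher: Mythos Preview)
Your proposal is correct and essentially identical to the paper's proof: both recognize that the right-hand side simplifies to $b_j(v)\,\wt t_j(u,v)$ (respectively $a_j(v)\,\wt t_j(u,v)$), after which the inequality reduces to $b_j(v)\bigl(\wt t_j(u,v)-b_j(v)\bigr)\ge 0$ (respectively the analogous product of two negative quantities), which is immediate from the sign conditions defining $\Lcal^+_b(u,v)$ and $\Lcal^-_a(u,v)$. The paper presents the chain slightly differently in display form, but the algebraic content is the same.
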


\begin{proof}
For any $j\in \Lcal^+_b(u, v)$, we have $\wt t_j(u, v) > b_j(v) > 0$. Therefore,
\begin{eqnarray*}
  2 b_j(v) \wt t_j(u, v) - b^2_j(v) & \ge & b_j(v) [ 2 \wt t_j(u, v) - b_j(v)]  \, = \, b_j(v) [ \wt t_j(u, v) + \wt t_j(u, v) - b_j(v)] \\
   & \ge & b_j(v) \wt t_j(u, v) \, = \, \frac{b_j(v)}{\wt t_j(u, v)} \cdot [ \wt t_j(u, v)]^2. 
\end{eqnarray*}
This gives rise to statement (i). Statement (ii) follows from the similar argument.
%
%
\end{proof}


%

\begin{lemma} \label{lem:lower_bd_f*}
  Let $\Pcal$ be an irreducible, closed, convex and CP admissible set whose conic hull is given by $\mbox{cone}(\Pcal)=\mathbb R_{\Lcal_1} \times (\mathbb R^+)_{\Lcal_+}\times (\mathbb R^-)_{\Lcal_-}$, and the matrix $A \in \mathbb R^{m\times N}$ with unit columns. Given an index set $\Jcal$ and vectors $u, v \in \Pcal$ with $\supp(v) \subseteq \Jcal \subset \supp(u)$, the following hold: (i) $f^*_j(u, v) \ge \|A(u-v)\|^2_2 -\wt t^2_j(u, v)$ for any $j \in \Lcal_1$; (ii)
  $f^*_j(u, v) \ge \| A(u- v)\|^2_2 - ([\wt t_j(u, v)]_+)^2$ for any $j \in \Lcal_+$; and (iii) $f^*_j(u, v) \ge \| A(u-v)\|^2_2 - ([\wt t_j(u, v)]_-)^2$ for any $j \in \Lcal_-$.
\end{lemma}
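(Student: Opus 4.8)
**Proof proposal for Lemma~\ref{lem:lower_bd_f*}.**

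The plan is to unpack the definition of $f^*_j(u,v)$ coordinate-wise using the closed-form expression (\ref{eqn:f*_j_convex}) for convex $\Pcal$, and to exploit the geometric structure of $\mbox{cone}(\Pcal)$ to control the endpoints $a_j(v)$ and $b_j(v)$ of the interval $\mathbb I_j(v)$ for indices $j$ lying in $\Lcal_1$, $\Lcal_+$, or $\Lcal_-$ respectively. Recall from the discussion after (\ref{eqn:f*_j_convex}) that the inequality $f^*_j(u,v)\ge \|A(u-v)\|^2_2 - \|A_{\bullet j}\|^2_2\cdot\wt t^2_j(u,v)$ always holds; since $A$ has unit columns this already gives statement (i) whenever $j\in\Lcal_1$, because the bound $f^*_j(u,v)\ge \|A(u-v)\|^2_2 - \wt t^2_j(u,v)$ is exactly what is claimed. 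So case (i) is essentially immediate from the generic lower bound; I would just note it and move on. (One should double-check that $j\in\Lcal_1$ together with $\supp(v)\subseteq\Jcal$ does not impose any further restriction — it does not, the generic bound is unconditional.)

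For case (ii), fix $j\in\Lcal_+$. By Proposition~\ref{prop:conic_hull} the $j$th coordinate projection of $\Pcal$ has $\inf[\pi_j(\Pcal)]_j=0$, and since $v\in\Pcal$ we get $v_j\ge 0$, hence $a_j(v)=\inf\{t : v+\mathbf e_j t\in\Pcal\}\ge -v_j$. More importantly I claim $a_j(v)\ge 0$ when $v_j=0$, which is the relevant situation: when $\supp(v)\subseteq\Jcal\subset\supp(u)$ and $j\in\supp(u)\setminus\Jcal$ we have $v_j=0$, so $0\in\mathbb I_j(v)$ and any $t<0$ would force the $j$th coordinate negative, contradicting $[\pi_j(\Pcal)]_j\subseteq\mathbb R_+$ up to the cone structure — here I must be a little careful, since $\Pcal$ itself (not its cone) is the constraint set, but CP admissibility of $\Pcal$ plus Proposition~\ref{prop:conic_hull} pin down the sign of admissible perturbations at a coordinate where $v$ vanishes. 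Thus $a_j(v)=0$, so $\mathbb I_j(v)=[0,b_j(v)]$ with $b_j(v)\in\mathbb R_+\cup\{+\infty\}$. Now invoke (\ref{eqn:f*_j_convex}): if $\wt t_j(u,v)\le 0=a_j(v)$ the relevant branch gives $f^*_j(u,v)=\|A(u-v)\|^2_2 - [2a_j(v)\wt t_j(u,v)-a^2_j(v)] = \|A(u-v)\|^2_2 - 0 = \|A(u-v)\|^2_2 \ge \|A(u-v)\|^2_2 - ([\wt t_j(u,v)]_+)^2$ since $[\wt t_j(u,v)]_+=0$. If $\wt t_j(u,v)\in[0,b_j(v)]$ the middle branch gives $f^*_j(u,v)=\|A(u-v)\|^2_2-\wt t^2_j(u,v)=\|A(u-v)\|^2_2-([\wt t_j(u,v)]_+)^2$. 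If $\wt t_j(u,v)>b_j(v)>0$, i.e.\ $j\in\Lcal^+_b(u,v)$, then by Lemma~\ref{lem:W_a_b_bound}(i) we have $2b_j(v)\wt t_j(u,v)-b^2_j(v)\ge [b_j(v)/\wt t_j(u,v)]\cdot\wt t^2_j(u,v) = b_j(v)\wt t_j(u,v)\le\wt t^2_j(u,v)$; combined with $0<2b_j(v)\wt t_j(u,v)-b^2_j(v)$ this yields $f^*_j(u,v)=\|A(u-v)\|^2_2-[2b_j(v)\wt t_j(u,v)-b^2_j(v)]\ge\|A(u-v)\|^2_2-\wt t^2_j(u,v)=\|A(u-v)\|^2_2-([\wt t_j(u,v)]_+)^2$. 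In all three branches the claimed bound holds. Case (iii) is proved by the symmetric argument, replacing $\Lcal_+$ by $\Lcal_-$, $b_j(v)=0$ by $a_j(v)=0$... wait, by $b_j(v)=0$ and $a_j(v)\in\mathbb R_-\cup\{-\infty\}$, using Lemma~\ref{lem:W_a_b_bound}(ii) and the index set $\Lcal^-_a(u,v)$, with $[\wt t_j(u,v)]_-$ in place of $[\wt t_j(u,v)]_+$.

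The main obstacle I anticipate is the bookkeeping in case (ii)/(iii): correctly matching each of the three branches of (\ref{eqn:f*_j_convex}) against the subcases $\wt t_j(u,v)\le a_j(v)$, $\wt t_j(u,v)\in[a_j(v),b_j(v)]$, $\wt t_j(u,v)\ge b_j(v)$, and in particular verifying that when $a_j(v)=0$ the ``left'' branch contributes $0$ (not a negative quantity) so that $f^*_j$ can only be \emph{larger} than $\|A(u-v)\|^2_2$, while the ``right'' branch needs Lemma~\ref{lem:W_a_b_bound} to be bounded below by $\wt t^2_j$. A secondary subtlety is justifying $a_j(v)=0$ for $j\in\Lcal_+$ with $v_j=0$ rigorously from CP admissibility of $\Pcal$ and the description of $\mbox{cone}(\Pcal)$ — this is where the irreducibility hypothesis and Proposition~\ref{prop:conic_hull} are actually used, and it deserves a careful sentence rather than being taken for granted.
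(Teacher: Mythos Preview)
Your overall strategy matches the paper's---use the three-branch formula (\ref{eqn:f*_j_convex}) together with the sign structure of $[\pi_j(\Pcal)]_j$ coming from Proposition~\ref{prop:conic_hull}. Statement (i) and the first two branches of (ii) are handled correctly. The gap is in the third branch of case (ii), where $\wt t_j(u,v) > b_j(v) > 0$. You invoke Lemma~\ref{lem:W_a_b_bound}(i), obtaining the \emph{lower} bound $2b_j\wt t_j - b_j^2 \ge b_j\wt t_j$, and then note $b_j\wt t_j \le \wt t_j^2$. But the chain $X \ge Z$, $Z \le W$ does not yield $X \le W$; the inequalities point the wrong way for what you need. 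To get $f^*_j \ge \|A(u-v)\|^2_2 - \wt t_j^2$ you need the \emph{upper} bound $2b_j\wt t_j - b_j^2 \le \wt t_j^2$, and that is simply $(\wt t_j - b_j)^2 \ge 0$---i.e., the generic inequality already recorded just after (\ref{eqn:f*_j_convex}). The paper uses exactly that one-line observation here; Lemma~\ref{lem:W_a_b_bound} plays no role in this lemma (it is reserved for the sharper, opposite-direction estimate inside the proof of Theorem~\ref{thm:sufficency_general_case}).

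On your secondary worry: you are right that $a_j(v)=0$ for $j\in\Lcal_+$ needs $v_j=0$. This holds whenever $j\notin\supp(v)$, so in particular for $j\in\supp(u)\setminus\Jcal$ (your ``relevant situation'') and for $j\in[\supp(u)]^c$ (where the lemma is actually applied in Theorem~\ref{thm:sufficency_general_case}), since $\supp(v)\subseteq\Jcal\subset\supp(u)$. The paper's proof simply asserts $a_j(v)=0<b_j(v)$ for all $j\in\Lcal_+$ without isolating this point; your instinct to justify it via $v_j=0$ and the description of $[\pi_j(\Pcal)]_j$ in (\ref{eqn:conic_hll_indices}) is the right one.
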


\begin{proof}
 Statement (i) follows from the comment given below (\ref{eqn:f*_j_convex}). To show statement (ii), we see from the relation of the index sets $\Lcal_1, \Lcal_+, \Lcal_-$ for $\mbox{cone}(\Pcal)$ given in (\ref{eqn:conic_hll_indices}) in Proposition~\ref{prop:conic_hull} that  $a_j(v)=0<b_j(v)$ for any $j \in \Lcal_+$, and $a_j(v)<0=b_j(v)$ for all $j \in \Lcal_-$. Hence, for each $j\in \Lcal_+$,  we deduce in view of (\ref{eqn:f*_j_convex}) that  either $f^*_j(u, v) = \| A(u- v)\|^2_2 - ([\wt t_j(u, v)]_+)^2$ when $\wt t_j(u,v) \le b_j(v)$ or $f^*_j(u, v) = \| A(u- v)\|^2_2 - [2 b_j(v) \wt t_j(u, v) - b^2_j(v)] \ge \| A(u- v)\|^2_2- \wt t^2_j(u, v)$ when $\wt t_j(u,v) \ge b_j(v)$. In the latter case, since $b_j(v)>0$, we have $0<\wt t_j(u,v)= [\wt t_j(u, v)]_+$. Therefore, $f^*_j(u, v) \ge \| A(u- v)\|^2_2 - ([\wt t_j(u, v)]_+)^2$ for both cases. This yields statement (ii). Statement (iii) follows from the similar argument and the fact that $a_j(v)<0=b_j(v)$ for all $j \in \Lcal_-$.
\end{proof}

The next result is trivial; its proof is omitted.

\begin{lemma} \label{lem:max_product}
  Let $x_i \in \mathbb R_+$ and $y_i \in \mathbb R_{+}$ for each $i=1, \ldots, n$. Then $\max_i  ( x_i \cdot y_i ) \ge \big(\max_i x_i \big) \cdot \big(\min_i y_i \big)$.
\end{lemma}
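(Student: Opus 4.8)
The plan is to realize the right-hand side by evaluating the product at an index that attains the maximum of the $x_i$'s. First I would fix an index $i^\star \in \argmax_{1\le i\le n} x_i$, so that $x_{i^\star} = \max_{1\le i\le n} x_i$. Since $\max_{1\le i\le n}(x_i y_i)$ is a maximum over a finite set, it dominates each of its entries, and in particular $\max_{1\le i\le n}(x_i y_i) \ge x_{i^\star} y_{i^\star}$. It therefore suffices to show $x_{i^\star} y_{i^\star} \ge \big(\max_{1\le i\le n} x_i\big)\cdot\big(\min_{1\le i\le n} y_i\big)$.

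Next I would invoke nonnegativity. Because $x_{i^\star} = \max_{1\le i\le n} x_i \ge 0$, multiplying the inequality $y_{i^\star} \ge \min_{1\le i\le n} y_i$ through by the nonnegative scalar $x_{i^\star}$ preserves its direction, yielding $x_{i^\star} y_{i^\star} \ge x_{i^\star}\cdot \min_{1\le i\le n} y_i = \big(\max_{1\le i\le n} x_i\big)\cdot\big(\min_{1\le i\le n} y_i\big)$. Chaining this with the previous display gives $\max_{1\le i\le n}(x_i y_i) \ge \big(\max_{1\le i\le n} x_i\big)\cdot\big(\min_{1\le i\le n} y_i\big)$, which is the claim.

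There is no genuine obstacle here; the only point worth flagging is that the hypotheses $x_i \ge 0$ and $y_i \ge 0$ are both used in an essential way. The assumption $x_{i^\star}\ge 0$ is what allows the inequality $y_{i^\star}\ge\min_i y_i$ to be multiplied by $x_{i^\star}$ without flipping, and $\min_i y_i \ge 0$ keeps the final lower bound meaningful; if either sign assumption were dropped the statement would fail. Since both hold by hypothesis, the argument is two lines long, which is exactly why the paper states the result without proof.
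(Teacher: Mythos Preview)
Your proof is correct and is exactly the natural argument; the paper itself omits the proof, declaring the lemma trivial. One small inaccuracy in your closing remark: the hypothesis $y_i \ge 0$ is not actually needed for the inequality to hold---your own argument only uses $x_{i^\star} \ge 0$ to multiply $y_{i^\star} \ge \min_i y_i$ without flipping, and that step goes through regardless of the sign of the $y_i$'s---so only dropping $x_i \ge 0$ can break the statement.
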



The following theorem gives a sufficient condition for condition $(\mathbf H)$ on $\Pcal$, and thus for the exact support recovery on $\Pcal$, in terms of the constants $\theta_{K, \Pcal}$ and $\theta_{K, \Pcal}$ introduced in Definition~\ref{def:RIP_MC_constants}.


\begin{theorem} \label{thm:sufficency_general_case}
\tblue{
  Let $\Pcal$ be an irreducible, closed, convex and CP admissible set whose conic hull is given by $\mbox{cone}(\Pcal)=\mathbb R_{\Lcal_1} \times (\mathbb R^+)_{\Lcal_+}\times (\mathbb R^-)_{\Lcal_-}$, and the matrix $A \in \mathbb R^{m\times N}$ with unit columns.
  Suppose  there exist  constants $\delta_{K,\Pcal}$ of Property RI and $\theta_{K, \Pcal}$ of Property RO corresponding to $\Lcal_1, \Lcal_+, \Lcal_-$ on $\Pcal$ such that for any $u, v \in \Pcal$ with $\supp(v) \subseteq \Jcal \subset \supp(u)$,  the following conditions hold:
  \begin{itemize}
    \item [C1.]  $\Lcal_0(u, v)$ is the empty set,
  $u_j > 0$ for all $j \in \Lcal^0_a(u, v)$,  $u_j < 0$ for all $j \in \Lcal^0_b(u, v)$,
  and
  \item [C2.]
  \begin{equation} \label{eqn:sufficient_condition_general}
   \big(1-\delta_{K, \Pcal} \big) \cdot \min \Big( \min_{j \in \Lcal^-_a(u, v)} \sqrt{a_j(v)/\wt t_j(u, v)}, \ 1, \ \min_{j \in \Lcal^+_b(u, v)} \sqrt{b_j(v)/\wt t_j(u,v)} \, \Big) \, > \, \sqrt{K} \cdot \theta_{K, \Pcal}.
  \end{equation}
\end{itemize}
Then condition $(\mathbf H)$ holds on $\Pcal$.
}
\end{theorem}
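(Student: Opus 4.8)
The plan is to fix $0 \ne u \in \Sigma_K \cap \Pcal$, a set $\Jcal \subset \supp(u)$, and an optimal solution $v$ of $\min_{w \in \Pcal,\, \supp(w) \subseteq \Jcal} \|A(u-w)\|^2_2$ (so $\supp(v) \subseteq \Jcal \subset \supp(u)$), and to establish the strict inequality (\ref{eqn:condition_H'}) by bounding both sides as multiples of $\|A(u-v)\|^2_2$. Since $u \ne v$, Property RI gives $\|A(u-v)\|^2_2 \ge (1-\delta_{K,\Pcal})\|u-v\|^2_2 > 0$ and $\|u-v\|^2_2 \le \|A(u-v)\|^2_2/(1-\delta_{K,\Pcal})$, which converts every estimate involving $\|u-v\|_2$ into one involving $\|A(u-v)\|_2$. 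Throughout we use that, with unit columns, $\wt t_j(u,v) = \langle A(u-v), A_{\bullet j}\rangle$.

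For the right-hand side of (\ref{eqn:condition_H'}) I would split $j \in [\supp(u)]^c$ according to whether $j \in \Lcal_1$, $\Lcal_+$, or $\Lcal_-$, apply the matching lower bound from Lemma~\ref{lem:lower_bd_f*}, and then bound $\wt t_j^2(u,v)$, $([\wt t_j(u,v)]_+)^2$, $([\wt t_j(u,v)]_-)^2$ by $\theta_{K,\Pcal}^2\|u-v\|^2_2$ using Property RO corresponding to $\Lcal_1,\Lcal_+,\Lcal_-$ (recalling $[\wt t_j(u,v)]_\pm = \langle A(u-v),A_{\bullet j}\rangle_\pm$). This yields $\min_{j \in [\supp(u)]^c} f^*_j(u,v) \ge \|A(u-v)\|^2_2 - \theta_{K,\Pcal}^2\|u-v\|^2_2 \ge \big(1 - \tfrac{\theta_{K,\Pcal}^2}{1-\delta_{K,\Pcal}}\big)\|A(u-v)\|^2_2$.

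The core of the argument is the upper bound on the left-hand side. Proposition~\ref{prop:negative_second_term} gives $\|A(u-v)\|^2_2 \le \sum_{j \in \supp(u)\setminus\Jcal}\wt t_j(u,v)\,u_j$, using $(u-v)_j = u_j$ on $\supp(u)\setminus\Jcal$. Discarding the non-positive terms and putting $P := \{j \in \supp(u)\setminus\Jcal : \wt t_j(u,v)u_j > 0\}$ (which is nonempty, since otherwise the bound forces $\|A(u-v)\|^2_2 \le 0$), I would apply Cauchy--Schwarz to $\sum_{j\in P}|\wt t_j(u,v)|\,|u_j|$, use $\sum_{j\in P} u_j^2 \le \|u-v\|^2_2$, and invoke Property RI to obtain $(1-\delta_{K,\Pcal})\|A(u-v)\|^2_2 \le \sum_{j \in P}\wt t_j^2(u,v)$; as $|P| \le |\supp(u)| \le K$, pigeonholing produces $j^* \in P$ with $\wt t_{j^*}^2(u,v) \ge \tfrac{1-\delta_{K,\Pcal}}{K}\|A(u-v)\|^2_2$. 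Now $j^*$ lies in exactly one of the sets in (\ref{eqn:LL_index_sets}); C1 rules out $\Lcal_0(u,v)$, and the sign conditions in C1 together with $\wt t_{j^*}(u,v)u_{j^*} > 0$ force $\wt t_{j^*}(u,v) \in [a_{j^*}(v),b_{j^*}(v)]$ when $j^* \in \Lcal^0_a(u,v)\cup\Lcal^0_b(u,v)\cup\Lcal_{uc}(u,v)$, while Lemma~\ref{lem:W_a_b_bound} handles $j^* \in \Lcal^-_a(u,v)\cup\Lcal^+_b(u,v)$. Reading off $f^*_{j^*}(u,v)$ from (\ref{eqn:f*_j_convex}) in each branch shows $\|A(u-v)\|^2_2 - f^*_{j^*}(u,v) \ge \Lambda\cdot\wt t_{j^*}^2(u,v)$, where $\Lambda := \min\big(\min_{j\in\Lcal^-_a(u,v)} a_j(v)/\wt t_j(u,v),\, 1,\, \min_{j\in\Lcal^+_b(u,v)} b_j(v)/\wt t_j(u,v)\big) \in (0,1]$. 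Hence $\min_{j\in\supp(u)\setminus\Jcal} f^*_j(u,v) \le f^*_{j^*}(u,v) \le \big(1 - \tfrac{\Lambda(1-\delta_{K,\Pcal})}{K}\big)\|A(u-v)\|^2_2$.

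Comparing the two estimates (and using $\|A(u-v)\|^2_2 > 0$), the desired strict inequality follows as soon as $\tfrac{\theta_{K,\Pcal}^2}{1-\delta_{K,\Pcal}} < \tfrac{\Lambda(1-\delta_{K,\Pcal})}{K}$, equivalently $(1-\delta_{K,\Pcal})\sqrt{\Lambda} > \sqrt{K}\,\theta_{K,\Pcal}$; since square roots pass through the $\min$, $\sqrt{\Lambda}$ is exactly the minimum appearing in C2, so this is precisely (\ref{eqn:sufficient_condition_general}). The main obstacle I anticipate is the case analysis in the third paragraph: one must align the partition (\ref{eqn:LL_index_sets}) with the branches of (\ref{eqn:f*_j_convex}) and check that C1 is exactly what eliminates the degenerate cases in which $f^*_{j^*}(u,v) = \|A(u-v)\|^2_2$ (no decrease), which is why $j^*$ must be drawn from $P$ so that the sign of $\wt t_{j^*}(u,v)$ is pinned down; the Cauchy--Schwarz/pigeonhole step and the use of Lemma~\ref{lem:W_a_b_bound} are then routine.
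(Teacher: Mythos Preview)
Your proposal is correct and follows essentially the same route as the paper: both use Proposition~\ref{prop:negative_second_term} for the key lower bound on $\sum_{j\in\supp(u)\setminus\Jcal}\wt t_j(u,v)(u-v)_j$, Lemma~\ref{lem:lower_bd_f*} for the right-hand side, Lemma~\ref{lem:W_a_b_bound} for the $\Lcal^\pm$ cases, and the sign conditions in C1 to handle $\Lcal^0_a,\Lcal^0_b$. The only technical variation is that the paper applies the H\"older-type bound $\sum_j \wt t_j(u,v)(u-v)_j \le \max_j(\cdot)\cdot\|(u-v)_{\supp(u)\setminus\Jcal}\|_1 \le \max_j(\cdot)\cdot\sqrt{K}\,\|u-v\|_2$ and then factors out $\Lambda$ via Lemma~\ref{lem:max_product}, whereas you use Cauchy--Schwarz plus pigeonholing to locate a single index $j^*$; both yield the same final inequality $(1-\delta_{K,\Pcal})\sqrt{\Lambda} > \sqrt{K}\,\theta_{K,\Pcal}$.
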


\begin{remark} \rm
We give a few comments on the conditions in the above theorem before presenting its proof. First, if any index set stated in Theorem~\ref{thm:sufficency_general_case} is empty, then its associated condition is vacuously true and can be ignored. Second, by the definitions of $\Lcal^-_a(u, v)$ and $\Lcal^+_b(u, v)$, we see that $0<a_j(v)/\wt t_j(u, v) <1$ for any $j \in \Lcal^-_a(u, v)$ and $0<b_j(v)/\wt t_j(u,v) <1$  for any $j \in \Lcal^+_b(u, v)$. Hence, if $\Lcal^-_a(u, v) \cup \Lcal^+_b(u, v)$ is nonempty, then
$
 0 <  \min \Big( \min_{j \in \Lcal^-_a(u, v)} \sqrt{a_j(v)/\wt t_j(u, v)}, \ 1, \ \min_{j \in \Lcal^+_b(u, v)} \sqrt{b_j(v)/\wt t_j(u,v)} \, \Big) < 1.
$
Otherwise, the minimum is one. Lastly, for any irreducible, closed, convex and CP admissible set $\Pcal$ and any constant $\varepsilon>0$, there exists an irreducible, closed, convex and CP admissible set $\wt\Pcal$ with $\Pcal\subseteq \wt \Pcal \subseteq \Pcal + \mathcal B(0, \varepsilon)$ such that
  for any $u, v \in \wt\Pcal$ with $\supp(v) \subseteq \Jcal \subset \supp(u)$, the conditions in C1 hold, i.e., $\Lcal_0(u, v)$ is empty,
  $u_j > 0$ for all $j \in \Lcal^0_a(u, v)$,  $u_j < 0$ for all $j \in \Lcal^0_b(u, v)$, where $\mathcal B(0, \varepsilon)$ is the open $\varepsilon$-ball at the origin.
\end{remark}

\begin{proof}
For any $u, v \in \Sigma_K \cap \Pcal $ with $\supp(v) \subseteq \Jcal \subset \supp(u)$ for some index set $\Jcal$, we have $v_j=0$ for each $j \in \supp(u)\setminus \Jcal$. Since $\Lcal_0(u, v)$ is the empty set, the (possibly unbounded) interval $\mathbb I_j(v)=[a_j(v), b_j(v)]$ satisfies exactly one of the following conditions: (i) $a_j(v)<0 < b_j(v)$; (ii) $a_j(v)=0<b_j(v)$; and (iii) $a_j(v)<0=b_j(v)$. Hence, the index sets $\Lcal^-_a(u, v), \Lcal^0_a(u,v), \Lcal^+_b(u,v), \Lcal^0_b(u, v)$ and $\Lcal_{uc}(u, v)$ defined in (\ref{eqn:LL_index_sets}) form a  disjoint union of $\supp(u)\setminus \Jcal$.
%
%
Moreover, it follows from the expression for $f^*_j(u,v)$ given in (\ref{eqn:f*_j_convex}), the assumption that $\|A_{\bullet j} \|_2=1, \forall \, j$, and the similar argument for Lemma~\ref{lem:lower_bd_f*} that
\begin{eqnarray*}
  j \in \Lcal^-_a(u,v) & \Rightarrow &  f^*_j(u,v) = \|A(u- v)\|^2_2 -  [ 2 a_j(v) \wt t_j(u, v) - a^2_j(v)],  \\
  j \in \Lcal^0_a(u,v) & \Rightarrow &  f^*_j(u,v) = \|A(u- v)\|^2_2 -  \big([\wt t_j(u, v)]_+ \big)^2, \\
  j \in \Lcal^+_b(u,v) & \Rightarrow &  f^*_j(u,v) = \|A(u- v)\|^2_2 -  [ 2 b_j(v) \wt t_j(u, v) - b^2_j(v)], \\
  j \in \Lcal^0_b(u,v) & \Rightarrow &  f^*_j(u,v) = \|A(u- v)\|^2_2 -   \big([\wt t_j(u, v)]_- \big)^2, \\
  j \in \Lcal_{uc}(u,v) & \Rightarrow &  f^*_j(u,v) = \|A(u- v)\|^2_2 -  [\wt t_j(u, v)]^2.
\end{eqnarray*}
In light of (\ref{eqn:f*_j_convex}) and Lemma~\ref{lem:W_a_b_bound}, we have
\begin{eqnarray}
  \lefteqn{ \min_{j \in \supp(u- v)\setminus \Jcal} f^*(u, v) \,  = \,  \min_{j \in \supp(u)\setminus \Jcal} f^*(u, v) }  \notag \\
   & = & \min \Big( \min_{j \in \Lcal^-_a(u, v)} f^*_j(u, v), \ \min_{j \in \Lcal^0_a(u, v)} f^*_j(u, v),
   %
   %
   \ \min_{j \in \Lcal_{uc}(u, v)} f^*_j(u, v), \  \min_{j \in \Lcal^0_b(u, v)} f^*_j(u, v), \min_{j \in \Lcal^+_b(u, v)} f^*_j(u, v) \Big) \notag \\
   & = & \| A(u-v)\|^2_2 - \max \Big( \max_{j \in \Lcal^-_a(u, v)}   [ 2 a_j(v) \wt t_j(u, v) - a^2_j(v)],  \max_{j \in \Lcal^0_a(u, v)}  \big([\wt t_j(u, v)]_+ \big)^2, \max_{j\in \Lcal_{uc}(u, v)}  \wt t^2_j(u, v), \notag \\
   & &   \qquad \qquad \qquad \max_{j \in \Lcal^0_b(u, v)}  \big([\wt t_j(u, v)]_- \big)^2, \max_{j \in \Lcal^+_b(u, v)}  [ 2 b_j(v) \wt t_j(u, v) - b^2_j(v)] \Big) \notag \\
  & \le &   \| A(u-v)\|^2_2 - \max \Big( \max_{j \in \Lcal^-_a(u, v)}  [a_j(v)/\wt t_j(u, v)] \cdot [\wt t_j(u, v)]^2,  \max_{j \in \Lcal^0_a(u, v)}  \big([\wt t_j(u, v)]_+ \big)^2,  \notag \\
   & &  \qquad \qquad\qquad \quad \quad \max_{j\in \Lcal_{uc}(u, v)}  \wt t^2_j(u, v), \max_{j \in \Lcal^0_b(u, v)}   \big([\wt t_j(u, v)]_- \big)^2,  \max_{j \in \Lcal^+_b(u, v)} [b_j(v)/\wt t_j(v)] \cdot [\wt t_j(u, v)]^2 \Big) \notag \\ 
  %
%
 %
 %
   & \le & \| A(u-v)\|^2_2 - \max \Big( \max_{j \in \Lcal^-_a(u, v)}  \wt t^2_j(u, v) \cdot \min_{j \in \Lcal^-_a(u, v)} [ a_j(v)/\wt t_j(u, v)],  \max_{j \in \Lcal^0_a(u, v)}  \big([\wt t_j(u, v)]_+ \big)^2,    \notag  \\
   & &  \qquad \qquad \qquad \max_{j\in \Lcal_{uc}(u, v) }  \wt t^2_j(u, v), \ \max_{j \in \Lcal^0_b(u, v)}   \big([\wt t_j(u, v)]_- \big)^2, \max_{j \in \Lcal^+_b(u, v)}  \wt t^2_j(u, v)  \cdot \min_{j \in \Lcal^+_b(u, v)} [ b_j(v)/ \wt t_j(u, v)]  \, \Big)
   \notag \\
  %
 %
 & \le & \|A(u-v)\|^2_2 -\max\Big( \max_{j\in \Lcal^0_a(u,v)} \big([\wt t_j(u, v)]_+ \big)^2, \max_{j\in \Lcal^-_a(u, v) \cup  \Lcal_{uc}(u,v) \cup \Lcal^+_b(u, v)}   \wt t^2_j(u,v), \notag \\
     & & \qquad \qquad  \max_{j \in \Lcal^0_b(u, v)} \big([\wt t_j(u, v)]_- \big)^2 \Big) \times \min\Big( \min_{j \in \Lcal^-_a(u, v)}  [a_j(v)/\wt t_j(u, v)], \ 1, \, \min_{j \in \Lcal^+_b(u, v)} \ [ b_j(v)/ \wt t_j(u, v)] \Big), \notag
\end{eqnarray}
where the second-to-last and last inequalities follow from Lemma~\ref{lem:max_product}.
%
%
%
Moreover, it follows from Lemma~\ref{lem:lower_bd_f*} that for any $u, v \in \Sigma_K \cap \Pcal $ with $\supp(v) \subseteq \Jcal \subset \supp(u)$,
\begin{align*}
 \lefteqn{\min_{j\in [\supp(u-v)]^c\setminus \Jcal} f^*_j(u, v)  \, = \, \min_{j\in [\supp(u)]^c} f^*_j(u, v) }\\
  & \ge  \| A(u-v)\|^2_2 - \max \Big( \max_{j \in [\supp(u)]^c \cap \Lcal_1}  \wt t^2_j(u, v),
%
  \max_{j \in [\supp(u)]^c \cap \Lcal_+}  ([\wt t_j(u, v)]_+)^2, \max_{j \in [\supp(u)]^c \cap \Lcal_-}  ([\wt t_j(u, v)]_-)^2 \Big).
\end{align*}
Let $\wt \Lcal(u,v) := \Lcal^-_a(u, v) \cup \Lcal_{uc}(u,v) \cup \Lcal^+_b(u, v) $ for notational simplicity. Define the following quantities:
\begin{eqnarray*}
  \Gamma_1 & := &  \max\Big( \max_{j\in \Lcal^0_a(u,v)}  [\wt t_j(u, v)]_+, \max_{j\in \wt \Lcal(u,v)} |\wt t_j(u,v)|, \max_{j \in \Lcal^0_b(u, v)}  [\wt t_j(u, v)]_- \, \Big)\\
 & & \quad \times
   \min \Big( \min_{j \in \Lcal^-_a(u, v)} \sqrt{a_j(v)/\wt t_j(u, v)}, \, 1,  \, \min_{j \in \Lcal^+_b(u, v)} \sqrt{b_j(v)/\wt t_j(u, v)} \, \Big),  \\  [5pt]
 \Gamma_2 & := &  \max \Big( \max_{j \in  [\supp(u)]^c \cap \Lcal_1 }  |\wt t_j(u, v)|, \max_{j \in [\supp(u)]^c \cap \Lcal_+}  [\wt t_j(u, v)]_+, \max_{j \in [\supp(u)]^c \cap \Lcal_-}  [\wt t_j(u, v)]_-   \Big).
\end{eqnarray*}
Note that if $\Gamma_2 < \Gamma_1$, then $\min_{j \in \supp(u)\setminus \mathcal J} f^*_{j}(u, v) \, < \,  \min_{j \in [\supp(u)]^c} f^*_{j}(u, v)$ such that condition $(\mathbf H)$ given by (\ref{eqn:condition_H'}) holds. Hence, it suffices to show that $\Gamma_2 < \Gamma_1$ as follows.

%
%
%

By virtue of the definition of the constant $\theta_{K, \Pcal}$ corresponding to $\Lcal_1, \Lcal_+$ and $\Lcal_-$, we deduce  that $\Gamma_2 \le \theta_{K, \Pcal} \cdot \| u - v \|_2$.
%
%
%
%
%
Furthermore, $\supp(u)\setminus \Jcal = \wt \Lcal(u,v) \cup \Lcal^0_a(u,v) \cup \Lcal^0_b(u,v)$ since $\Lcal_0(u, v) = \emptyset$. Hence, in view of Proposition~\ref{prop:negative_second_term}, we deduce that
for any optimal solution $v$ to the minimization problem $\min_{z \in \mathcal P, \ \supp(z)\subseteq \mathcal J} \| A (u - z) \|^2_2$, the following holds:
%
%
\begin{eqnarray*}
    \lefteqn{ (1-\delta_{K, \Pcal}) \cdot \| u -v \|^2_2 \, \le \,  \| A (u-v)\|^2_2 \, \le \, \sum_{j \in \supp(u)\setminus\Jcal } \langle A(u-v), A_{\bullet j}  \rangle \cdot (u-v)_j } \\
    & = & \sum_{j \in  \wt \Lcal(u,v) } \langle A(u-v), A_{\bullet j}  \rangle \cdot (u-v)_j + \sum_{j \in \Lcal^0_a(u,v) } \langle A(u-v), A_{\bullet j}  \rangle \cdot (u-v)_j     \\ [5pt]
    & &  \qquad \quad + \sum_{j \in \Lcal^0_b(u,v) } \langle A(u-v), A_{\bullet j}  \rangle \cdot (u-v)_j \\
    & \le & \sum_{j \in \wt \Lcal(u,v) } |\langle A(u-v), A_{\bullet j} \rangle|  \cdot  | (u-v)_j| + \sum_{j \in \Lcal^0_a(u,v) } \langle A(u-v), A_{\bullet j} \rangle_+ \cdot  |(u-v)_j| \\
  & &  \qquad \quad + \sum_{j \in  \Lcal^0_b(u,v) } \langle A(u-v), A_{\bullet j} \rangle_- \cdot  |(u-v)_j| \\
   %
   %
   %
    &  \le &
     \max\Big(  \max_{j\in \wt \Lcal(u,v)} |\wt t_j(u,v)|, \max_{j\in \Lcal^0_a(u,v)}  [\wt t_j(u, v)]_+, \max_{j \in \Lcal^0_b(u, v)}   [\wt t_j(u, v)]_- \, \Big) \cdot \|(u-v)_{\supp(u)\setminus \Jcal} \|_1 \\
    &  \le &
     \max\Big(  \max_{j\in \wt \Lcal(u,v)} |\wt t_j(u,v)|, \max_{j\in \Lcal^0_a(u,v)}  [\wt t_j(u, v)]_+, \max_{j \in \Lcal^0_b(u, v)}   [\wt t_j(u, v)]_- \, \Big) \cdot \sqrt{|\supp(u)\setminus \Jcal|} \cdot \|u-v \|_2 \\
    & \le & \max\Big(  \max_{j\in \wt \Lcal(u,v)} |\wt t_j(u,v)|, \max_{j\in \Lcal^0_a(u,v)}  [\wt t_j(u, v)]_+, \max_{j \in \Lcal^0_b(u, v)}  [\wt t_j(u, v)]_- \, \Big)  \cdot \sqrt{K} \cdot \|u-v \|_2.
\end{eqnarray*}
where the third inequality holds because $u_j>0=v_j$ for all $j\in \Lcal^0_a(u,v)$ and $u_j<0=v_j$ for all $j\in \Lcal^0_b(u,v)$.
By the definition of $\Gamma_1$, we obtain
\[
     (1-\delta_{K, \Pcal}) \cdot \| u -v \|^2_2 \cdot \min \Big( \min_{j \in \Lcal^-_a(u, v)} \sqrt{a_j(v)/\wt t_j(u, v)}, \ 1,  \, \min_{j \in \Lcal^+_b(u, v)} \sqrt{b_j(v)/\wt t_j(u, v)} \, \Big) \le \Gamma_1 \cdot \sqrt{K} \cdot \|u - v\|_2.
\]
Since $\supp(v) \subset \supp(u)$, we have $\| u-v \|_2>0$. This further implies that
\[
   [(1- \delta_{K,\Pcal})/\sqrt{K}] \cdot \min \Big( \min_{j \in \Lcal^-_a(u, v)} \sqrt{a_j(v)/\wt t_j(u, v)}, \ 1,  \, \min_{j \in \Lcal^+_b(u, v)} \sqrt{b_j(v)/\wt t_j(u, v)} \, \Big) \cdot \| u-v\|_2 \le \Gamma_1.
\]
Using  $\Gamma_2 \le \theta_{K, \Pcal} \| u - v \|_2 $ and the assumption in (\ref{eqn:sufficient_condition_general}), we have $\Gamma_2 < \Gamma_1$ so that condition $(\mathbf H)$ holds.
%
%
\end{proof}

\mycut{
Based on the definitions of the index sets $\Lcal^-_a(u, v)$ and $\Lcal^+_b(u, v)$, we see that $0<a_j(v)/\wt t_j(u, v) <1$ for each $j \in \Lcal^-_a(u, v)$ and $0<b_j(v)/\wt t_j(u,v) <1$  for each $j \in \Lcal^+_b(u, v)$. Hence, if $\Lcal^-_a(u, v) \cup \Lcal^+_b(u, v)$ is nonempty, then
\[
 0 <  \min \Big( \min_{j \in \Lcal^-_a(u, v)} \sqrt{a_j(v)/\wt t_j(u, v)}, \ 1, \ \min_{j \in \Lcal^+_b(u, v)} \sqrt{b_j(v)/\wt t_j(u,v)} \, \Big) < 1.
\]
}


%
\subsection{Cone Case} \label{subsect:suff_cond_cone}

The uniform recovery conditions developed in Theorem~\ref{thm:sufficency_general_case} can be simplified for specific convex CP admissible sets. To illustrate it, consider an irreducible, closed, convex and CP admissible cone. By Proposition~\ref{prop:CP_admissible_cone},
$\Pcal=  \mathbb R_{\Ical_1} \times (\mathbb R_+)_{\Ical_+} \times (\mathbb R_-)_{\Ical_-}$, where $\Ical_1, \Ical_+$ and $\Ical_-$ form a disjoint union of $\{1, \ldots, N\}$.
The following corollary gives a simpler sufficient condition for condition $(\mathbf H)$ on $\Pcal$ in terms of $\theta_{K, \Pcal}$ and $\theta_{K, \Pcal}$. This result recovers the similar condition given in \cite{MoS_TIT12} for $\Pcal=\mathbb R^N$.

%
%
%
%

%
%

\begin{corollary} \label{thm:sufficency_cone_case}
  Let $\Pcal=  \mathbb R_{\Ical_1} \times (\mathbb R_+)_{\Ical_+} \times (\mathbb R_-)_{\Ical_-}$, where the index sets $\Ical_1, \Ical_+$ and $\Ical_-$ form a disjoint union of $\{1, \ldots, N\}$, and let $A \in \mathbb R^{m\times N}$ be a matrix with unit columns. Suppose there exist constants $\delta_{K, \Pcal}$ of Property RI on $\Pcal$ and $\theta_{K, \Pcal}$ of Property RO on $\Pcal$ corresponding to $\Ical_1, \Ical_+$ and $\Ical_-$ such that $1- \delta_{K, \Pcal} \, > \, \sqrt{K} \cdot \theta_{K, \Pcal}$. Then condition $(\mathbf H)$ holds on $\Pcal$.
%
%
\end{corollary}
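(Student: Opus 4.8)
The plan is to obtain this corollary as a direct specialization of Theorem~\ref{thm:sufficency_general_case}, by showing that for the cone $\Pcal$ several of the index sets in (\ref{eqn:LL_index_sets}) collapse and that conditions C1 and C2 of that theorem reduce to the single scalar inequality $1-\delta_{K,\Pcal}>\sqrt K\,\theta_{K,\Pcal}$. First I would record that $\Pcal=\mathbb R_{\Ical_1}\times(\mathbb R_+)_{\Ical_+}\times(\mathbb R_-)_{\Ical_-}$ is a closed, convex and CP admissible cone by Proposition~\ref{prop:CP_admissible_cone}, that it is irreducible in the sense of Definition~\ref{def:irreducible_CP_set} since its index decomposition has no $\{0\}$-block, and that, being a convex cone, it coincides with its own conic hull, so $\mbox{cone}(\Pcal)=\Pcal$ and hence $\Lcal_1=\Ical_1$, $\Lcal_+=\Ical_+$, $\Lcal_-=\Ical_-$ in the notation of Proposition~\ref{prop:conic_hull}. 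Thus Theorem~\ref{thm:sufficency_general_case} is applicable to $\Pcal$, and the constants $\delta_{K,\Pcal}$, $\theta_{K,\Pcal}$ supplied in the hypothesis are exactly of the type required there (Property RO corresponding to $\Lcal_1,\Lcal_+,\Lcal_-$).

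Next I would fix $u,v\in\Sigma_K\cap\Pcal$ with $\supp(v)\subseteq\Jcal\subset\supp(u)$ and analyze $\mathbb I_j(v)$ for $j\in\supp(u)\setminus\Jcal$. Since $j\notin\Jcal\supseteq\supp(v)$ we have $v_j=0$, so $\mathbb I_j(v)=\{t:\,v+t\mathbf e_j\in\Pcal\}$ equals $\mathbb R$ when $j\in\Ical_1$, $\mathbb R_+$ when $j\in\Ical_+$, and $\mathbb R_-$ when $j\in\Ical_-$. Consequently $a_j(v)\in\{-\infty,0\}$ and $b_j(v)\in\{0,+\infty\}$, and in particular neither endpoint is ever a finite nonzero number; therefore $\Lcal^-_a(u,v)$, $\Lcal^+_b(u,v)$ and $\Lcal_0(u,v)$ are all empty, while the remaining pieces of $\supp(u)\setminus\Jcal$ are $\Lcal_{uc}(u,v)=(\supp(u)\setminus\Jcal)\cap\Ical_1$, $\Lcal^0_a(u,v)=(\supp(u)\setminus\Jcal)\cap\Ical_+$, and $\Lcal^0_b(u,v)=(\supp(u)\setminus\Jcal)\cap\Ical_-$. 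Condition C1 of Theorem~\ref{thm:sufficency_general_case} then holds automatically: $\Lcal_0(u,v)=\emptyset$, and for $j\in\Lcal^0_a(u,v)$ we have $u_j\ne0$ (since $j\in\supp(u)$) together with $u_j\ge0$ (since $j\in\Ical_+$ and $u\in\Pcal$), so $u_j>0$; symmetrically $u_j<0$ for $j\in\Lcal^0_b(u,v)$. For C2, the emptiness of $\Lcal^-_a(u,v)$ and $\Lcal^+_b(u,v)$ means the minimum on the left of (\ref{eqn:sufficient_condition_general}) equals $1$ (the minima over those empty index sets impose no constraint, by the convention noted in the remark following Theorem~\ref{thm:sufficency_general_case}), so (\ref{eqn:sufficient_condition_general}) becomes precisely $1-\delta_{K,\Pcal}>\sqrt K\,\theta_{K,\Pcal}$, the stated hypothesis. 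Invoking Theorem~\ref{thm:sufficency_general_case} then gives condition $(\mathbf H)$ on $\Pcal$.

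Regarding difficulty, there is no genuine obstacle here: the argument is a specialization rather than a new estimate. The only point requiring care is the bookkeeping of the six index sets in (\ref{eqn:LL_index_sets}) and making the empty-set convention explicit, so that the reduction of (\ref{eqn:sufficient_condition_general}) to the scalar inequality $1-\delta_{K,\Pcal}>\sqrt K\,\theta_{K,\Pcal}$ is unambiguous; I would state that convention at the start of the proof.
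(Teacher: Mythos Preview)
Your proposal is correct and follows essentially the same route as the paper's proof: both observe that for $j\in\supp(u)\setminus\Jcal$ the interval $\mathbb I_j(v)$ is one of $\mathbb R$, $\mathbb R_+$, $\mathbb R_-$, hence $\Lcal_0(u,v)=\Lcal^-_a(u,v)=\Lcal^+_b(u,v)=\emptyset$, C1 holds automatically, and (\ref{eqn:sufficient_condition_general}) collapses to $1-\delta_{K,\Pcal}>\sqrt K\,\theta_{K,\Pcal}$. Your write-up is somewhat more explicit than the paper's (you spell out irreducibility, $\mbox{cone}(\Pcal)=\Pcal$, and the sign argument for $u_j$), but the substance is identical.
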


\begin{proof}
\tblue{
 For a given index set $\Jcal$ and any $u, v \in \Pcal$ with $\supp(v) \subseteq \Jcal \subset \supp(u)$, either
 $[a_j(v)=-\infty, b_j(v)=0]$ or $[a_j(v)=0, b_j(v)=+\infty]$ or $[a_j(v)=-\infty, b_j(v)=+\infty]$ for each $j \in \supp(u)\setminus \Jcal$. Hence, $u_j > 0$ for all $j \in \Lcal^0_a(u, v)$,  $u_j < 0$ for all $j \in \Lcal^0_b(u, v)$, and $\Lcal_0(u, v)$ is always the empty set. Therefore, the conditions in C1 hold.
 Moreover, $\Lcal^-_a(u, v)$ is also empty since $a_j(v)=-\infty$ if $a_j(v)<0$. Similarly, $\Lcal^+_b(u, v)$ is empty. Hence, condition~(\ref{eqn:sufficient_condition_general}) reduces to $1- \delta_{K, \Pcal} \, > \, \sqrt{K} \cdot \theta_{K, \Pcal}$.
}
\end{proof}

%
%

Since $\delta_{K, \Pcal}$ and $\theta_{K, \Pcal}$ may be difficult to obtain numerically due to the conditions such as $\supp(v) \subset \supp(u)$ in their definitions, it is desired that similar constants independent of the above mentioned conditions can be used. This leads to the following quantities.


\begin{definition} \label{def:delta_theta_hat}
 Let a matrix $A \in \mathbb R^{m\times N}$ with unit columns and the index sets $\Ical_1, \Ical_+$ and $\Ical_-$ which form a disjoint union of $\{1, \ldots, N\}$ be given.
\begin{itemize}
 \item [(i)] The constant $\wh \delta_K \in (0, 1)$ is such that $(1-\wh\delta_K) \cdot \| x \|^2_2 \le \| Ax \|^2_2 $ for all $x \in \Sigma_K$;
 \item [(ii)]
      The constant $\wh\theta_K>0$ corresponding to the index set $\Ical_1, \Ical_+$ and $\Ical_-$ is such that for any $x \in \Sigma_K$,
      \[
         \max \Big( \, \max_{ j \in \Ical_1} |\langle Ax, A_{\bullet j} \rangle|, \ \max_{ j \in \Ical_+} \langle Ax, A_{\bullet j} \rangle_+, \ \max_{ j \in \Ical_-} \langle Ax, A_{\bullet j} \rangle_- \, \Big) \, \le \, \wh\theta_K \cdot \|x\|_2.
      \]
\end{itemize}
\end{definition}
To emphasize the dependence of the above constants on $A$ (when $\Ical_1, \Ical_+$ and $\Ical_-$ are fixed), we also write them as $\wh \delta_K(A)$ and $\wh \theta_K(A)$, respectively.
Based on Definition~\ref{def:delta_theta_hat}, it is easy to see that  $\wh \delta_K$ is of Property RI and $\wh \theta_K$ is of Property RO, both on $\Pcal$. Hence, by Corollary~\ref{thm:sufficency_cone_case}, we obtain the following result immediately; its proof is omitted.

\begin{corollary} \label{coro:sufficiency_uniform_constants}
 For a given matrix $A \in \mathbb R^{m\times N}$ with unit columns and a closed, convex, and CP admissible cone $\Pcal$ defined by the index sets $\Ical_1, \Ical_+$ and $\Ical_-$, if there exist positive constants $\wh\delta_K$ and $\wh\theta_K$ given by Definition~\ref{def:delta_theta_hat} such that $ 1-\wh\delta_K> \sqrt{K} \cdot \wh\theta_K$, then  condition  $(\mathbf H)$ given by (\ref{eqn:condition_H'}) holds.
\end{corollary}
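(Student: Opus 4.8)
The plan is to observe that the two constants introduced in Definition~\ref{def:delta_theta_hat} are particular instances of the constants $\delta_{K,\Pcal}$ and $\theta_{K,\Pcal}$ of Definition~\ref{def:RIP_MC_constants}, after which the claim is immediate from Corollary~\ref{thm:sufficency_cone_case}. First I would record the elementary support fact that underpins everything: if $u\in\Sigma_K$ and $\supp(v)\subseteq\supp(u)$, then $(u-v)_i=0$ whenever $u_i=0$, so $\supp(u-v)\subseteq\supp(u)$ and hence $u-v\in\Sigma_K$. This lets one apply the ``unrestricted'' bounds of Definition~\ref{def:delta_theta_hat} to the vector $x:=u-v$.

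Next I would check Property RI. Given arbitrary $u,v\in\Sigma_K\cap\Pcal$ with $\supp(v)\subset\supp(u)$, set $x:=u-v\in\Sigma_K$ by the fact just noted, and apply Definition~\ref{def:delta_theta_hat}(i) to $x$: this yields $(1-\wh\delta_K)\|u-v\|^2_2\le\|A(u-v)\|^2_2$. Since $\wh\delta_K\in(0,1)$, this is exactly the defining inequality of Property RI on $\Pcal$, so $\delta_{K,\Pcal}:=\wh\delta_K$ is of Property RI on $\Pcal$. For Property RO I would proceed identically: applying Definition~\ref{def:delta_theta_hat}(ii) to the same $x=u-v$ bounds the maxima of $|\langle Ax,A_{\bullet j}\rangle|$ over $j\in\Ical_1$, of $\langle Ax,A_{\bullet j}\rangle_+$ over $j\in\Ical_+$, and of $\langle Ax,A_{\bullet j}\rangle_-$ over $j\in\Ical_-$, all by $\wh\theta_K\|x\|_2=\wh\theta_K\|u-v\|_2$; since the index sets $[\supp(u)]^c\cap\Ical_1$, $[\supp(u)]^c\cap\Ical_+$, $[\supp(u)]^c\cap\Ical_-$ appearing in Definition~\ref{def:RIP_MC_constants}(ii) are subsets of $\Ical_1$, $\Ical_+$, $\Ical_-$ respectively, the corresponding restricted maxima are no larger, and therefore $\theta_{K,\Pcal}:=\wh\theta_K$ is of Property RO on $\Pcal$ corresponding to $\Ical_1,\Ical_+,\Ical_-$.

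Finally, the hypothesis $1-\wh\delta_K>\sqrt{K}\cdot\wh\theta_K$ reads $1-\delta_{K,\Pcal}>\sqrt{K}\cdot\theta_{K,\Pcal}$ for the constants just produced, so Corollary~\ref{thm:sufficency_cone_case}, applied to the cone $\Pcal=\mathbb R_{\Ical_1}\times(\mathbb R_+)_{\Ical_+}\times(\mathbb R_-)_{\Ical_-}$, gives that condition $(\mathbf H)$ holds on $\Pcal$. There is no substantive obstacle here; the only points deserving care are the support inclusion $\supp(u-v)\subseteq\supp(u)$, which is what guarantees $u-v\in\Sigma_K$ and hence legitimises applying the unrestricted constants, and the bookkeeping that the index sets over which the restricted maxima in Definition~\ref{def:RIP_MC_constants}(ii) run are contained in those of Definition~\ref{def:delta_theta_hat}(ii).
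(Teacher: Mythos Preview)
Your proposal is correct and follows exactly the approach the paper indicates (the paper omits the proof, remarking only that $\wh\delta_K$ is of Property RI and $\wh\theta_K$ is of Property RO on $\Pcal$, and then invoking Corollary~\ref{thm:sufficency_cone_case}). You have supplied precisely the two missing verifications---the support inclusion $\supp(u-v)\subseteq\supp(u)$ ensuring $u-v\in\Sigma_K$, and the observation that the restricted maxima in Definition~\ref{def:RIP_MC_constants}(ii) are taken over subsets of those in Definition~\ref{def:delta_theta_hat}(ii)---so nothing more is needed.
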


In what follows, we discuss the constants $\wh\delta_K$ and $\wh \theta_K$ subject to perturbations of $A$.
\begin{proposition} \label{prop:A_perturbation}
  Let a matrix $A^\diamond \in \mathbb R^{m\times N}$  be such that there exist constants $\wh\delta_K(A^\diamond) \in (0, 1)$ and $\wh\theta_{K}(A^\diamond)>0$ satisfying
  $1- \wh\delta_K(A^\diamond)> \sqrt{K} \cdot \wh\theta_{K}(A^\diamond)$. Then there exists a constant $\eta>0$ such that for any $A $ with $\| A - A^\diamond \|_2 < \eta$, there exist constants $\wh\delta_K(A)>0$ and $\wh\theta_{K}(A)>0$ satisfying the conditions given by Definition~\ref{def:delta_theta_hat} such that $1- \wh\delta_K(A)> \sqrt{K} \cdot \wh\theta_{K}(A)$.
\end{proposition}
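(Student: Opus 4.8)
The plan is to prove this by a direct first‑order perturbation estimate on the two defining inequalities of Definition~\ref{def:delta_theta_hat}, using the spectral‑norm bound $\|A-A^\diamond\|_2<\eta$ to control $\|Ax\|_2$ and the inner products $\langle Ax,A_{\bullet j}\rangle$ uniformly over the (fixed) compact set $\Sigma_K\cap\{\|x\|_2=1\}$. No CP‑admissibility is used; only the scalar inequalities themselves, so $A$ need not have unit columns.

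First I would extract from the hypothesis the numbers $\alpha:=1-\wh\delta_K(A^\diamond)\in(0,1)$ and $\beta:=\wh\theta_K(A^\diamond)>0$, so that $\|A^\diamond x\|_2^2\ge\alpha\|x\|_2^2$ and $\max\big(\max_{j\in\Ical_1}|\langle A^\diamond x,A^\diamond_{\bullet j}\rangle|,\ \max_{j\in\Ical_+}\langle A^\diamond x,A^\diamond_{\bullet j}\rangle_+,\ \max_{j\in\Ical_-}\langle A^\diamond x,A^\diamond_{\bullet j}\rangle_-\big)\le\beta\|x\|_2$ for all $x\in\Sigma_K$, with $\alpha>\sqrt K\,\beta$. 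Then, for any $A$ with $E:=A-A^\diamond$, $\|E\|_2<\eta<\sqrt\alpha$, and any unit $x\in\Sigma_K$: from $\|Ax\|_2\ge\|A^\diamond x\|_2-\|Ex\|_2\ge\sqrt\alpha-\eta$ one gets $\|Ax\|_2^2\ge(\sqrt\alpha-\eta)^2\|x\|_2^2$ for all $x\in\Sigma_K$; and expanding $\langle Ax,A_{\bullet j}\rangle=\langle A^\diamond x,A^\diamond_{\bullet j}\rangle+\langle A^\diamond x,E_{\bullet j}\rangle+\langle Ex,A^\diamond_{\bullet j}\rangle+\langle Ex,E_{\bullet j}\rangle$ together with $\|E_{\bullet j}\|_2\le\|E\|_2$ and $\|A^\diamond_{\bullet j}\|_2\le\|A^\diamond\|_2$ gives $|\langle Ax,A_{\bullet j}\rangle-\langle A^\diamond x,A^\diamond_{\bullet j}\rangle|\le(2\|A^\diamond\|_2+\eta)\eta$; combined with the elementary bounds $(a+b)_+\le a_++|b|$, $(a+b)_-\le a_-+|b|$ and $|a+b|\le|a|+|b|$, this yields $\max(\,\cdots\text{ for }A\,)\le\big(\beta+(2\|A^\diamond\|_2+\eta)\eta\big)\|x\|_2$.

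Consequently the constants $\wh\delta_K(A):=1-(\sqrt\alpha-\eta)^2$ and $\wh\theta_K(A):=\beta+(2\|A^\diamond\|_2+\eta)\eta$ — which do not even depend on the particular $A$ in the ball — satisfy the two inequalities of Definition~\ref{def:delta_theta_hat}, and $\wh\delta_K(A)\in(0,1)$ whenever $0<\eta<\sqrt\alpha$, since $\alpha<1$ forces $0<\sqrt\alpha-\eta<1$. It then remains to choose $\eta>0$ so that $1-\wh\delta_K(A)>\sqrt K\,\wh\theta_K(A)$, i.e. $(\sqrt\alpha-\eta)^2>\sqrt K\big(\beta+(2\|A^\diamond\|_2+\eta)\eta\big)$; as $\eta\downarrow 0$ the left side tends to $\alpha$ and the right side to $\sqrt K\,\beta<\alpha$, so the inequality (together with $\eta<\sqrt\alpha$) holds for all sufficiently small $\eta>0$, and fixing any such $\eta$ finishes the proof.

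There is no real obstacle; this is a soft continuity argument, and the only points needing a little care are keeping $\wh\delta_K(A)$ strictly inside $(0,1)$ (handled by $\eta<\sqrt\alpha$ and $\alpha<1$) and the uniform bookkeeping of the three cases $|\cdot|,(\cdot)_+,(\cdot)_-$ in the restricted‑orthogonality term, which collapses via $(a+b)_\pm\le a_\pm+|b|$. Alternatively one could argue through the tightest admissible constants $\mu(B):=\min_{x\in\Sigma_K,\|x\|_2=1}\|Bx\|_2^2$ and $\nu(B):=\max_{x\in\Sigma_K,\|x\|_2=1}\max(\,\cdots\,)$, which are continuous in $B$ because $\Sigma_K\cap\{\|x\|_2=1\}$ is compact: the hypothesis forces $\mu(A^\diamond)-\sqrt K\,\nu(A^\diamond)\ge\alpha-\sqrt K\,\beta>0$, and continuity transports this to a neighborhood of $A^\diamond$. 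The explicit estimate above is, however, shorter and has the bonus of exhibiting a concrete $\eta$.
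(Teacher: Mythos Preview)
Your proof is correct and follows essentially the same approach as the paper: both argue by direct perturbation estimates on the two defining inequalities, using the same expansion $\langle Ax,A_{\bullet j}\rangle-\langle A^\diamond x,A^\diamond_{\bullet j}\rangle=\langle A^\diamond x,E_{\bullet j}\rangle+\langle Ex,A^\diamond_{\bullet j}\rangle+\langle Ex,E_{\bullet j}\rangle$ to obtain the identical bound $(2\|A^\diamond\|_2+\eta)\eta$ for the restricted-orthogonality term, and a lower bound on $\|Ax\|_2^2$ via the triangle inequality for the restricted-isometry term. The only cosmetic difference is that the paper bounds $\bigl|\|Ax\|_2^2-\|A^\diamond x\|_2^2\bigr|$ linearly in $\|A-A^\diamond\|_2$ (via $\bigl|\|Ax\|_2-\|A^\diamond x\|_2\bigr|\cdot\bigl(\|Ax\|_2+\|A^\diamond x\|_2\bigr)$), whereas you bound $\|Ax\|_2$ first and then square; both choices work and lead to the same continuity conclusion.
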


\begin{proof}
 For the given matrix $A^\diamond$ and the positive constants $\wh\delta_K(A^\diamond)$ and $\wh\theta_{K}(A^\diamond)$, it suffices to show that for any $\varepsilon>0$, there exist constants $\eta'>0$ and $\eta''>0$ such that (i) for each $A$ with $\| A - A^\diamond\|_2 < \eta'$, there exists a constant $\wh\delta_K(A)>0$ satisfying condition (i) of Definition~\ref{def:delta_theta_hat}  such that $|\wh\delta_K(A)- \wh\delta_K(A^\diamond)| < \varepsilon$; and (ii) for each $A$ with $\| A - A^\diamond\|_2 < \eta''$, there exists a constant $\wh\theta_K(A)>0$ satisfying condition (ii) of Definition~\ref{def:delta_theta_hat} such that $|\wh\theta_K(A)- \wh\theta_K(A^\diamond)| < \varepsilon$.
%
%

 To show the existence of $\eta'$, we use the inequality $\big| \|Ax\|_2 - \| A^\diamond x\|_2 \big| \le \| A - A^\diamond \|_2 \cdot \|x \|_2$  for any $A$ and $x$ \cite[Proposition 5.3]{ShenMousavi_SIOPT18}. Hence, for all $A$ in the neighborhood $\mathcal U$ of $A^\diamond$ given by $\mathcal U=\{ A \, | \, \|A - A^\diamond\|_2<\alpha\}$ for some $\alpha>0$, we have $\big| \|A x \|^2_2 - \| A^\diamond x \|^2_2 \big | = \big| \|A x \|_2 - \| A^\diamond x \|_2 \big | \cdot ( \|A x \|_2 + \| A^\diamond x \|_2) \le \| A -  A^\diamond\|_2 \cdot \|x\|_2 \cdot (2\| A^\diamond\|_2 + \alpha) \cdot \| x\|_2 \le c' \cdot \| A -  A^\diamond\|_2 \cdot \| x \|^2_2$ for all $x$, where $c' :=2\| A^\diamond\|_2 + \alpha>0$. Hence, $\| A x \|^2_2 \ge \| A^\diamond x \|^2_2 - c' \cdot \| A -  A^\diamond\|_2 \cdot \| x \|^2_2 \ge [1-\wh\delta_K(A^\diamond) - c'\cdot \| A -  A^\diamond\|_2] \cdot \| x \|^2_2$ for all $x$. Letting $\wh \delta_K(A):= \wh \delta_K(A^\diamond) + c' \cdot  \| A - A^\diamond \|_2$, we can obtain a positive constant $\eta'$ with $0<\eta'<\min(\varepsilon/c', \alpha)$ such that for each $A$ with $\| A - A^\diamond\|_2 < \eta'$, $|\wh\delta_K(A)- \wh\delta_K(A^\diamond)| < \varepsilon$.

  To show the existence of $\eta''$, define the function $h_j$ for a fixed index $j$ and a matrix $A$:
 \[
     h_j(A, x) \, := \, \left\{ \begin{array}{lll} |\langle Ax, A_{\bullet j} \rangle|, & \mbox{ if } \ j \in \Ical_1; \\  \langle Ax, A_{\bullet j} \rangle_+,  & \mbox{ if } \ j \in \Ical_+; \\
      \langle Ax, A_{\bullet j} \rangle_-,  & \mbox{ if } \ j \in \Ical_-. \end{array} \right.
 \]
Using the fact that $|x_+- y_+| \le |x -y |$ and $|x_- - y_-| \le |x - y|$ for any $x, y \in \mathbb R$, we have, for each $j$,
\begin{eqnarray*}
  | h_j(A, x) - h_j(A^\diamond, x)| & \le & |\langle Ax, A_{\bullet j} \rangle -\langle A^\diamond x, A^\diamond_{\bullet j} \rangle | \\
  & = & \Big| \langle A^\diamond x, (A-A^\diamond)_{\bullet j} \rangle + \langle (A-A^\diamond) x, A^\diamond_{\bullet j} \rangle + \langle (A-A^\diamond) x, (A-A^\diamond)_{\bullet j} \rangle  \Big| \\
  & \le & | \langle A^\diamond x, (A-A^\diamond) \mathbf e_{j} \rangle| + |\langle (A-A^\diamond) x, A^\diamond \mathbf e_{j} \rangle| + | \langle (A-A^\diamond) x, (A-A^\diamond) \mathbf e_{j} \rangle | \\
  & \le & \| A - A^\diamond \|_2 \cdot [ 2 \|A^\diamond\|_2 + \| A - A^\diamond \|_2 \big] \cdot \| x \|_2,
\end{eqnarray*}
where the last inequality follows from Cauchy-Schwarz inequality and $\| \mathbf e_j \|_2=1$. Therefore, for all $A$ in the neighborhood $\mathcal U$ of $A^\diamond$ given by $\mathcal U=\{ A \, | \, \|A - A^\diamond\|_2<\beta\}$ for some $\beta>0$, we obtain the constant $c:= 2 \|A^\diamond\|_2 +\beta>0$ such that for each $j$, $h_j(A, x) \le h_j(A^\diamond, x) + c \cdot \| A - A^\diamond \|_2 \cdot \| x \|_2$.  In view of
\[
\max_j h_j(A, x) \, = \, \max \Big( \, \max_{ j \in \Ical_1} |\langle Ax, A_{\bullet j} \rangle|, \ \max_{ j \in \Ical_+} \langle Ax, A_{\bullet j} \rangle_+, \ \max_{ j \in \Ical_-} \langle Ax, A_{\bullet j} \rangle_- \, \Big),
\]
 we further have
\begin{eqnarray*}
  \max_j h_j(A, x) & \le & \max_j h_j(A^\diamond, x) +  c \cdot  \| A - A^\diamond \|_2  \cdot \| x \|_2 \, \le \, \wh \theta_K  (A^\diamond) \cdot \| x \|_2 +  c \cdot  \| A - A^\diamond \|_2  \cdot \| x \|_2 \\
  & \le & \big[ \wh \theta_K(A^\diamond) + c \cdot  \| A - A^\diamond \|_2  \big] \cdot \| x \|_2.
\end{eqnarray*}
By letting $\wh \theta_K(A):= \wh \theta_K(A^\diamond) + c \cdot  \| A - A^\diamond \|_2$, it is easy to obtain a positive constant $\eta''$ with $0<\eta''<\min(\varepsilon/c, \beta)$ such that for each $A$ with $\| A - A^\diamond\|_2 < \eta''$, $|\wh\theta_K(A)- \wh\theta_K(A^\diamond)| < \varepsilon$.
%
%
%
%
\end{proof}

\begin{remark} \rm \label{remark:A_perturb}
 The above proposition shows that for fixed index sets $\Ical_1, \Ical_+$ and $\Ical_-$, $\mathcal A := \{ A \in \mathbb R^{m\times N} \, | \, 1- \wh\delta_K(A)> \sqrt{K} \cdot \wh\theta_{K}(A) \}$ is an open set in the matrix space $\mathbb R^{m \times N}$.
%
%
 Since the set of  matrices of completely full rank, i.e., $A \in \mathbb R^{m\times N}$ is such that every $m\times m$ submatrix of $A$ is invertible \cite{ShenMousavi_SIOPT18}, is open and dense in the matrix space $\mathbb R^{m\times N}$, we conclude that for any $A \in \mathcal A$ and an arbitrarily small $\varepsilon>0$, there exists a matrix $A' \in \mathcal A$ of complete full rank such that $\| A' - A \|< \varepsilon$. An advantage of using the matrix $A'$ is that it leads to a unique $x^k$ in each step (cf. Lemma~\ref{lemma:sol_existence}) and thus gives rise to the exact vector recovery,  provided that the sparsity level $K \le m$.
%
%
\end{remark}

%
%
%

%

%
\section{Conclusions} \label{sect:conclusion}

This paper studies the exact support and vector recovery on a constraint set  via constrained matching pursuit. We show the exact recovery critically relies on a constraint set, and introduce the class of CP admissible sets. Rich properties of these sets are exploited, and various exact recovery conditions are developed for convex CP admissible cones or sets. Future research includes the exact recovery of constrained sparse vectors subject to noise and errors via constrained matching pursuit.
%
%

%
%
\gap
\noindent {\bf Acknowledgements.}
The authors would like to thank Dr. Joel A. Tropp for a helpful discussion on the counterexample given in Section~\ref{subsect:OMP_counterexample}.



\end{document}